\numberwithin{equation}{subsection}
\title[Symplectic resolutions for multiplicative quiver varieties ]{Symplectic resolutions for multiplicative quiver varieties 
%with applications to character varieties
and character varieties for punctured surfaces}
\author{Travis Schedler}
\address[Travis Schedler]{Department of Mathematics, Imperial College, London, 180 Queen’s Gate, London SW7 2AZ, UK}
\email{t.schedler@imperial.ac.uk}
\author{Andrea Tirelli}
\address[Andrea Tirelli]{Department of Mathematics, Imperial College, London, 180 Queen’s Gate, London SW7 2AZ, UK and SISSA, via Bonomea 265, 34136, Trieste, IT}
\email{a.tirelli15@imperial.ac.uk}
\date{}
\newcommand{\mc}[1]{\mathcal{#1}}
\newcommand{\mb}[1]{\mathbb{#1}}
\newcommand{\mr}[1]{\mathrm{#1}}
\newcommand{\tit}[1]{\textit{#1}}
\newcommand{\vdim}{\underline{\dim}\hspace{0.5mm}}
\newcommand{\OO}{\mc{O}}
\newcommand{\mm}{\mc{M}}
\newcommand{\Mod}[1]{\ (\mathrm{mod}\ #1)}
\newcommand{\Z}{\mathbb{Z}}
\newcommand{\N}{\mathbb{N}}
\newcommand{\Q}{\mathbb{Q}}
\newcommand{\R}{\mathbb{R}}
\newcommand{\C}{\mathbb{C}}
\newcommand{\mulquiv}{\mc{M}_{q, 0}(Q, \alpha)}
\newcommand{\iso}{{\;\stackrel{_\sim}{\to}\;}}
\newcommand{\Spec}{\operatorname{Spec}}
\newcommand{\GL}{\operatorname{GL}}
\newcommand{\PGL}{\operatorname{PGL}}
\newcommand{\SL}{\operatorname{SL}}
\newcommand{\Sp}{\operatorname{Sp}}
\newcommand{\Hom}{\operatorname{Hom}}
\newtheorem{thm}{Theorem}[section]
\newtheorem*{mainthm*}{Main result}
\newtheorem{lem}[thm]{Lemma}
\newtheorem{coro}[thm]{Corollary}
\newtheorem{prop}[thm]{Proposition}
\newtheorem{conj}[thm]{Conjecture}
\newtheorem{ques}[thm]{Question}
\theoremstyle{definition}
\newtheorem{defn}[thm]{Definition}
\newtheorem{eg}[thm]{Example}
\theoremstyle{remark}
\newtheorem*{claim*}{Claim}
\newtheorem{rem}[thm]{Remark} 
\begin{document}
\begin{abstract} 
  We study the algebraic symplectic geometry of multiplicative quiver
  varieties, which are moduli spaces of representations of certain
  quiver algebras, introduced by Crawley-Boevey and Shaw
  \cite{cb-shaw}, called \tit{multiplicative preprojective
    algebras}. They are multiplicative analogues of Nakajima quiver varieties.
  They include  character varieties of (open) Riemann surfaces fixing conjugacy class closures of the monodromies around punctures, when the quiver is
  ``crab-shaped''.
  We prove that, under suitable hypotheses
  on the dimension vector of the representations, or the conjugacy classes of monodromies in the character variety case, the normalisations of such moduli spaces
  are symplectic singularities and the existence of a symplectic
  resolution depends on a combinatorial condition on the quiver and the
dimension vector. These results are analogous to those obtained by Bellamy and the first author in the ordinary quiver variety case, and for character varieties of closed Riemann surfaces.
 At the end of the paper, we outline some conjectural generalisations to moduli
  spaces of objects in 2-Calabi--Yau categories.
\end{abstract}
	
\maketitle
\centerline{\emph{To Sasha Beilinson and Victor Ginzburg on the occasion of their 60th birthdays, with admiration}} 

%\tableofcontents
\section{Introduction}\label{intro}
\subsection{Motivation}
This paper is devoted to the study of the symplectic algebraic
geometry of coarse moduli spaces of (semistable) representations of
multiplicative preprojective algebras. These can be thought of as
multiplicative analogues of Nakajima quiver varieties \cite{nakajima},
which includes character varieties of (open) Riemann
surfaces. In particular, our attention is focused on tackling two main
problems: the first is to understand whether these multiplicative
quiver varieties are symplectic singularities, as defined by Beauville
in \cite{beauville}; the second is to classify all the possible cases
in which they admit symplectic resolutions.

Multiplicative
preprojective algebras were first defined by Crawley-Boevey and Shaw
in \cite{cb-shaw}, with the aim of better understanding Katz’s middle
convolution operation for rigid local systems, \cite{katz}. Another
important application contained in the seminal paper \cite{cb-shaw} is
the solution of (one direction of) the multiplicative Deligne-Simpson
problem in terms of the root data of a certain star-shaped quiver.
Moduli spaces of representations, in the sense of King \cite{king}, of
these algebras give rise to the so-called multiplicative quiver
varieties.
%, which can be thought of as a
%multiplicative version of Nakajima’s quiver varieties,
%\cite{nakajima}. 
Ordinary (Nakajima) quiver varieties have appeared in numerous places
in representation theory, algebraic geometry, and mathematical
physics; their homology theories are closely related to the
representation theory of Kac--Moody Lie algebras \cite{nakajima}, and
their quantum cohomology is closely related to quantum R-matrices and
the Casimir connection \cite{maulik-okounkov}. A number of authors
have studied multiplicative quiver varieties since their definition:
among others, Jordan \cite{jordan} considered quantisations of such
varieties from a representation theoretic point of view by
constructing flat $q$-deformations of the algebra of differential
operators on certain affine spaces; a more geometric approach was used
by Yamakawa in \cite{yamakawa}, where a symplectic structure on these
moduli spaces was defined and studied. Some of these results will be
recalled in the next sections of the present paper. More recently,
(derived) multiplicative preprojective algebras appeared in the study
of wrapped Fukaya categories of certain Weinstein 4-manifolds
constructed by plumbing cotangent bundles of Riemann surfaces: see
\cite{etgu-lekili}. In the recent work of Chalykh and Fairon
\cite{fairon}, multiplicative quiver varieties are used to construct a
new integrable system generalising the Ruijsenaars--Schneider system,
which plays a central role in supersymmetric gauge theory and
cyclotomic DAHAs. Moreover, in work of McBreen--Webster \cite{mcbreen-webster}
and McBreen--Gammage--Webster \cite{mcbreen-gammage-webster}, 
 related to
 \cite[\S7]{kapranov}, mirror symmetry is studied for multiplicative hypertoric varieties, which include multiplicative quiver varieties when the dimensions are one.  Finally, mixed Hodge polynomials of
 character varieties and quiver varieties
 were studied in the ground-breaking work of Hausel, Lettelier and Rodriguez-Villegas using arithmetic methods \cite{h-l-rv-1, h-l-rv-2, h-l-rv-3}; they
 suggested that similar methods should apply to general multiplicative quiver varieties.
% quiver
% varieties, which, at least in certain cases, are believed to be
% self-mirrors.
Given their appearance in so many different contexts, it
seems natural to perform a careful analysis of multiplicative quiver
varieties from the point of view of symplectic algebraic geometry.

%Note that another interesting point of view is the one
%adopted by McBreen and Webster in \cite{mcbreen-webster}.  
The subject of symplectic resolutions and the more general symplectic
singularities (the latter dating to Beauville \cite{beauville}) has
recently gained importance in many areas of mathematics and
physics. Their quantisations subsume many of the important examples of
algebras appearing in representation theory (Cherednik and symplectic
reflection algebras, $D$-modules on flag varieties and representations
of Lie algebras, quantised hypertoric and quiver algebras,
etc.). There is a growing theory of symplectic duality, or
three-dimensional physical mirror symmetry (\cite{BFN-mdCb2,
  Nak-mdCb1, CHZ-moHsCb, blpw14} and many others), between pairs of
these varieties. Pioneering work of Braverman, Maulik and Okounkov
\cite{BMO-qcsr} (continued in the aforementioned
\cite{maulik-okounkov} and in many other places) show that their
quantum cohomology is also deeply tied to connections arising in
representation theory, related to derived autoequivalences of duals in
the sense of homological mirror symmetry (two-dimensional field
theories). Since, as mentioned before, quiver varieties play such an
important role here, it is expected that multiplicative quiver
varieties will as well. Moreover, the varieties in question are
instances of moduli spaces parametrising geometric objects. The study
of such spaces and their singularities is, in general, important in
algebraic geometry.

For all of these reasons, it is natural to ask when multiplicative
quiver varieties have symplectic singularities and admit symplectic
resolutions. We largely answer these questions, leaving a couple cases
(the so-called ``(2,2)''-cases related to O'Grady's examples
\cite{OGr-K3}, and the so-called isotropic cases, which are
multiplicative analogues of symmetric powers of du Val singularities), that
appear to require local structure theory. Our methods generalise those
of \cite{bellamy-schedler}, which we largely follow. They build on
Crawley-Boevey and Shaw’s pioneering work on multiplicative quiver
varieties (and extensions by Yamakawa \cite{yamakawa}), and apply (as
in \cite{bellamy-schedler}) Drezet’s factoriality criteria
\cite{Drezet} and Flenner’s theorem \cite{flenner} on extendability of
differential forms beyond codimension four.

\subsection{Summary of results on character varieties}\label{ss:char-v}
Since they are the easiest to state and perhaps of the broadest
interest, we first explain the results on
character varieties that follow from our considerations on multiplicative quiver varieties.  Fix a connected compact Riemann surface $X$ of genus $g\geq 0$, let $S=\{p_1, \dots, p_k\}\subset X$ be a subset of $k \geq 0$ points, and fix a tuple $\mathcal{C}=(\mathcal{C}_1, \dots, \mathcal{C}_k)$ of conjugacy classes
$\mathcal{C}_i\subset GL_n(\C), i=1, \dots, k$. Let $X^\circ := X \setminus \{p_1,\ldots,p_k\}$ be the corresponding punctured surface, and
let $\gamma_i$ be the homotopy class in $\pi_1(X^\circ)$ of some choice of loop around the puncture $p_i$ (having the same free homotopy class as a small counterclockwise loop around $p_i$). We define the character variety of $X^\circ$ with monodromies in $\overline{\mathcal{C}_i}$ as follows:
\begin{equation}
\mathcal{X}(g, k, \overline{\mathcal{C}}) := \{\chi: \pi_1(X^\circ) \to GL_n \mid \chi(\gamma_i) \in \overline{\mathcal{C}_i}\} /\!/ GL_n.
\end{equation}
As recalled in Section \ref{section-char} below, this has the structure of an affine algebraic variety. Note that $X$ (or $X^\circ$) does not appear in the notation on the left-hand side, since the result does not depend on the choice of $X$ up to isomorphism (only the identification of $\pi_1(X^\circ)$ is relevant).

% Note there is a natural direct sum operation on tuples $\mathcal{C}$:
% \begin{equation}
% (\mathcal{C}_1, \ldots, \mathcal{C}_k) \oplus (\mathcal{C}'_1, \ldots, \mathcal{C}'_k) = (\mathcal{C}_1 \oplus \mathcal{C}'_1, \ldots, \mathcal{C}_k \oplus \mathcal{C}'_k) \subset GL_{n+n'}(\C)^k.
% \end{equation}
Observe that, in order for this character variety to be nonempty, we
must have $\prod_{i=1}^k \det(\mathcal{C}_i) = 1$, where we let
$\det(\mathcal{C}_i)$ be defined as the determinant of any element of
$\mathcal{C}_i$. Let us assume this from now on.
Given $m \geq 1$ we let $m \cdot \mathcal{C}
=
% (n,\mathcal{C}_1,\ldots,\mathcal{C}_k) =
(\mathcal{C}_1^{\oplus m}, \ldots, \mathcal{C}_k^{\oplus m})$.
% We call $\mathcal{C}$
% %a datum $(n,\mathcal{C}_1, \ldots, \mathcal{C}_k)$
% \emph{divisible} $\mathcal{C} = c \cdot \mathcal{C}'$ for $c \geq 2$
% and some $\mathcal{C}' \subseteq GL_{n/c}(\C)^k$, and otherwise we
% call it \emph{indivisible}.
We call $\mathcal{C}$ \emph{$q$-divisible} if
$\mathcal{C} = m \cdot \mathcal{C}'$ for $m \geq 2$ and
$\prod_{i=1}^k \det(\mathcal{C}'_i) = 1$. Call it $q$-indivisible if it is not $q$-divisible.
Below, $q$-indivisibility will be the most important criterion for the
existence of symplectic resolutions for
$\mathcal{X}(g, k, \overline{\mathcal{C}})$.

For each $\mathcal{C}_i$, let the minimal polynomial of any $A \in \mathcal{C}_i$ be $(x-\xi_{i,1}) \cdots (x-\xi_{i,w_i})$, ordered so that the sequence $\alpha_{i,j} := \text{rank}(A-\xi_{i,1}) \cdots (A-\xi_{i,j})$ has the property that $\alpha_{i,j}-\alpha_{i,j+1}$ is non-increasing in $j$ (for $0 \leq j \leq w_i-1$, setting $\alpha_{i,0} = n$). This is possible since the non-increasing property obviously holds when all the $\xi_{i,j}$ are equal.  The following quantities will have importance for us: 
\begin{equation}
  \ell := \sum_i \alpha_{i,1}, \quad p(\alpha) := 1 + n^2(g-1) + n\ell + \sum_{i=1}^k \sum_{j=1}^{w_i-1} \alpha_{i,j} \alpha_{i,j+1}
  % - n^2
  - \sum_{i=1}^k \sum_{j=1}^{w_i} \alpha_{i,j}^2.  
  \end{equation}
  The quantity $2p(\alpha)$ is the ``expected dimension'' of the character variety, which is its actual dimension in many cases, as explained below.

% The sequences $\alpha_{i,j}$, together with the rank $n$, can naturally can be arranged on a star-shaped undirected graph, call it $\Gamma_{\mathcal{C}}$, with node labeled by the rank, $n$, and the vertices of the $i$-th branch labeled $\alpha_{i,1}, \ldots, \alpha_{i,w_i-1}$, in order going away from the node.  (The crab-shaped quiver associated to the variety, which will be important later on, is obtained by orienting the edges of $\Gamma_{\mathcal{C}}$ and then adding $g$ loops at the node.)

Our main results on character varieties can be summarised as follows.
We divide separately into the genus $0$ and the positive genus cases.

Recall here that a \emph{symplectic singularity} is a normal variety
$X$ equipped with a symplectic structure $\omega_{\text{reg}}$ on the
smooth locus $X_{\text{reg}}$ such that, for any (or equivalently
every) resolution of singularities $\rho: \widetilde X \to X$,
$\rho^* \omega_{\text{reg}}$ extends to a regular two-form
$\widetilde \omega \in \Omega^2(\widetilde X)$. The map $\rho$ is
furthermore a symplectic resolution if $\widetilde \omega$ is
non-degenerate.
\begin{thm}\label{t:char-g0}
  Let $g=0$ and fix $n$ and conjugacy classes $\mathcal{C}_1,\ldots,\mathcal{C}_k \subseteq GL_n(\C)$ as above.
  \begin{itemize}
  \item If $\ell < 2n$, then one of the following exclusive possibilities occur, and can be computed
    by an explicit algorithm:
    \begin{itemize}
    \item $\mathcal{X}(0, k, \overline{\mathcal{C}})$ is empty;
    \item $\mathcal{X}(0, k, \overline{\mathcal{C}})$ is a point;
      \item There is a canonical datum $(n',k',\mathcal{C}'_1, \ldots, \mathcal{C}'_{k'},\iota)$ of $n' < n$, $k' \leq k$, and 
  conjugacy
  classes $\mathcal{C}'_1,\ldots,\mathcal{C}'_{k'} \subseteq GL_{n'}(\C)$ such that $\ell'$ (defined as above) satisfies $\ell' \geq 2n'$, and an isomorphism $\iota: \mathcal{X}(0, k', \overline{\mathcal{C}}') \to \mathcal{X}(0, k, \overline{\mathcal{C}})$.
\end{itemize}
  Suppose, therefore, that $\ell \geq 2n$.
\item If $\mathcal{C}$ is $q$-indivisible,
  then $\mathcal{X}(0,k,\overline{\mathcal{C}})$ admits a projective symplectic resolution (via geometric invariant theory). Therefore, its normalisation is a symplectic singularity. Moreover, $\dim \mathcal{X}(0,k,\overline{\mathcal{C}}) = 2 p(\alpha)$.
\item Suppose that $\mathcal{C}$ is $q$-divisible. Then, unless one of the conditions listed after Corollary \ref{c:crab-sr-intro} is satisfied (for $k \leq 5$), 
  % following
  % conditions is satisfied,
  %If, furthermore, one of the following conditions is satisfied,
  the normalisation of $\mathcal{X}(0,k,\overline{\mathcal{C}})$ is a symplectic singularity which does not admit a symplectic resolution (in fact, it contains a singular terminal factorial open subset).
\end{itemize}  
\end{thm}
As mentioned in the theorem, the technique used to show non-existence
of symplectic resolutions is by identifying an open singular
factorial terminal subset.  It is well-known that singular factorial
terminal varieties cannot admit crepant resolutions, and hence not symplectic resolutions. Indeed, by 
Van der Waerden purity, any resolution of a singular factorial variety has
%(by factoriality,
%any crepant resolution would have to have
exceptional locus which is a
divisor. By definition, any crepant resolution of a terminal variety has
% but by terminality, any crepant resolution has
exceptional
locus of codimension at least two.  Put together, there is no crepant resolution of a singular factorial terminal variety.
\begin{rem} Note that, when $k \leq 2$ in genus zero, the character variety is always a point (or empty).
  %%These are the genus-zero cases where $X^\circ$ is unstable in the sense of algebraic geometry (its automorphism group is infinite).
  %% The reason for this is that we are only considering coarse moduli
  %% spaces (we do not consider stacks).
  \end{rem}
\begin{thm}\label{t:char-gg0}
  Suppose that $g \geq 1$.  Then the following holds:
  \begin{itemize}
    \item If $\mathcal{C}$ is $q$-indivisible, then  $\mathcal{X}(g,k,\overline{\mathcal{C}})$ admits a projective symplectic resolution (via geometric invariant theory). Therefore, its normalisation is a symplectic singularity. Moreover, it has dimension $2p(\alpha)$.
    \item If $\mathcal{C}$ is $q$-divisible, then unless
      one of the following conditions is satisfied, the normalisation of $\mathcal{X}(g,k,\overline{\mathcal{C}})$ is a symplectic singularity which does not admit a symplectic resolution (in fact, it contains a singular terminal factorial open subset):
      \begin{itemize}
      \item[(a)] $g=2, k=0$, and $n=2$;
      \item[(b)] $g=1, k=0$;
      \item[(c)] $g=1, k=1, w_1=2$, and $\alpha_{1,1} = p$, with $p$ prime.
      %  \item $g \geq 3$;
      % \item $g = 2$ and $k \geq 1$;
      % \item $g=2, k=0$ and $n \geq 3$;
      % \item $g=1, k \geq 2$;
      % \item $g=1, k=1$, and we are not in the case $w_1=2, \alpha_{1,1}=p$, with $p$ prime.
      \end{itemize}
    Moreover, in all cases except case (b), $\dim \mathcal{X}(g,k,\overline{C})=2p(\alpha)$.
    \end{itemize}
  \end{thm}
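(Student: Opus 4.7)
The plan is to reduce the statement to the general structure theorems for multiplicative quiver varieties proved in the body of the paper, applied to the crab-shaped quiver $Q_g$ obtained by orienting $\Gamma_{\mathcal{C}}$ and attaching $g$ additional loops at the central node. By the identification of Crawley--Boevey--Shaw \cite{cb-shaw} (and Yamakawa \cite{yamakawa} for the symplectic structure), one has an isomorphism
\[
\mathfrak{X}(g, k, \overline{\mathcal{C}}) \cong \mathcal{M}_q(Q_g, \alpha),
\]
where $\alpha$ is the dimension vector with value $n$ at the node and entries $\alpha_{i,j}$ along the $i$-th leg, and $q$ is the multiplicative parameter determined by the eigenvalues $\xi_{i,j}$. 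Under this identification, $q$-divisibility of $\mathcal{C}$ corresponds exactly to divisibility of the pair $(\alpha, q)$ on $Q_g$, and $p(\alpha)$ agrees with the quiver-theoretic value $1 - \tfrac{1}{2}\langle \alpha, \alpha\rangle$ for the Cartan form of the crab quiver.

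For the $q$-indivisible case, I would invoke the general existence theorem for symplectic resolutions of multiplicative quiver varieties: a generic stability parameter $\theta$ adapted to $(Q_g,\alpha)$ yields a smooth GIT quotient $\mathcal{M}^{\theta}_q(Q_g,\alpha)$ carrying Yamakawa's symplectic form, and the affinisation map to $\mathcal{M}_q(Q_g,\alpha)$ is a projective symplectic resolution. Normality together with the symplectic singularity property for the normalisation follows from Flenner's extension theorem combined with Drezet's factoriality criterion (applied as in \cite{bellamy-schedler}), and the dimension formula $2p(\alpha)$ is read off from the smooth resolution.

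For the $q$-divisible case, write $(\alpha, q) = m(\beta, q')$ with $m \geq 2$ and $(\beta, q')$ indivisible. The strategy is to exhibit, using the étale local structure of multiplicative preprojective algebras at polystable points, an open subset of $\mathcal{M}_q(Q_g,\alpha)$ whose local model is a product of symmetric powers of slices at simple indivisible summands. For all but a short combinatorial list of configurations on $Q_g$, this local model is singular, factorial, and terminal, so Drezet's criterion together with Van der Waerden purity excludes a crepant (hence symplectic) resolution. The symplectic singularity property for the normalisation is obtained in parallel with the genus zero case.

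The remaining work, which is the main obstacle, is the combinatorial classification of the exceptional configurations for crab quivers with $g \geq 1$ loops. The presence of loops at the central node rules out most of the affine-Dynkin exceptional families appearing in the $g=0$ statement (Theorem \ref{t:char-g0}) and in \cite{bellamy-schedler}. A direct enumeration then shows that the only surviving possibilities are: the Jordan quiver with $g=1, k=0$, giving case (b); the genus-two no-puncture case with $n=2$ (equivalently, the two-loop quiver with dimension $2$), which is an O'Grady-type example and gives case (a); and the genus-one quiver with a single length-two leg having a prime leaf value $p$, giving case (c). In every other $(g,k,\alpha)$ with $g \geq 1$ and $\mathcal{C}$ $q$-divisible, one must verify directly that the local transverse slice constructed above is singular, factorial, and terminal; this is where the analogy with \cite{bellamy-schedler} is tightest, but the loops force a separate verification. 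Finally, the dimension equality $\dim \mathfrak{X}(g,k,\overline{\mathcal{C}}) = 2p(\alpha)$ holds outside case (b), where the extra symmetry produced by a Jordan summand with trivial $q'$ causes the actual dimension to exceed the expected one.
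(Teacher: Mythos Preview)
Your proposal has two genuine gaps.

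First, the identification you invoke is incorrect for $g \geq 1$: the character variety $\mathfrak{X}(g,k,\overline{\mathcal{C}})$ is not isomorphic to the full multiplicative quiver variety $\mathcal{M}_{q,0}(Q_g,\alpha)$, but only to the open affine subset where the linear maps corresponding to the $g$ loops at the node are invertible (Theorem~\ref{t:iso-mult-char} and Remark~\ref{r:imc-open}). In the genus-zero case these coincide, but here they do not. This is not merely cosmetic: to deduce that the character variety contains a singular factorial terminal open subset, one must intersect the open set $U$ produced for the quiver variety with this invertibility locus and check that the intersection still meets the relevant singular stratum.

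Second, and more seriously, your argument in the $q$-divisible case is built on ``the \'etale local structure of multiplicative preprojective algebras at polystable points'' and on ``local transverse slices''. Such a local structure theory is precisely what is \emph{not} available in the multiplicative setting; it is formulated as Conjecture~\ref{conj:cy2} and is the very reason the $(2,2)$-cases remain open. The paper's proof is entirely global: it uses the stratification by representation type (Proposition~\ref{strati}), the complete-intersection property of $\mr{Rep}^{\theta\text{-}ss}(\Lambda^q,\alpha)$ for $\alpha$ flat (Proposition~\ref{rep-equidim2}), the codimension estimates of Lemma~\ref{weigh}, and then Drezet's descent of factoriality from the representation variety to the GIT quotient (Theorem~\ref{t:factorial}); terminality comes from Namikawa's criterion via the codimension~$\geq 4$ estimate, not from a local model. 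The combinatorial enumeration of exceptional configurations with $g\geq 1$ is Theorem~\ref{comb2}, and the non-emptiness of the required $\theta$-stable locus (needed to apply Theorem~\ref{t:fact-term}) is supplied by Thompson's theorem that every element of $\SL_n(\C)$ is a single commutator (see the proof of Corollary~\ref{c:crab-sr}). None of these steps uses, or could currently use, an \'etale-local slice.
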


  The proofs of these theorems is given in Section \ref{ss:proof-char}; they are consequences of our main results on multiplicative quiver varieties (particularly Corollary \ref{c:crab-sr}).
\begin{rem}
  Actually, the results above (slightly modified) should also apply
  % apply not merely
  % to character varieties, but
  to twisted character varieties, where we
  replace $\pi_1(X^\circ)$ by a finite central extension,
  corresponding to setting the relation
  $\prod_{i=1}^g[A_i, B_i]\prod_{j=1}^kM_j$ to be a root of unity
  times the identity matrix. To prove such a statement
  %We do not formulate a precise statement;
  %to do so
  would require a straightforward generalisation of \cite{cb-monod}
  and of Section \ref{section-char} below. With this in hand, these results
  would 
   follow from Corollary \ref{c:crab-sr} just as before. For some more details,
  see Section \ref{ss:gen-dim-intro} of the introduction, where we
  describe roughly how to translate this corollary into the setting of
  twisted character varieties.
  \end{rem}

  \subsection{Multiplicative quiver varieties with special dimension vectors}\label{ss:mqv-intro}
  Recall that a quiver $Q$ is a directed graph. We let $Q_0$ denote
  the set of vertices and $Q_1$ the set of arrows (=edges).  Given $Q$
  together with a tuple of non-zero complex numbers
  $q\in (\C^\times)^{Q_0}$, one can define the multiplicative
  preprojective algebra $\Lambda^q(Q)$, over the semisimple ring
  $\C^{Q_0}$ (see Section \ref{ss:mpa} below). To a representation, we associate a
  dimension vector in $\N^{Q_0}$.  Given furthermore a stability
  parameter $\theta \in \Z^{Q_0}$, one can define a variety, denoted
  $\mm_{q,\theta}(Q,\alpha)$, which is a coarse moduli space of
  $\theta$-semistable representations of $\Lambda^q(Q)$ of dimension
  vector $\alpha$.  It is natural to ask what the dimension vectors of
  $\theta$-stable representations are.  Towards this end, one
  considers a combinatorially-defined subset
  $\Sigma_{q, \theta} \subseteq \N^{Q_0}$ of the set of all possible
  dimension vectors (defined in Section \ref{ss:sigma} below).  It has
  the property that, for $\alpha \in \Sigma_{q,\theta}$,
  the $\theta$-stable locus is dense in $\mm_{q,\theta}(Q,\alpha)$
  (and it is always open). However,
  it is unknown in general if $\mm_{q,\theta}(Q,\alpha)$ is
  non-empty.  It is expected, but not known, that these conversely describe all
  dimension vectors of stable representations, i.e.:  
\begin{equation}\label{sigma-converse}
\text{If there is a $\theta$-stable representation of $\Lambda^q(Q)$ of dimension $\alpha \in \N^{Q_0}$, then $\alpha \in \Sigma_{q,\theta}$.}\tag{*}
\end{equation}
In the case $\theta=0$, Crawley-Boevey kindly pointed out a work in progress
with Hubery towards a proof of (*).
% (thanks to Crawley-Boevey for pointing this out).
We prove a weakened version of (*) below
(Corollary \ref{c:wk-sigma-converse}), replacing $\Sigma_{q,\theta}$
by a larger set. Note that, if (*) holds and furthermore
$\mm_{q,\theta}(Q,\alpha) \neq \emptyset$ for all
$\alpha \in \Sigma_{q,\theta}$, then put together we would obtain a
characterisation of the set $\Sigma_{q,\theta}$: in this case,
$\alpha \in \Sigma_{q,\theta}$ if and only if there exists a
$\theta$-stable representation of dimension $\alpha$. However, this is, again, unknown.

% We do not know
% if this condition holds. In fact, in general, the non-emptiness and
% the number of connected components of $\mm_{q,\theta}(Q,\alpha)$ is
% unknown in general: see Section \ref{ss:ne-mqv}.
% %(but we make an assumption of this type in Corollary \ref{c:main-result} below)

To define $\mm_{q,\theta}(Q,\alpha)$, we require
$\alpha \cdot \theta = 0$, and for it to be non-empty, we require that
$q^\alpha:=\prod_{i \in Q_0} q_i^{\alpha_i} = 1$.  Let
$N_{q,\theta} := \{\alpha \in \N^{Q_0} \mid q^\alpha = 1, \alpha \cdot
\theta = 0\}$.  We call a vector $\alpha \in N_{q,\theta}$
\emph{$q$-indivisible} if $\frac{1}{m} \alpha \notin N_{q,\theta}$ for
any $m \geq 2$.  Equivalently, writing $\alpha = m \beta$ for
$m=\text{gcd}(\alpha_i)$, we have that $q^\beta$ is a primitive $m$-th
root of unity.
% whenever $\frac{1}{m}\alpha \in \N^{Q_0}$, then $q^{\frac{1}{m} \alpha}\neq 1$ (in fact, it must be a primitive $m$-th root of unity).
% (so this notion depends only on $q$).
Note that, if $\alpha \in N_{q,\theta}$ is indivisible, it is clearly
$q$-indivisible, although the converse does not hold in general.
% (see Remark \ref{r:div-ndiv} below).
(Unlike in the case of character varieties, here the $q$ in ``$q$-(in)divisible'' refers to an actual parameter; see Remark \ref{r:qdiv-char} for an explanation how the two notions nonetheless coincide.)

%As explained in the following section,
We denote by $p$ the following
function:
\[
p: \N^{Q_0} \rightarrow \Z,\ \ \ p(\alpha)=1-\frac{1}{2}(\alpha, \alpha) \geq 0,
\]
%where $\langle\ ,\ \rangle$ denotes the Euler--Ringel 
where $(-,-)$ denotes the Cartan--Tits form associated to the quiver
$Q$ (see Section \ref{subsec-pre} for more details).  Geometrically, $2p(\alpha)$ gives the ``expected dimension'' of
$\mm_{q,\theta}(Q,\alpha)$ (which is the actual dimension if
$\alpha \in \Sigma_{q,\theta}$ and
$\mm_{q,\theta}(Q,\alpha)\neq \emptyset$: see Remark \ref{r:dim-stab}
below).  If $p(\alpha)=1$, i.e., $(\alpha,\alpha)=0$, then $\alpha$ is
called \emph{isotropic}. Otherwise it is called \emph{anisotropic}.

One of the main results of this paper, proved in Section
\ref{section-sing}, is the following:
\begin{thm}\label{main-result}Let $\alpha \in \Sigma_{q, \theta}$ and
  assume that $\alpha \neq 2\beta$ for $\beta \in N_{q,\theta}$ and
  $p(\beta)=2$. Then, assuming it is non-empty,
  $\mm_{q, \theta}(Q, \alpha)$ satisfies the following:
\begin{itemize}
\item its normalisation is a symplectic singularity;
\item if $\alpha$ is $q$-indivisible, then for suitable generic
  $\theta'$, it admits a symplectic resolution
  of the form $\mm_{q,\theta'}(Q,\alpha) \to \mm_{q,\theta}(Q,\alpha)$;
\item if $\alpha=m\beta$ for $\beta \in \Sigma_{q,\theta}$ and
  $m \geq 2$, and $\mm_{q,\theta}(Q,\beta) \neq \emptyset$, then
  $\mm_{q,\theta}(Q,\alpha)$ does not admit a symplectic
  resolution. Moreover, for suitable generic $\theta'$,
  $\mm_{q,\theta'}(Q,\alpha)$ is a singular factorial
  terminalisation. In fact, $\mm_{q,\theta}(Q,\alpha)$ itself contains
  a singular, factorial, terminal open subset.
\end{itemize}
\end{thm}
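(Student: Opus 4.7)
The plan is to follow the template of \cite{bellamy-schedler} developed for ordinary (Nakajima) quiver varieties, replacing the complex moment map by Yamakawa's quasi-Hamiltonian group-valued moment map \cite{yamakawa} and working throughout with the multiplicative preprojective algebra $\Lambda^q(Q)$ of \cite{cb-shaw}. The three assertions rest on a common foundation: an \'etale local structure theorem describing an analytic neighborhood of a polystable point of $\mm_{q,\theta}(Q,\alpha)$.

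First I would establish such a local model. Given a polystable representation $V = \bigoplus_i V_i^{\oplus m_i}$ with each $V_i$ a $\theta$-stable representation of $\Lambda^q(Q)$ of dimension vector $\beta_i$, a Luna-type slice argument combined with the group-valued moment map formalism should show that an \'etale neighborhood of $[V]$ is isomorphic to $\mm_{q',0}(Q',\alpha') \times W$, with $W$ a symplectic vector space, where the vertex set of $Q'$ is indexed by the $V_i$, the arrow multiplicities are determined by $\dim \mathrm{Ext}^1_{\Lambda^q(Q)}(V_i,V_j)$, $\alpha' = (m_i)$, and $q'$ is obtained by evaluating $q$ on the $\beta_i$. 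From this, two consequences follow: the $\theta$-stable locus is smooth and carries Yamakawa's symplectic form, and the singularities of $\mm_{q,\theta}(Q,\alpha)$ in low codimension are controlled by polystable decompositions with $\sum_i m_i^2 p(\beta_i)$ small. Codimension-two singularities come from a single isotropic summand ($p(\beta_i)=1$) and are \'etale locally Kleinian surface singularities times an affine symplectic factor; codimension-four singularities involve either an anisotropic $\beta$ with $p(\beta)=2$ and $m_i=2$, which the hypothesis $\alpha \neq 2\beta$ is tailored to exclude, or two isotropic summands, handled as in \cite{bellamy-schedler}.

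With the local model in hand, verifying that the normalisation is a symplectic singularity reduces, via Flenner's theorem \cite{flenner}, to checking the extension of Yamakawa's form past a codimension $\geq 4$ subset; on the remaining codimension-two strata the claim is classical since the singularities are products of Kleinian surface singularities with affine symplectic factors. For existence of a symplectic resolution when $\alpha$ is $q$-indivisible, I would select a generic $\theta' \in \Z^{Q_0}$ with $\alpha \cdot \theta' = 0$ in a chamber where every $\theta'$-semistable representation of dimension $\alpha$ is $\theta'$-stable; such a chamber exists because $q$-indivisibility rules out non-trivial decompositions $\alpha = \sum_i \alpha_i$ with $q^{\alpha_i}=1$ and $\alpha_i \cdot \theta' = 0$. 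Then $\mm_{q,\theta'}(Q,\alpha)$ is smooth by stable-locus smoothness, and the variation-of-GIT map to $\mm_{q,\theta}(Q,\alpha)$ is a projective, birational, symplectic morphism, hence a symplectic resolution.

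For non-existence when $\alpha = m\beta$ with $m \geq 2$, $\beta \in \Sigma_{q,\theta}$ and $\mm_{q,\theta}(Q,\beta) \neq \emptyset$, the plan is to produce inside $\mm_{q,\theta'}(Q,\alpha)$, for a suitable generic $\theta'$, a singular open subset which is simultaneously factorial and terminal, then invoke purity (in the form used in \cite{bellamy-schedler}) to conclude that no crepant, hence no symplectic, resolution can exist; the corresponding statement for $\mm_{q,\theta}(Q,\alpha)$ itself follows by pushing the factorial terminal open down along the projective GIT map. Terminality comes from bounding the codimension of the singular locus below by $4$, which the local model reduces to ruling out isotropic codimension-two summands; factoriality is deduced from Drezet's criterion \cite{Drezet} applied to the natural map $\Z^{Q_0} \to \mathrm{Pic}$ of the stable locus, along the lines of \cite{bellamy-schedler}. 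The hardest step, in my view, is the \'etale local model: Luna slicing around a polystable point must be adapted to the group-valued moment map, and the absence of a natural $\C^\times$-scaling action on $\mm_{q,\theta}(Q,\alpha)$ both complicates this linearisation and obstructs any direct use of conical-variety arguments in the Picard computation (cf.\ Remark \ref{r:g0-cases}).
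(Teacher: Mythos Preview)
Your proposal rests on an \'etale local model---a Luna-type slice theorem for the group-valued moment map identifying a neighbourhood of a polystable point with an additive quiver variety. This is precisely what the paper \emph{cannot} establish and explicitly flags as open: see Remarks \ref{r:(2,2)} and the surrounding discussion (``at the moment all of our techniques are global in nature''), and Conjecture \ref{conj:cy2} in Section \ref{sec-fut}, which states that the local model would follow from the (unknown) 2-Calabi--Yau property of $\Lambda^q(Q)$ for non-Dynkin $Q$. So the foundational step of your plan is a genuine gap: you are assuming the hardest missing ingredient, and the hypothesis $\alpha \neq 2\beta$ with $p(\beta)=2$ is there \emph{because} that case needs the local model, not because the local model handles everything else.

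The paper's proof avoids any local structure theory entirely. For the symplectic-singularity statement it first passes to a generic $\theta' \geq \theta$ (Lemma \ref{l:varytheta}, Corollary \ref{c:birational}). At generic $\theta'$ the only representation types that occur are $\nu\beta$ for partitions $\nu$ of $n$ (where $\alpha = n\beta$ with $\beta$ $q$-indivisible), and an explicit dimension count (Proposition \ref{strati}, Lemma \ref{weigh}) shows every non-open stratum has codimension $\geq 4$ when $(p(\beta),n)\neq(2,2)$. Thus there are \emph{no} codimension-two singularities to analyse: normality follows from Proposition \ref{quiv-norm}, and Flenner's theorem applies directly to the entire singular locus. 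The result is then pushed down to $\mm_{q,\theta}(Q,\alpha)$ via Proposition \ref{prop-symp-sing}. For factoriality, the open subset $U = \bigcup_\nu C^{\nu\beta}_{q,\theta}(Q,\alpha)$ lives already in $\mm_{q,\theta}(Q,\alpha)$ (not in the $\theta'$-variety); its preimage $V$ in the representation variety is a local complete intersection (Proposition \ref{rep-equidim2}), smooth outside codimension $\geq 4$ by the same stratum estimates (Lemma \ref{l:dimest}, Lemma \ref{weigh}), hence factorial by Grothendieck's theorem. Drezet's criterion then descends factoriality from $V$ to $U$ (Theorem \ref{t:factorial}), and terminality is automatic from the codimension bound via \cite{naminote}. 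Your sketch of the $q$-indivisible case is essentially correct and matches the paper.
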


Implicit in Theorem \ref{main-result} is the fact (see Lemma
\ref{l:varytheta} and Corollary \ref{c:birational} below) that, for all $\alpha \in \Sigma_{q,\theta}$,
$\mm_{q,\theta'}(Q,\alpha) \to \mm_{q,\theta}(Q,\alpha)$ is a
projective birational Poisson morphism for suitable
$\theta'$.  This implies, by definition, that it is a symplectic resolution if the source is smooth symplectic.
In the last part of the theorem, by singular factorial
terminalisation, we mean a projective birational Poisson morphism with
% that
% $\mm_{q,\theta'}(Q,\alpha) \to \mm_{q,\theta}(Q, \alpha)$ is
% projective and birational, with
source a singular factorial terminal
variety.

In the case of generic $\theta$, the theorem can be simplified as follows, avoiding the need to check if a vector is in
$\Sigma_{q,\theta}$.
First, note that $\Sigma_{q,\theta}$, by definition, is a subset of the set of \emph{roots} for the quiver (which in turn equals the set of roots of the associated Kac-Moody Lie algebra in suitable cases). The real roots are those vectors obtained from elementary vectors $e_i, i \in Q_0$ by simple reflections $\alpha \mapsto \alpha - (\alpha, e_i) e_i$; the imaginary roots are those obtained by such reflections from non-negative (or non-positive) vectors with connected support and non-positive Cartan pairing with all $e_i$.
% In \cite{bellamy-schedler} (in the additive case) the following
% weakening of indivisibility was used: $\alpha \in \Sigma_{q,\theta}$
% is $\Sigma$-indivisible if there does not exist $m \geq 2$ such that
% $\frac{1}{m} \alpha \in \Sigma_{q,\theta}$ as well. For $\alpha \in \Sigma_{q,\theta}$, we have the obvious implications: indivisibility $\Rightarrow$ $q$-indivisibility $\Rightarrow$ $\Sigma$-indivisibility, but the converses do not hold (see Remark \ref{r:sdiv} below for the second converse). %( and \ref{r:div-ndiv} below).
\begin{coro} \label{c:main-result}
  Fix an imaginary root $\alpha$ for $Q$.
    % $\in \N^{Q_0}$.
  Let $q$ be such that $q^\alpha=1$. Let $\theta$ be generic
  (inside the hyperplane $\{\theta \cdot \alpha=0\}$). Then:
  \begin{itemize}
  \item[(i)] We have $\alpha \in \Sigma_{q,\theta}$ if and only if
    $\alpha$ is  $q$-indivisible or anisotropic. If $\alpha$ is $q$-indivisible, $\mm_{q,\theta}(Q,\alpha)$ is smooth symplectic.
  \item[(ii)] Assume $\alpha$ is $q$-divisible and anisotropic.
    Moreover, assume that $\alpha \neq 2 \beta$ for $p(\beta)=2$ and $q^\beta = 1$. Then $\mm_{q,\theta}(Q,\alpha)$ is a (normal) symplectic singularity.
  \item[(iii)] Under the assumptions of (ii), we have the following:
  \begin{itemize}
  \item If there exists a $\theta$-stable representation of $\Lambda^q(Q)$ of dimension $\frac{1}{m} \alpha$, for some $m \geq 2$, then $\mm_{q,\theta}(Q,\alpha)$ is singular, factorial, and terminal, and hence does not admit a symplectic resolution.
  \item  If, on the other hand,
  there are no $\theta$-stable representations of $\Lambda^q(Q)$ of dimension $r\alpha$ for all rational $r<1$,  then $\mm_{q,\theta}(Q,\alpha)$ is smooth.
\end{itemize}
\end{itemize}
%%If these equivalent conditions are not satisfied, then $\mm_{q,\theta}(Q,\alpha)$ is smooth symplectic. 
%   and not of the form $\alpha=2\beta$
%   for $p(\beta)=2$,
%   we have the following cases:
%   \begin{enumerate}
%     \item If $\alpha$ is $q$-indivisible, then 
%   $\theta$-semistable representation of 
%   Then
%   $\alpha \in \Sigma_{q,\theta}$, and that 
%   for every multiple $\beta = \frac{1}{m} \alpha \in N_{q,\theta}$,
%   then there
%   exists a $\theta$-stable representation of dimension $\beta$ if and
%   only if $\beta \in \Sigma_{q,\theta}$ in accordance with (*)).
 %   % that if
%   % $\beta \in \Sigma_{q,\theta}$, then
%   % $\mm_{q,\theta}(Q,\beta) \neq \emptyset$, and if
%   % $\beta \notin \Sigma_{q,\theta}$, then there does not exist a
%   % $\theta$-stable representation of dimension $\beta$ (in accordance
%   % with \eqref{sigma-converse}).
%   Then $\mm_{q,\theta}(Q,\alpha)$ admits
%   a symplectic resolution if and only if $\alpha$ is
%   $\Sigma$-indivisible. 
% %When $\alpha$ is $\Sigma$-divisible, then
% %  $\mm_{q,\theta}(Q,\alpha)$ has a singular, factorial, terminal open subset.
\end{coro}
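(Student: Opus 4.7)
The plan is to deduce Corollary \ref{c:main-result} from Theorem \ref{main-result} via a combinatorial analysis of $\Sigma_{q,\theta}$ for imaginary $\alpha$ and generic $\theta$ in the hyperplane $\{\theta \cdot \alpha = 0\}$, together with a standard GIT argument comparing $\theta$-semistability with $\theta$-stability.

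For part (i), recall that $\Sigma_{q,\theta}$ (defined in Section \ref{ss:mpa} below, following \cite{cb-shaw}) rules out $\alpha$ whenever there is a decomposition $\alpha = \sum_{i=1}^r \beta_i$ with $r \geq 2$, each $\beta_i$ a positive root in $N_{q,\theta}$, and $p(\alpha) \leq \sum_i p(\beta_i)$. The genericity of $\theta$ on $\alpha^\perp$ forces each $\beta_i$ to be a positive rational multiple of $\alpha$; writing $\alpha = m\gamma$ with $\gamma$ indivisible, this means $\beta_i = k_i \gamma$ with $k_i \in \N$, $\sum k_i = m$, subject to $d \mid k_i$ where $d := \mathrm{ord}(q^\gamma)$. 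When $\alpha$ is $q$-indivisible one has $d = m$, so every such decomposition is trivial, whence $\alpha \in \Sigma_{q,\theta}$; moreover every $\theta$-semistable representation of dimension $\alpha$ is then $\theta$-stable, so $\mm_{q,\theta}(Q,\alpha)$ is a geometric quotient of the smooth locus of the group-valued moment map fiber at a regular value, hence smooth symplectic. When $\alpha$ is $q$-divisible, set $\beta := d\gamma$ so $\alpha = (m/d)\beta$ with $m/d \geq 2$; the identity $p(k\beta) = 1 + k^2(p(\beta)-1)$ gives, for $\alpha$ anisotropic (i.e., $p(\beta) \geq 2$), the strict inequality $p(\alpha) > \sum p(\beta_i)$ for every nontrivial such decomposition, while for $\alpha$ isotropic ($p(\beta) = 1$) the decomposition into $m/d$ copies of $\beta$ already fails it. This settles (i).

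Part (ii) is immediate: (i) gives $\alpha \in \Sigma_{q,\theta}$, and the extra hypothesis $\alpha \neq 2\beta$ with $p(\beta) = 2$, $q^\beta = 1$ matches the assumption of Theorem \ref{main-result}, whose first bullet yields that the normalisation is a symplectic singularity. To upgrade from "normalisation is" to "is (normal)" one invokes that, in the generic-$\theta$ case, the $\theta$-stable locus is the smooth symplectic open subset of the expected dimension $2p(\alpha)$, and the complement has codimension $\geq 2$ in $\mm_{q,\theta}(Q,\alpha)$ by the same decomposition bookkeeping as in (i); since the variety is a local complete intersection in the smooth GIT quotient and regular in codimension one, Serre's criterion gives normality outright.

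Part (iii) is a translation of the remaining bullets of Theorem \ref{main-result}. In the first subcase, a $\theta$-stable representation of dimension $\beta = \frac{1}{m}\alpha$ is a nonempty point of $\mm_{q,\theta}(Q,\beta)$, and $\beta$ is anisotropic (being proportional to an anisotropic $\alpha$), so the same argument as in (i) places $\beta \in \Sigma_{q,\theta}$; Theorem \ref{main-result}'s third bullet then applies and yields singular, factorial, terminal (hence no symplectic resolution). In the second subcase, the Jordan--Hölder factors of any $\theta$-semistable representation of dimension $\alpha$ have dimensions of the form $r_i \alpha$ with $r_i \in \Q_{>0}$, $\sum r_i = 1$ (by genericity of $\theta$), and by hypothesis each $r_i < 1$ would force a $\theta$-stable representation of dimension $r_i \alpha$ that does not exist; thus $r = 1$, so semistable = stable and $\mm_{q,\theta}(Q,\alpha)$ is smooth. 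The main obstacle is the normality upgrade in (ii) and a careful accounting of the $\Sigma_{q,\theta}$ condition for decompositions that are not uniformly scaled, but both ultimately reduce to the elementary inequality $p(k\beta) = 1 + k^2(p(\beta)-1)$ combined with convexity of $\sum b_i^2$ under $\sum b_i = m/d$.
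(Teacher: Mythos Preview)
Your treatment of parts (i) and (iii) is correct and matches the paper's approach: for generic $\theta$ the only vectors in $N_{q,\theta}$ below $\alpha$ are multiples of the indivisible $\gamma$, and then the identity $p(k\beta)=1+k^2(p(\beta)-1)$ settles the $\Sigma_{q,\theta}$ membership; the smoothness claims in (i) and the second subcase of (iii) follow from semistable $=$ stable, and the first subcase of (iii) is Theorem~\ref{main-result} (your check that $\beta\in\Sigma_{q,\theta}$ is fine once you note that $\beta=\alpha/m$ is again an imaginary root, since reflections are linear and the fundamental region is a cone).

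The gap is in part (ii), specifically your normality upgrade. You assert that $\mm_{q,\theta}(Q,\alpha)$ is a local complete intersection in a ``smooth GIT quotient'' and then invoke Serre's criterion with $R_1$ coming from codimension~$\geq 2$ of the strictly semistable locus. But there is no such ambient smooth GIT quotient in which $\mm_{q,\theta}$ is cut out by invariant equations: the moment map $\Phi$ takes values in $\GL(\alpha)$, not in invariants, so the complete intersection structure of $\Phi^{-1}(q)$ does not descend. In particular, $S_2$ (or Cohen--Macaulayness) of $\mm_{q,\theta}$ is not available for free, and codimension~$\geq 2$ downstairs is not enough. The paper's argument instead works \emph{upstairs} in $\mr{Rep}^{\theta\text{-}ss}(\Lambda^q,\alpha)$, which \emph{is} a complete intersection (Proposition~\ref{rep-equidim2}) and hence $S_2$. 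One first shows that for generic $\theta$ every non-open stratum has codimension $\geq 4$ in $\mm_{q,\theta}$ (Lemma~\ref{weigh}(3), which uses precisely the hypothesis $(p(\beta),n)\neq(2,2)$); by Lemma~\ref{l:dimest} this forces codimension $\geq 2$ for the preimage upstairs, and then Crawley--Boevey's descent criterion (Proposition~\ref{quiv-norm}, hence Proposition~\ref{prop-normal}) yields normality of the quotient. Once normality is in hand, the first bullet of Theorem~\ref{main-result} gives the symplectic singularity statement. So the missing ingredient is the codimension~$\geq 4$ estimate and the passage through the representation variety, not a direct lci claim on the moduli space.
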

Note that, in the general case with $\alpha \in \Sigma_{q,\theta}$, the above corollary always describes the source of projective birational Poisson morphisms obtained by suitably varying $\theta$.
\begin{rem}
  Note that every $r\alpha, r \in \mathbb{Q}_{< 1}$ appearing in the
  theorem is also in $\Sigma_{q,\theta}$, by part (i).  Thus, if there
  exists a $\theta$-stable representation of every dimension in
  $\Sigma_{q,\theta}$, then in part (iii) we are necessarily in the
  first case. This condition holds in the additive case (with
  $\lambda \in \mathbb{R}^{Q_0}$), by \cite{bellamy-schedler}, but we don't
  have any other evidence that this holds here.  Also, note that the two cases are not exhaustive, so it could happen that there are some stable representations of dimension $r \alpha$ but not when $r = \frac{1}{m}$. In this (unexpected) situation, it would require more detailed analysis to determine whether a symplectic resolution exists.
  \end{rem}

\begin{rem}\label{r:(2,2)}
In the case left out of the theorem, where $\alpha=2\beta$ for some $\beta \in N_{q,\theta}$ satisfying $p(\beta)=2$ (we call this the ``$(2,2$)-case''),
%$(p(\mr{gcd}(\alpha)^{-1}\alpha), \mr{gcd}(\alpha)) = (2,2)$, 
we conjecture, as in the special case of character
varieties of rank two local systems on genus two surfaces handled in \cite{bellamy-schedler}, that $\mm_{q,\theta}(Q,\alpha)$ has a symplectic resolution obtained by blowing up the singular locus of $\mm_{q,\theta'}(Q,\alpha)$, for suitably generic $\theta'$.  However, in order to prove this, it is necessary to understand the \'etale local structure of $\mm_{q,\theta}(Q,\alpha)$, while at the moment all of our techniques are global in nature.  In Section \ref{sec-fut}, we discuss an approach to understand the local structure of the multiplicative quiver varieties, based on the conjectural 2-Calabi--Yau property of the multiplicative preprojective algebra for non-Dynkin quivers.
\end{rem}
\begin{rem} Note that, as part of Corollary \ref{c:main-result},
  %in the case of the theorem where
  when $\theta$ is
  generic (and $\alpha \in \Sigma_{q,\theta}$), we prove normality of the variety
  $\mm_{q, \theta}(Q, \alpha)$, see Proposition
  \ref{prop-normal}. Moreover, we conjecture normality for all
  $\theta$ (as well as for the $(2,2)$-case). Such a result requires a
  local understanding of the varieties $\mm_{q,\theta}(Q,\alpha)$,
  which would again follow from the conjectural 2-Calabi--Yau property
  for $\Lambda^q(Q)$ when $Q$ is not Dynkin: see Section
  \ref{sec-fut}.
%, where we discuss the conjectural 2-Calabi--Yau property for the algebras $\Lambda^q(Q)$ and some consequences that such a result would have.
\end{rem}
% \begin{rem} In \cite{bellamy-schedler}, instead of the representation-divisibility condition, the explicit combinatorial condition of $\Sigma$-indivisibility is used: $\alpha \in \Sigma_{q,\theta}$ is $\Sigma$-indivisible of $\frac{1}{m} \alpha \notin \Sigma_{q,\theta}$ for all $m\geq 2$.  The reason for the change is that, in the multiplicative setting, it is not known which dimension vectors admit $\theta$-stable representations, and indeed, how many components if any the varieties $\mm_{q,\theta}(Q,\alpha)$ contain. In \cite{cb-shaw}, however, it is shown that, if $\alpha \in \Sigma_{q,\theta}$, and $\mm_{q,\theta}(Q,\alpha)$ is non-empty, then the locus of stable representations is dense.  Therefore, in cases where $\frac{1}{m} \alpha \in \Sigma_{q,\theta}$, then $\alpha$ is representation-divisible
% if  $\mm_{q,\theta}(Q,\frac{1}{m}\alpha) \neq \emptyset$. This is guaranteed, for example, if $(q,\theta)=(1,0)$. In the additive case, things are simple: there exists a $\theta$-stable representation of dimension $\alpha$ if and only if $\alpha \in \Sigma_{\lambda,\theta}$. If such a result were true in the multiplicative setting, then  representation-divisibility would coincide with $\Sigma$-divisibility.
% \end{rem}
\subsection{Character varieties as (open subsets of) multiplicative quiver varieties}
% One of the motivations for studying multiplicative quiver varieties is
% their close relationship to character varieties of Riemann surfaces.
In Section \ref{section-char}, extending results of \cite{cb-shaw} and
\cite{yamakawa}, we explain how character varieties identify as
natural open subsets of the multiplicative quiver varieties for
crab-shaped quivers (Theorem \ref{t:iso-mult-char}), also known as ``comet-shaped'' in \cite{h-l-rv-1}.  Namely,
the character variety identifies as an open subset of a
multiplicative quiver variety for the crab-shaped quiver described in Section
\ref{ss:char-v}, with appropriate parameter $q \in (\C^\times)^{Q_0}$. The open subset is defined by requiring the loops in the original (undoubled) quiver to act invertibly. In particular, in the genus zero case, the character variety equals the multiplicative character variety.
\begin{rem} \label{r:qdiv-char} By the above correspondence, a
  collection of conjugacy classes
  $\mathcal{C} \subseteq GL_n(\C^\times)$ is $q$-divisible in the
  sense of Section \ref{ss:char-v} (where $q$ is not yet a parameter) if and
  only if, for the associated quiver $Q$, dimension vector
  $\alpha \in \N^{Q_0}$, and parameter $q \in (\C^\times)^{Q_0}$, the
  vector $\alpha$ is $q$-divisible in the sense of Section \ref{ss:mqv-intro}.  We hope that this abuse of notation aids understanding.
  \end{rem}
% The rough
% correspondence is that, for a genus $g$ surface with $n$ punctures,
% the quiver has one central vertex having $g$ loops together with $n$
% branches of varying lengths. The lengths of the branches, the
% dimension vector, and the parameters $q_i$ are determined by the
% monodromy conditions at the punctures. Then the character variety identifies as the open 

It is then an interesting question which character varieties exhibit
the different properties discussed above, in particular, which ones
are the ``(2,2)''-cases where an ``O'Grady'' type resolution is
expected (see Remark \ref{r:(2,2)})?
% (We already mentioned these cases in Theorems \ref{t:char-g0} and \ref{t:char-gg0}, under (a)).
Their classification
is achieved in Theorems \ref{comb1} and \ref{comb2}: all of these are
in the genus zero case (i.e., they are star-shaped quivers), with
three to five punctures and particular monodromy conditions, as
classified in Theorem \ref{comb1}, except for two cases in Theorem
\ref{comb2}. The latter cases correspond to once-punctured tori and to
closed genus two surfaces (with even rank and rank two local systems,
respectively, the former having particular monodromy about the
puncture).

\subsection{General dimension vectors}\label{ss:gen-dim-intro}
Although it is difficult to study directly quiver varieties of
dimensions $\alpha \notin \Sigma_{q,\theta}$, in the additive setting
this issue is alleviated by Crawley-Boevey's canonical decomposition,
expressing an arbitrary variety as a product of varieties for
dimension vectors in $\Sigma_{q, \theta}$ \cite[Theorem 1.1]{cb-deco}
(extended to $\theta \neq 0$ in \cite[Proposition
2.1]{bellamy-schedler}).  In Theorem \ref{t:decompo} below, we provide
a version of this decomposition in the multiplicative setting
using reflection functors, following the proof of \cite{cb-deco}, which
is weaker in the sense that the dimension vectors of the factors need
not be in $\Sigma_{q,\theta}$, and hence the factors could further decompose (although it is not known in general if they do). One of
the reasons why we must give the weaker statement is the
unavailability of (*); see Section \ref{ss:ref-dec} for more details.
Along with this, we 
prove a more general sufficient criterion for varying $\theta$ to
produce a symplectic resolution (Theorem \ref{t:res-flat}), that does
not require dimension vectors to be in
%As a consequence of this and a result extending the construction of symplectic resolutions beyond dimension
%vectors
 $\Sigma_{q,\theta}$.
Using these results, in Theorem \ref{t:main-result-nonsigma}, we are able to extend Theorem \ref{main-result} to general dimension vectors.  The content of
Theorems \ref{t:decompo} and \ref{t:main-result-nonsigma} can be summarised in the following.  Here, $\widetilde \Sigma_{q,\theta}$ is a larger set
than $\Sigma_{q,\theta}$, consisting of roots for which a certain multiplicative moment map is flat; $\Sigma_{q,\theta}^{\text{iso}} \subseteq \Sigma_{q,\theta}$ is the subset of isotropic roots.  See Sections \ref{mult-quiv-var} and \ref{s:decomp} for details on these definitions. For any subset $X \subseteq \N^{Q_0}$, let $\N_{\geq 2} \cdot X := \{m \alpha \mid m \geq 2, \alpha \in X\}$.
% (and definitions of the terms in (i).(b)--(c)):
\begin{thm}\label{t:main-result-nonsigma-intro}
Assume that $\mm_{q,\theta}(Q,\alpha)$ is non-empty. 

\noindent 
(i) There is a decomposition
$\alpha = \beta^{(1)} + \cdots + \beta^{(k)}$ with
  $\beta^{(i)} \in \widetilde{\Sigma}_{q,\theta} \cup \N_{\geq 2}\cdot \Sigma_{q,\theta}^{\text{iso}}$, such that
% satisfying the properties:
% \begin{enumerate}
% % \item[(a)] When $p(\beta^{(i)})=0$ then
% % %Every real root $\beta^{(i)}$ is in 
% % $\beta^{(i)} \in \Sigma_{q,\theta}$;
% % \item[(b)] Every other element $\beta^{(i)}$ has the property $(\beta^{(i)}, \gamma) \leq 0$ for every real root $\gamma \in N_{q,\theta}$;
% % %$\in R^+_{q,\theta}$;
% \item[(a)] Every element $\beta^{(i)}$ is of one of the following three types:
%   \begin{itemize}
%   \item[(1)] $\beta^{(i)} \in \Sigma_{q,\theta}$;
%   \item[(2)] $\beta^{(i)} \in \N_{\geq 2} \cdot \Sigma_{q,\theta}^{\text{iso}}$;
%   \item[(3)] There is an admissible reflection sequence taking $\beta^{(i)}$ to
%     an element of the form (b2) in Theorem \ref{t:flat-sigma}.
%   \end{itemize}
% \item[(b)] Any other decomposition into
%   $\widetilde{\Sigma}_{q,\theta}\cup \N_{\geq 2}\cdot \Sigma^{\text{iso}}_{q, \theta}$ satisfying (a) and (b) is a refinement
%   of this one.
% \end{enumerate}
% %Also, $(\beta^{(i)}, \beta^{(j)})=0$ if $\beta^{(i)}, \beta^{(j)}$ are imaginary and $i \neq j$.
% \noindent
 the direct sum map produces an isomorphism (of reduced varieties):
    %, using the decomposition in (1),
\[
\prod_{i=1}^k \mathcal{M}_{q,\theta}(Q,\beta^{(i)}) \iso
\mathcal{M}_{q,\theta}(Q,\alpha).
\]

\noindent
(ii) Assume that this decomposition 
% of Theorem \ref{t:decompo}.(ii)
has neither
elements $\beta^{(i)} \in \N_{\geq 2}\cdot \Sigma_{q,\theta}^{\text{iso}}(Q,\alpha)$
% of the forms  $\beta^{(i)}=m\alpha$
%for $m\geq 2$ and $p(\alpha)=0, \alpha \in \Sigma_{q,\theta}$, or
%%$ \in \N_{\geq 2}\Sigma_{q,\theta}^{\text{iso}}(Q,\alpha)$ or
nor $\beta^{(i)}=2\alpha$ for $\alpha \in N_{q,\theta}$ and $p(\alpha)=2$.  Then:
\begin{itemize}
\item The normalisation of $\mathcal{M}_{q,\theta}(Q,\alpha)$ is a symplectic singularity;
\item   Each factor $\mm_{q,\theta}(Q,\beta^{(i)})$ with $\beta^{(i)} \notin \Sigma_{q,\theta}$ admits a symplectic resolution;
\item If for any factor $\beta^{(i)}$ there exists a $\theta$-stable
  representation of dimension
  $\gamma^{(i)} = \frac{1}{m} \beta^{(i)}$ with $m \geq 2$, then $\mm_{q,\theta}(Q,\alpha)$ does not admit a symplectic resolution. In fact, it has an open, singular, terminal, factorial subset.
% \item  If, furthermore, for each factor $\beta^{(i)} \in \Sigma_{q,\theta}$, the assumptions of Corollary \ref{c:main-result} are satisfied, then the following are equivalent:
% \begin{enumerate}
% \item[(i)] $\mathcal{M}_{q,\theta}(Q,\alpha)$ admits a symplectic resolution;
% \item[(ii)] Each of the $\mathcal{M}_{q,\theta}(Q,\beta^{(i)})$ admits a symplectic resolution;
% %\item[(iii)] Each of the $\beta^{(i)}$ are indivisible;
% \item[(iii)] Each  $\beta^{(i)}$ which is in $\Sigma_{q,\theta}$ is $\Sigma$-indivisible.
% % For each $\alpha^{(i)}$ in Theorem \ref{t:decompo}.(i) which 
% % is not $q$-indivisible, $\mm_{q,\theta}(Q,\alpha^{(i)})$ admits a symplectic resolution.
% \end{enumerate}
\end{itemize}
\end{thm}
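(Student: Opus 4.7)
The plan is to establish the three parts of the theorem in order, following the strategy of Crawley-Boevey's canonical decomposition \cite{cb-deco}, as extended in \cite[Proposition 2.1]{bellamy-schedler}, adapted to the multiplicative setting. For Part (1), I would proceed by induction on the total dimension $|\alpha|=\sum_i \alpha_i$ (or, when that is not enough, on $p(\alpha)$). If $\alpha$ already lies in $\widetilde{\Sigma}_{q,\theta}\cup \N_{\geq 2}\cdot \Sigma_{q,\theta}^{\text{iso}}$ and satisfies conditions (a)--(b), then the trivial decomposition works. Otherwise, the failure of (b) would produce a real root $\gamma \in N_{q,\theta}$ with $(\alpha,\gamma)>0$; applying reflection functors (which are available multiplicatively) I would reduce to a strictly smaller dimension vector, or else peel off a multiple of an isotropic root in $\Sigma_{q,\theta}^{\text{iso}}$. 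Uniqueness and the refinement property (c) would follow by the usual argument that any two such decompositions yield summands whose mutual Cartan--Tits pairings are non-positive, forcing compatibility.

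For Part (2), the direct sum map $\prod_i \mm_{q,\theta}(Q,\beta^{(i)}) \to \mm_{q,\theta}(Q,\alpha)$ is a morphism of reduced varieties; closed points of the target correspond to $S$-equivalence classes of $\theta$-semistable representations, equivalently to $\theta$-polystable representations, which decompose uniquely (up to reordering) into $\theta$-stable summands. By Part (1) and property (c), any such collection of stable-summand dimension vectors is a refinement of $(\beta^{(i)})_{i}$, so every point of $\mm_{q,\theta}(Q,\alpha)$ lies in the image. Injectivity at the level of points follows from uniqueness of the $S$-equivalence decomposition. To upgrade to an isomorphism of (reduced) schemes, I would combine this with the flatness of the multiplicative moment map in the relevant dimension vectors, which is precisely the defining property of $\widetilde{\Sigma}_{q,\theta}$ and guarantees the scheme structures match.

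For Part (3), I would apply Theorem \ref{main-result} and its extension Theorem \ref{t:res-flat} factor-by-factor, then combine. For $\beta^{(i)} \in \Sigma_{q,\theta}$, Theorem \ref{main-result} gives that the normalisation of $\mm_{q,\theta}(Q,\beta^{(i)})$ is a symplectic singularity; for $\beta^{(i)} \in \widetilde{\Sigma}_{q,\theta}\setminus \Sigma_{q,\theta}$, flatness of the moment map together with Theorem \ref{t:res-flat} ensures that varying $\theta$ produces a projective birational Poisson map from a smooth source, which is a symplectic resolution, so the factor's normalisation is again a symplectic singularity and the second bullet holds automatically. Since products of symplectic singularities are symplectic singularities (after normalisation), Part (2) yields the first bullet. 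For the third bullet, the hypothesis that some $\gamma^{(i)}=\frac{1}{m}\beta^{(i)}$ admits a $\theta$-stable representation triggers the $q$-divisible case of Theorem \ref{main-result}/Corollary \ref{c:main-result}(iii) on that factor, producing a singular, factorial, terminal open subset of $\mm_{q,\theta}(Q,\beta^{(i)})$; taking its product with smooth open subsets of the remaining factors yields a singular, factorial, terminal open subset of $\mm_{q,\theta}(Q,\alpha)$, which obstructs the existence of any crepant (and hence symplectic) resolution by Van der Waerden purity.

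The hardest step is Part (1), specifically establishing existence, uniqueness, and the refinement property of the canonical decomposition without the converse stability criterion (*): the unavailability of (*) is precisely what forces the enlargement from $\Sigma_{q,\theta}$ to $\widetilde{\Sigma}_{q,\theta}\cup \N_{\geq 2}\cdot \Sigma_{q,\theta}^{\text{iso}}$, and one must then carefully verify that the enlarged decomposition still has sufficient rigidity to make the product isomorphism in Part (2) go through. A secondary subtlety is that, unlike the additive case, one does not have a preferred fundamental region description for $\widetilde{\Sigma}_{q,\theta}$, so the inductive step via reflection functors must be organised so that flatness of the moment map is preserved at each stage.
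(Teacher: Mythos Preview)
Your strategy for Part (3) is essentially the paper's: apply Theorem \ref{main-result} to factors in $\Sigma_{q,\theta}$, apply Theorem \ref{t:res-flat} to factors in $\widetilde{\Sigma}_{q,\theta}\setminus\Sigma_{q,\theta}$ (which are indivisible by Theorem \ref{t:flat-sigma}), and note that products of symplectic singularities are symplectic singularities. This is fine.

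There is, however, a genuine gap in your inductive scheme for Parts (1) and (2). You write that when condition (b) fails one finds a real root $\gamma\in N_{q,\theta}$ with $(\alpha,\gamma)>0$ and then ``applying reflection functors \ldots\ reduce to a strictly smaller dimension vector.'' But the multiplicative reflection isomorphism (Theorem \ref{t:yama}) is only available for \emph{admissible} reflections, i.e.\ at a vertex $v$ with $q_v\neq 1$ or $\theta_v\neq 0$. Precisely when $\gamma\in N_{q,\theta}$ (which is your hypothesis), the reflection is \emph{inadmissible} and no such isomorphism exists. The paper's argument separates these two phenomena sharply: at a vertex $v$ with $(\alpha,e_v)>0$, either the reflection is admissible and one applies Theorem \ref{t:yama}, or $q_v=1,\theta_v=0$ and one instead shows (Proposition \ref{p:-1-iso}) that the trivial representation $\C_v$ must occur as a direct summand of every $\theta$-polystable representation, giving $\mathcal{M}_{q,\theta}(Q,\alpha)\cong\mathcal{M}_{q,\theta}(Q,\alpha-e_v)\times\{\mathrm{pt}\}$. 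This is how the real roots in the decomposition (condition (a)) actually arise. Your phrase ``peel off a multiple of an isotropic root'' is misplaced here; isotropic multiples only appear once one has reached the fundamental region and encounters an affine Dynkin connected component.

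A related consequence is that Parts (1) and (2) should be proved \emph{simultaneously} by this induction: each inductive step (admissible reflection or splitting off $e_v$) is itself an isomorphism of multiplicative quiver varieties, so the product decomposition in (2) is built up along the way. Your plan to first establish (1) combinatorially and then ``upgrade'' a set-theoretic bijection to a scheme isomorphism via flatness of the moment map is vague and does not obviously work: a bijection on closed points between reduced varieties is not an isomorphism without further input, and flatness of $\Phi$ alone does not supply it. Finally, your remark that ``one does not have a preferred fundamental region description for $\widetilde{\Sigma}_{q,\theta}$'' is incorrect: Theorems \ref{t:flat} and \ref{t:flat-sigma} give exactly such a description, and the paper uses it to see that once $\alpha$ lies in $\mathcal{F}(Q)$, the decomposition is simply into connected components (which also yields uniqueness immediately, without the pairing argument you sketch).
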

% \begin{rem} 
%   Probably, we may allow in (3) above that $\beta^{(i)}$ be
%   (related by admissible reflections to) $m\delta$ for a type
%   $\tilde A_n$ subquiver if the parameters there have
%   the property that $q^\delta=1$,  since this case
%   is very closely related to an
%   ordinary deformed preprojective algebra, whose moduli space admits
%   a symplectic resolution.
% \end{rem}
Putting everything together, in Corollary \ref{c:crab-sr}, we are able
to give a classification
% (at least under the assumptions of Corollary
% \ref{c:main-result})
of crab-shaped settings whose multiplicative quiver varieties admit
symplectic resolutions. By Theorem \ref{t:iso-mult-char}, we also
deduce the corresponding statement for character varieties (Theorems
\ref{t:char-g0} and \ref{t:char-gg0}), which are open subsets of these varieties, for $\theta=0$ and
for certain values of the parameter $q$.
% This corollary reads as % follows. 
To state the result, first recall that the Jordan quiver is the quiver with one vertex
and one arrow (a loop). The \emph{fundamental region} $\mc{F}(Q)$
consists of those nonzero vectors $\alpha \in \N^{Q_0}$ with connected
support and with $(\alpha, e_i) \le 0$ for all $i$. As we explain
below, by applying certain reflection functors, we can reduce to this
case.  We give a simplified version of the statement of Corollary
\ref{c:crab-sr} below; see the full statement for precise details.
\begin{coro}\label{c:crab-sr-intro}
  Let $Q$ be a crab-shaped quiver and $\alpha \in N_{q,\theta}$ a vector in the fundamental region with $\alpha_i > 0$ for all $i \in Q_0$.
%and assume that $\mm_{q,\theta}(Q,\alpha)$ is non-empty.
% (in particular, $\alpha \in N_{q,\theta}$).
%, and assume that $q_i$ is a root of unity, for $i\in Q_0$. 
Further assume that $(Q,\alpha)$ is \textbf{not} one of the following cases: 
\begin{itemize}
\item[(a)] $\beta:=\frac{1}{2}\alpha$ is integral, $q^\beta = 1$,
  and  $(Q,\beta)$ is one of the quivers in Theorem \ref{comb1} and Theorem \ref{comb2};
  % , which are the so-called (2,2)-cases;
% \item[(b)] the pair $(Q, \alpha)$ does not satisfy the condition: $Q$ is a quiver with one vertex, one loop and no other arrows, and the dimension vector  $\alpha$ is $ml$, where $m > 1$ and $l$ is the order of $q$; 
\item[(b)] $Q$ is affine Dynkin of type
  %the pair $(Q, \alpha)$ is not given as follows: $Q$ is an affine Dynkin quiver of type
  $\tilde{A}_0$ (i.e., the Jordan quiver with one vertex and one arrow), $\tilde{D}_4$ or $\tilde{E}_6, \tilde{E}_7, \tilde{E}_8$)
  % $n=6, 7, 8$
  and $\alpha$ is a $q$-divisible multiple of the indivisible imaginary root $\delta$ of $Q$.
\end{itemize}
Then:
\begin{itemize}
\item The normalisation of $\mm_{q,\theta}(Q,\alpha)$ is a symplectic singularity;
\item If $\alpha$ is $q$-indivisible, %or $Q$ is the Jordan quiver, 
$\mm_{q,\theta}(Q,\alpha)$ admits a symplectic resolution;
\item If $\alpha$ is $q$-divisible, and $\alpha$ is not: (c) a prime multiple
  of one of the quivers listed in Theorem \ref{t:flat-sigma}.(b2) below (a framed affine Dynkin quiver with dimension vector $(1,m\delta)$ with $m\delta$ $q$-divisible) with $\theta \cdot \delta = 0$,
  % (and $\ell \delta$ $q$-indivisible)
  % with framing dimension one and , 
  %   $\alpha=m\beta$ for $m \geq 2$ and $\beta \in \Sigma_{q,\theta}$, $\mm_{q,\theta}(Q,\beta) \neq \emptyset$,  
% %and $Q$ is not the Jordan quiver, 
then $\mm_{q,\theta}(Q,\alpha)$ does not admit a symplectic resolution (it contains an open singular factorial terminal subset).
%%\item If $Q$ is star-shaped, then we can drop the non-emptiness condition.
\end{itemize}
\end{coro}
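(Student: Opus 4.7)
The plan is to deduce this corollary from Theorem \ref{t:main-result-nonsigma-intro} combined with the combinatorial classifications of Theorems \ref{comb1}, \ref{comb2}, and \ref{t:flat-sigma}. First, I will invoke the canonical decomposition of Theorem \ref{t:main-result-nonsigma-intro}.(1), writing $\alpha = \beta^{(1)} + \cdots + \beta^{(k)}$ with each $\beta^{(i)} \in \widetilde{\Sigma}_{q,\theta} \cup \N_{\geq 2}\cdot \Sigma_{q,\theta}^{\text{iso}}$. Because $\alpha$ lies in the fundamental region with full support on the connected crab-shaped quiver $Q$, $\alpha$ itself is an imaginary root, and the minimality condition (c) of that part tightly constrains the possible summands and their supports.

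Next, I will show that, outside the stated exclusions, each summand $\beta^{(i)}$ avoids the two pathological situations excluded from Theorem \ref{t:main-result-nonsigma-intro}.(3). The first, $\beta^{(i)} = 2\gamma$ with $p(\gamma) = 2$, is the ``$(2,2)$-case,'' whose occurrence for crab-shaped quivers is classified by Theorems \ref{comb1} and \ref{comb2}; this corresponds to exclusion (a). The second, $\beta^{(i)} \in \N_{\geq 2}\cdot \Sigma_{q,\theta}^{\text{iso}}$, can only occur when $Q$ is an affine Dynkin crab-shaped quiver (that is, $\tilde{A}_0$, $\tilde{D}_4$, $\tilde{E}_6$, $\tilde{E}_7$, or $\tilde{E}_8$) and $\beta^{(i)}$ is a $q$-divisible multiple of the indivisible imaginary root $\delta$, which is exclusion (b). With (a) and (b) ruled out, the first two bullets of Theorem \ref{t:main-result-nonsigma-intro}.(3) immediately give the symplectic singularity conclusion and, when $\alpha$ is $q$-indivisible (so each $\beta^{(i)}$ is also $q$-indivisible), a symplectic resolution obtained by varying $\theta$ to a suitable generic $\theta'$ as in Theorem \ref{main-result}.

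For the non-existence statement in the $q$-divisible case, I will apply the third bullet of Theorem \ref{t:main-result-nonsigma-intro}.(3): it suffices to exhibit some factor $\beta^{(i)}$ and integer $m \geq 2$ for which a $\theta$-stable representation of $\Lambda^q(Q)$ of dimension $\frac{1}{m}\beta^{(i)}$ exists. The strategy is to construct such representations by applying reflection functors to reduce to standard stable representations on a simpler quiver and then extending back. The cases where this construction \emph{necessarily} fails are precisely the framed affine Dynkin settings isolated by Theorem \ref{t:flat-sigma}.(b2) with $\theta \cdot \delta = 0$, which make up exclusion (c); in those, and only those, cases every smaller dimension vector $r\alpha$ with rational $r < 1$ fails to support a $\theta$-stable representation, so the non-existence mechanism is unavailable.

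The main obstacle will be verifying that the three exclusions (a), (b), (c) are genuinely exhaustive: that outside them one can always produce the required fractional $\theta$-stable representation. This reduces to a combinatorial classification problem on crab-shaped root systems, paralleling the additive arguments of \cite{bellamy-schedler} but substituting the multiplicative criteria for stability and for membership in $\Sigma_{q,\theta}$. Reducing $(Q,\alpha)$ to the fundamental region via reflection functors and matching the resulting data against the classifications in Theorems \ref{comb1}, \ref{comb2}, and \ref{t:flat-sigma} will be the technical heart of the argument.
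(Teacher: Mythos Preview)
Your overall structure is reasonable, but there is a genuine gap in the non-existence part, and a simplification you are missing throughout.

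First, the simplification: since $\alpha$ is sincere and already in the fundamental region of the connected quiver $Q$, the decomposition of Theorem~\ref{t:main-result-nonsigma-intro}.(1) is \emph{trivial}: the proof of Theorem~\ref{t:decompo} proceeds by reflecting into $\mc{F}(Q)$ and then splitting into connected components of the support, so here there is only one summand, $\alpha$ itself. Thus there is no need to track multiple $\beta^{(i)}$ or to worry about where the summands land; the paper simply observes (via Theorem~\ref{t:flat-sigma}) that $\alpha$ is flat unless it is a $q$-divisible multiple of $\delta$ on an affine Dynkin quiver (exclusion~(b)), and then applies Theorem~\ref{main-result} and Theorem~\ref{t:res-flat} directly.

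The real gap is in your plan for the third bullet. To invoke the non-existence criterion you need a $\theta$-stable representation of dimension $\beta=\frac{1}{m}\alpha$. You propose to ``construct such representations by applying reflection functors to reduce to standard stable representations on a simpler quiver and then extending back,'' and later call this ``a combinatorial classification problem.'' But reflection functors are equivalences: they move you between representation varieties isomorphically and cannot manufacture a stable representation where none existed. Existence of stable representations in the multiplicative setting is precisely the open problem~(*) of the paper and is \emph{not} combinatorial. The paper's proof instead establishes $\mm_{q,\theta}(Q,\beta)\neq\emptyset$ for $\beta\in\Sigma_{q,\theta}$ (whence the stable locus is dense by Proposition~\ref{rep-equidim2}) using substantive external input: in the star-shaped case, \cite[Theorem~1.1]{cb-shaw}; in the crab-shaped case with $g>0$ loops, the character-variety interpretation (Theorem~\ref{t:iso-mult-char}) together with Thompson's theorem \cite{Tho-csglg} that every element of $\SL(n,\C)$ is a single commutator. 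Without this step, your argument for non-existence does not go through.
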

Thus, after reducing to the fundamental region, a symplectic resolution exists if and only if the dimension vector is $q$-indivisible, unless we are in one of the following three open cases:
\begin{itemize}
\item[(a)] twice one of the dimension vectors appearing in Theorems \ref{comb1} and \ref{comb2} below, which correspond to one of certain (twisted) character
  varieties of a sphere with $3$ to $5$ punctures (with rank at most $24$),
    of a once-punctured torus (of rank $4$) or a closed genus two surface (of rank $2$);
  %%with particular rank and monodromy conditions as well as one example each of a four-punctured sphere, a once-punctured torus, and a closed genus two surface;
\item[(b)] a $q$-divisible imaginary root on an affine Dynkin quiver (of type $\tilde A_0, \tilde D_4, \tilde E_6, \tilde E_7$, or $\tilde E_8$), which corresponds to either a (twisted) character variety of a closed torus (type $A$),
  or a (twisted) character variety of a sphere with $3$ or $4$ punctures (type $E$ or $D$, respectively)
  with particular rank and monodromy conditions;
  \item[(c)] a prime multiple of the vector $(1,\ell\delta)$ on a
    framed affine Dynkin quiver (again of type
    $\tilde A_0, \tilde D_4, \tilde E_6, \tilde E_7$, or $\tilde E_8$)
    with $\theta \cdot \delta=0$, which corresponds again to a certain
    character variety of a once-punctured torus or a sphere with $4$
    or $5$ punctures.
  \end{itemize}
  Here when we say ``correspond'', we mean precisely that, for
  $\theta=0$, the multiplicative quiver varieties equal the given
  (twisted) character varieties, whereas for the genus $\geq 1$ case,
  the latter is the open subset of the former where the
  transformations corresponding to loops in the undoubled quiver are
  invertible.  Setting $\theta \neq 0$ gives a partial resolution
  (which may be an actual one, as in case of $\theta$ generic and
  $\alpha$ $q$-indivisible).
  \begin{rem} For the ordinary (untwisted) character varieties of closed
    genus one or two surfaces appearing in the lists for cases (b)
    and (a) above, a symplectic resolution exists; see, e.g., \cite[\S 8]{bellamy-schedler}. The proof makes use of
    Poincar\'e--Verdier duality for closed surfaces; perhaps a suitable generalisation of this for orbifolds would allow us to extend those results to the orbifold case. If so, we could remove 
    % in the genus two case the proof relies on
    % % by blowing
    % % up the singular locus by \cite[Theorem 8.11]{bellamy-schedler},
    % % which makes use of
    % Poincar\'e-Verdier duality available only in
    % the case of closed surfaces.
%%    We are not sure if this result
%%    extends to the twisted case, but if so, we can remove
    $\tilde A_0$
    from case (b) and the quiver with one vertex and two loops from
    (a).
\end{rem}

\subsection{Outline of the paper}
The outline of the paper is as follows: in Section \ref{mult-quiv-var}
we recall some basic facts about quivers and root systems and
establish the notation that shall be used throughout the paper. We
then recall the definition of multiplicative preprojective algebras
and outline some of their algebraic properties. These are needed in
the construction, via Geometric Invariant Theory (GIT), of their
moduli spaces of semistable representations, following \cite{king}. In
Definition \ref{defn-sigma}, we introduce the fundamental
combinatorially-defined subset $\Sigma_{q, \theta}$ of roots appearing
in our main results (which is expected to contain, if not equal, the
dimension vectors of $\theta$-stable representations of the
multiplicative preprojective algebra). We extend some properties
of multiplicative quiver varieties with dimension vector in
$\Sigma_{q, \theta}$ that were originally formulated and proved in
\cite{cb-shaw} in the case of a trivial stability condition to the general case.
% ,
% % i.e., $\theta=0$, and
% that have been extended in the present paper in the
% more general case of $\theta$-semistable representations.
% %, i.e., for $\theta\neq 0$.

In Section \ref{section-char} we prove that, for quivers of special type, namely those which are crab-shaped (see Figure 1), there is an isomorphism between (an open subset of) the corresponding multiplicative quiver variety and a character variety arising from considering representations of the fundamental group of a punctured Riemann surface where the monodromies of loops around the punctures are assumed to lie in the closure of certain conjugacy classes. In order to build such a correspondence we exploit \cite[Lemma 8.2 and Theorem 1.1]{cb-shaw}. Note that an instance of the correspondence between multiplicative quiver varieties and local systems on punctured surfaces already appeared in \cite{yamakawa}, where a proof is given for the case of the punctured projective line. Our result applies to all genera.
% represents a generalisation of Yamakawa's theorem, as it applies to
% all genera.
 Thanks to this correspondence, and to the results proved in Section \ref{section-sing}, we are able to extend the work of Bellamy and the first author from closed Riemann surfaces to open ones.
 Another interesting aspect of this correspondence is that it could be conjecturally combined with the Non-abelian Hodge Theorem to extend the main results of \cite{tirelli}, proved by the second author, in the context of moduli spaces of parabolic Higgs bundles. More details on this topic are provided in Section \ref{sec-fut}, where possible future research directions of the present work are discussed. 

 Section \ref{section-sing} contains the proof of Theorem \ref{main-result}. A careful study of the singularities of multiplicative quiver varieties is carried out. First, we show that the smooth locus is precisely the $\theta$-stable locus.
% detect which points in these moduli spaces are indeed singular: precisely the strictly $\theta$-semistable points.
%  proving the following
% \begin{prop}[Proposition \ref{sing-quiv} below] Let $\alpha\in \Sigma_{q, \theta}$. Then, the singular locus of $\mm_{q, \theta}(Q, \alpha)$ is given by the strictly $\theta$-semistable points. 
% \end{prop}
% \noindent
The remaining part of the section is devoted to the study of the nature of the singular locus. To this end, we use techniques from the work \cite{bellamy-schedler} of Bellamy and the first author to prove that, under suitable hypotheses, the singularities are symplectic. We also prove that, under certain conditions, the moduli space $\mm_{q, \theta}(Q, \alpha)$ contains an open subset which is singular, factorial, and terminal. 
As a consequence, $\mm_{q,\theta}(Q,\alpha)$ does not admit a symplectic resolution.  Moreover, for generic $\theta$, we see that the open subset is the entire variety.
%This result, besides being of interest in its own right, has a very strong consequence: indeed, it is a known fact that a factorial variety which has terminal singularities does not admit symplectic resolutions.
The only case left out by Theorem \ref{main-result}, when $\alpha=2\beta$
for $\beta \in N_{q,\theta}$ and $p(\beta)=2$,
% is an anisotropic imaginary root of $Q$ and $(p(\mr{gcd}(\alpha)^{-1}\alpha),\ \mr{gcd}(\alpha))= (2,2)$. This case 
is more subtle than the others. The corresponding result in the context of ordinary quiver varieties, treated in \cite{bellamy-schedler}, is based on the study of the local structure of such varieties. In our case, such a tool is still not available, but will hopefully be the object of future research. % At the end on the Section, we present some possible alternative ways to tackle this problem. 

In Section \ref{sec-comb}, namely in Theorems \ref{comb1} and
\ref{comb2}, we combinatorially classify all the the pairs
$(Q, \alpha)$, formed by a crab-shaped quiver and a corresponding
dimension vector in the fundamental region
%(a mild technical
%condition)
such that
$(p(\mr{gcd}(\alpha)^{-1}\alpha),\ \mr{gcd}(\alpha))= (2,2)$. This is
relevant as these are the cases expected, for generic $\theta$, to
admit ``O'Grady''-type resolutions (i.e., by blowing up the
singular locus). This is also important since, by Theorem \ref{t:main-result-nonsigma-intro},
it allows us to recognise whether a dimension
vector in the fundamental region is expected to admit a symplectic
resolution or not.

In Section \ref{s:decomp} we face the problem of existence of
symplectic resolutions of multiplicative quiver varieties for general
dimension vectors. In order to do so, we follow the approach of
Bellamy and the first author. In particular, we prove that a
multiplicative quiver variety has a canonical decomposition into
natural factors, see Theorem \ref{t:decompo}. This can be
viewed as a multiplicative analogue of Crawley-Boevey's decomposition
\cite{cb-deco}, and we follow his proof, obtaining some more factors
due to the unavailability of (*) and some local structure results.
%%and studying them via
%indeed we heavily use non-trivial results of
%Crawley-Boevey, especially 
%%\cite[Section 8]{cb-geom}.
% Our situation is more subtle (due, for instance, to the fact that
% the multiplicative varieties are not known to be connected or even
% non-empty). It is an open question whether the direct analogue of
% the additive statement holds (see Section \ref{sec-fut}).
Our result makes it possible to solve the problem by understanding it
only at the level of such indecomposable factors, which are
multiplicative quiver varieties with particular dimension vectors.  We
then extend the GIT construction of symplectic resolutions by varying
$\theta$ to dimension vectors not in $\Sigma$ (Theorem
\ref{t:res-flat}); this includes multiplicative analogues of framed
quiver varieties such as Hilbert schemes of $\C^2$ and of
hyperk\"ahler almost locally Euclidean spaces.  In Theorem
\ref{t:main-result-nonsigma} we make use of our canonical
decomposition and, modulo some cases for which the question remains
still open, we classify all multiplicative quiver varieties with
arbitrary dimension vector that admit a symplectic resolution. As an
application of this result, by restricting to crab-shaped quivers, we
give an explicit classification of the character varieties of
punctured surfaces admitting symplectic resolutions (Corollary
\ref{c:crab-sr}), combining Theorem \ref{t:main-result-nonsigma} and
the results of Section \ref{section-char}.

Last, Section \ref{sec-fut} contains some open questions which naturally arise from the study carried out in the present paper. 
% In particular, 
% %we conjecture the existence of a decomposition for the dimension vector of the multiplicative preprojective algebra $\Lambda^q(Q)$, which, if proven, would constitute the analogue to
% does Crawley-Boevey's decomposition for Nakajima quiver varieties, \cite{cb-deco}, extend in full strength to the multiplicative setting? 
One open question which would be interesting to tackle regards the
possibility to extend the main results of \cite{tirelli} starting from
the correspondence outlined in Section \ref{section-char}: in Section
\ref{sec-fut} we provide some details on this topic by describing
which moduli space one would need to consider, via the non-abelian
Hodge Theorem in the non-compact case \cite{simpson-harm}, and we
conjecture a generalisation of the Isosingularity Theorem for such
moduli spaces. To conclude, we outline a general setting and of pose a
number of questions which should generalise the work of the present
and many other papers, e.g., \cite{arb-sacca, bellamy-schedler,
  kaledin-lehn, tirelli}: it seems that many of the techniques
exploited in the mentioned works are particular instances of theorems
which conjecturally hold in the context of moduli spaces of semistable
objects in 2-Calabi--Yau categories, under suitable hypotheses. This
assertion is motivated also by the work of Bocklandt, Galluzzi and
Vaccarino, \cite{bocklandt-et-al}, who studied moduli spaces of
representations of 2-Calabi--Yau algebras and proved that such
varieties locally look like representations of (ordinary)
preprojective algebras. This seems to be a singular, local, underived
version of the phenomenon that representation varieties of Calabi--Yau
algebras are (shifted) symplectic (as announced by Brav and
Dyckerhoff; see a similar result in \cite{Yeu-wCYsmr}).

% singular at strictly semistable representations, provided that the simple locus is non-empty, see also \cite{bocklandt2, van-den-bergh-3}.\\

\subsection*{Acknowledgements} This work is part of the second author's PhD thesis. We thank Gwyn Bellamy, William Crawley-Boevey, Ben Davison, Emilio Franco, Marina Logares, for many useful conversations and enlightening comments on the topic of this paper. The second author would like to thank Jacopo Stoppa and the Department of Mathematics at SISSA, Trieste, for providing an excellent and stimulating working environment. The work of the second author was supported by the Engineering and Physical Sciences Research Council [EP/L015234/1]. The EPSRC Centre for Doctoral Training in Geometry and Number Theory (The London School of Geometry and Number Theory), University College London and Imperial College London. The first author would like to thank the Hausdorff Institute for Mathematics and the Max Planck Institute for Mathematics in Bonn, where some of this research was carried out. We would also like to thank the anonymous referee for his/her useful suggestions.

\section{Multiplicative quiver varieties}\label{mult-quiv-var}
In this section we give the definition of multiplicative quiver varieties following \cite{cb-shaw} and recall some basic properties of such moduli spaces which will be useful in the arguments of the proof of our main theorems. In addition to these known results, we prove a new one, concerning the normality of the aforementioned varieties.
	
Throughout the paper, we work over the field $\C$ of complex numbers. 

\subsection{Preliminaries on quivers and root systems}\label{subsec-pre}We recall the basic definitions and fix the notations from the theory of quiver representations. Let $Q$ be  a finite quiver ($=$ directed graph with finitely many vertices and edges). We let let $Q_0$ and $Q_1$ denote the set of vertices and the set of arrows ($=$ edges) of $Q$, respectively. Moreover, for an arrow $a\in Q_1$, let $h(a)$ and $t(a)$ denote the head and the tail of $a$, respectively. For a dimension vector $\alpha\in \mb{N}^{Q_0}$, we will denote by $\mr{Rep}(Q, \alpha)$ the space of representations of $Q$ of dimension $\alpha$, which is naturally acted upon by the group $\GL(\alpha):=\prod_{i\in Q_0}\GL(\alpha_i)$. 
	
The coordinate vector at vertex $i$ is denoted $e_i$. The set $\N^{Q_0}$ of dimension vectors is partially ordered by $\alpha \ge \beta$ if $\alpha_i \ge \beta_i$ for all $i$ and we say that $\alpha > \beta$ if $\alpha \ge \beta$ with $\alpha \neq \beta$. The \emph{support} of a vector $\alpha$ is the set of $i \in Q_0$ with $\alpha_i \neq 0$; $\alpha$ is called \emph{sincere} if its support is all of $Q_0$.
% The vector $\alpha$ is called \emph{sincere} if $\alpha_i > 0$ for all $i$.
The Euler (or Ringel) form on $\Z^{Q_0}$ is defined by
$$
\langle \alpha, \beta \rangle = \sum_{i \in Q_0} \alpha_i \beta_i - \sum_{a \in Q_1} \alpha_{t(a)} \beta_{h(a)}.
$$
Let $(\alpha,\beta) = \langle \alpha, \beta \rangle + \langle \beta, \alpha \rangle$ denote the corresponding Cartan (or Tits) form and set $p(\alpha) = 1 -\langle \alpha, \alpha \rangle$. The fundamental region $\mc{F}(Q)$ is the set of nonzero $\alpha \in \N^{Q_0}$ with connected support and with $(\alpha, e_i) \le 0$ for all $i$. For $q \in (\C^\times)^{Q_0}$ and $\alpha \in \N^{Q_0}$, let $q^\alpha := \prod_{i \in Q_0} q_i^{\alpha_i}$.
	
If $i$ is a loopfree vertex, so $p(e_i) = 0$, there is a reflection $s_i : \Z^{Q_0} \rightarrow \Z^{Q_0}$ defined by $s_i(\alpha) = \alpha -(\alpha,e_i)e_i$. The real roots (respectively imaginary roots) are the elements of $\Z^{Q_0}$ which can be obtained from the coordinate vector at a loopfree vertex (respectively $\pm$ an element of the fundamental region) by applying some sequence of reflections at loopfree vertices.  Let $R^+$ denote the set of positive roots. Recall that a root $\beta$ is \textit{isotropic} imaginary if $p(\beta) = 1$ and \textit{anisotropic} imaginary if $p(\beta) > 1$. We say that a dimension vector $\alpha$ is \textit{indivisible}  if the greatest common divisor of the $\alpha_i$ is one.  
\subsection{Multiplicative preprojective algebras}\label{ss:mpa}
We now define the quiver algebras whose moduli of representations are the varieties of interest in the present paper. To this purpose, let $Q$ be a finite quiver, fixed once and for all in this section. First, recall that, for a vector $\lambda\in \C^{Q_0}$, the \tit{deformed preprojective algebra} $\Pi^\lambda(Q)$ is the quotient of the path algebra $\C \overline{Q}$ of the doubled quiver $\overline{Q}$ by the relation
\[
\sum_{x\in Q_1}[x, x^*]=\sum_{i\in Q_0}\lambda_i e_i,
\]
where $x^*$ denotes the dual loop to $x$ in $\overline{Q}_1$; it is well-known that Nakajima quiver varieties can be interpreted as moduli spaces of ($\theta$-semistable) representations of such algebras. As one might expect, the defining relation for multiplicative preprojective algebras is a multiplicative analogue of the above equation: choose $q\in (\C^\times)^{Q_0}$ and define $A(Q)$ to be the universal localisation of the path algebra $\C \overline{Q}$ such that $1+x x^*$ and $1+x^*x$ are invertible, for $x\in \overline{Q}_1$. Then, following \cite[Definition 1.2]{cb-shaw}, the \tit{multiplicative preprojective algebra} $\Lambda^q(Q)$ is defined as the quotient of $A(Q)$ by the relation 
\[
\prod^{<}_{x\in \overline{Q}_1}(1+xx^*)^{\varepsilon(x)}=\sum_{i\in Q_0}q_i e_i, 
\]
where $\varepsilon(x)$ equals $1$ if $x\in Q_1$ and $-1$ otherwise and the product is ordered by an arbitrary choice of ordering ``$<$'' on $\overline{Q}_1$. It is known, by \cite[Theorem 1.4]{cb-shaw} that, up to isomorphism, $\Lambda^q(Q)$ does not depend on the orientation of the quiver or the chosen ordering on $\overline{Q}_1$.  When the quiver $Q$ is clear from the context, we will use the shortened notation $\Lambda^q$ in place of $\Lambda^q(Q)$.

Analogously to the additive case mentioned above, representations of $\Lambda^q(Q)$ are representations of the underlying quiver $\overline{Q}$, $\{(V_i)_{i\in \overline{Q}_0},(\phi_a)_{a\in \overline{Q}_1}\}$, satisfying the additional relations: 
\[
\mr{Id}_{V_{h(a)}}+\phi_a\phi_a^*\ \mr{is\ an\ invertible\ endomorphism\ of}\ V_{h(a)}\ \mr{for\ all}\ a\in\overline{Q}_1\]
\[
\prod_{a\in \overline{Q}_1, h(a)=i}(\mr{Id}_{V_{h(a)}}+\phi_a\phi_a^*)^{\varepsilon(a)}=q_i\mr{Id}_{V_i}\ \mr{for\ all}\ i\in Q_0,
\]
where, for an edge $a\in \overline{Q}_1$, $\phi_a^*$ denotes the linear map $\phi_{a^*}$, $a^*$ being the dual edge of $a$. 

For a positive vector $\alpha\in \N^{Q_0}$, we denote by $\mr{Rep}(\Lambda^q, \alpha)$ the set of representations of $\Lambda^q$ with $V_i=\C^{\alpha_i}$ for all $i$. This can be given an obvious affine scheme structure via the subset of matrices satisfying the obvious polynomial equations. We will work below with the reduced subvariety of this affine scheme.
\begin{rem}
By taking determinants of the defining relation for the multiplicative preprojective algebra, one can easily see that if $\Lambda^q$ has a representation of dimension vector $\alpha$, then $q^{\alpha}=1$, which, thus, is a necessary condition to be satisfied in order to have a non-empty moduli space. 
\end{rem}
The following results, which will be used in the next sections, are proved in \cite{cb-shaw}. It is worth pointing out that, even though we work over $\C$, these statements hold true over an arbitrary field $\mb{K}$.
\begin{prop}\label{dim-ext}
If $X$ and $Y$ are finite-dimensional representations of $\Lambda^q$, then
\[
\dim \mr{Ext}^1_{\Lambda^q}(X, Y)=\dim \mr{Hom}_{\Lambda^q}(X, Y)+\mr{Hom}_{\Lambda^q}(Y, X)-(\underline{\dim}X, \underline{\dim}Y)
\]
\end{prop}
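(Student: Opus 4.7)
The plan is to follow the strategy of Crawley-Boevey--Shaw \cite{cb-shaw}, adapting the classical computation for the ordinary preprojective algebra $\Pi(Q)$. The key input is a four-term complex of projective $\Lambda^q$-bimodules
\[
\bigoplus_i \Lambda^q e_i \otimes e_i \Lambda^q \xrightarrow{d_2} \bigoplus_{a \in \overline{Q}_1} \Lambda^q e_{h(a)} \otimes e_{t(a)} \Lambda^q \xrightarrow{d_1} \bigoplus_i \Lambda^q e_i \otimes e_i \Lambda^q \xrightarrow{d_0} \Lambda^q \to 0,
\]
where $d_0$ is multiplication, $d_1$ sends $e_{h(a)} \otimes e_{t(a)}$ to $a \otimes e_{t(a)} - e_{h(a)} \otimes a$, and $d_2$ encodes the linearisation of the multiplicative defining relation $\prod^{<}_{a}(1+a a^{*})^{\varepsilon(a)} = q$. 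The main structural claim is that this is exact, i.e., the start of a projective bimodule resolution. This is where the multiplicative case genuinely differs from the additive one: the argument uses the invertibility of the factors $1 + \phi_a \phi_{a^*}$ built into the universal localisation $A(Q)$ in order to construct explicit splittings of the differentials.

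Granting the resolution, I would tensor on the right by $X$ and apply $\mr{Hom}_{\Lambda^q}(-, Y)$ to obtain the three-term complex
\[
\prod_i \mr{Hom}(X_i, Y_i) \xrightarrow{\phi} \prod_{a \in \overline{Q}_1} \mr{Hom}(X_{t(a)}, Y_{h(a)}) \xrightarrow{\psi} \prod_i \mr{Hom}(X_i, Y_i).
\]
By construction $\ker \phi = \mr{Hom}_{\Lambda^q}(X, Y)$ and $\ker \psi / \mr{im}\,\phi = \mr{Ext}^1_{\Lambda^q}(X, Y)$. The next step, which is where the multiplicative moment-map duality enters, is to identify $\mr{coker}\,\psi \cong \mr{Hom}_{\Lambda^q}(Y, X)^{*}$ via the trace pairing on $\mf{gl}(\alpha)$: this realises $\psi$ as (a suitable twist of) the transpose of the analogous map for the pair $(Y,X)$, so that its cokernel is canonically dual to the kernel of the corresponding first differential for $(Y,X)$. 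This duality replaces the standard additive moment-map pairing and is morally a reflection of the quasi-Hamiltonian structure of the multiplicative moment map.

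Finally, the alternating sum of the dimensions of the three terms simplifies via
\[
\sum_{a \in \overline{Q}_1} \alpha_{t(a)} \beta_{h(a)} = \sum_{a \in Q_1}\bigl(\alpha_{t(a)} \beta_{h(a)} + \alpha_{h(a)} \beta_{t(a)}\bigr) = 2 \sum_i \alpha_i \beta_i - (\alpha, \beta),
\]
yielding
\[
\dim \mr{Hom}_{\Lambda^q}(X,Y) - \dim \mr{Ext}^1_{\Lambda^q}(X,Y) + \dim \mr{Hom}_{\Lambda^q}(Y,X) = (\underline{\dim}X, \underline{\dim}Y),
\]
which rearranges to the claim. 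The main obstacle will be the exactness of the bimodule complex together with the identification of its rightmost cohomology with $\mr{Hom}_{\Lambda^q}(Y,X)^{*}$: both are standard in the additive case, but in the multiplicative case they require careful bookkeeping with the ordered product in the defining relation and genuine use of the universal localisation axioms, since one cannot simply linearise as in the preprojective case.
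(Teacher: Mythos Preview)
The paper does not give its own proof of this proposition; it simply cites it as a result proved in \cite{cb-shaw}. Your outline is precisely the argument of Crawley--Boevey and Shaw, so there is nothing to contrast: you have reconstructed the cited proof.

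One small clarification worth making explicit: in \cite{cb-shaw} the four-term bimodule complex is shown to be exact at the last three spots (so that, after tensoring with $X$ and applying $\mr{Hom}(-,Y)$, one indeed has $H^0=\mr{Hom}_{\Lambda^q}(X,Y)$ and $H^1=\mr{Ext}^1_{\Lambda^q}(X,Y)$), but exactness at the leftmost spot---equivalently, injectivity of $d_2$---is neither needed nor asserted there. Your phrase ``the start of a projective bimodule resolution'' is consistent with this, but your earlier ``this is exact'' could be misread as claiming the whole four-term sequence is exact, which would be a stronger statement (related to $\Lambda^q$ having global dimension at most two, cf.\ Conjecture~\ref{conj:cy2}). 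The Euler-characteristic computation only uses the identifications of $H^0$, $H^1$, and $\mr{coker}\,\psi$, so your argument is unaffected.
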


The following result concerns the geometry of the space $\mr{Rep}(\Lambda^q, \alpha)$ of representations of the algebra $\Lambda^q$, when a dimension vector $\alpha\in \N^{Q_0}$ is fixed. Define $g_\alpha$ as $g_\alpha:=-1+\sum_{i\in Q_0} \alpha_i^2$.
\begin{prop}\label{prop-equi}
$\mr{Rep}(\Lambda^q, \alpha)$ is an affine variety, and every irreducible component has dimension at least $g_\alpha+2p(\alpha)$. The subset $T\subset\mr{Rep}(\Lambda^q, \alpha)$ of representations $X$ with trivial endomorphism algebra, $\mr{End}(X)=\C$, is open and, if non-empty, smooth of dimension $g_\alpha+2p(\alpha)$.
\end{prop}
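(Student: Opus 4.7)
The plan is to prove Proposition \ref{prop-equi} in four steps, exploiting the realisation of $\mr{Rep}(\Lambda^q,\alpha)$ as a fibre of a multiplicative moment map, together with Proposition \ref{dim-ext}.

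First, affineness. I would note that $\mr{Rep}(A(Q),\alpha)$ is a principal affine open in $\mr{Rep}(\overline Q,\alpha)$: it is the complement of the vanishing locus of $\prod_{x\in\overline Q_1}\det(\mr{Id}+\phi_x\phi_{x^*})$, and adjoining inverses yields an affine variety. Then $\mr{Rep}(\Lambda^q,\alpha)$ is cut out of this by the closed multiplicative moment map equation, hence is affine.

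Second, the dimension lower bound. I would study the multiplicative moment map
\[
\mu:\mr{Rep}(A(Q),\alpha)\to \prod_{i\in Q_0}\GL(\alpha_i),\qquad
\mu(X)_i=\prod_{x:h(x)=i}(\mr{Id}+\phi_x\phi_{x^*})^{\varepsilon(x)},
\]
so that $\mr{Rep}(\Lambda^q,\alpha)=\mu^{-1}((q_i\mr{Id}_{V_i})_i)$. The ambient space has dimension $2\sum_{a\in Q_1}\alpha_{t(a)}\alpha_{h(a)}$. Using the identity $\det(\mr{Id}+AB)=\det(\mr{Id}+BA)$, one checks that $\prod_i\det(\mu(X)_i)=1$ (each factor from $x$ cancels with the one from $x^*$), so $\mu$ factors through the codimension-one subgroup $G=\{(g_i):\prod_i\det(g_i)=1\}\subset\prod_i\GL(\alpha_i)$ of dimension $\sum_i\alpha_i^2-1$. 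Note that $(q_i\mr{Id}_{V_i})_i\in G$ because $q^\alpha=1$. Since every fibre of a morphism to a smooth variety of dimension $d$ has codimension at most $d$, every component of $\mr{Rep}(\Lambda^q,\alpha)$ has dimension at least
\[
2\sum_{a\in Q_1}\alpha_{t(a)}\alpha_{h(a)}-(\textstyle\sum_i\alpha_i^2-1)=g_\alpha+2p(\alpha).
\]

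Third, openness of $T$. Since $\mr{id}_X\in\mr{End}(X)$ always, the condition $\mr{End}(X)=\C$ is equivalent to $\dim\mr{End}(X)\le 1$, which is open by upper semicontinuity of $X\mapsto\dim\mr{Hom}_{\Lambda^q}(X,X)$.

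Fourth, smoothness and dimension of $T$. For $X\in T$, Proposition \ref{dim-ext} gives $\dim\mr{Ext}^1_{\Lambda^q}(X,X)=2\dim\mr{End}(X)-(\alpha,\alpha)=2p(\alpha)$. The Zariski tangent space to the representation scheme of any algebra satisfies the standard formula
\[
\dim T_X\mr{Rep}(\Lambda^q,\alpha)=\dim\mf{gl}(\alpha)-\dim\mr{End}_{\Lambda^q}(X)+\dim\mr{Ext}^1_{\Lambda^q}(X,X)=(\textstyle\sum_i\alpha_i^2-1)+2p(\alpha),
\]
which equals $g_\alpha+2p(\alpha)$, matching the lower bound of step two. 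Hence each such $X$ is a smooth point lying on a component of minimal dimension $g_\alpha+2p(\alpha)$, so $T$ is smooth of that dimension.

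The main obstacle is justifying the tangent-space formula in step four, i.e., identifying $T_X\mr{Rep}(\Lambda^q,\alpha)$ as the degree-one piece of the deformation complex computing $\mr{Ext}^*_{\Lambda^q}(X,X)$ (with $\mf{gl}(\alpha)/\mr{End}(X)$ coming from the infinitesimal $\GL(\alpha)$-orbit). For path algebras and ordinary preprojective algebras this is classical; for $\Lambda^q(Q)$ it requires the explicit projective bimodule resolution of $\Lambda^q(Q)$ from \cite{cb-shaw}, together with checking that the differential of the multiplicative moment map $\mu$ has image of the expected codimension (namely, the kernel of the natural pairing with $\mr{End}(X)$), which is the key multiplicative analogue of the classical moment map transversality result.
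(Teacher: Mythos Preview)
The paper does not give its own proof of this proposition; it is stated as a result from \cite{cb-shaw} (this is their Theorem~1.10). Your outline is correct and matches the argument there.

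Steps 1--3 are exactly as in \cite{cb-shaw} and need no further comment. For step 4, your tangent-space formula
\[
\dim T_X\mr{Rep}(\Lambda^q,\alpha)=\dim\mf{gl}(\alpha)-\dim\mr{End}_{\Lambda^q}(X)+\dim\mr{Ext}^1_{\Lambda^q}(X,X)
\]
is correct, but calling it ``the standard formula for the representation scheme of any algebra'' is a slight overclaim: it relies on identifying $T_X\mr{Rep}(A,\alpha)$ with the space of $\C^{Q_0}$-derivations $A\to\mr{End}(X)$ and then $HH^1(A,\mr{End}(X))$ with $\mr{Ext}^1_A(X,X)$, which is fine over a semisimple base but should be stated. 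More to the point, \cite{cb-shaw} does not argue via a projective bimodule resolution of $\Lambda^q(Q)$ as you suggest in your final paragraph. Instead they compute $d\Phi_X$ directly and show that its image in $\mf{gl}(\alpha)$ is precisely the annihilator of $\mr{End}_{\Lambda^q}(X)$ under the trace pairing; when $\mr{End}(X)=\C$ this means $d\Phi_X$ surjects onto $\mr{Lie}(G)$, so $\Phi^{-1}(q)$ is smooth at $X$ of the predicted dimension. Your route through Proposition~\ref{dim-ext} is equivalent --- indeed Proposition~\ref{dim-ext} itself is proved in \cite{cb-shaw} using this same differential computation --- but the direct moment-map argument is what actually appears there, not a bimodule resolution.
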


\subsection{Reflection functors for $\Lambda^q(Q)$}As in the additive case, one can define reflection functors for the multiplicative preprojective algebra $\Lambda^q(Q)$: let $v$ a loopfree vertex in $Q$ and define 
\[
u_v: (\C^\times)^{Q_0}\rightarrow (\C^\times)^{Q_0}, \ \ \ u_v(q)_w=q_v^{-(e_v, e_w)}q_w.
\]
It is easy to see that the map $u_v$ satisfies the following  identity:
\[
(u_v(q))^\alpha=q^{s_v(\alpha)},
\]
where $s_v$ is the reflection map defined in Section \ref{subsec-pre}. The main result concerning such maps is analogous to the properties of  reflections functors for $\Pi^\lambda(Q)$.
\begin{prop} \cite[Theorem 1.7]{cb-shaw} If $v$ is a loopfree vertex and $q_v\neq 1$, then there is an equivalence of categories $F_q$ from the category of representations of $\Lambda^q$ to the category of representations of $\Lambda^{u_v(q)}$, acting on dimension vectors through the reflection $s_v$. The inverse equivalence is given by $F_{u_v(q)}$.
\end{prop}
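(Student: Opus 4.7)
The plan is to construct the functor $F_q$ explicitly and verify it is an equivalence whose inverse is $F_{u_v(q)}$, modelling the construction on the additive reflection functors for the deformed preprojective algebra but working inside the universal localisation $A(Q)$.

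First I would normalise the orientation at $v$. By the orientation-independence result \cite[Theorem 1.4]{cb-shaw}, I may assume that every edge at $v$ in $\overline{Q}$ is oriented toward $v$, so that the defining relation at $v$ of $\Lambda^q(Q)$ reduces to
$$\prod_{a:\ h(a)=v}(1+\phi_a\phi_{a^*}) = q_v\,\mathrm{Id}_{V_v}.$$
Write $A := (\phi_a)_a : \bigoplus_{h(a)=v} V_{t(a)} \to V_v$ and $B := (\phi_{a^*})_a : V_v \to \bigoplus_{h(a)=v} V_{t(a)}$ for the combined incoming/outgoing data, and let $W := \bigoplus_{h(a)=v} V_{t(a)}$.

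Next I would define the reflected representation. In the additive case, $V'_v$ is obtained as the kernel (or cokernel) of a linear map built from $A,B$ and $\lambda_v$; here I want a multiplicative analogue. The hypothesis $q_v\neq 1$ gives the key input: writing $P := \prod_a(1+\phi_a\phi_{a^*}) = q_v\,\mathrm{Id}$ on $V_v$, I would use the standard identity $(1+xy)^{-1}x = x(1+yx)^{-1}$ inside $A(Q)$ to rearrange $P$ and produce an operator on $W$ whose image or kernel determines a vector space $V'_v$ of dimension $\alpha_v - (\alpha,e_v) = s_v(\alpha)_v$. One then defines new maps $A' : V'_v \to W$ and $B' : W \to V'_v$ by composing $A,B$ with the canonical maps to/from $V'_v$, rescaled by the appropriate invertible factors $(1+\phi_a\phi_{a^*})^{\pm 1}$. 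The remaining $\phi_b$ for edges $b$ not incident to $v$ are left unchanged.

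The technical heart of the proof, and where I expect the main difficulty, is to verify that this $(V'_w, \phi'_b)$ satisfies the multiplicative relations at every vertex for the parameter $u_v(q)$. At $v$ itself, the new relation reads $\prod_a(1+\phi'_{a^*}\phi'_a) = u_v(q)_v\,\mathrm{Id}_{V'_v} = q_v^{-1}\,\mathrm{Id}_{V'_v}$ (since $(e_v,e_v)=2$), and this comes out by a direct computation in $A(Q)$ using $(1+xy)^{-1}x = x(1+yx)^{-1}$ together with $P = q_v\,\mathrm{Id}$. At a neighbouring vertex $w$ the relation picks up an extra factor of the form $(1+\phi_a\phi_{a^*})^{\pm 1}$ for each arrow between $v$ and $w$, which is exactly the shift $q_w \mapsto q_v^{-(e_v,e_w)}q_w = u_v(q)_w$. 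The care needed to order products compatibly with the arbitrary ordering ``$<$'' on $Q_1$ is the bookkeeping step most likely to go wrong; it is handled by cyclically conjugating the product, which is legal because its value is $q_v\,\mathrm{Id}$, a scalar.

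Finally, I would show $F_q$ is an equivalence by exhibiting a natural isomorphism $F_{u_v(q)}\circ F_q \cong \mathrm{Id}$. By construction both compositions replace $V_v$ by a canonically isomorphic space (the reflection is an involution on $V_w$ for $w\neq v$ and squares to the identity on $V_v$ up to an invertible rescaling supplied by $P$), and naturality in morphisms is immediate from functoriality of kernels/cokernels. Fullness, faithfulness, and essential surjectivity then follow formally. The main obstacle throughout is keeping careful track of the non-commutative products, and ensuring that the invertibility hypotheses required by the universal localisation $A(Q)$ continue to hold after reflection; the assumption $q_v\neq 1$ is exactly what is needed to guarantee this.
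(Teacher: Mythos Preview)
The paper does not prove this proposition; it simply quotes it from \cite[Theorem 1.7]{cb-shaw} without further argument. Your proposal is therefore not to be compared with anything in the present paper, but rather with the original source. Your outline is indeed in the spirit of the construction in \cite{cb-shaw}: orient all arrows at $v$ inward, use the relation $\prod_a(1+\phi_a\phi_{a^*})=q_v\,\mathrm{Id}$ together with $q_v\neq 1$ to build the reflected vertex space as a suitable kernel/cokernel of dimension $\dim W-\alpha_v$, and check the new relations using the identity $(1+xy)^{-1}x=x(1+yx)^{-1}$. The one place where your sketch is genuinely vague is the sentence ``produce an operator on $W$ whose image or kernel determines $V'_v$'': in the multiplicative setting the relation is a product of rank-one perturbations rather than a single $AB$, so identifying the correct operator and verifying the dimension count requires more than rearranging $P$---in \cite{cb-shaw} this is handled by an explicit construction which you would need to reproduce or cite. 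Otherwise your plan matches the original argument.
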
 
We will need also reflections on $\theta$.
 Define
\[
r_v: \Z^{Q_0}\rightarrow \Z^{Q_0}, \ \ \ r_v(\theta)_w=\theta_w - (e_v, e_w) \theta_v.
\]
\begin{defn} \label{d:admiss-refl}
The map $(q,\theta,\alpha) \mapsto (u_v(q),r_v(\theta), s_v(\alpha))$, 
is called a reflection. If $\theta_v \neq 0$ or $q_v \neq 1$, it is called
an \emph{admissible reflection}.
\end{defn}
We will explain below isomorphisms of multiplicative quiver varieties,
due to Yamakawa, which are closely related to the above equivalence.

\subsection{Moduli of representations of $\Lambda^q(Q)$} We shall now outline the construction of the varieties of interest for the present work. As mentioned above, the general definition involves a stability condition $\theta\in\Z^{Q_0}$, which we fix for the rest of this section.\\ 
The seminal work of King \cite{king}
allows one to define the notion of $\theta$-semistability for modules over
$\Lambda^q$:
%Given a dimension vector $\alpha$,
%one from the seminal work of King, \cite{king}, one can introduce the following definition of $\theta$-(semi)stability on $\mr{Rep}(Q, \alpha)$.
\begin{defn}
Let $M$ be a finite-dimensional representation of $\Lambda^q$ such that $\underline{\dim}M\cdot \theta=0$. The module $M$ is said to be \tit{$\theta$-semistable} if, for any sub-module $N\subset M$ 
\[
\theta \cdot \vdim N\leq 0.
\]
The module $M$ is said to be \tit{$\theta$-stable} if the strict inequality holds. Finally, $M$ is said to be \tit{$\theta$-polystable} if it is a direct sum of $\theta$-stable representations. Given a set (or scheme) $X$ of representations, let $X^{\theta\mr{-}s}$ and $X^{\theta\mr{-}ss}$ denote the $\theta$-stable and $\theta$-semistable loci, respectively. We will use the notation $\mr{Rep}^{\theta\mr{-}s}(Q,\alpha) := \mr{Rep}(Q,\alpha)^{\theta\mr{-}s}$ and similarly for $\theta\mr{-}ss$.
\end{defn}
\begin{rem} By \cite[Proposition 3.1]{king}, one has that the above definition of stability coincides with the usual one coming from GIT: indeed,  consider the character 
\[
\chi_\theta: \GL(\alpha)\rightarrow\C^\times, \ \ \ \ \ (g_i)_{i\in Q_0}\mapsto \prod_{i\in Q_0}(\det g_i)^{-\theta_i}.
\]
It defines a linearisation on the trivial line bundle $\mr{Rep}(Q, \alpha)\times \C$ of the action of $\GL(\alpha)$ on $\mr{Rep}(Q, \alpha)$; thus, one can define the notion of $\chi_{\theta}$-(semi)stability \`a la Mumford, \cite{mumford}. The aforementioned result of King proves that $M$ is $\theta$-(semi)stable if and only if it is $\chi_{\theta}$-(semi)stable.
\end{rem}
Using the notion above one can construct the moduli space of (semistable) representations of $\Lambda^q$ of dimension $\alpha$ as follows (see \cite[\S 2]{yamakawa}, for the details):
% define $\mr{Rep}^{\theta\mr{-}ss}(\overline{Q}, \alpha)\subset \mr{Rep}(\overline{Q}, \alpha)$ to be the subset of $\theta$-semistable representations and
define
%set $\mr{Rep}^{\circ}(\overline{Q}, \alpha)$ to be 
\[
  \mr{Rep}^{\circ}(\overline{Q}, \alpha)=\{\phi
  % (\phi_j)_{j\in \overline{Q}_1}
  \in \mr{Rep}(\overline{Q}, \alpha)\ |\ \det(1+\phi_a\phi_a^*)\neq 0,\ a\in \overline{Q}_1\}.
\]
Here and in the following, for $\phi \in \mr{Rep}(\overline{Q}, \alpha)$, we let
$\phi_a, a \in \overline{Q}_1$ denote the component linear maps. 
One can then consider the map
\[
\Phi: \mr{Rep}^{\circ}(\overline{Q}, \alpha)\longrightarrow \GL(\alpha),
\]
defined by the formula
\[
  \Phi(\phi
  % (\phi_j)_{j\in \overline{Q}_1}
)=\prod_{a\in\overline{Q}_1}^{<}(1+\phi_a\phi_a^*)^{\varepsilon(j)}.
\]
Let us identify $\C^\times$ also with the scalar matrices in $\GL(\alpha_i)$, and hence $(\C^\times)^{Q_0}$ also with a subset of $\GL(\alpha)$.
Fixing $q\in (\C^\times)^{Q_0}$, one has that $\mr{Rep}(\Lambda^q(Q), \alpha)$ is the set-theoretic preimage $\Phi^{-1}(q)$. Thus, one can give the following
\begin{defn}
The \tit{multiplicative quiver variety} $\mathcal{M}_{q, \theta}(Q, \alpha)$ is the GIT quotient
\[
\mathcal{M}_{q, \theta}(Q, \alpha):=(\mr{Rep}^{\theta\mr{-}ss}(\overline{Q}, \alpha)\cap\Phi^{-1}(q))\ /\!\!/\ \GL(\alpha).
\]
\end{defn}
% \begin{rem} The action of $\GL(\alpha)$ factors through the quotient
%   $\PGL(\alpha) := \GL(\alpha)/\C^\times$. It is probably better to use
%   this group (for instance, if we want the derived stack version of
%   the construction to coincide with the above in some cases), although
%   the definition is equivalent.
% \end{rem}
\begin{rem}
The reason for the terminology in the previous definition is apparent: the equations defining the multiplicative preprojective relation are modifications of the ones used to define the usual \tit{{deformed} preprojective algebras}, whose moduli of (semistable) representations are Nakajima quiver varieties.
\end{rem}
It is worth recalling a fundamental result of King, which gives a moduli-theoretic interpretation---in the sense of (representable) moduli functors---to $\mathcal{M}_{q, \theta}(Q, \alpha)$. 
\begin{thm}\cite[Propositions 3.1 and 3.2]{king} 
\label{t:king}
Assume $\theta\in \Z^{Q_0}$. Then, $\mathcal{M}_{q, \theta}(Q, \alpha)$ is a coarse moduli space for families of $\theta$-semistable representations up to $S$-equivalence.
\end{thm}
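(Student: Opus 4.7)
The plan is to reduce the statement to King's theorem \cite[Propositions 3.1, 3.2]{king} applied to the linearised $\GL(\alpha)$-action on $\mr{Rep}(\overline{Q}, \alpha)$ with character $\chi_\theta$, by exploiting the fact that $\mm_{q,\theta}(Q,\alpha)$ is the GIT quotient of a closed invariant subvariety of $\mr{Rep}^\circ(\overline{Q}, \alpha)$. The key preliminary observation is that the constraint $\Phi = q$ is $\GL(\alpha)$-invariant: the map $\Phi$ is equivariant for the conjugation action on source and target, and $q \in (\C^\times)^{Q_0}$ lies in the centre of $\GL(\alpha)$, so $\Phi^{-1}(q)$ is preserved. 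Consequently, $\mr{Rep}^{\theta-ss}(\overline{Q},\alpha) \cap \Phi^{-1}(q)$ is a closed $\GL(\alpha)$-invariant subvariety of $\mr{Rep}^{\theta-ss}(\overline{Q},\alpha)$, and the GIT quotient defining $\mm_{q,\theta}(Q,\alpha)$ makes sense.

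The first step is to apply King's Proposition 3.1, the Hilbert--Mumford criterion for quiver representations: $\theta$-(semi)stability of a representation of $\overline{Q}$ of dimension $\alpha$ (in the slope sense above) coincides with $\chi_\theta$-(semi)stability in the GIT sense. Since this identification concerns an intrinsic property of the representation, it restricts to the closed subvariety $\Phi^{-1}(q)$, so the $\theta$-(semi)stable representations of $\Lambda^q(Q)$ of dimension $\alpha$ are exactly the $\chi_\theta$-(semi)stable points of this subvariety.

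Next, by Mumford's general GIT, the quotient is a good categorical quotient whose closed points biject with closed $\GL(\alpha)$-orbits in the semistable locus. Under the identification above, these closed orbits correspond to $\theta$-polystable $\Lambda^q$-modules, and orbit-closure intersection corresponds to having isomorphic $\theta$-semisimplifications, i.e., to $S$-equivalence. To upgrade this to the coarse moduli statement, I would adapt King's Proposition 3.2, which for each family of $\chi_\theta$-semistable $\overline{Q}$-representations over a base $T$ produces, by flat descent, a canonical morphism from $T$ to the GIT quotient, universal among such natural transformations. Restricting to families where $\Phi$ equals $q$ pointwise, i.e., to families of $\Lambda^q$-modules, the induced morphism factors through $\mm_{q,\theta}(Q,\alpha)$ and inherits the universal property.

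The main obstacle is conceptually mild but worth pinning down: one must verify that the abelian-categorical notion of $S$-equivalence for $\Lambda^q$-modules coincides with the ambient notion for $\overline{Q}$-representations satisfying $\Phi=q$. This holds because any sub- or quotient $\overline{Q}$-representation of a $\Lambda^q$-module automatically inherits the multiplicative preprojective relation (the relation is preserved by sub-objects and quotients, once the ambient module satisfies it), so Jordan--Hölder series computed in the two categories agree, and the polystable $\Lambda^q$-module parameterising each $S$-equivalence class is the same in both settings.
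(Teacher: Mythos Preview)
The paper does not provide a proof of this theorem; it is stated with a citation to King \cite[Propositions 3.1 and 3.2]{king} and then used without further comment. Your proposal is therefore a correct expansion of what the paper leaves implicit, and follows the same route the citation indicates.

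Your argument is essentially sound. One minor simplification: rather than treating $\Phi^{-1}(q)$ as a closed invariant subvariety of $\mr{Rep}^\circ(\overline{Q},\alpha)$ and restricting King's construction, it is cleaner to apply King's theory directly with the finitely generated algebra $A = \Lambda^q(Q)$, whose representation variety $\mr{Rep}(\Lambda^q,\alpha)$ is affine; King's Propositions 3.1--3.2 (and the subsequent moduli discussion) are formulated for modules over an arbitrary finitely generated algebra, so they apply verbatim. Your final observation---that any $\overline{Q}$-subrepresentation of a finite-dimensional $\Lambda^q$-module is automatically a $\Lambda^q$-submodule, because invertibility of $1+\phi_j\phi_j^*$ and the multiplicative relation pass to invariant subspaces and quotients---is precisely what ensures that the abelian-categorical $S$-equivalence for $\Lambda^q$-modules coincides with the GIT notion, so both approaches yield the same conclusion.
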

%\begin{rem}\label{s-equivalence}
Here two $\theta$-semistable representations are $S$-equivalent if and only if they have the same composition factors into $\theta$-stable representations (i.e., they have filtrations whose subquotients are isomorphic $\theta$-stable representations). This means that every point in $\mm_{q,\theta}(Q,\alpha)$ has a unique representative which is $\theta$-polystable, up to isomorphism.
%\end{rem}

Precisely as in \cite[Lemma 2.4]{bellamy-schedler}, we have the following instance of the well-known principle of GIT:
\begin{defn} \label{d:tss}
We say that $\theta' \geq \theta$ if every $\theta'$-semistable representation of $\Lambda^q$ is also $\theta$-semistable.
\end{defn}
Note that $\theta' \geq \theta$ is implied if the purely combinatorial condition holds, that $\theta \cdot \beta > 0$ implies $\theta' \cdot \beta > 0$ for all $\beta < \alpha$. 
% Indeed, $\theta' \geq \theta$ follows if
%   $\theta \cdot \beta > 0 \,\Rightarrow\, \theta' \cdot \beta > 0$ for
%   all $\beta < \alpha$, which is an open condition.  Moreover, we can
%   guarantee that
%   $\theta \cdot \beta < 0 \, \Rightarrow \theta' \cdot \beta < 0$,
%   i.e., that every $\theta$-stable representation is also
%   $\theta'$-stable.
\begin{lem}\cite[Lemma 2.4]{bellamy-schedler} \label{l:varytheta}
Let $\alpha \in N_{q,\theta}$ be such that $\mm_{q,\theta}(Q,\alpha) \neq \emptyset$. Take $\theta' \geq \theta$. Then we have a projective Poisson morphism
$\mm_{q,\theta'}(Q,\alpha) \to \mm_{q,\theta}(Q,\alpha)$ induced by
the inclusion $\Phi^{-1}(q)^{\theta'-ss} \subseteq \Phi^{-1}(q)^{\theta\mr{-}ss}$.
\end{lem}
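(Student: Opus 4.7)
The plan is to imitate the additive argument of \cite[Lemma 2.4]{bellamy-schedler} step by step, replacing the moment-map fibre $\mu^{-1}(\lambda)$ by the multiplicative one $\Phi^{-1}(q)$ and invoking Yamakawa's quasi-Hamiltonian reduction in place of ordinary Hamiltonian reduction to get the Poisson structure.

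First, by Definition \ref{d:tss}, the hypothesis $\theta' \geq \theta$ gives the $\GL(\alpha)$-stable inclusion
\[
\Phi^{-1}(q)^{\theta'\mr{-}ss} \subseteq \Phi^{-1}(q)^{\theta\mr{-}ss}.
\]
By the universal property of GIT quotients, this inclusion descends to a morphism $\pi: \mm_{q,\theta'}(Q,\alpha) \to \mm_{q,\theta}(Q,\alpha)$. To see projectivity, I would use variation of GIT: write $N$ large enough so that $N(\theta'-\theta)$ is an integral character, and observe that $\chi_{N\theta'} = \chi_{N\theta}\otimes \chi_{N(\theta'-\theta)}$. Standard GIT (see e.g.\ Thaddeus or Dolgachev--Hu) then identifies
\[
\mm_{q,\theta'}(Q,\alpha) \cong \operatorname{Proj}\bigoplus_{n\geq 0}\left(\C[\Phi^{-1}(q)^{\theta\mr{-}ss}]^{\GL(\alpha),\chi_{n N(\theta'-\theta)}}\right)
\]
as a scheme over $\mm_{q,\theta}(Q,\alpha)=\operatorname{Spec}\C[\Phi^{-1}(q)^{\theta\mr{-}ss}]^{\GL(\alpha)}$. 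The graded algebra is finitely generated (as $\Phi^{-1}(q)$ is an affine $\GL(\alpha)$-scheme and one applies Hilbert's theorem to the reductive group together with finite generation of rings of semi-invariants), so $\pi$ is projective.

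For the Poisson statement, recall from \cite{yamakawa} that $\mr{Rep}^{\circ}(\overline{Q},\alpha)$ carries a quasi-Hamiltonian $\GL(\alpha)$-structure with group-valued moment map $\Phi$, and that the quasi-Hamiltonian reduction $\Phi^{-1}(q)/\!\!/\GL(\alpha)$ (on any $\theta$-semistable open locus) inherits a Poisson structure whose symplectic leaves on the $\theta$-stable locus are the ordinary symplectic reductions. Both $\mm_{q,\theta}(Q,\alpha)$ and $\mm_{q,\theta'}(Q,\alpha)$ are therefore Poisson, and the bracket on each one is induced from the same bivector on $\Phi^{-1}(q)^{\theta\mr{-}ss}$, pushed forward to the quotient. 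Hence the comparison morphism $\pi$, being induced by an open inclusion upstairs followed by the quotient map, is automatically Poisson: for $f,g$ local functions on $\mm_{q,\theta}(Q,\alpha)$, both $\{\pi^*f,\pi^*g\}$ and $\pi^*\{f,g\}$ pull back to the same function on $\Phi^{-1}(q)^{\theta'\mr{-}ss}$.

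The main obstacle is the Poisson step: one has to be careful that Yamakawa's bracket, which is defined via a quasi-Hamiltonian (rather than Hamiltonian) formalism, really does descend compatibly to both GIT quotients and that the compatibility is preserved under shrinking the semistable locus. This is essentially done in \cite{yamakawa} for $\theta=0$ and the extension to general $\theta$ follows from the standard fact that the bracket depends only on the affine $\GL(\alpha)$-invariant ring on the fibre $\Phi^{-1}(q)$, not on the choice of linearisation used to carve out a semistable open; projectivity, by contrast, is a routine application of variation of GIT.
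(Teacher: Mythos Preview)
Your proposal is correct and follows exactly the approach the paper indicates: the paper does not give its own proof of this lemma but simply cites \cite[Lemma 2.4]{bellamy-schedler} and calls it ``an instance of the well-known principle of GIT,'' and your sketch carries out precisely that additive argument in the multiplicative setting, with Yamakawa's quasi-Hamiltonian reduction replacing the ordinary Hamiltonian one. One small imprecision: when $\theta\neq 0$ the quotient $\mm_{q,\theta}(Q,\alpha)$ is not globally $\Spec$ of invariants on $\Phi^{-1}(q)^{\theta\mr{-}ss}$; the $\operatorname{Proj}$-over-$\Spec$ description you give should be read Zariski-locally over $\mm_{q,\theta}(Q,\alpha)$ (on each affine chart $\{f\neq 0\}$ for $f$ a $\chi_\theta$-semi-invariant), after which the usual gluing gives the global projective morphism.
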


We caution that this morphism need not be surjective (and indeed the source could be empty when the target is not). However, in many cases, as we will see,
it produces a symplectic resolution.

\subsection{Reflection isomorphisms}
There is a multiplicative analogue of the Lusztig-Maffei-Nakajima reflection isomorphisms of quiver varieties (see in particular \cite[Theorem 26]{Maffei}), due to Yamakawa, which makes use of the reflection functors $F_q$. Let us extend the definition of  $\mathcal{M}_{q,\theta}(Q,\alpha)$ to $\alpha \in \Z^{Q_0}$
by setting it to be empty in the case that  $\alpha_i < 0$ for some $i$.
\begin{thm} \cite[Theorem 5.1]{yamakawa} \label{t:yama}
An admissible reflection $(q,\theta,\alpha) \mapsto (u_v(q),r_v(\theta), s_v(\alpha))$ induces an isomorphism of multiplicative quiver varieties,
$\mathcal{M}_{q,\theta}(Q,\alpha) \cong \mathcal{M}_{u_v(q),r_v(\theta)}(Q,s_v(\alpha))$.
\end{thm}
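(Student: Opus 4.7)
The plan is to upgrade Crawley-Boevey–Shaw's categorical equivalence $F_q$ of \cite{cb-shaw} to a morphism of GIT quotients, and then verify that the stability conditions $\theta$ and $r_v(\theta)$ correspond under this reflection. Since the reflection $(q,\theta,\alpha) \mapsto (u_v(q), r_v(\theta), s_v(\alpha))$ is an involution, it suffices to produce a morphism in one direction, by symmetry. The key identity motivating the definition of $r_v$ is
\[
r_v(\theta) \cdot s_v(\gamma) = \theta \cdot \gamma \qquad \text{for all } \gamma \in \Z^{Q_0},
\]
combined with $(u_v(q))^{s_v(\gamma)} = q^{\gamma}$, which together guarantee that the numerical constraints defining $N_{q,\theta}$ are exchanged correctly with those defining $N_{u_v(q), r_v(\theta)}$.

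First, I would give a geometric construction of the reflection at the vertex $v$, working in families over $\Spec \C[\mathrm{Rep}^\circ(\overline{Q},\alpha)^{\theta-ss}]$. Assembling the arrows incident at $v$ into a pair of maps $a: V_v \to W_v$ and $b: W_v \to V_v$ where $W_v = \bigoplus_{h(x)=v} V_{t(x)} \oplus \bigoplus_{t(x)=v} V_{h(x)}$, the multiplicative moment map relation, after interchanging $a$ and $b$ in products of the form $(1+ab)$, takes the local form $(1+ba)^{\pm 1} = q_v \cdot \mathrm{id}_{V_v}$. The reflection replaces $V_v$ with a new space $V_v'$ of dimension $\alpha_v - (\alpha, e_v)$ defined from the cokernel/kernel of an appropriate map built from $a$ and $b$; this is precisely how $F_q$ is constructed representation-theoretically in \cite{cb-shaw}, and the construction works in families provided that the admissibility hypothesis $q_v \neq 1$ or $\theta_v \neq 0$ ensures that the required cokernels have locally constant dimension on the semistable locus.

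The main obstacle, and the content of the theorem beyond \cite[Theorem 1.7]{cb-shaw}, is showing that $\theta$-semistability is exchanged with $r_v(\theta)$-semistability. The argument is combinatorial: for any subrepresentation $U \subseteq V$ one either has that $F_q$ sends $U$ to a subrepresentation $F_q(U) \subseteq F_q(V)$ with $\vdim F_q(U) = s_v(\vdim U)$, or, in the borderline case where $U$ is supported entirely at $v$ (or complementarily, fails to be supported at $v$), a correction term can be controlled. The identity $r_v(\theta) \cdot s_v(\vdim U) = \theta \cdot \vdim U$ then immediately transports the (semi)stability inequality. The admissibility condition is essential precisely here: if $\theta_v \neq 0$ but $q_v = 1$, then any $\theta$-semistable representation must satisfy structural constraints at $v$ (namely, it cannot contain $S_v$ or its dual as a subquotient outside of the correct range), which is what allows the geometric reflection to be defined in the absence of $q_v \neq 1$.

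Finally, combining the previous steps, the reflection gives a $\GL(\alpha) \cong \GL(s_v(\alpha))$-equivariant isomorphism from the $\theta$-semistable locus in $\Phi^{-1}(q)$ to the $r_v(\theta)$-semistable locus in the analogous fibre for $u_v(q)$, where the group identification comes from the matching of dimensions $\alpha \leftrightarrow s_v(\alpha)$ at vertices other than $v$ and the change at $v$. Since $S$-equivalence is preserved (as $F_q$ sends $\theta$-stable to $r_v(\theta)$-stable composition factors), Theorem \ref{t:king} yields the desired isomorphism of GIT quotients. Involutivity of $(u_v, r_v, s_v)$ provides the inverse.
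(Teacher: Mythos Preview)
The paper does not supply its own proof of this statement; it is quoted directly from Yamakawa \cite[Theorem 5.1]{yamakawa} without argument. So there is nothing in the paper to compare your sketch against.

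That said, a couple of points in your outline deserve tightening. First, the equivalence $F_q$ of \cite[Theorem 1.7]{cb-shaw} requires $q_v \neq 1$; in the admissible case $q_v = 1$, $\theta_v \neq 0$ you cannot simply invoke it. Your remark that a $\theta$-semistable representation ``cannot contain $S_v$ or its dual as a subquotient outside of the correct range'' is the right heuristic, but it is not yet a construction of the reflection on the semistable locus in this case. You would need to show that on $\theta$-semistable representations the relevant map at $v$ has constant rank so that the cokernel construction still goes through in families; this is where Yamakawa's argument does real work.

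Second, the phrase ``$\GL(\alpha) \cong \GL(s_v(\alpha))$-equivariant isomorphism'' is not accurate: these two groups are not isomorphic in general, since $\alpha_v \neq s_v(\alpha)_v$. The reflection does not give an equivariant isomorphism of representation spaces; rather, one must show directly that the map on orbits (equivalently on $S$-equivalence classes, via your correct observation that $\theta$-stable factors go to $r_v(\theta)$-stable factors) is well-defined and descends to the GIT quotients.
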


\subsection{Poisson structure on $\mathcal{M}_{q, \theta}(Q, \alpha)$} In order to construct a Poisson structure on $\mathcal{M}_{q, \theta}(Q, \alpha)$, we shall use the theory of quasi-Hamiltonian reductions, first developed in \cite{alekseev} for the case of real manifolds, and then treated by Boalch, \cite{boalch}, and Van den Bergh \cite{van-den-bergh-1, van-den-bergh-2} in the holomorphic and algebraic settings. To this end, note that the map $\Phi$ defined above is a group valued moment map for the quasi-Hamiltonian action of $\GL(\alpha)$ on $\mr{Rep}^{\circ}(\overline{Q}, \alpha)$. Thus, the variety $\mathcal{M}_{q, \theta}(Q, \alpha)$ can be considered as the quasi-Hamiltonian reduction of $\mr{Rep}^{\circ}(\overline{Q}, \alpha)$ modulo the action of $\GL(\alpha)$. From the properties of such a reduction, we obtain that $\mathcal{M}_{q, \theta}(Q, \alpha)$ is a Poisson variety. Moreover, defining 
\[
\mathcal{M}^s_{q, \theta}(Q, \alpha):= (\mr{Rep}^{\theta\mr{-}s}(\overline{Q}, \alpha)\cap\Phi^{-1}(q))/\GL(\alpha),
\]
where $\mr{Rep}^{\theta\mr{-}s}(\overline{Q}, \alpha)\subset\mr{Rep}^{\theta\mr{-}ss}(\overline{Q}, \alpha)$ denotes the $\theta$-stable locus, one has the following result, which will be crucial in proving that $\mathcal{M}_{q, \theta}(Q, \alpha)$ is a symplectic singularity. Note that, in the above definition the quotient is the usual orbit space, if we replace $\GL(\alpha)$ by $\PGL(\alpha) = \GL(\alpha)/\C^\times$, as a point in the stable locus has trivial stabiliser group under $\PGL(\alpha)$.
\begin{prop}\cite[Theorem 3.4]{yamakawa}\label{p:s-smooth}
$\mathcal{M}^s_{q, \theta}(Q, \alpha)$, if non-empty, is an equidimensional algebraic symplectic manifold and its dimension is $2p(\alpha)$.
\end{prop}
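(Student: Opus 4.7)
The plan is to deduce the result from the general theory of quasi-Hamiltonian reduction, specialised to the setting of representations of $\Lambda^q(Q)$. The key inputs are: (i) Van den Bergh's/Yamakawa's construction showing that $\mr{Rep}^{\circ}(\overline{Q},\alpha)$ is a quasi-Hamiltonian $\GL(\alpha)$-space with group-valued moment map $\Phi$ taking values in $\GL(\alpha)$; (ii) the standard reduction theorem, which says that whenever the $\PGL(\alpha)$-action on $\Phi^{-1}(q)$ is set-theoretically free and $\Phi$ has maximal rank there, the quotient $\Phi^{-1}(q)/\PGL(\alpha)$ inherits an algebraic symplectic structure; (iii) King's characterisation of $\theta$-stable representations as those with trivial endomorphism algebra.

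First, I would verify that on the $\theta$-stable locus $\mr{Rep}^{\theta-s}(\overline{Q},\alpha) \cap \Phi^{-1}(q)$, the $\GL(\alpha)$-stabiliser of each point equals the scalars $\C^{\times}$ (via the identification of stabilisers with units of the endomorphism algebra of the corresponding $\Lambda^q$-representation, using King's result). Consequently, $\PGL(\alpha)$ acts with trivial stabilisers, and so by Luna-type slice arguments, or directly by openness of the $\theta$-stable locus and GIT, the map
\[
\mr{Rep}^{\theta-s}(\overline{Q},\alpha) \cap \Phi^{-1}(q) \longrightarrow \mm^s_{q,\theta}(Q,\alpha)
\]
is a principal $\PGL(\alpha)$-bundle (in particular, a geometric quotient).

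Next, I would check that $\Phi$ is a submersion at every point of $\Phi^{-1}(q) \cap \mr{Rep}^{\theta-s}$. The standard criterion in the quasi-Hamiltonian formalism identifies the kernel of the infinitesimal action with the annihilator of the image of $d\Phi_x$ under a non-degenerate pairing; when the stabiliser of $x$ is only the centre $\C^\times$ and $\Phi(x)=q$ is central (so $q$ is fixed by all of $\GL(\alpha)$), a dimension count shows $d\Phi_x$ surjects onto $T_q \GL(\alpha)/\C^\times \cdot e$, which is exactly what is needed. Together with freeness of the $\PGL(\alpha)$-action this implies smoothness of $\mm^s_{q,\theta}(Q,\alpha)$ and gives the symplectic form as the descent of the restriction of the quasi-Hamiltonian $2$-form to $\Phi^{-1}(q) \cap \mr{Rep}^{\theta-s}$, which is non-degenerate exactly because the null-directions are precisely the orbit directions of $\PGL(\alpha)$.

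Finally, the dimension computation is a straightforward count: $\mr{Rep}(\overline{Q},\alpha)$ has dimension $2\sum_{a \in Q_1} \alpha_{t(a)}\alpha_{h(a)}$; submersivity of $\Phi$ cuts out a subvariety of codimension $\dim \GL(\alpha) = \sum \alpha_i^2$; and quotienting by $\PGL(\alpha)$ removes a further $\sum \alpha_i^2 - 1$ dimensions. Assembling,
\[
\dim \mm^s_{q,\theta}(Q,\alpha) = 2\sum_{a \in Q_1}\alpha_{t(a)}\alpha_{h(a)} - 2\sum_{i \in Q_0}\alpha_i^2 + 2 = 2p(\alpha),
\]
which matches the claim and establishes equidimensionality (connected components all have this dimension).

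The main obstacle is step two, the submersivity of $\Phi$ on the stable locus: one must carefully translate the abstract quasi-Hamiltonian criterion of \cite{alekseev} into the concrete quiver setting, using the fact that the ``fusion'' structure making $\mr{Rep}^\circ(\overline{Q},\alpha)$ a quasi-Hamiltonian space is built arrow by arrow from the internally fused double of $\GL(n)$ (cf.\ \cite{van-den-bergh-1}). In practice this amounts to proving that, for a $\theta$-stable representation $X$, the map ``differential of $\Phi$'' has image of codimension exactly $\dim Z(\GL(\alpha))=1$, equivalently that $\mr{Hom}_{\Lambda^q}(X,X) = \C$; this in turn is controlled by Proposition \ref{dim-ext} and King's stability criterion. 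Once this is in hand, the rest of the argument is bookkeeping.
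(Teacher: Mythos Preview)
Your approach is correct and is precisely the one taken in the cited reference \cite[Theorem 3.4]{yamakawa} (the paper itself gives no independent proof, deferring entirely to Yamakawa's quasi-Hamiltonian reduction argument). One small slip: your intermediate bookkeeping says the fibre $\Phi^{-1}(q)$ has codimension $\sum_i \alpha_i^2$, but as you correctly note just before, $d\Phi_x$ only has rank $\dim\GL(\alpha)-1$ (its image misses the central direction), so the fibre has codimension $\sum_i\alpha_i^2-1$ and the $\PGL(\alpha)$-quotient then removes another $\sum_i\alpha_i^2-1$; the two off-by-ones cancel and your final formula $2p(\alpha)$ is right.
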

\subsection{Stratification by representation type} An important result proved in \cite[\S 7]{cb-shaw} concerns a natural stratification of the affine variety $\mathcal{M}_{q, 0}(Q, \alpha)$
% $=\mr{Rep}(\Lambda^q, \alpha)/\!\!/\GL(\alpha)$,
which parametrises semisimple representations of the algebra $\Lambda^q$. This stratification and its generalisation, proved below, to the case of $\theta$-semistable representations are important in order to understand the singular locus of $\mathcal{M}_{q, \theta}(Q, \alpha)$.\\
Consider $M\in\mr{Rep}^{\theta\mr{-}ss}(\Lambda^q, \alpha)$. Replace it by the
unique $\theta$-polystable representation which is $S$-equivalent to it
 (see the discussion after Theorem \ref{t:king}).
 $M$ is then said to be of \tit{representation type} $\tau=(k_1, \beta^{(1)}; \dots; k_r, \beta^{(r)} )$ if it can be decomposed into the direct sum $M\cong M_1^{k_1}\oplus\dots\oplus M_r^{k_r}$, where $M_i$ is a $\theta$-stable representation of $\Lambda^q$ of dimension vector $\beta^{(i)}$, $i=1, \dots, r$, and $M_i\ncong M_j $ for $i\neq j$. 
% Note that, by Remark \ref{s-equivalence}, being of representation type $\tau$ is a well-defined concept also after taking the quotient by the action of $\GL(\alpha)$.

\begin{prop}\label{strati} If $\tau$ is a representation type for
  $\Lambda^q$, then the set $C^{\tau}_{q, \theta}(Q, \alpha)$ of
  $\theta$-semistable representations of type $\tau$ is a locally
  closed subset of $\mathcal{M}_{q, \theta}(Q, \alpha)$, which, if
  non-empty, has dimension
  $\sum_{i=1}^r2p(\beta^{(i)})$. $\mathcal{M}_{q, \theta}(Q, \alpha)$
  is the disjoint union of the strata
  $C^{\tau}_{q, \theta}(Q, \alpha)$, where $\tau$ runs over the set of
  representation types that can occur for $\Lambda^q$.
\end{prop}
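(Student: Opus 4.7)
The plan is to generalise the stratification result proved by Crawley-Boevey and Shaw in \cite[\S 7]{cb-shaw} for the case $\theta = 0$. The key new input is that, for $\theta \ne 0$, we work with the coarse moduli space $\mathcal{M}_{q,\theta}(Q,\alpha)$, whose points parametrise $\theta$-polystable representations up to isomorphism (by Theorem \ref{t:king} and the discussion after it). In particular, every point has a unique representation type $\tau$ attached to it, so the disjoint union assertion is immediate from the uniqueness of the $\theta$-polystable representative.

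The main geometric content is the description of each stratum via the natural map
\[
\pi^{\tau}: \prod_{i=1}^{r} \mathcal{M}^{s}_{q,\theta}(Q,\beta^{(i)}) \supseteq U^{\tau} \longrightarrow \mathcal{M}_{q,\theta}(Q,\alpha), \qquad ([M_1],\ldots,[M_r]) \longmapsto [M_1^{k_1}\oplus\cdots\oplus M_r^{k_r}],
\]
where $U^{\tau}$ is the open subset consisting of tuples whose components are pairwise non-isomorphic (this is automatic if all the $\beta^{(i)}$ are distinct, and it cuts out a nonempty open locus otherwise). Using the uniqueness of the $\theta$-polystable representative in each $S$-equivalence class, one checks that $\pi^{\tau}$ is well-defined, has image exactly $C^{\tau}_{q,\theta}(Q,\alpha)$, and factors through a quotient by the finite group permuting indices $i$ with identical $(k_i,\beta^{(i)})$, yielding a finite surjective map onto $C^{\tau}_{q,\theta}(Q,\alpha)$. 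Combining this with Proposition \ref{p:s-smooth}, which asserts that each $\mathcal{M}^{s}_{q,\theta}(Q,\beta^{(i)})$ is an equidimensional symplectic manifold of dimension $2p(\beta^{(i)})$, gives the dimension formula $\sum_{i=1}^{r} 2p(\beta^{(i)})$.

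For local closedness, I would use the Luna-type stratification by stabiliser type. Specifically, if $M \cong \bigoplus_i M_i^{k_i}$ is $\theta$-polystable of type $\tau$, the stabiliser of $M$ in $\PGL(\alpha)$ is (up to the diagonal centre) isomorphic to $\prod_i \GL_{k_i}$; the stratification of the GIT quotient by conjugacy class of stabiliser is well-known to be a stratification by locally closed subsets, and $C^{\tau}_{q,\theta}$ is the corresponding stratum for the conjugacy class determined by $\tau$. Alternatively, one can argue directly: the closure $\overline{C^{\tau}_{q,\theta}}$ consists of $S$-equivalence classes whose polystable representative is obtained by further direct-sum decomposition (i.e.\ by specialising to a refinement of $\tau$), and the open complement inside this closure is precisely $C^{\tau}_{q,\theta}$.

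The main obstacle I anticipate is not the local closedness itself, which follows from general GIT principles once the coarse moduli description is in place, but rather verifying carefully that the map $\pi^{\tau}$ is a morphism of schemes and that it is (generically) finite onto its image. This requires a small amount of care because one has to track how $\theta$-polystable decompositions glue in families and interact with $S$-equivalence; this is exactly the step that was automatic in the $\theta=0$ setting of \cite[\S 7]{cb-shaw} (where $\theta$-polystable specialises to semisimple and $S$-equivalence classes correspond to closed $\GL(\alpha)$-orbits in the affine quotient), and for general $\theta$ it is handled by applying the analogous closed-orbit characterisation inside the semistable locus provided by King's GIT construction.
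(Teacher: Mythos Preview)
Your proposal is correct and follows essentially the same approach as the paper. The paper's proof simply says to adapt \cite[Lemma 7.1]{cb-shaw} and \cite[Theorem 1.3]{cb-geom} to the $\theta\neq 0$ setting by replacing $\mr{Rep}(\overline{Q},\alpha)$ with its $\theta$-semistable locus and ``(semi)simple'' with ``$\theta$-(semi)stable''; what you have written is precisely the content of that adaptation (direct-sum map from products of stable loci, dimension via Proposition~\ref{p:s-smooth}, disjoint union via uniqueness of the polystable representative). One small point: your identification of $C^{\tau}_{q,\theta}$ with a single Luna stratum relies on the fact that the $\GL(\alpha)$-conjugacy class of the stabiliser $\prod_i \GL_{k_i}$ determines $\tau$, which is true (the stabiliser fixes the isotypic decomposition of $\bigoplus_v \C^{\alpha_v}$ and hence the $\beta^{(i)}$) but deserves a sentence; the paper avoids this by pointing directly to the explicit constructive argument in \cite{cb-geom}.
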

\begin{proof} First, note that the case when $\theta=0$ is treated in
  \cite{cb-shaw} and proved in Lemma 7.1 therein. For the case when
  $\theta\neq 0$ we use the same arguments. Indeed, the fact that
  $\mm_{q, \theta}(Q, \alpha)$ is a disjoint union of subsets of a
  fixed representation type is immediate from the fact that the
  %up to
  %$S$-equivalence, the
  decomposition of a $\theta$-polystable module into $\theta$-stable modules is
  unique. This, in turn, holds because, for $\theta$-stable modules $M$ and $N$, we have $\dim \Hom(M,N) \leq 1$, with equality if and only if $M$ and $N$ are isomorphic.
  %there is a nonzero homomorphism $M \to N$ if and only if $M \cong N$
%  (since the analogue of Schur's Lemma holds, saying that , and, thus, subsets of different representation type can not
%  intersect.
  Moreover, to prove that each
  $C_{q, \theta}^{\tau}(Q, \alpha)$ is locally closed and of the
  dimension prescribed by the lemma, one can adapt the proof
  \cite[Theorem 1.3]{cb-geom}: indeed, those arguments can be repeated
  in this case as well, replacing $\mr{Rep}(\overline{Q}, \alpha)$
  with $\mr{Rep}^{\theta\mr{-}ss}(\overline{Q}, \alpha)$,
  $\mu_{\alpha}^{-1}(\lambda)$ with $\Phi^{-1}(q)$, the word
  `(semi)simple' with `$\theta$-(semi)stable' in the proof, and noting
  that everything goes through in the same way because
  $\mr{Rep}^{\theta\mr{-}ss}(\overline{Q}, \alpha)$ is open in
  $\mr{Rep}(\overline{Q}, \alpha)$. The only difference is that, in
  this case, we do not claim irreducibility, since
  $\mr{Rep}^{\theta\mr{-}ss}(\overline{Q},\beta)$ is not known to be
  irreducible.
  % (even when $\beta \in \Sigma_{q,\theta}$, which we define below).
\end{proof}
We will need also the following property
of $\mr{Rep}^{\theta\mr{-}ss}(\Lambda^q(Q), \alpha)$:
\begin{lem}\label{lem-irred}Every irreducible component of $\mr{Rep}^{\theta\mr{-}ss}(\Lambda^q(Q), \alpha)$ has dimension at least $g_\alpha+2p(\alpha)$ and the set of $\theta$-stable representations form an open subset of $\mr{Rep}(\Lambda^q(Q), \alpha)$ which, if non-empty, is smooth of dimension $g_\alpha+2p(\alpha)$.
\end{lem}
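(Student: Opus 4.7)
The plan is to derive both assertions from Proposition \ref{prop-equi} by combining it with the basic openness properties of GIT (semi)stability, as formulated by King.

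First, I would observe that $\theta$-semistability is an open condition on $\mr{Rep}(\overline{Q},\alpha)$. This is exactly King's GIT interpretation: identifying $\theta$-semistability with $\chi_\theta$-semistability for the linearisation on the trivial line bundle, the semistable locus is open by general GIT. Restricting to the closed subscheme $\Phi^{-1}(q) = \mr{Rep}(\Lambda^q(Q), \alpha)$, we conclude that $\mr{Rep}^{\theta-ss}(\Lambda^q(Q), \alpha)$ is an open subvariety of $\mr{Rep}(\Lambda^q(Q),\alpha)$. Since any irreducible component of an open subset of a variety is open inside some irreducible component of the ambient variety, and since Proposition \ref{prop-equi} bounds every irreducible component of $\mr{Rep}(\Lambda^q(Q),\alpha)$ from below by $g_\alpha + 2p(\alpha)$, the same lower bound transfers to each irreducible component of $\mr{Rep}^{\theta-ss}(\Lambda^q(Q),\alpha)$.

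For the stable locus, the same openness argument (now applied to the $\chi_\theta$-stable locus) shows that $\mr{Rep}^{\theta-s}(\Lambda^q(Q),\alpha)$ is open in $\mr{Rep}(\Lambda^q(Q),\alpha)$. Next, I would invoke the standard Schur-type argument to show that every $\theta$-stable representation $M$ satisfies $\mr{End}_{\Lambda^q}(M) = \C$: any endomorphism $f$ of $M$ has an eigenvalue $\lambda$ (as $\C$ is algebraically closed and $M$ is finite-dimensional), and $\ker(f - \lambda\cdot\mr{id})$ is a nonzero sub-representation of dimension vector $\beta$ with $\beta \cdot \theta \leq 0$ and $\beta \cdot \theta \geq 0$ (applying $\theta$-stability to both the kernel and the image, whose dimension vectors sum to $\vdim M$). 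Strict $\theta$-stability then forces $\ker(f - \lambda \cdot \mr{id}) = M$, i.e., $f = \lambda \cdot \mr{id}$.

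Consequently, the $\theta$-stable locus is contained in the open subset $T \subset \mr{Rep}(\Lambda^q(Q),\alpha)$ of representations with trivial endomorphism algebra considered in Proposition \ref{prop-equi}. Being the intersection of two open subsets (the stable locus and $T$), it is open in $T$, and hence, by Proposition \ref{prop-equi}, smooth of dimension $g_\alpha + 2p(\alpha)$ whenever non-empty. I do not anticipate a substantive obstacle: everything here reduces to a direct application of known results, the only tiny subtlety being to make the transfer between ``open in $\mr{Rep}(\overline{Q},\alpha)$'' and ``open in $\mr{Rep}(\Lambda^q(Q),\alpha)$'', which is automatic since $\mr{Rep}(\Lambda^q(Q),\alpha)$ carries the subspace topology as a closed subscheme of $\mr{Rep}^\circ(\overline{Q},\alpha)$.
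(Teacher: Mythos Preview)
Your proposal is correct and follows essentially the same approach as the paper: openness of (semi)stability together with Proposition \ref{prop-equi} for the dimension bound, and the Schur lemma argument to place the $\theta$-stable locus inside $T$ for smoothness and the dimension computation. The paper cites the $\theta=0$ case via \cite{cb-norm} rather than directly via Proposition \ref{prop-equi}, and omits the explicit Schur argument, but the substance is identical.
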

\begin{proof}
For the first part, Lemma 6.2 in \cite{cb-norm} proves the statement in the case when $\theta=0$, of which the above result is a consequence since $\mr{Rep}^{\theta\mr{-}ss}(\Lambda^q(Q), \alpha)$ is an open subset of $\mr{Rep}(\Lambda^q(Q), \alpha)$: indeed, every irreducible component of the former variety is contained in only one irreducible component of the latter and, hence, the dimension estimate holds. For the second part, one just needs to note that, if $X$ is a $\theta$-stable representation, then $\mr{End}(X)=\C$  and, hence, by \cite[Theorem 1.10]{cb-shaw} defines a smooth point of $\mr{Rep}(\Lambda^q(Q), \alpha)$, which implies that it is a smooth point of $\mr{Rep}^{\theta\mr{-}ss}(\Lambda^q(Q), \alpha)$.
\end{proof}
For the proof of the following proposition, apply the strategy carried out in \cite[\S 6, 7]{cb-norm} and \cite[\S 7]{cb-shaw}: the only change is that, in the definition of representation of \tit{top-type}, one has to replace the word `simple' with the word `$\theta$-stable' and use Proposition \ref{strati} instead of \cite[Lemma 7.1]{cb-shaw} and Lemma \ref{lem-irred} instead of \cite[Theorem 1.1]{cb-shaw}. 
\begin{prop}\label{p:inverseimage}
	The inverse image in $\mr{Rep}^{\theta\mr{-}ss}(\Lambda^q(Q), \alpha)$ of the stratum of representations of type $\tau=(k_1, \beta^{(1)};, \dots; k_r, \beta^{(r)})$ has dimension at most $g_\alpha+p(\alpha)+\sum_{l=1}^rp(\beta^{(l)})$.
      \end{prop}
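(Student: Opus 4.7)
The plan is to follow verbatim the strategy of Crawley-Boevey in \cite[\S 6]{cb-norm} and \cite[\S 7]{cb-shaw}, which establishes the analogous dimension bound in the case $\theta = 0$, making the substitutions indicated immediately before the proposition's statement: replace `simple' by `$\theta$-stable', replace $\mr{Rep}(\Lambda^q, \alpha)$ by its open subscheme $\mr{Rep}^{\theta-ss}(\Lambda^q, \alpha)$, and use Proposition \ref{strati} and Lemma \ref{lem-irred} in place of \cite[Lemma 7.1]{cb-shaw} and \cite[Theorem 1.1]{cb-shaw} respectively.

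Concretely, denote by $Z_\tau \subseteq \mr{Rep}^{\theta-ss}(\Lambda^q, \alpha)$ the preimage of the stratum $C^\tau_{q,\theta}(Q, \alpha)$. Following Crawley-Boevey, I would parametrise $Z_\tau$ by a variety of filtered data: fix an ordering $M^{(1)}, \ldots, M^{(N)}$, with $N = \sum_i k_i$, of the $\theta$-stable constituents (with multiplicity), and let $F_\tau$ be the variety whose points are a compatible partial flag in each $\C^{\alpha_i}$ of the appropriate dimensions, together with a $\theta$-stable representation of the prescribed dimension vector on each successive quotient, and a filtered $X \in \mr{Rep}^{\theta-ss}(\Lambda^q, \alpha)$ inducing them. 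The natural forgetful map $F_\tau \to Z_\tau$ is surjective.

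Next, I would compute $\dim F_\tau$ fibrewise. The map from $F_\tau$ to the successive quotients lands in the smooth product $\prod_j \mr{Rep}^{\theta-s}(\Lambda^q, \underline{\dim}M^{(j)})$, of total dimension $\sum_i k_i (g_{\beta^{(i)}} + 2p(\beta^{(i)}))$ by Lemma \ref{lem-irred}. Over each collection of quotients, the fibre is an affine space of compatible extensions whose dimension is controlled term by term via Proposition \ref{dim-ext}, using the crucial fact that $\dim \Hom(M, N) \leq 1$ between $\theta$-stable representations, with equality iff $M \cong N$. Adding the dimension of the flag bookkeeping and subtracting the dimension of the fibres of $F_\tau \to Z_\tau$ (essentially the gauge group of the filtered object), the combinatorics collapses, after expanding $\frac{1}{2}(\alpha, \alpha) = \frac{1}{2} \sum_{i,l} k_i k_l (\beta^{(i)}, \beta^{(l)})$ and $g_\alpha = -1 + \sum_i \alpha_i^2$, to the bound $g_\alpha + p(\alpha) + \sum_{l=1}^r p(\beta^{(l)})$.

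The main potential obstacle is that, unlike in the $\theta = 0$ setting, $\mr{Rep}^{\theta-ss}(\Lambda^q, \alpha)$ is not a priori irreducible; but, exactly as in the proof of Proposition \ref{strati}, this is circumvented by openness of $\mr{Rep}^{\theta-ss}$ inside $\mr{Rep}$, which allows the component-wise dimension estimate of Lemma \ref{lem-irred} to replace irreducibility in the source arguments, so that the remaining combinatorial computation transfers unchanged.
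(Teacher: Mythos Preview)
Your proposal is correct and follows essentially the same approach as the paper, which simply refers the reader to the strategy of \cite[\S 6, 7]{cb-norm} and \cite[\S 7]{cb-shaw} with the indicated substitutions (simple $\to$ $\theta$-stable, Proposition \ref{strati} for \cite[Lemma 7.1]{cb-shaw}, Lemma \ref{lem-irred} for \cite[Theorem 1.1]{cb-shaw}). Your sketch in fact supplies more detail than the paper does, and your observation about handling possible reducibility via openness matches the paper's comment in the proof of Proposition \ref{strati}.
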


\subsection{The set $\Sigma_{q,\theta}$}\label{ss:sigma}
      As mentioned in the introduction, the dimension vectors of stable representations are closely related to the following combinatorially-defined set,
 which is the multiplicative analogue of the set $\Sigma_\lambda$ introduced by Crawley-Boevey in \cite{cb-deco} and extensively used in \cite{bellamy-schedler}:
 %(generalised to $\Sigma_{\lambda,\theta}$) to prove the existence of a \tit{canonical decomposition} for quiver varieties with general stability conditions. In the next definition, one should keep in mind that $\theta\in\Z^{Q_0}$ will play the role of a stability condition for a GIT quotient.
\begin{defn}\label{defn-sigma}
Fix $q\in(\C^\times)^{Q_0}$ and $\theta\in \Z^{Q_0}$ and set 
$N_{q,\theta} := \{\alpha \in \N^{Q_0} \mid
q^\alpha = 1, \alpha \cdot \theta = 0\}$. Define
$R^+_{q, \theta}:=R^+ \cap N_{q,\theta}$. Then,
%$Th\{\alpha\in R^+\ |\ \alpha \cdot \theta=0,\ q^{\alpha}=1\}$. Then, we define
\[
\Sigma_{q,\theta} := \left\{ \alpha \in R_{q,\theta}^+ \ \left| \ p(\alpha) > \sum_{i
	= 1}^r p \left( \beta^{(i)} \right) \textrm{ for any decomposition } \right. \right.\]
\[
\hspace{65mm} 
\left. \alpha =
\beta^{(1)} + \dots + \beta^{(r)} \textrm{ with } r \ge 2, \ \beta^{(i)}
\in R_{q,\theta}^+ \right\}.
\]
When $\theta=0$, we shall use the shortened notation $\Sigma_q$ in place of $\Sigma_{q, 0}$.
\end{defn}
The following is an extension of \cite[Theorem 1.11]{cb-shaw} to the case $\theta \neq 0$.
%It is a direct consequence of Proposition \ref{} to be given later.
\begin{prop}\label{rep-equidim2}
	Let $\alpha\in\Sigma_{q, \theta}$. Then, if non-empty,
$\mr{Rep}^{\theta\mr{-}ss}(\Lambda^q(Q), \alpha)$ is a complete intersection in $\mr{Rep}^{\theta\mr{-}ss}(Q,\alpha)$, equidimensional of dimension $g_\alpha+2p(\alpha)$. The locus of $\theta$-stable representations $\mr{Rep}^{\theta\mr{-}s}(\Lambda^q(Q), \alpha)$ is dense inside $\mr{Rep}^{\theta\mr{-}ss}(\Lambda^q(Q), \alpha)$.
\end{prop}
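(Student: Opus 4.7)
The proof plan is to extend Crawley-Boevey's argument for Proposition \ref{rep-equidim} (i.e., \cite[Theorem 1.11]{cb-shaw}) to the $\theta$-semistable setting, combining the three ingredients just assembled: the lower bound of Lemma \ref{lem-irred}, the stratification by representation type of Proposition \ref{strati}, and the upper stratum-dimension bound of Proposition \ref{p:inverseimage}. The defining property of $\Sigma_{q,\theta}$ will force only the $\theta$-stable stratum to attain the maximal dimension.

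In detail, Lemma \ref{lem-irred} already gives that each irreducible component of $\mr{Rep}^{\theta-ss}(\Lambda^q(Q),\alpha)$ has dimension at least $g_\alpha+2p(\alpha)$, and that the $\theta$-stable locus, if non-empty, is smooth of exactly this dimension. For a matching upper bound, I would stratify by representation type $\tau=(k_1,\beta^{(1)};\dots;k_r,\beta^{(r)})$ with $\alpha=\sum_l k_l\beta^{(l)}$ recording the $\theta$-stable composition factors of a $\theta$-polystable representative. Each such $\beta^{(l)}$ lies in $R^+_{q,\theta}$: one has $q^{\beta^{(l)}}=1$ by taking determinants of the defining relation of $\Lambda^q$, $\theta\cdot\beta^{(l)}=0$ by $\theta$-stability, and $\beta^{(l)}$ is a positive root by the multiplicative analogue of the statement on dimension vectors of simple modules in \cite{cb-shaw}. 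By Proposition \ref{p:inverseimage}, the preimage of the type-$\tau$ stratum has dimension at most $g_\alpha+p(\alpha)+\sum_{l=1}^r p(\beta^{(l)})$; for any $\tau\neq(1,\alpha)$ the expression $\alpha=\sum_l k_l\beta^{(l)}$, viewed as a decomposition into $R^+_{q,\theta}$ with $\sum_l k_l\geq 2$ summands (each $\beta^{(l)}$ repeated $k_l$ times), triggers the defining property of $\Sigma_{q,\theta}$ and yields
\[
p(\alpha)>\sum_l k_l\, p(\beta^{(l)})\geq \sum_l p(\beta^{(l)}),
\]
using $p(\beta^{(l)})\geq 0$ for positive roots. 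Hence the preimage of every non-$\theta$-stable stratum has dimension strictly less than $g_\alpha+2p(\alpha)$.

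Putting these estimates together, the $\theta$-stable locus is dense, and every irreducible component has dimension exactly $g_\alpha+2p(\alpha)$. Since $\mr{Rep}^{\theta-ss}(\Lambda^q(Q),\alpha)$ is cut out inside the smooth open subset $\mr{Rep}^{\theta-ss}(\overline{Q},\alpha)\cap\mr{Rep}^\circ(\overline{Q},\alpha)$ by the $g_\alpha$ independent equations encoded by $\Phi=q$ (accounting for the single automatic relation $q^\alpha=1$ on determinants), the dimension count forces the complete intersection property. The main subtlety I anticipate is checking that the preceding propositions on stratification and stratum-dimension really carry over from the $\theta=0$ case without tacitly using irreducibility of $\mr{Rep}^{\theta-ss}(\overline{Q},\beta)$, which is not claimed here; however, as the proofs of Propositions \ref{strati} and \ref{p:inverseimage} already flag, the arguments of \cite[\S 7]{cb-shaw} and \cite[\S 6,7]{cb-norm} adapt by systematically replacing \emph{simple} with \emph{$\theta$-stable} and working component-by-component, so the strategy above should go through verbatim.
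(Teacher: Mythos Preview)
Your proposal is correct and follows essentially the same approach as the paper: the paper's proof is simply a one-sentence pointer instructing the reader to rerun the arguments of \cite[\S 6,7]{cb-norm} and \cite[\S 7]{cb-shaw}, replacing ``simple'' by ``$\theta$-stable'' and invoking Proposition~\ref{strati} and Lemma~\ref{lem-irred} in place of their $\theta=0$ counterparts, which is exactly what you have unpacked. Your handling of the key inequality $p(\alpha)>\sum_l p(\beta^{(l)})$ via the defining property of $\Sigma_{q,\theta}$ (using that the $\beta^{(l)}$ lie in $R^+_{q,\theta}$) and the resulting complete-intersection conclusion are precisely the steps implicit in the cited strategy.
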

\begin{proof}
This is a direct consequence of Lemma \ref{lem-irred} and Proposition \ref{p:inverseimage}, by the definition of $\Sigma_{q,\theta}$.
\end{proof}
\begin{rem} \label{r:dim-stab}
  Note that a consequence of the above proposition is that, if $\pi:\mr{Rep}^{\theta\mr{-}ss}(\Lambda^q(Q), \alpha)\rightarrow \mm_{q, \theta}(Q, \alpha)$ is the projection map, the image $\mm^s_{q, \theta}(Q, \alpha)$ of the stable locus
  % $:=\pi\left(\mr{Rep}^{\theta\mr{-}s}(\Lambda^q(Q), \alpha))\right)$
  is dense in the moduli space $\mm_{q, \theta}(Q,\alpha)$. As a corollary of this and
  Proposition \ref{p:s-smooth} (or Proposition \ref{strati}), one has that every component of $\mm_{q, \theta}(Q, \alpha)$ has dimension $2p(\alpha)$.
\end{rem}
A useful corollary of the proposition is the following criterion for birationality of the maps $\mm_{q, \theta'}(Q, \beta) \to \mm_{q, \theta}(Q, \beta)$. Together with Lemma \ref{l:varytheta}, this explains that these maps will be resolutions of singularities when the source is smooth.
\begin{coro}\label{c:birational}
Let $\alpha \in \Sigma_{q,\theta}$ be such that $\mm_{q,\theta}(Q,\alpha) \neq \emptyset$. Take $\theta' \geq \theta$ such that every $\theta$-stable representation is $\theta'$-stable. Then, the morphism $\mm_{q, \theta'}(Q, \beta) \to \mm_{q, \theta}(Q, \beta)$
is birational.
\end{coro}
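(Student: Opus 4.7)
The plan is to adapt the argument of \cite[Lemma 2.4]{bellamy-schedler} from the additive to the multiplicative setting. By Lemma \ref{l:varytheta}, the natural morphism
\[
f \colon \mm_{q,\theta'}(Q,\alpha) \to \mm_{q,\theta}(Q,\alpha)
\]
is projective. By Proposition \ref{rep-equidim2} together with Remark \ref{r:dim-stab}, every irreducible component of the target has dimension $2p(\alpha)$, and the $\theta$-stable locus $\mm^s_{q,\theta}(Q,\alpha)$ is open and dense in $\mm_{q,\theta}(Q,\alpha)$. It therefore suffices to identify a dense open subset over which $f$ restricts to an isomorphism.

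The key step would be to show that $f$ is set-theoretically injective above the open locus $U \subseteq \mm^s_{q,\theta}(Q,\alpha)$ consisting of those $\theta$-stable classes $[M]$ admitting a $\theta'$-semistable representative in the $\GL(\alpha)$-orbit of $M$. Indeed, any $[M'] \in f^{-1}([M])$ can be represented by a $\theta'$-polystable $M'$; by construction of $f$ from the inclusion $\Phi^{-1}(q)^{\theta'-ss} \subseteq \Phi^{-1}(q)^{\theta-ss}$, the $\theta$-$S$-equivalence class of $M'$ coincides with that of $M$. Since $M$ is $\theta$-stable it is simple as an object of the $\theta$-semistable category, so its $\theta$-polystable representative is $M$ itself. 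Comparing dimension vectors then forces $M' \cong M$, giving $f^{-1}([M]) = \{[M]\}$ on $U$.

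The main obstacle—and the place where some care is required—is to establish that $U$ is dense in $\mm_{q,\theta}(Q,\alpha)$. Being $\theta'$-semistable is an open condition on the representation variety, and it descends to an open condition on the quotient, so density reduces to non-emptiness of $U$ on each irreducible component of the target. Assuming the source $\mm_{q,\theta'}(Q,\alpha)$ is non-empty (otherwise the statement is vacuous), I would verify that its components also have dimension $2p(\alpha)$ by a variant of Proposition \ref{rep-equidim2} for $\theta'$, using that $\alpha \in \Sigma_{q,\theta}$ together with $\theta' \geq \theta$ to control the decomposition criterion of Definition \ref{defn-sigma}. Since $f$ is projective, its image is closed of dimension $2p(\alpha)$, hence dominates each component of $\mm_{q,\theta}(Q,\alpha)$; it must therefore meet the open dense $\mm^s_{q,\theta}(Q,\alpha)$, and, by the openness of the $\theta'$-semistability condition, it meets $U$. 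Combining this density with the injectivity above yields the desired birationality.
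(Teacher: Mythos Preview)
Your overall strategy (locate an open locus on which $f$ is an isomorphism) is fine, but you have made the argument much harder than necessary, and two of your steps do not go through as written.

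First, the step ``I would verify that its components also have dimension $2p(\alpha)$ by a variant of Proposition \ref{rep-equidim2} for $\theta'$'' is a promissory note, not a proof. You are implicitly trying to show $\alpha \in \Sigma_{q,\theta'}$ (or at least enough of its consequence) from $\alpha \in \Sigma_{q,\theta}$ and $\theta' \geq \theta$, but Definition \ref{d:tss} is a statement about semistable representations, not about the lattices $N_{q,\theta}$ and $N_{q,\theta'}$, so there is no direct route from one decomposition criterion to the other. Second, even granting the dimension claim, the inference ``its image is closed of dimension $2p(\alpha)$, hence dominates each component of $\mm_{q,\theta}(Q,\alpha)$'' is simply false: a closed subset of the right dimension in an equidimensional variety is a union of \emph{some} components, not necessarily all of them. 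So your density argument for $U$ in the target collapses.

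The paper's proof avoids all of this by working on the \emph{source} side and using nothing beyond elementary topology. Since $\theta' \geq \theta$, the inclusion $\mr{Rep}^{\theta'\text{-}ss}(\Lambda^q,\alpha) \subseteq \mr{Rep}^{\theta\text{-}ss}(\Lambda^q,\alpha)$ is open. By Proposition \ref{rep-equidim2} the $\theta$-stable locus is dense in $\mr{Rep}^{\theta\text{-}ss}(\Lambda^q,\alpha)$; intersecting a dense set with an open set is dense in that open set, so the $\theta$-stable locus is dense in $\mr{Rep}^{\theta'\text{-}ss}(\Lambda^q,\alpha)$, hence its image is dense in $\mm_{q,\theta'}(Q,\alpha)$. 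That locus maps isomorphically to its image in $\mm_{q,\theta}(Q,\alpha)$, which is exactly your $U$. No dimension count, no ``variant'' of Proposition \ref{rep-equidim2}, and in particular no need to analyse $\Sigma_{q,\theta'}$ at all.
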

\begin{rem}\label{r:rational stability}
 Note that $\theta' \geq \theta$ is guaranteed if, whenever $\beta < \alpha$, then $\theta \cdot \beta > 0$ implies $\theta' \cdot \beta > 0$. Similarly, the assumption
 that every $\theta$-stable representation is $\theta'$-stable is implied if, for $\beta < \alpha$, then $\theta \cdot \beta < 0$ implies $\theta' \cdot \beta < 0$. To find $\theta'$ satisfying these conditions, first note that they will be satisfied for rational stability conditions $\theta' \in \mathbb{Q}^{Q_0}$ sufficiently close to $\theta$.  But they hold for a  rational vector if and only if they hold for an integral multiple.
 \end{rem}
\begin{proof}
  % Note that $\alpha \in \Sigma_{q,\theta'}$ by definition, since $\alpha \in \Sigma_{q,\theta}$ and $R^+_{q,\theta'} \subseteq R^+_{q,\theta}$. 
  By Definition \ref{d:tss}, $\mr{Rep}^{\theta'\mr{-}ss}(\Lambda^q, \alpha)$ is a subset of $\mr{Rep}^{\theta\mr{-}ss}(\Lambda^q,\alpha)$, and it is open.
  By assumption, the locus $\mr{Rep}^{\theta\mr{-}s}(\Lambda^q,\alpha)$ is open
  in $\mr{Rep}^{\theta'\mr{-}ss}(\Lambda^q,\alpha)$. It is also dense, since it is
  dense in $\mr{Rep}^{\theta\mr{-}ss}$. Therefore the locus $\mm^{\theta\mr{-}s}_{q, \theta'}(Q, \beta)$ is open and dense in $\mm_{q,\theta'}(Q,\beta)$. As the stable $\GL(\alpha)$-orbits are closed, $\mm^{\theta\mr{-}s}_{q,\theta'}(Q,\beta)$ maps isomorphically to $\mm^s_{q,\theta}(Q,\beta)$. As the latter is dense in $\mm_{q,\theta}(Q,\beta)$, we conclude the desired birationality.
  %   Since
%   the $\theta$-stable locus is dense in the $\theta$-semistable locus, this implies that the $\theta$-stable, $\theta'$-semistable locus is dense in the $\theta'$-semistable locus. 
% So the $\theta$-stable locus is dense in $\mm_{q, \theta'}(Q, \beta)$.
%  the  $\theta$-stable locus , which is
%   open and dense inside $\mm_{q, \theta'}(Q, \beta)$, by assumption maps isomorphically to the $\theta$-stable locus of $\mm_{q, \theta}(Q, \beta)$. 
\end{proof}

Using the above results, one can derive an important geometric
property of the moduli space $\mathcal{M}_{q, \theta}(Q,
\alpha)$.
For reasons which are clear in the proof of the proposition, we assume
that a certain codimension estimate holds. As usual, let $\pi:\mr{Rep}^{\theta\mr{-}ss}(\Lambda^q, \alpha)\rightarrow\mm_{q, \theta}(Q, \alpha)$ denote the quotient map.
\begin{lem}\label{l:dimest}
Assume $\alpha\in\Sigma_{q, \theta}$ and let $\tau$ be a stratum. The following inequality holds true:
	\[
	\mr{codim}_{\mr{Rep}^{\theta\mr{-}ss}(\Lambda^q, \alpha)}(\pi^{-1}(C^\tau_{q, \theta}(Q, \alpha)))\geq\frac{1}{2}\mr{codim}_{\mm_{q, \theta}(Q, \alpha)}(C^\tau_{q, \theta}(Q, \alpha)).
	\]
\end{lem}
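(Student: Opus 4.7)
The plan is to perform a direct dimension count using the results already proved in the paper, chiefly Propositions \ref{rep-equidim2}, \ref{strati}, \ref{p:inverseimage}, together with Remark \ref{r:dim-stab}.

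First I would compute the right-hand side. Since $\alpha \in \Sigma_{q,\theta}$ and $\mm_{q,\theta}(Q,\alpha)$ is assumed to contain the stratum $C^\tau_{q,\theta}(Q,\alpha)$ (so in particular is non-empty), Remark \ref{r:dim-stab} gives that every component of $\mm_{q,\theta}(Q,\alpha)$ has dimension $2p(\alpha)$, while Proposition \ref{strati} says that the stratum itself has dimension $\sum_{i=1}^r 2p(\beta^{(i)})$. Thus
\[
\mr{codim}_{\mm_{q,\theta}(Q,\alpha)}\bigl(C^\tau_{q,\theta}(Q,\alpha)\bigr) \;=\; 2p(\alpha) - 2\sum_{i=1}^r p(\beta^{(i)}).
\]

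Next I would handle the left-hand side. By Proposition \ref{rep-equidim2}, the ambient space $\mr{Rep}^{\theta-ss}(\Lambda^q, \alpha)$ is equidimensional of dimension $g_\alpha + 2p(\alpha)$. On the other hand, Proposition \ref{p:inverseimage} bounds the dimension of $\pi^{-1}(C^\tau_{q,\theta}(Q,\alpha))$ from above by $g_\alpha + p(\alpha) + \sum_{l=1}^r p(\beta^{(l)})$. Combining these,
\[
\mr{codim}_{\mr{Rep}^{\theta-ss}(\Lambda^q,\alpha)}\bigl(\pi^{-1}(C^\tau_{q,\theta}(Q,\alpha))\bigr) \;\geq\; \bigl(g_\alpha + 2p(\alpha)\bigr) - \bigl(g_\alpha + p(\alpha) + \textstyle\sum_l p(\beta^{(l)})\bigr) \;=\; p(\alpha) - \sum_{l=1}^r p(\beta^{(l)}).
\]

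Comparing the two computations immediately yields the claimed inequality, since the lower bound on the representation side is exactly half of the codimension on the moduli side. There is no real obstacle here: the statement is essentially a repackaging of Propositions \ref{rep-equidim2}, \ref{strati} and \ref{p:inverseimage}, and the factor of $\tfrac12$ arises naturally from the fact that the fibre dimension of $\pi$ over the stratum contributes $g_\alpha + p(\alpha) - \sum_l p(\beta^{(l)})$ to the preimage, while the stratum dimension in the quotient contributes $2\sum_l p(\beta^{(l)})$, so the codimension on top is exactly half that on the bottom (whenever Proposition \ref{p:inverseimage} is sharp). The mild subtlety to keep in mind is that Proposition \ref{p:inverseimage} gives an upper bound, so one obtains an inequality rather than an equality; this is precisely the form required by the lemma.
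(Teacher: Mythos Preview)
Your proof is correct and follows essentially the same approach as the paper's own proof: both combine Proposition \ref{rep-equidim2} (equidimensionality of the representation space), Proposition \ref{p:inverseimage} (upper bound on the preimage dimension), and Proposition \ref{strati} together with Remark \ref{r:dim-stab} (dimensions on the moduli side) to obtain the inequality by direct dimension count. The only cosmetic difference is that you compute the right-hand side first and then the left-hand side, whereas the paper proceeds linearly from the codimension on the representation side to the codimension on the moduli side.
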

\begin{proof}
By Proposition \ref{rep-equidim2}, one has that 
\begin{equation*}
\mr{codim}(\pi^{-1}(C^\tau_{q, \theta}(Q, \alpha))) = 
g_\alpha+2p(\alpha)-\dim\pi^{-1}(C^\tau_{q, \theta}(Q, \alpha)).
\end{equation*}
% By Proposition \ref{rep-equidim2}, one has that 
% \begin{multline*}
% \mr{Rep}^{\theta\mr{-}ss}(\Lambda^q, \alpha)-\dim\pi^{-1}(Z)=g_\alpha+2p(\alpha)-\max_{\tau\neq (1, \alpha)}\dim\pi^{-1}(C^\tau_{q, \theta}(Q, \alpha))=\\ \min_{\tau\neq (1, \alpha)}(g_\alpha+2p(\alpha)-\dim\pi^{-1}(C^\tau_{q, \theta}(Q, \alpha)).
% \end{multline*}
Moreover, from Proposition \ref{p:inverseimage} it follows that 
\[
g_\alpha+2p(\alpha)-\dim\pi^{-1}(C^\tau_{q, \theta}(Q, \alpha))\geq g_\alpha+2p(\alpha)-g_\alpha-p(\alpha)-\sum_{l=1}^rp(\beta^{(l)})=p(\alpha)-\sum_{l=1}^rp(\beta^{(l)}).
\]
On the other hand, by Proposition \ref{strati}, one has that 
\[
p(\alpha)-\sum_{l=1}^rp(\beta^{(l)})=\frac{1}{2}\left(\dim \mm_{q, \theta}(Q, \alpha)-\dim
C^{\tau}_{q, \theta}(Q, \alpha)\right),
\]
which, combined with the above inequality, leads to the desired statement.
\end{proof}
By taking the minimum of these codimensions, we immediately conclude:
\begin{coro}\label{coro-codim} Let $Z$ denote the complement inside $\mm_{q, \theta}(Q, \alpha)$ of the set of $\theta$-stable representations $\mm_{q, \theta}^s(Q, \alpha)$, i.e., $Z$ is the union of all the non-open strata of $\mm_{q, \theta}(Q, \alpha)$. Then, the following inequality holds: 
	\[
\operatorname{codim} \pi^{-1}(Z) \geq \frac{1}{2} \min_{\tau\neq (1, \alpha)}
 \operatorname{codim} C^{\tau}_{q, \theta}(Q, \alpha).
\]
% \dim \mr{Rep}^{\theta\mr{-}ss}(\Lambda^q, \alpha)-\dim\pi^{-1}(Z)\geq
% \frac{1}{2}\min_{\tau\neq (1, \alpha)}(\dim \mm_{q, \theta}(Q, \alpha)-\dim
% C^{\tau}_{q, \theta}(Q, \alpha)),

\end{coro}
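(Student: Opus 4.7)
The plan is to reduce the corollary directly to Lemma \ref{l:dimest} by decomposing $Z$ via the stratification by representation type. First, I would invoke Proposition \ref{strati} to write $\mm_{q,\theta}(Q,\alpha)$ as the disjoint union of the locally closed strata $C^\tau_{q,\theta}(Q,\alpha)$. By Remark \ref{r:dim-stab}, the unique open stratum is $\mm^s_{q,\theta}(Q,\alpha) = C^{(1,\alpha)}_{q,\theta}(Q,\alpha)$, so
\[
Z = \bigcup_{\tau \neq (1,\alpha)} C^\tau_{q,\theta}(Q,\alpha).
\]
The index set here is finite because each representation type $\tau$ corresponds to a decomposition of $\alpha$ as a finite sum of positive vectors in $\N^{Q_0}$, of which there are only finitely many.

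Next, since $\pi$ is set-theoretically surjective onto $\mm_{q,\theta}(Q,\alpha)$, pulling back commutes with the union, yielding $\pi^{-1}(Z) = \bigcup_{\tau \neq (1,\alpha)} \pi^{-1}(C^\tau_{q,\theta}(Q,\alpha))$. By Proposition \ref{rep-equidim2}, the ambient space $\mr{Rep}^{\theta-ss}(\Lambda^q(Q),\alpha)$ is equidimensional, so the codimension of a finite union of subvarieties equals the minimum of the codimensions of the members. Hence
\[
\mr{codim}\, \pi^{-1}(Z) = \min_{\tau \neq (1,\alpha)} \mr{codim}\, \pi^{-1}(C^\tau_{q,\theta}(Q,\alpha)) \geq \frac{1}{2} \min_{\tau \neq (1,\alpha)} \mr{codim}\, C^\tau_{q,\theta}(Q,\alpha),
\]
where the last inequality is Lemma \ref{l:dimest} applied termwise and then taking the minimum on both sides. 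There is no substantive obstacle here; the corollary is essentially a repackaging of Lemma \ref{l:dimest} via the finite stratification of Proposition \ref{strati}.
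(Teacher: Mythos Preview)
Your argument is correct and matches the paper's approach exactly: the paper simply states that the corollary follows ``by taking the minimum of these codimensions'' from Lemma~\ref{l:dimest}, and you have spelled out precisely that step (decompose $Z$ via Proposition~\ref{strati}, pull back, use equidimensionality from Proposition~\ref{rep-equidim2}, apply Lemma~\ref{l:dimest} termwise, take the minimum).
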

\begin{prop}\label{quiv-norm}
  Consider $\alpha\in \Sigma_{q, \theta}$ and assume that all strata
  in the non-empty multiplicative quiver variety
  $\mathcal{M}_{q, \theta}(Q, \alpha)$ have codimension at least 4,
  i.e., assume that
\[
\min_{\tau\neq (1, \alpha)}\left(\dim \mm_{q, \theta}(Q, \alpha)-\dim
  C^{\tau}_{q, \theta}(Q, \alpha))\right)\geq 4.
\] 
Then, the variety $\mathcal{M}_{q, \theta}(Q, \alpha)$ is normal.
\end{prop}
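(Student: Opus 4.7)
The plan is to apply Serre's criterion: a variety is normal if and only if it is $R_1$ (regular in codimension one) and $S_2$. I would first verify these two conditions for the $\theta$-semistable representation space $\mr{Rep}^{\theta-ss}(\Lambda^q(Q), \alpha)$, and then descend normality to the GIT quotient $\mm_{q,\theta}(Q,\alpha)$ by invoking the standard fact that the ring of invariants of a normal ring under a reductive group action is itself normal (being integrally closed in its field of fractions).

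For the $S_2$ condition, I would appeal to Proposition \ref{rep-equidim2}: since $\alpha \in \Sigma_{q,\theta}$ and $\mm_{q,\theta}(Q,\alpha)$ is non-empty, $\mr{Rep}^{\theta-ss}(\Lambda^q(Q), \alpha)$ is a complete intersection inside the smooth ambient variety $\mr{Rep}^{\theta-ss}(\overline{Q}, \alpha) \subseteq \mr{Rep}^{\circ}(\overline{Q}, \alpha)$. A complete intersection in a smooth scheme is Cohen--Macaulay, hence $S_2$.

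For the $R_1$ condition, observe that by Lemma \ref{lem-irred} the $\theta$-stable locus $\mr{Rep}^{\theta-s}(\Lambda^q(Q), \alpha)$ is a smooth open subset of $\mr{Rep}^{\theta-ss}(\Lambda^q(Q), \alpha)$, so the singular locus of the latter is contained in its complement, namely $\pi^{-1}(Z)$ where $Z$ is the union of non-open strata as in Corollary \ref{coro-codim}. The hypothesis in the statement of the proposition is precisely that $\mr{codim}_{\mm_{q,\theta}(Q,\alpha)}\,C^\tau_{q,\theta}(Q,\alpha) \geq 4$ for every $\tau \neq (1,\alpha)$, and substituting this bound into Corollary \ref{coro-codim} yields
\[
\mr{codim}\, \pi^{-1}(Z) \;\geq\; \tfrac{1}{2} \cdot 4 \;=\; 2,
\]
which gives the required codimension bound on the singular locus.

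Combining the two conditions via Serre's criterion, $\mr{Rep}^{\theta-ss}(\Lambda^q(Q), \alpha)$ is normal. Since $\mm_{q,\theta}(Q,\alpha)$ is constructed as a GIT quotient of this space by the reductive group $\GL(\alpha)$, normality descends to the quotient, completing the argument. The entire proof is essentially an assembly of the previously established results: the genuinely substantive input is the codimension estimate of Corollary \ref{coro-codim}, which relied on the complete intersection property and the dimension bound of Proposition \ref{p:inverseimage}; there is no new technical obstacle beyond correctly interpreting the factor of $\tfrac{1}{2}$ in that corollary, which explains why the hypothesis demands codimension $4$ rather than $2$ on the strata.
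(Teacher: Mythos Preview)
Your proof is correct and follows essentially the same strategy as the paper: establish that the representation variety is Cohen--Macaulay (hence $S_2$) via the complete intersection property, use the codimension estimate from Corollary \ref{coro-codim} to bound the non-smooth locus, and then descend normality to the quotient. The only presentational differences are that the paper packages the Serre-criterion-plus-descent step by invoking \cite[Corollary 7.2]{cb-norm} rather than applying Serre's criterion explicitly, and it first treats the affine case $\theta=0$ before reducing the general case to it by passing to local affine charts of the GIT quotient; your uniform treatment via ``GIT quotients preserve normality'' is equivalent once one recalls that a good quotient is locally an affine invariant-theory quotient.
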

\begin{proof}
  The arguments to prove the above statement are analogous to the ones
  used in \cite[Proposition 8.3]{bellamy-schedler}. In particular, we
  shall use a criterion proved by Crawley-Boevey, \cite[Corollary
  7.2]{cb-norm}. We first deal with the case when $\theta=0$ and then
  explain how to adapt the arguments for general $\theta$. When
  $\theta=0$, $\mc{M}_{q, 0}(Q, \alpha)$ is the categorical quotient
  $\mr{Rep}(\Lambda^q, \alpha)/\!\!/\GL(\alpha)$ of an affine variety
  modulo a reductive group. Thus, we only need to show that
  $\mr{Rep}(\Lambda^q, \alpha)$ satisfies Serre's condition ($S_2$)
  and that certain codimension estimates hold true. The first
  condition is ensured by the fact that, by
  \cite[Theorem 1.11]{cb-shaw} (the case $\theta=0$ of Proposition
  \ref{rep-equidim2}), $\mr{Rep}(\Lambda^q, \alpha)$ is a complete
  intersection and, hence, Cohen-Macaulay, which indeed implies
  condition ($S_2$). Now, denote by $S$ the open subset
  $S\subset \mc{M}_{q, 0}(Q, \alpha)$ of simple
  representations, which is non-empty by our assumption. $S$ is
  contained in the smooth locus and hence is normal. Moreover, let $Z$
  denote its complement in $\mc{M}_{q, 0}(Q, \alpha)$ and denote with
  $\pi: \mr{Rep}(\Lambda^q, \alpha)\rightarrow \mulquiv$ the quotient
  map; then, by Corollary \ref{coro-codim}, one has
	\[
	\dim \mr{Rep}(\Lambda^q, \alpha)-\dim\pi^{-1}(Z)\geq
        \frac{1}{2}\min_{\tau\neq (1, \alpha)}(\dim \mulquiv-\dim
        C^{\tau}_{q, 0}(Q, \alpha)),
	\]
	and the right hand side is greater or equal than two by
        assumption. Thus, all the hypotheses of \cite[Corollary
        7.2]{cb-norm} are satisfied and we can conclude that
        $\mulquiv$ is normal. For the case when $\theta\neq 0$,
        keeping in mind that normality is a local property, we fix a
        point $x\in \mc{M}_{q, \theta}(Q, \alpha)$ and aim at proving
        normality at $x$. This is achieved by choosing an open
        neighbourhood $V$ of $x$ such that the restriction to
        $\pi^{-1}(V)$ of the projection morphism
        $\pi^{-1}(V)\rightarrow V$ is an affine quotient (note that
        this can be done thanks to the properties of the GIT
        construction). One can now repeat the same arguments as for
        the $\theta=0$ case, noting that, by Proposition \ref{strati},
        the estimates above hold true also in this more general
        setting: being Cohen-Macaulay is a local statement and thus
        the previous part of the proof ensures that $\pi^{-1}(V)$,
        which is open in $\mr{Rep}(\Lambda^q, \alpha)$, satisfies this
         property. Moreover, defining $S_{\theta}$ to be the subset
        of $V$ of $\theta$-stable representations, then one may
        proceed as in the first part of the proof to obtain the
        desired conclusion.
\end{proof}
\begin{rem}
  In the next sections, we will examine some cases in which the technical
  assumption in the previous result is satisfied, thus giving explicit
  examples of when $\mm_{q, \theta}(Q, \alpha)$ is normal.
\end{rem}
Finally, for the sequel, we will have to consider the following analogue of divisibility:
\begin{defn}
  A dimension vector $\alpha \in N_{q,\theta}$ is said to be
  $q$-indivisible if $\frac{1}{m}\alpha \notin N_{q, \theta}$ for all
  $m\geq 2$. Equivalently, for $\alpha=m\beta$ and $\beta$
  indivisible, then $q^\beta$ is a primitive $m$-th root of unity.
\end{defn}

\section{Punctured character varieties as multiplicative quiver varieties}\label{section-char}
In this section, we explain how it is possible to realise certain character varieties as particular examples of multiplicative quiver varieties by considering quivers of special type, the so-called crab-shaped quivers. Such character varieties parametrise representations of the fundamental group of a compact Riemann surface with a finite number of punctures, where the monodromies at closed loops around such punctures are fixed to lie in (the closure of) certain conjugacy classes.  We use the language of quiver Riemann surfaces introduced by Crawley-Boevey in \cite{cb-monod}. Moreover, in what follows, we shall adopt the term \tit{punctured character variety} to refer to the character variety of a Riemann surface with punctures.

Fix a connected compact Riemann surface $X$ of genus $g\geq 0$, let $S=\{p_1, \dots, p_k\}\subset X$ be the set of punctures and fix a tuple $\mathcal{C}=(\mathcal{C}_1, \dots, \mathcal{C}_k)$ of conjugacy classes $\mathcal{C}_i\subset GL_n(\C), i=1, \dots, k$. Recall that the fundamental group $\pi_1(X\setminus S)$ of the punctured surface $X\setminus S$ admits the following presentation:
\[
\pi_1(X\setminus S)=\langle a_1, \dots, a_g, b_1, \dots, b_g, c_1, \dots, c_k\ |\ [a_1, b_1]\cdot\cdots\cdot[a_g, b_g]c_1\cdot\cdots\cdot c_k=1\rangle,
\]
where $[a, b]=aba^{-1}b^{-1}$ denotes the commutator. Note that the generators $c_1, \dots, c_k$ represent homotopy classes of closed loops around the punctures, in the same free homotopy classes as small counter-clockwise loops around the punctures. Thus, a representation of $\pi_1(X\setminus S)$ whose monodromies about the punctures are in the conjugacy classes $\mc{C}_i$ is given by a tuple of matrices $(A_1, \dots, A_g, B_1, \dots, B_g, C_1, \dots, C_k)\in GL_n(\C)^{2g}\times \mathcal{C}_1\times\dots\times\mathcal{C}_k$, satisfying the relation
\[
\prod_{i=1}^g[A_i, B_i]\prod_{j=1}^kC_j=I.
\] 
Given the above, from the fact that isomorphic representations correspond to conjugate matrices, one has that the character variety $\mathcal{X}(g, k, \overline{\mathcal{C}})$ associated to the pair $(X, S)$ and monodromies lying in the conjugacy classes fixed above is isomorphic to the affine quotient 
\[
\begin{split}
\mathcal{X}(g, k, \overline{\mathcal{C}}):=\{
	(A_1, \dots, A_g, B_1, \dots, B_g, C_1, \dots, C_k)\in GL_n(\C)^{2g}\times \overline{\mathcal{C}}_1\times\dots\times\overline{\mathcal{C}}_k |
\\ \prod_{i=1}^g[A_i, B_i]\prod_{j=1}^kC_j=I\}/\!\!/GL_n(\C).
	\end{split}
\]
\begin{rem}
Note that the  closures $\overline{\mc{C}}_i$ are affine varieties and hence the quotient is indeed that of an affine variety by an algebraic group.
\end{rem}
We shall now explain how to realise the variety $\mathcal{X}(g, k, \overline{\mathcal{C}})$ as an open subset of a multiplicative quiver variety, using an equivalence of categories proved in \cite{cb-monod}. As mentioned above, such a correspondence holds when one considers the so-called \tit{crab-shaped} quivers (called ``comet-shaped'' in \cite{h-l-rv-1}), i.e., quivers such that there exists a vertex $v$ satisfying the following condition: the set of arrows 
%with starting or ending at $v$ 
is formed by loops at $v$ and a finite number of \tit{legs} ending at $v$. See Figure \ref{figure1}. A \emph{star-shaped} quiver is a crab-shaped quiver with no loops.

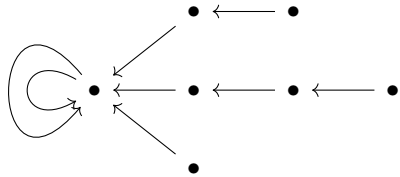
\begin{figure}[!ht]\label{figure1}
\centering
\begin{tikzcd}
&\arrow[ld] \bullet &\arrow{l}  \bullet \\
\arrow[out=150, in=210,loop,distance=1cm]{}{} 
\arrow[out=130, in=230,loop,distance=2cm]{}[swap]{}  \bullet &  \arrow{l}{}  \bullet & \arrow{l}{}  \bullet& \arrow{l}{}\bullet \\
&\arrow[lu] \bullet \\ 
\end{tikzcd}
	\caption{A crab-shaped quiver with 2 loops and 3 legs, of length 2, 3 and 1 respectively.}
\end{figure}
For the remainder of this section, the following notation will be used: $g$, for the number of loops around the central vertex; $k$ for the number of legs and $l_i$, for $i=1, \dots, k$, for the length of the $i$-th leg. As we shall see, $g$ contains the information regarding the genus of the surface, while the integers $k$ and $l_i$ encode information about the (prescribed) conjugacy classes of the monodromies of the loops around the punctures.
\begin{defn}\cite[\S2]{cb-monod} A \tit{Riemann surface quiver} $\Gamma$ is a quiver whose set of vertices has the structure of a Riemann surface $X$ with finitely many connected components. $\Gamma$ is said to be \tit{compact} if $X$ is compact. A point $p\in X$ is called \tit{marked} if it is a head or a tail of an arrow of $\Gamma$. 
\end{defn}   
\begin{defn}Given a Riemann surface quiver $\Gamma$, the \tit{component quiver} $[\Gamma]$ of $\Gamma$, is the quiver whose set of vertices is the set of connected components of $\Gamma$ and arrows given by $[a]:[p]\rightarrow[q]$ for any arrow $a:p\rightarrow q$, where $p$ and $q$ are points of $X$ and $[p]$ denotes the connected component of $X$ containing $p$.
\end{defn}
\begin{rem}
Although, by definition, there are in general infinitely many vertices, we will consider (Riemann surface) quivers with finitely many arrows.
\end{rem}
Following closely \cite[\S5, \S 8]{cb-monod}, starting from a Riemann surface quiver $\Gamma$, it is possible to define two categories of representations, $\mr{Rep}_{\sigma}(\pi(\Gamma))$ and $\mr{Rep}\ \Lambda^q([\Gamma])$, whose equivalence is the key point to proving the correspondence between multiplicative quiver varieties and punctured character varieties. \\
Fix a quiver Riemann surface $\Gamma$ and let $\{X_i\}_{i\in I}$ the set of connected components of the underlying Riemann surface $X$. For each $i\in I$ let $D_i$ be the set of marked points of $\Gamma$ contained in $X_i$. Moreover, let $D=\cup_iD_i$: fix $\sigma \in (\C^\times)^{D}$, $b_i\in X_i\setminus D_i$ and, for each $p\in D_i$ fix a loop $l_p\in \pi_1(X_i\setminus D_i, b_i)$ around $p$. 

%Let $\sigma\in (\C^\times)^D$, defined by $\sigma_p=e^{2\pi \sqrt{-1}\lambda_p}$.\\
$\mr{Rep}_\sigma\pi(\Gamma)$ is defined to be the category whose objects are given by collections $(V_i, \rho_i, \rho_a, \rho^*_a)$ consisting of representations $\rho_i:\pi_1(X_i\setminus D_i, b_i)\rightarrow \GL(V_i)$, for $i\in I$ and linear maps $\rho_a:V_i\rightarrow V_j$ and $\rho_a^*:V_j\to V_i$
for each arrow $a:p\to q$ in $\Gamma$, where $X_i = [p]$ and $X_j = [q]$,
satisfying
\[
\sigma_p^{-1} \rho_i(\ell_p)^{-1} =  1_{V_i} + \rho_a^* \rho_a
\quad\text{and}\quad
\sigma_q \rho_j(\ell_q) = 1_{V_j} + \rho_a \rho_a^*
\]
and whose morphisms are the natural ones.\\
Consider the component quiver $[\Gamma]$ and define $Q$ to be the
quiver obtained from $[\Gamma]$ by adjoining $g_i$ loops at each
vertex $i$, where $g_i$ is the genus of $X_i$. Moreover, define
$q\in (\C^\times)^I$ by $q_i=\prod_{p \in D_i} \sigma_p$.
%$e^{2\pi\sqrt{-1}\lambda_i}$. 
%In line with the notation used in the previous sections,
We define $\mr{Rep}\ \Lambda^q([\Gamma])'$ to be the category of representations of the multiplicative preprojective algebra $\Lambda^q(Q)$ in which the linear maps representing the added loops in $Q$ (but not their reverse loops in $\overline{Q}$) are invertible. 
\begin{lem}\cite[Proposition 2]{cb-monod} \label{l:monod}
There is an equivalence of categories 
\[\mr{Rep}_{\sigma}\pi(\Gamma) \simeq \mr{Rep}\ \Lambda^q([\Gamma])'.
\]
This induces a $\GL(\alpha)$-equivariant isomorphism of affine algebraic varieties,
\[
\mr{Rep}_\sigma(\pi(\Gamma),\alpha) \overset{\sim}{\longrightarrow} \mr{Rep}(\Lambda^q([\Gamma])', \alpha),
\]
defined as the collections of representations with $V_i = \C^{\alpha_i}$ for all $i$.
\end{lem}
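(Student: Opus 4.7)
The plan is to construct the equivalence of categories explicitly by matching the defining data on both sides, following the strategy of \cite[Proposition 2]{cb-monod} together with the computations of \cite[Section 8]{cb-shaw}. First I would unpack both categories. On one side, an object of $\mr{Rep}_{\sigma}\pi(\Gamma)$ is, for each component $X_i$, a representation $\rho_i$ of $\pi_1(X_i \setminus D_i, b_i)$, which by standard topology is presented by generators $a_{i,1},b_{i,1},\ldots,a_{i,g_i},b_{i,g_i}$ and $\ell_p$ for $p\in D_i$ satisfying the surface group relation $\prod_j[a_{i,j},b_{i,j}]\prod_{p\in D_i}\ell_p = 1$, together with arrow maps $\rho_a,\rho_a^*$ satisfying the two displayed identities. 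On the other side, an object of $\mr{Rep}\,\Lambda^q([\Gamma])'$ consists of vector spaces $V_i$, invertible endomorphisms realising the $g_i$ adjoined loops at vertex $i$, and maps $\phi_{[a]},\phi_{[a]}^*$ for each arrow of $[\Gamma]$, subject to the multiplicative preprojective relation at each vertex.

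Next, I would define the functor by sending each $\rho$ to the collection of invertible endomorphisms $\rho_i(a_{i,j}),\rho_i(b_{i,j})$ (realising the adjoined loops at $i$) together with the assignment $\phi_{[a]}:=\rho_a$, $\phi_{[a]}^*:=\rho_a^*$. The crucial step is to verify the multiplicative preprojective relation at vertex $i$, which amounts to using the arrow identities of the statement to substitute $(1+\phi_{[a]}\phi_{[a]}^*)^{\pm 1}$ by $(\sigma_q\rho_j(\ell_q))^{\pm 1}$ at the head and $(1+\phi_{[a]}^*\phi_{[a]})^{\pm 1}$ by $(\sigma_p\rho_i(\ell_p))^{\mp 1}$ at the tail. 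Once the ordering ``$<$'' on $\overline{Q}_1$ is chosen compatibly with a cyclic arrangement of the marked points around $X_i$, and the adjoined (loop, dual-loop) pairs are inserted so that each pair contributes a factor $[\rho_i(a_{i,j}),\rho_i(b_{i,j})]$ (as in Boalch's fission formalism, cf.\ \cite[Lemma 8.2]{cb-shaw}), the left-hand side of the multiplicative preprojective relation at $i$ rearranges to $\prod_j[\rho_i(a_{i,j}),\rho_i(b_{i,j})] \prod_{p\in D_i}\sigma_p\rho_i(\ell_p)$, which by the surface group relation equals $\prod_{p\in D_i}\sigma_p = q_i$, as required.

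The inverse functor proceeds identically: from a representation of $\Lambda^q([\Gamma])'$ the invertible adjoined loops provide the images of $a_{i,j},b_{i,j}$, and each $\rho_i(\ell_p)$ is recovered from the arrow identities (e.g.\ $\rho_i(\ell_p):=\sigma_p^{-1}(1+\phi_{[a]}^*\phi_{[a]})^{-1}$ for an arrow tailing at $p$); the multiplicative preprojective relation then forces the surface group relation, guaranteeing that these assignments extend to a genuine representation of $\pi_1(X_i \setminus D_i, b_i)$. Morphisms on either side are tuples of linear intertwiners of the underlying vector space data and visibly correspond. Finally, fixing $V_i = \C^{\alpha_i}$, all formulas are polynomial in the matrix entries (and their inverses, which are polynomial on the open subset where they exist), so the equivalence restricts to a $\GL(\alpha)$-equivariant isomorphism of affine schemes as claimed.

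The main obstacle is the careful bookkeeping needed to align the ordering on $\overline{Q}_1$ with the cyclic arrangement of marked points on each $X_i$ and to identify the contribution of each pair of adjoined loops in the multiplicative relation with the commutator $[a_{i,j},b_{i,j}]$ appearing in the surface group relation. This is the combinatorial core of the correspondence, and it is what makes the ordering choice in the definition of $\Lambda^q(Q)$ immaterial up to isomorphism, as in \cite[Theorem 1.4]{cb-shaw}; once this matching is made, the substitution of the arrow identities into the multiplicative relation reduces the verification to a routine algebraic manipulation.
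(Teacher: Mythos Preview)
Your proposal is essentially correct and follows the same underlying construction as the paper, but you are doing considerably more than the paper does. The paper's proof simply invokes \cite[Proposition~2]{cb-monod} for the categorical equivalence and then observes that, because Crawley-Boevey's proof proceeds via explicit invertible polynomial formulae in the matrix entries, the equivalence restricts to a $\GL(\alpha)$-equivariant isomorphism of affine varieties. You instead reconstruct that equivalence from scratch, which is fine and arguably more informative, but is work the paper deliberately outsources.

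One point in your sketch deserves tightening. You write that ``the invertible adjoined loops provide the images of $a_{i,j},b_{i,j}$'', but there are only $g_i$ adjoined loops in $Q$ versus $2g_i$ generators $a_{i,j},b_{i,j}$, and only the images of the $g_i$ loops (not of their duals $x^*$) are assumed invertible in $\mr{Rep}\,\Lambda^q([\Gamma])'$. The actual dictionary is a nontrivial change of variables: for a loop $x$ with invertible image $A$ and dual image $B$, one uses the identity $A(1+BA)A^{-1}=1+AB$ to realise $(1+xx^*)(1+x^*x)^{-1}$ as a commutator, e.g.\ via $a\mapsto A$, $b\mapsto 1+BA$ (so that $b$ is automatically invertible). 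This substitution, and its inverse, is precisely the ``explicit invertible polynomial formula'' the paper appeals to; your phrase ``each pair contributes a factor $[\rho_i(a_{i,j}),\rho_i(b_{i,j})]$'' is the right conclusion, but the mechanism producing it is this change of variables rather than a direct identification of $(x,x^*)$ with $(a,b)$.
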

%\begin{rem}\label{rep-monod}
\begin{proof} 
  The first statement is precisely \cite[Proposition 2]{cb-monod}. For
  the second, % fix a dimension vector $\alpha=(\alpha_i)_{i\in I}$ and
%   define $\mr{Rep}_\sigma(\pi(\Gamma),\alpha)$ to be the set of 
%   objects of $\mr{Rep}_{\sigma}\pi(\Gamma)$ with $V_i = \C^{\alpha_i}$ for all $i$. 
% %given by
% %  representations such that $V_i=\alpha_i$. 
%   Analogously, let
%   $\mr{Rep}(\Lambda^q([\Gamma])', \alpha)$ be the the set of representations
%   of $\Lambda^q([\Gamma])'$ with $V_i = \C^{\alpha_i}$ for all $i$.
% of the
%  set of objects of $\mr{Rep}\ \Lambda^q([\Gamma])'$ given by
%  representations such that $\dim V_i=\alpha_i$. 
  both
  $\mr{Rep}_\sigma(\pi(\Gamma),\alpha)$ and
  $\mr{Rep}(\Lambda^q([\Gamma])', \alpha)$ are acted upon by the group
  $\GL(\alpha)$ and the above equivalence of categories implies that
  there is a $\GL(\alpha)$-equivariant bijection as desired.  Moreover,
  $\mr{Rep}_\sigma(\pi(\Gamma),\alpha)$ and
  $\mr{Rep}(\Lambda^q([\Gamma]', \alpha))$ are easily seen to be
  affine algebraic varieties, defined as tuples of matrices satisfying
  certain polynomial relations, with certain polynomials inverted. To
  see that the above map is a $\GL(\alpha)$-equivariant algebra
  isomorphism, observe that the proof of \cite[Proposition
  2]{cb-monod} uses explicit invertible polynomial formulae.
  % Therefore, the map
  % descends to an isomorphism between the affine quotients for the
  % action of $\GL(\alpha).$
\end{proof}
%Now we have to explain that a representation of that quiver Riemann surface satisfying those conditions is the same as a representation of a genus g Riemann surface with some punctures where we also fix the holonomy of loops at punctures

In order to explain how the above equivalence of categories implies the correspondence between character varieties and preprojective algebras, we shall explain how it is possible to encode the datum of a number of conjugacy classes into a star-shaped quiver. We follow \cite[\S 8]{cb-shaw} and \cite[\S 2]{cb-indec}: fix $k$ conjugacy classes $\mc{C}_1,\dots, \mc{C}_k$ in $GL_n(\C)$, for $k\geq 1$. We can encode the datum of such conjugacy classes in a combinatorial object as follows: take $A_i\in \mathcal{C}_i$ and let $w_i\geq 1$ be the degree of its minimal polynomial, for $i=1, \dots, k$; choose elements $\xi_{ij}\in \C^\times$, $1\leq i\leq k, 1\leq j\leq w_i$, such that 
\[
(A_i-\xi_{i1}I)\cdot\dots\cdot(A_i-\xi_{iw_i}I)=0.
\]
% By construction, one may take $\xi_{ij}$ as the roots of the minimal polynomial of $A_i$, for $i=1, \dots, k$.
The closure of the conjugacy class $\mathcal{C}_i$ is then determined by the ranks of the partial products 
\[
\alpha_{ij}=\mr{rank}(A_i-\xi_{i1}I)\cdot\dots\cdot (A_i-\xi_{ij}I),
\]
for $A_i\in \mathcal{C}_i$ and $1\leq j\leq w_i-1$. In addition, if we set $\alpha_0=n$, we get a dimension vector $\alpha$ for the following quiver $Q_w$
\setlength{\unitlength}{1.5pt}
\[
\begin{picture}(110,80)
\put(10,40){\circle*{2.5}} \put(30,10){\circle*{2.5}}
\put(30,50){\circle*{2.5}} \put(30,70){\circle*{2.5}}
\put(50,10){\circle*{2.5}} \put(50,50){\circle*{2.5}}
\put(50,70){\circle*{2.5}} \put(100,10){\circle*{2.5}}
\put(100,50){\circle*{2.5}} \put(100,70){\circle*{2.5}}
\put(28,67){\vector(-2,-3){16}}
\put(27,48.5){\vector(-2,-1){14}}
\put(28,13){\vector(-2,3){16}}
\put(47,10){\vector(-1,0){14}}
\put(47,50){\vector(-1,0){14}}
\put(47,70){\vector(-1,0){14}}
\put(67,10){\vector(-1,0){14}}
\put(67,50){\vector(-1,0){14}}
\put(67,70){\vector(-1,0){14}}
\put(97,10){\vector(-1,0){14}}
\put(97,50){\vector(-1,0){14}}
\put(97,70){\vector(-1,0){14}}
\put(70,10){\circle*{1}}
\put(75,10){\circle*{1}} \put(80,10){\circle*{1}}
\put(70,50){\circle*{1}} \put(75,50){\circle*{1}}
\put(80,50){\circle*{1}} \put(70,70){\circle*{1}}
\put(75,70){\circle*{1}} \put(80,70){\circle*{1}}
\put(30,25){\circle*{1}} \put(30,30){\circle*{1}}
\put(30,35){\circle*{1}} \put(50,25){\circle*{1}}
\put(50,30){\circle*{1}} \put(50,35){\circle*{1}}
\put(100,25){\circle*{1}} \put(100,30){\circle*{1}}
\put(100,35){\circle*{1}} \put(5,38){0} \put(24,2){$[k,1]$}
\put(24,54){$[2,1]$} \put(24,74){$[1,1]$} \put(44,2){$[k,2]$}
\put(44,54){$[2,2]$} \put(44,74){$[1,2]$} \put(92,2){$[k,w_k-1]$}
\put(92,54){$[2,w_2-1]$} \put(92,74){$[1,w_1-1]$}
\end{picture}
\]
Now, for every $i \in \{1, \dots, k\}$, let $t_i$ be a non-negative integer, and define a Riemann surface quiver $\Gamma$ as follows: its underlying Riemann surface $X$ is given by the disjoint union
\[
X=X_0\sqcup\bigsqcup_{i\in\{1, \dots, k\},\\ j\in\{1, \dots, t_i\}}\mathbb{P}^1_{i, j},
\]
where $X_0$ is an arbitrary closed Riemann surface of genus $g$ (the choice
does not matter),
and $\mathbb{P}^1_{i, j}$ is simply a copy of $\mathbb{P}^1$ for the index $(i, j)$. For each pair of indices $(i, j)$ fix a point $p_{i, j}\in \mathbb{P}^1_{i, j}$, and, for $i=1, \dots, k$, fix distinct points $p_i\in X_0$. Define $D$ as before to be 
\[
D=\{p_{i, j}\}\cup\{p_l\}.
\]
The arrows of $\Gamma$ are listed as follows: 
\begin{itemize}
\item $a_{i, 0}:p_{i, 1}\rightarrow p_i$, for $i=1, \dots, k$;
\item $a_{i, j}:p_{i, j+1}\rightarrow p_{i, j}$, for $i=1, \dots, k$, $j=1, \dots, t_{i}-1$.
\end{itemize}
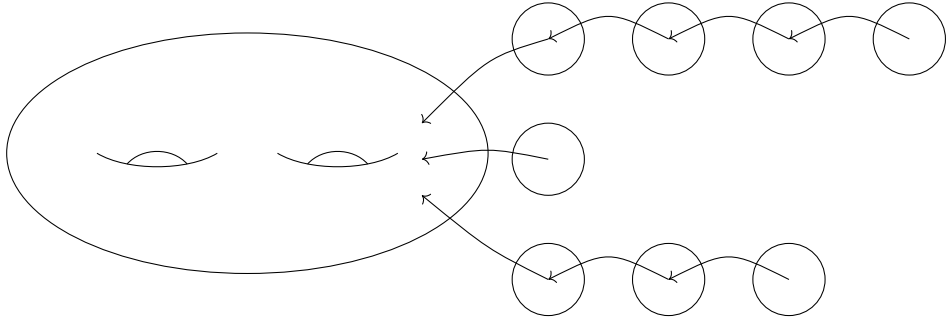
\begin{figure}[h]\label{fig:3-stab}
	\centering
	\begin{tikzpicture}[scale=.8]
\draw (2,-3) circle (.6cm);
\draw (4,-3) circle (.6cm);
\draw (6,-3) circle (.6cm);
\draw (8,-3) circle (.6cm);

\draw[<-] (-.1,-4.4) .. controls (1,-3.3)..(2,-3);
\draw[<-] (2,-3) .. controls (3,-2.5)..(4, -3);
\draw[<-] (4,-3) .. controls (5,-2.5)..(6,-3);
\draw[<-] (6,-3) .. controls (7,-2.5)..(8,-3);

\draw (-3,-4.9) ellipse (4cm and 2cm);

\draw (2,-5) circle (.6cm);

\draw (2,-7) circle (.6cm);
\draw (4,-7) circle (.6cm);
\draw (6,-7) circle (.6cm);

\draw[<-] (-.1,-5.6) .. controls (1,-6.5)..(2,-7);
\draw[<-] (2,-7) .. controls (3,-6.5)..(4, -7);
\draw[<-] (4,-7) .. controls (5,-6.5)..(6,-7);

\draw (-5.5,-4.9) .. controls (-5, -5.2) and (-4, -5.2) .. (-3.5,-4.9);
\draw (-5, -5.08) .. controls (-4.75, -4.8) and (-4.25, -4.8) ..(-4, -5.08);

\draw (-2.5,-4.9) .. controls (-2, -5.2) and (-1, -5.2) .. (-.5,-4.9);
\draw (-2, -5.08) .. controls (-1.75, -4.8) and (-1.25, -4.8) ..(-1, -5.08);

\draw[<-] (-.1,-5) .. controls (1,-4.8)..(2,-5);

\end{tikzpicture}
\caption{An example of a Riemann surface quiver associated with a tuple of conjugacy classes.}
\end{figure} 
Unfolding the definition for the objects of the category $\mr{Rep}_{\sigma}\pi(\Gamma)$, one has that for such a Riemann surface quiver $\Gamma$ these are representations 
\[
\rho: \pi_1(X_0\setminus \{p_1, \dots, p_k\})\rightarrow \GL(V),
\]
and linear maps
$$\rho_{i, 0}: V_{i, 1}\rightarrow V,\ \  \rho_{i, j}:V_{i, j+1}\rightarrow V_{i, j}
$$
and 
$$\rho_{i, 0}^*: V\rightarrow V_{i, 1},\ \  \rho_{i, j}^*:V_{i, j}\rightarrow V_{i, j+1},
$$
for $i=1\,\dots, k$ and $j=1, \dots, t_i-1$, such that, if $l_i\in\pi_1(X_0\setminus\{p_1, \dots, p_k\})$ is the loop around $p_i$, $i=1, \dots, k$, the linear automorphism $\rho(l_i)$ satisfies the condition 
\[
\sigma_i\rho(l_i)=\mr{1}_V+\rho_{i, 0}\rho_{i, 0}^*
\]
and the linear maps $\rho_{i, j}$ and $\rho_{i, j}^*$ satisfy the equations: 
\begin{equation*}
\begin{array}{c}
\sigma_{i, j+1}^{-1}1_{V_{i, j+1}}=1_{V_{i, j+1}}+\rho_{i, j}^*\rho_{i, j},\\
\sigma_{i, l}1_{V_{i, l}}=1_{V_{i, l}}+\rho_{i, l}\rho^*_{i, l},
\end{array}
\end{equation*}
for $i=1, \dots, k$, $j=1, \dots, t_i-1$ and $l=1, \dots, t_i$, which, setting $j=l-1$ and summing the equations involving operators on the same space $V_{i, l}$, can be rewritten as 
\begin{equation*}
\begin{array}{c}
\rho(l_i)=\sigma_{i}^{-1}1_V+\sigma_i^{-1}\rho_{i, 0}\rho^*_{i, 0},\\
\rho^*_{i, j-1}\rho_{i, j-1}-\rho_{i, j}\rho^*_{i, j}=(\sigma_{i, j}^{-1}-\sigma_{i, j})1_{V_{i,j}},\\
\sigma_{i,t_i}1_{V_{i, t_i}}=1_{V_{i, t_i}}+\rho_{i, t_i}\rho^*_{i, t_i},
\end{array}
\end{equation*}
for $i=1, \dots, k$ and $j=1, \dots, t_i-1$. Now, we specialise to the case $t_i=w_i-1$ and assume that $\dim V_{i, j}=\alpha_{i, j}$ and $\dim V=n$, where $w_i$ and $\alpha_{i, j}$ are defined as before. Through some simple algebraic computations, it is possible to see that, given $\xi_{i, j}$ as before, it is possible to find corresponding $\sigma_{i, j}$, defined as 
\begin{equation*}
\begin{array}{c}
\sigma_0=\frac{1}{\prod_{i=1}^k \xi_{i, 1}},\ \ 
\sigma_{i, j}=\frac{\xi_{i, j}}{\xi_{i, j+1}},
\end{array}
\end{equation*}
 such that the above sets of equations can be rewritten in terms of linear operators $\phi_{i, j}$ and $\psi_{i, j}$, for $i=1, \dots, k$ and $j=i, \dots, w_{i}-1$,
\[
V \overset{\phi_{i1}}{\underset{\psi_{i1}}{\rightleftarrows}} V_{i1} \overset{\phi_{i2}}{\underset{\psi_{i2}}{\rightleftarrows}} V_{i2}
\overset{\phi_{i3}}{\underset{\psi_{i3}}{\rightleftarrows}} \dots
\overset{\phi_{i, w_{i-1}}}{\underset{\psi_{i, w_{i-1}}}{\rightleftarrows}} V_{i,w_i-1}
\]
satisfying
\begin{equation*}\label{char-preproj}
\begin{array}{c}
\rho(l_i) - \psi_{i1} \phi_{i1}= \xi_{i1} \, 1_V
\\
\phi_{ij} \psi_{ij} - \psi_{i,j+1}\phi_{i,j+1}= (\xi_{i,j+1}-\xi_{ij})\, 1_{V_{ij}}\qquad (1\le j < w_i-1)
\\
\phi_{i,w_i-1} \psi_{i,w_i-1}= (\xi_{i,w_i} - \xi_{i,w_i-1})\, 1_{V_{i,w_i-1}},
\end{array}
\end{equation*}
which, by \cite[Theorem 2.1]{cb-indec}, implies that $\rho(l_i)$ lies in the closure of the conjugacy class $\mathcal{C}_i$, $i=1, \dots, k$. In fact, this theorem says that this is a necessary and sufficient condition; thus, given a representation 
\[
\rho: \pi_1(X_0\setminus \{p_1, \dots, p_k\})\rightarrow \GL(V),
\]
where $\dim V=n$ and $\rho(l_i)\in \overline{\mathcal{C}_i}$, for prescribed conjugacy classes $\mathcal{C}_1, \dots, \mathcal{C}_k$ in $GL_n(\C)$, we can find linear maps $\rho_{i, j}$ and $\rho_{i, j}^*$ and vector spaces $V_{i, j}$ of dimension $\alpha_{i, j}$ as above, such that the tuple $(V, \rho, V_{i, j}. \rho_{i, j}, \rho^*_{i, j})$ is an object of the category $\mr{Rep}_{\sigma}\pi(\Gamma)$. Then, combining this with Lemma \ref{l:monod}, one has the following result. 
\begin{thm}\label{t:iso-mult-char}
There is an isomorphism between the character variety $\mathcal{X}(g, k, \overline{\mathcal{C}})$ and the affine quotient $\widetilde{\mm}_{q, 0}([\Gamma], \alpha):=\mr{Rep}\ \Lambda^q([\Gamma], \alpha)'\ \!/\!/\GL(\alpha)$.
\end{thm}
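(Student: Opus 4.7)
The plan is to assemble Lemma \ref{l:monod} with Crawley-Boevey's characterisation of conjugacy class closures in $GL_n(\C)$, namely \cite[Theorem 2.1]{cb-indec}, exactly along the lines already sketched in the paragraphs preceding the theorem statement. The preceding discussion does all of the non-trivial algebraic matching; what remains is to state the matching of group actions and to take affine quotients on both sides.

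First, I would take the Riemann surface quiver $\Gamma$ associated to $(g,k,\mathcal{C})$ with $t_i=w_i-1$, dimension vector $\alpha$ given by $\alpha_0=n$ at the central vertex and $\alpha_{i,j}$ on the $i$-th leg, and parameter $\sigma$ defined by $\sigma_0=(\prod_i\xi_{i,1})^{-1}$ and $\sigma_{i,j}=\xi_{i,j}/\xi_{i,j+1}$. Applying Lemma \ref{l:monod} gives a $\GL(\alpha)$-equivariant isomorphism of affine varieties
\[
\mr{Rep}_\sigma(\pi(\Gamma),\alpha)\iso \mr{Rep}(\Lambda^q([\Gamma])',\alpha),
\]
with $q=\prod_{p\in D_i}\sigma_p$ as in the construction preceding Lemma \ref{l:monod}. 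Taking the affine GIT quotient of the right-hand side by $\GL(\alpha)$ produces the multiplicative quiver variety $\widetilde{\mm}_{q,0}([\Gamma],\alpha)$ by definition.

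Next I would unfold the left-hand side. Since every component $\mathbb{P}^1_{i,j}$ is simply connected, an object of $\mr{Rep}_\sigma(\pi(\Gamma),\alpha)$ consists of a representation $\rho:\pi_1(X_0\setminus\{p_1,\dots,p_k\})\to \GL(V)$ together with spaces $V_{i,j}=\C^{\alpha_{i,j}}$ and maps $\phi_{i,j},\psi_{i,j}$ on the $i$-th leg satisfying the telescoping relations displayed right before the theorem. By \cite[Theorem 2.1]{cb-indec}, for fixed $\rho$ these relations admit a solution with $\dim V_{i,j}=\alpha_{i,j}$ if and only if $\rho(\ell_i)\in \overline{\mathcal{C}_i}$; moreover the map sending such leg data to $\rho(\ell_i)$ surjects onto $\overline{\mathcal{C}_i}$, and a standard $\GL(\alpha_{i,j})$-torsor argument shows that the affine GIT quotient of the solution space by $\prod_j \GL(\alpha_{i,j})$ is exactly $\overline{\mathcal{C}_i}$ (acting by trivially on $V$ and in the standard way on the $V_{i,j}$'s). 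Consequently, performing the affine GIT quotient of $\mr{Rep}_\sigma(\pi(\Gamma),\alpha)$ by $\prod_{i,j}\GL(\alpha_{i,j})$ yields the subvariety of $GL_n(\C)^{2g}\times\overline{\mathcal{C}_1}\times\cdots\times\overline{\mathcal{C}_k}$ cut out by the surface relation $\prod_i[A_i,B_i]\prod_jC_j=I$, which is simply $\prod_i[\rho(a_i),\rho(b_i)]\prod_j\rho(\ell_j)=1$ in $\pi_1(X_0\setminus S)$.

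Finally I would take the residual affine quotient by $\GL(\alpha_0)=\GL_n(\C)$. Using that $\GL(\alpha)=\GL_n(\C)\times\prod_{i,j}\GL(\alpha_{i,j})$ is reductive, affine GIT quotients may be formed in stages, so this residual quotient recovers $\mathfrak{X}(g,k,\overline{\mathcal{C}})$ on one side and $\widetilde{\mm}_{q,0}([\Gamma],\alpha)$ on the other. The main obstacle, and the only non-formal input, is checking that the leg data really surjects onto $\overline{\mathcal{C}_i}$ with fibres whose affine quotients are trivial: this is exactly the content of \cite[Theorem 2.1]{cb-indec} together with the observation that the choice of $\sigma_{i,j}$ was arranged so that the representation-theoretic relations at the leg vertices match the factorisation $(A-\xi_{i,1})\cdots(A-\xi_{i,w_i})=0$ ranked by the $\alpha_{i,j}$. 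Once this is in place, the remaining steps are purely formal, and the desired isomorphism of affine quotients follows.
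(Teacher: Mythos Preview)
Your proposal is correct and follows essentially the same approach as the paper: the paper's proof is precisely the discussion preceding the theorem statement (the construction of $\Gamma$, the unfolding of $\mr{Rep}_\sigma\pi(\Gamma)$, and the application of \cite[Theorem 2.1]{cb-indec}) combined with Lemma~\ref{l:monod}, and you have simply made the quotient-in-stages argument explicit. One minor quibble of terminology: the leg data is not literally a $\prod_j\GL(\alpha_{i,j})$-torsor over $\overline{\mathcal{C}_i}$ (over boundary points of the closure the stabilisers can jump), but the GIT quotient statement you actually need---that the affine quotient of the leg data by the leg groups is $\overline{\mathcal{C}_i}$---is correct and is essentially the content of \cite[Lemma 8.2]{cb-shaw}, which the paper invokes for the star-shaped case.
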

\begin{rem}\label{r:imc-open}
  From its definition, one can see that $\mr{Rep}\ \Lambda^q([\Gamma], \alpha)'$ is an open affine $\GL(\alpha)$-invariant subset of $\mr{Rep}(\Lambda^q[\Gamma], \alpha)$ which is obtained by inverting certain $\GL(\alpha)$-invariant functions (the determinants of the linear transformations corresponding to loops of the undoubled quiver at the node). Since the quotient in the affine case, with $G$ reductive, is obtained by passing to $G$-invariant functions, i.e., $\Spec B /\!/ G = \Spec B^G$, we deduce that
   the affine quotient $ \widetilde{\mm}_{q, 0}([\Gamma], \alpha)$ can be identified with an open subset of the multiplicative quiver variety $\mm_{q, 0}([\Gamma], \alpha)$. This is important because, as outlined in the following section, in order to show the non-existence of symplectic resolutions we prove that certain such varieties contain an open subset which is factorial and terminal.   
\end{rem}
\begin{rem}
  We note that, in the star-shaped case, this result follows from
  \cite[Section 8]{cb-shaw}. Moreover, in the general case, Yamakawa
  proves a similar result to the one obtained in this section in the
  language of local systems on punctured surfaces, see \cite[Theorem
  4.14]{yamakawa} for more details.
\end{rem}

%fundamental group of punctured surfaces and their representations
%explain cb's equivalence of categories in the case we are interested in 
%1
\section{Singularities of multiplicative quiver varieties} \label{section-sing} 
Throughout this section, which is
devoted to the study of the singularities
$\mm_{q, \theta}(Q, \alpha)$ and to the proof of Theorem \ref{main-result}, we use the notation introduced in Section \ref{mult-quiv-var}. 

In order to carry out this analysis, in Section \ref{ss:sing-alpha},
we describe the singular locus of the varieties in question. As one
might expect, for $\alpha \in \Sigma_{q,\theta}$, this is given by the
locus of strictly semistable representations. This follows because
these varieties are Poisson, the stable locus is symplectic and
smooth, and its complement has codimension at least two. Since a
generically non-degenerate Poisson structure on a smooth variety can
only degenerate along a divisor (the vanishing locus of the Pfaffian
of the Poisson bivector), we conclude that the entire smooth locus is
non-degenerate. Since the strictly semistable locus is degenerate, it
must therefore be singular.
% symplectic leaf stratification coincides with the
% stratification by representation type. Moreover, 
% $\mm_{q, \theta}(Q, \alpha)$ must be singular along the non-open
% symplectic leaves and hence the non-open strata. For
% $\alpha \in \Sigma_{q,\theta}$, the unique open symplectic leaf
% comprises the stable representations, and all the other strata are
% therefore singular.
Moreover, in the case where $\alpha$ is $q$-indivisible, a
symplectic resolution can be obtained by varying $\theta$, by Lemma
\ref{l:varytheta} and Corollary \ref{c:birational} (and Remark \ref{r:rational stability}). These arguments, spelled out below, prove the
second statement of Theorem \ref{main-result}.

In Section \ref{ss:na}, we complete the proof of Theorem
\ref{main-result} by considering strata of representation type
$\nu \beta$, where $\alpha = n\beta$ and $\nu$ is a partition of $n$.
We compute their codimension. As a consequence, taking $\beta$ to be
$q$-indivisible, for suitable $\theta' \geq \theta$,
$\mm_{q, \theta'}(Q, \alpha)$ has singularities in codimension
$\geq 4$. Hence by Flenner's theorem \cite{flenner}, its normalisation
is a symplectic singularity, which proves the first statement of
Theorem \ref{main-result}.  Finally, we show, using Drezet's criterion
of factoriality, that the singularities along most strata $\nu \beta$
are factorial and terminal. This proves the final statement of Theorem
\ref{main-result}. Note that Section \ref{ss:na} closely follows
\cite{bellamy-schedler}, where the analogous strata are considered for
ordinary quiver varieties.

\subsection{Singular locus of $\mm_{q, \theta}(Q, \alpha)$ for $\alpha \in \Sigma_{q,\theta}$}\label{ss:sing-alpha}
%\begin{defn}
%  A point $x\in \mm_{q, \theta}(Q, \alpha)$ is strictly semistable if $\tilde{x}$
%  decomposes into a non-trivial sum of $\theta$-stable
%  representations, where
%  $\tilde{x}\in \mr{Rep}^{\theta\mr{-}ss}(\Lambda^q, \alpha)$ is a $\theta$-polystable lift of
%  $x$.
%\end{defn}
Before proving the main statement, we need a well-known result which is valid for any variety endowed with a Poisson structure. 
\begin{lem}\label{l:degen-div}
  Let $X$ be a smooth variety and $\pi \in \wedge^2TX$ a generically non-degenerate Poisson
  bivector. Let $D$ the degenerate locus of $\pi$. Then, if nonempty, $D$ is a
  divisor.
\end{lem}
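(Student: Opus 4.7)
The plan is to identify the degeneracy locus $D$ with the vanishing locus of a global section of a line bundle on $X$, and then invoke the standard fact that such loci are either empty or of pure codimension one.

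First, observe that since $\pi$ is generically non-degenerate, the dimension of $X$ at a generic point must be even (an antisymmetric bilinear form on an odd-dimensional vector space is always degenerate); by connectedness arguments applied to each component, we may write $\dim X = 2m$ on the component being analysed. The essential idea is then to consider the ``top Pfaffian'' of $\pi$, namely the section
\[
\pi^{\wedge m} := \underbrace{\pi \wedge \cdots \wedge \pi}_{m \text{ times}} \in H^0(X, \wedge^{2m} TX).
\]
Here $\wedge^{2m} TX$ is a line bundle, since $X$ is smooth of dimension $2m$.

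The key linear-algebraic fact I would cite is that, for a bivector $\pi_x \in \wedge^2 T_x X$ on a $2m$-dimensional vector space, $\pi_x$ is non-degenerate (i.e., the induced map $T^*_x X \to T_x X$ is an isomorphism) if and only if $\pi_x^{\wedge m} \neq 0$ in $\wedge^{2m} T_x X$. Indeed, $\pi_x^{\wedge m}$ is, up to a constant, the Pfaffian of the matrix representing $\pi_x$ in any basis (weighted by a volume element), and the Pfaffian is nonzero exactly when the antisymmetric form is non-degenerate. Consequently, $D$ is set-theoretically equal to the vanishing locus $Z(\pi^{\wedge m})$.

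Since $\pi$ is generically non-degenerate, $\pi^{\wedge m}$ is a nonzero global section of the line bundle $\wedge^{2m} TX$. On a smooth (in particular locally factorial) variety, the vanishing locus of a nonzero section of a line bundle is either empty or a Cartier divisor: locally it is cut out by a single nonzero regular function, and by Krull's Hauptidealsatz such a vanishing locus has pure codimension one. Hence if $D$ is nonempty, it is a divisor, completing the proof. The argument is essentially formal; the only thing one must be slightly careful about is the standard identification of the degeneracy locus with the vanishing of $\pi^{\wedge m}$, which is a pointwise statement and thus requires no further input beyond the Pfaffian interpretation.
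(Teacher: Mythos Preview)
Your proof is correct and follows essentially the same approach as the paper: form the top wedge power $\pi^{\wedge m}$ as a section of the line bundle $\wedge^{2m}TX$, identify the degeneracy locus with its vanishing locus, and conclude that this is a divisor. The paper's version is terser (it simply says ``$\gamma$ is a section of a line bundle and, therefore, its zero locus is a divisor (if nonempty)''), while you spell out the Pfaffian interpretation and invoke Krull's Hauptidealsatz explicitly, but the argument is the same.
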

\begin{proof}
  By generic non-degeneracy, $\dim X$ has to be even, therefore $\dim X= 2d$. Define
  the top polyvector field $\gamma=\wedge^d\pi$. Then, $D$ coincides with
  the zero locus of $\gamma$. On the other hand, $\gamma$ is a section
  of a line bundle and, therefore, its zero locus is a divisor (if nonempty).
\end{proof}
This implies the following criterion for the singular locus of a Poisson variety:
\begin{coro}\label{c:sing-crit}
Let $X$ be a Poisson variety which is smooth and symplectic in the complement of a closed Poisson subvariety $Z \subseteq X$ which has codimension at least two everywhere. Then $Z$ equals the set-theoretic singular locus of $X$.
\end{coro}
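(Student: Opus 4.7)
The plan is to prove the two inclusions $X_{\text{sing}} \subseteq Z$ and $Z \subseteq X_{\text{sing}}$ separately. The first is immediate from the hypothesis that $X \setminus Z$ is smooth.

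For the reverse inclusion, I would argue by contradiction: suppose some $p \in Z$ lies in the smooth locus $X_{\text{sm}}$. The strategy is to restrict the Poisson bivector $\pi$ to the open smooth variety $X_{\text{sm}}$. Since $\pi$ is non-degenerate on $X \setminus Z$, which is dense in $X_{\text{sm}}$ (as $Z$ has codimension at least two), $\pi|_{X_{\text{sm}}}$ is generically non-degenerate. By Lemma \ref{l:degen-div}, its degenerate locus $D$ is either empty or a divisor in $X_{\text{sm}}$. But $D$ is contained in $Z \cap X_{\text{sm}}$, which has codimension at least two in $X_{\text{sm}}$ (since $X_{\text{sm}}$ is open and $Z$ has codimension at least two everywhere in $X$). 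A divisor cannot have codimension two, so $D$ must be empty and $X_{\text{sm}}$ is therefore itself symplectic.

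Now I would exploit that $Z$ is a Poisson subvariety: every Hamiltonian vector field on $X$ is tangent to $Z$, hence on $X_{\text{sm}}$ they are tangent to $Z \cap X_{\text{sm}}$. At the assumed point $p \in Z \cap X_{\text{sm}}$, the non-degeneracy of $\pi|_{X_{\text{sm}}}$ ensures that Hamiltonian vector fields of local functions span the whole tangent space $T_p X_{\text{sm}}$, which has dimension $\dim X$. But by the Poisson-subvariety condition they lie in $T_p(Z \cap X_{\text{sm}})$, of dimension at most $\dim X - 2$, a contradiction. Hence $Z \cap X_{\text{sm}} = \emptyset$, giving $Z \subseteq X_{\text{sing}}$.

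The subtle point is that Lemma \ref{l:degen-div} on its own only forces the degenerate locus to be codimension one, which is a priori compatible with a codimension-two Poisson subvariety $Z$. The key move is to combine the lemma with the codimension hypothesis in order to eliminate the degenerate locus entirely on $X_{\text{sm}}$; once this is done, the fact that $Z$ is a Poisson subvariety produces the contradiction via the standard symplectic-leaf dimension count. No further input is needed beyond Lemma \ref{l:degen-div} and the elementary interaction between symplectic leaves and Poisson subvarieties.
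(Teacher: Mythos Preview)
Your approach is essentially the paper's: both combine Lemma \ref{l:degen-div} with the Poisson-subvariety hypothesis to derive a contradiction at a putative smooth point of $Z$. The paper simply reverses the order---it first asserts that $\pi$ is degenerate at any smooth point $z\in Z$ (from the Poisson-subvariety condition), then invokes the lemma to force the degeneracy locus to be a divisor through $z$, contradicting its containment in the codimension-$\geq 2$ set $Z$.

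There is one gap in your step 3: the bound $\dim T_p(Z\cap X_{\mathrm{sm}})\leq \dim X-2$ is not justified, and in fact can fail, since $Z$ may be singular at $p$ and the Zariski tangent space of a variety at a singular point can exceed its dimension (e.g.\ $Z=V(x^2+y^3,\,z^2+w^3)\subset\mathbb{A}^4$ at the origin has codimension two but full tangent space). The fix is easy: having shown $X_{\mathrm{sm}}$ is symplectic, pass from $p$ to a smooth point $q$ of an irreducible component of $Z$ lying in $X_{\mathrm{sm}}$ (such $q$ exists since $Z\cap X_{\mathrm{sm}}$ is open and nonempty in $Z$ and the smooth locus of $Z$ is dense); at $q$ one genuinely has $\dim T_qZ=\dim Z\leq\dim X-2$ and your contradiction goes through. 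Equivalently, phrase it via symplectic leaves: the leaf through $p$ lies in $Z$, so its dimension (the rank of $\pi_p$) is at most $\dim Z<\dim X$. The paper's unproved assertion that ``the Poisson structure of $X$ is degenerate at $z$'' rests on exactly this standard fact.
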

\begin{proof}
Suppose for a contradiction that $X$ is smooth at a point $z \in Z$. Since $Z$ is a closed Poisson subvariety, the Poisson structure of $X$ is degenerate at $z$.  It follows from Lemma \ref{l:degen-div} that the degeneracy locus of $X$ has codimension $1$ at $z$.  However, this locus is contained in $Z$, which has codimension at least two at $z$. This is a contradiction.
\end{proof}
% \begin{prop} Let $\alpha \in N_{q,\theta}$ be arbitrary. Then 
% the symplectic leaves of $\mm_{q,\theta}(Q,\alpha)$ are precisely the representation type strata (and there are finitely many). Moreover, the smooth locus is the union of the open symplectic leaves.
% \end{prop}
% \begin{proof}

% \end{proof}
\begin{prop}\label{sing-quiv}Let $\alpha \in \Sigma_{q,\theta}$. The smooth locus of $\mm_{q, \theta}(Q, \alpha)$ is $\mm^s_{q, \theta}(Q, \alpha)$.
\end{prop}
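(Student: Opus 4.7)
The plan is to apply Corollary \ref{c:sing-crit} to the complement $Z := \mm_{q,\theta}(Q,\alpha) \setminus \mm^s_{q,\theta}(Q,\alpha)$ of the stable locus. To do so, I must verify three things: (i) $\mm^s_{q,\theta}(Q,\alpha)$ is smooth and symplectic, (ii) $Z$ has codimension at least $2$ in $\mm_{q,\theta}(Q,\alpha)$, and (iii) $Z$ is a closed Poisson subvariety. Point (i) is Proposition \ref{p:s-smooth}. Point (iii) will follow from the stratification by representation type in Proposition \ref{strati}, since $Z$ is the union of all strata $C^{\tau}_{q,\theta}(Q,\alpha)$ with $\tau \neq (1,\alpha)$, and the Poisson bracket preserves representation type (as the symplectic leaves of the quasi-Hamiltonian reduction refine this stratification). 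The closedness of $Z$ amounts to openness of $\theta$-stability, which is well known.

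The heart of the argument is point (ii). Let $\tau = (k_1, \beta^{(1)}; \ldots; k_s, \beta^{(s)})$ be a representation type with $\tau \neq (1,\alpha)$, so $r := \sum_{j=1}^s k_j \geq 2$, and write $\alpha = k_1 \beta^{(1)} + \cdots + k_s \beta^{(s)}$. Each $\beta^{(j)}$ is the dimension vector of a $\theta$-stable (hence simple) representation of $\Lambda^q(Q)$, so in particular $\beta^{(j)} \in R^+_{q,\theta}$ and $p(\beta^{(j)}) \geq 0$ (the latter from Proposition \ref{dim-ext} applied to such a representation, since $\dim \Ext^1 \geq 0$). The definition of $\Sigma_{q,\theta}$ applied to the decomposition of $\alpha$ into $r \geq 2$ positive roots $\beta^{(j)}$ (counted with multiplicity) yields
\[
p(\alpha) > \sum_{j=1}^s k_j\, p(\beta^{(j)}) \geq \sum_{j=1}^s p(\beta^{(j)}),
\]
using $k_j \geq 1$ and $p(\beta^{(j)}) \geq 0$. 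Since $p$ is integer-valued, this strict inequality gives $p(\alpha) - \sum_j p(\beta^{(j)}) \geq 1$. Combined with Proposition \ref{strati}, which states that $\dim C^{\tau}_{q,\theta}(Q,\alpha) = \sum_j 2p(\beta^{(j)})$, and Remark \ref{r:dim-stab}, which gives $\dim \mm_{q,\theta}(Q,\alpha) = 2p(\alpha)$, we conclude
\[
\operatorname{codim} C^{\tau}_{q,\theta}(Q,\alpha) = 2p(\alpha) - 2\sum_{j=1}^s p(\beta^{(j)}) \geq 2.
\]
Taking the union over all non-trivial $\tau$ and using Proposition \ref{strati} again to see $Z$ is the union of such strata, $Z$ has codimension at least $2$ everywhere.

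With (i)--(iii) established, Corollary \ref{c:sing-crit} applies directly: the set-theoretic singular locus of the Poisson variety $\mm_{q,\theta}(Q,\alpha)$ equals $Z$, i.e., the smooth locus is $\mm^s_{q,\theta}(Q,\alpha)$, as claimed. The subtlest step is verifying (iii); in particular, the claim that $Z$ is Poisson relies on compatibility of the quasi-Hamiltonian reduction with representation type, which can be made precise by observing that the reduced Poisson structure at a closed point depends only on the isomorphism class of the corresponding polystable representation, so deformation along a Hamiltonian flow preserves the set of multiplicity vectors of stable summands.
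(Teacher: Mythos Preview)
Your overall strategy matches the paper's: verify the hypotheses of Corollary~\ref{c:sing-crit} for $Z$. However, your verification of (ii) has a gap. You assert that each $\beta^{(j)}$, being the dimension vector of a $\theta$-stable representation, lies in $R^+_{q,\theta}$, and then invoke the defining inequality of $\Sigma_{q,\theta}$ on the decomposition $\alpha = \sum_j k_j \beta^{(j)}$. But the claim that $\beta^{(j)}$ is a root is nontrivial: it is the content of Corollary~\ref{c:wk-sigma-converse}, which is only proved much later (via Theorem~\ref{t:decompo}), so it cannot be used here without a forward-reference/circularity concern. (Also, ``$\theta$-stable, hence simple'' is incorrect for $\theta \neq 0$; $\theta$-stable implies only Schurian.) The paper sidesteps this completely with a simpler parity argument: by Proposition~\ref{strati} every stratum has even dimension, and since $\mm^s_{q,\theta}(Q,\alpha)$ is dense of dimension $2p(\alpha)$ (Remark~\ref{r:dim-stab}), every non-open stratum has dimension strictly less than $2p(\alpha)$, hence at most $2p(\alpha)-2$. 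No appeal to the $\Sigma_{q,\theta}$ inequality or to root properties of the $\beta^{(j)}$ is needed.

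Your justification of (iii) is also looser than the paper's. The parenthetical ``the symplectic leaves of the quasi-Hamiltonian reduction refine this stratification'' is precisely the open question raised in the Remark immediately following the proposition, so it cannot be taken for granted. Your closing sentence gestures toward a correct argument, but the paper makes it concrete: Hamiltonian vector fields on the quotient descend from $\GL(\alpha)$-invariant Hamiltonian vector fields on the representation variety, and these integrate to formal automorphisms commuting with the $G$-action, which therefore preserve the stratification by conjugacy class of stabiliser. That is what shows $Z$ is Poisson, without assuming anything about symplectic leaves.
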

\begin{proof} By Proposition \ref{p:s-smooth} $\mm^s_{q, \theta}(Q, \alpha)$ is smooth and symplectic.
%we only have to prove that the complement is singular.
Let $Z$ be the complement. It is
  % Therefore the claim of the proposition can be rephrased as follows: the singular
  % locus of $\mm_{q, \theta}(Q, \alpha)$ is
  the union of all the
  non-open strata of $\mm_{q, \theta}(Q, \alpha)$.  There are finitely many
and these all have purely even dimension; hence $Z$ has codimension at least two everywhere (as $\mm^s_{q,\theta}$ is dense and it has purely even dimension, $2p(\alpha)$).  Furthermore, we claim that $Z$ is a Poisson subvariety, i.e., all Hamiltonian vector fields are tangent to it.  
% We know that this
%   union is contained in the degenerate locus of the Poisson structure
%   of $\mm_{q, \theta}(Q, \alpha)$, since Hamiltonian vector fields
%   preserve the stratification.
  Indeed, Hamiltonian vector fields descend from $\GL(\alpha)$-invariant Hamiltonian vector fields on representation varieties. These integrate to formal automorphisms which commute with the $G$-action, which hence preserve the stratification by conjugacy classes of stabiliser.  Therefore, the hypotheses of Corollary \ref{c:sing-crit} are satisfied, and the statement follows.
  % If $\mm_{q, \theta}(Q, \alpha)$ were
  % smooth at a strictly semistable point, then locally at that point,
  % the degenerate locus would be a divisor. Since a divisor cannot be
  % the union of finitely many closed subvarieties of even codimension,
  % it would follow that the Poisson structure were degenerate at a
  % point of the open stratum. But the open stratum is symplectic by Proposition \ref{p:s-smooth} and therefore the Poisson structure cannot degenerate. This is a contradiction.
\end{proof}
\begin{rem} It is reasonable to ask if a stronger statement is true,
  which makes sense for general $\alpha$: are the connected components
  of the representation type strata the symplectic leaves?
  Equivalently, do the Hamiltonian vector fields span the tangent
  spaces to the representation type strata?  If so, then (a)
  $\mm_{q,\theta}(Q,\alpha)$ has finitely many symplectic leaves, and
  (b) the representation type strata are all smooth.  The converse
  statement also holds: if a stratum is smooth and it is a union of
  finitely many symplectic leaves, its Poisson structure must be
  non-degenerate outside a locus of codimension at least two.  So Lemma
  \ref{l:degen-div} implies that it is actually non-degenerate.

  Let us comment briefly on conditions (a) and (b). First, if
  $\mm_{q,\theta}(Q,\alpha)$ is a symplectic singularity, it has
  finitely many symplectic leaves, by \cite[Theorem 2.5]{Kalss}. Next,
  for a representation type
  $\tau=(k_1, \beta^{(1)}; \dots; k_r, \beta^{(r)} )$, the direct sum map produces a surjection
  $(\mm_{q,\theta}^s(Q,\beta^{(1)}) \times \cdots \times
  \mm_{q,\theta}^s(Q,\beta^{(r)}))^{\text{dist}} \to
  C^{\tau}_{q,\theta}(Q,\alpha)$ with smooth source, where the
  $\text{dist}$ refers to the open subset where the elements of the
  $i$-th and $j$-th factors are unequal for all distinct $i$ and $j$. It seems
  reasonable to expect this to be a covering, in which case the
  stratum is smooth.
\end{rem}
\subsection{Generalities on symplectic singularities}
In order to prove the first statement of Theorem \ref{main-result}, we need a criterion for the normalisation of a variety to have symplectic singularities. This   is an extension
of \cite[Lemma 6.12]{bellamy-schedler}, using \cite[Theorem 1.5]{Kalnpa}:
\begin{prop}\label{prop-symp-sing}
Let $X$ be a Poisson variety and assume that $\pi: Y\rightarrow X$ is a proper birational Poisson morphism from a variety $Y$ with symplectic singularities. Then the normalisation $X'$ of $X$ has symplectic singularities. Moreover, the induced map $\pi: Y \to X'$ is Poisson.
\end{prop}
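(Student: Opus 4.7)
The plan is to first upgrade the morphism $\pi$ to one landing in the normalisation $X'$, then transfer the Poisson structure, and finally show that the symplectic form on the smooth locus of $X'$ extends to any resolution by passing to a common resolution with $Y$.

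First, since $Y$ has symplectic singularities, $Y$ is normal by definition. Hence $\pi: Y \to X$ factors uniquely through the normalisation as $\tilde{\pi}: Y \to X'$, and $\tilde{\pi}$ is again proper and birational. By Kaledin's theorem on normalisation of Poisson algebras, \cite[Theorem 1.5]{Kalnpa}, the Poisson structure on $X$ lifts uniquely to a Poisson structure on $X'$ making $X' \to X$ Poisson. Since $\pi$ is Poisson by hypothesis and the normalisation map is Poisson, the factorisation $\tilde{\pi}$ is Poisson as well. This settles the last sentence of the proposition. Since $\tilde\pi$ is birational and the Poisson structure on $Y$ is generically non-degenerate (as $Y$ has symplectic singularities), the Poisson structure on $X'$ is also generically non-degenerate. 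By Lemma \ref{l:degen-div} applied to the smooth locus $X'_{\mr{reg}}$, it restricts to a symplectic form $\omega_{X'}$ on $X'_{\mr{reg}}$.

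Next, I would verify the extension property. Fix a resolution $\rho: \widetilde{X'} \to X'$ and a resolution $\sigma: \widetilde{Y} \to Y$. By the symplectic singularity hypothesis on $Y$, the form $\sigma^*\omega_Y$ extends to a regular $2$-form $\widetilde{\omega}_Y \in \Omega^2(\widetilde{Y})$. Choose a smooth variety $W$ together with proper birational morphisms $g: W \to \widetilde{Y}$ and $f: W \to \widetilde{X'}$ resolving the rational map $\widetilde{Y} \dashrightarrow \widetilde{X'}$ induced by $\tilde{\pi}$ (such a $W$ exists by Hironaka, e.g.\ by taking a resolution of the graph). The form $g^*\widetilde{\omega}_Y$ is regular on $W$. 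On the dense open subset $W^\circ \subseteq W$ where $f$ and $g$ are isomorphisms onto open subsets of the smooth loci of $\widetilde{X'}$ and $\widetilde{Y}$ mapping into $X'_{\mr{reg}}$, the identity $g^*\widetilde{\omega}_Y = f^*\rho^*\omega_{X'}$ holds because $\tilde\pi$ is Poisson (and on the smooth symplectic open loci this translates to compatibility of the symplectic forms). Hence $f^*\rho^*\omega_{X'}$ is a rational $2$-form on $W$ which agrees on a dense open subset with the regular form $g^*\widetilde{\omega}_Y$, so it is itself regular on $W$.

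The final step is to descend regularity along the proper birational morphism $f: W \to \widetilde{X'}$ between smooth varieties. For any prime divisor $D \subset \widetilde{X'}$, the open dense subset $D \cap U$ (where $U \subseteq \widetilde{X'}$ is the locus over which $f$ is an isomorphism) is nonempty, so $D$ has a well-defined strict transform $\widetilde{D} \subset W$, and pullback preserves the valuation along $D$. Regularity of $f^*\rho^*\omega_{X'}$ along $\widetilde{D}$ therefore forces regularity of $\rho^*\omega_{X'}$ along $D$; as this holds for every prime divisor and $\widetilde{X'}$ is smooth, $\rho^*\omega_{X'}$ extends regularly to all of $\widetilde{X'}$. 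This verifies Beauville's criterion and shows that $X'$ has symplectic singularities.

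The main obstacle is arranging the common resolution $W$ and verifying that the two pulled-back forms agree there: everything else is either formal (the factorisation through $X'$ and the Kaledin lifting) or a standard birational-descent argument. Once these are in place, the proof reduces to the same line of reasoning as \cite[Lemma 6.12]{bellamy-schedler}, with the new input being Kaledin's theorem which removes the need to assume $X$ normal at the outset.
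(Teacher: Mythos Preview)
Your approach matches the paper's: factor $\pi$ through the normalisation, use Kaledin's \cite[Theorem 1.5]{Kalnpa} to put a Poisson structure on $X'$ making $\tilde\pi$ Poisson, and then invoke the normal-target case. The paper simply cites \cite[Lemma 6.12]{bellamy-schedler} for that last step, whereas you unpack its proof via a common resolution; this is fine and makes the argument self-contained.

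There is one genuine gap. Your sentence ``By Lemma~\ref{l:degen-div} applied to the smooth locus $X'_{\mr{reg}}$, it restricts to a symplectic form $\omega_{X'}$ on $X'_{\mr{reg}}$'' does not follow from what precedes it. Lemma~\ref{l:degen-div} only says that the degeneracy locus, if nonempty, is a divisor; it does not by itself force the Poisson structure to be non-degenerate on all of $X'_{\mr{reg}}$. You need an additional codimension bound. The fix is short: since $\tilde\pi:Y\to X'$ is proper birational between normal varieties, the complement in $X'$ of the locus over which $\tilde\pi$ is an isomorphism has codimension $\geq 2$. Over the isomorphism locus, the Poisson structure on $X'_{\mr{reg}}$ is identified with that on the corresponding open subset of $Y_{\mr{reg}}$, where it is non-degenerate (as $Y$ has symplectic singularities). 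Hence the degeneracy locus in $X'_{\mr{reg}}$ lies in a codimension $\geq 2$ set, and now Lemma~\ref{l:degen-div} forces it to be empty. (This is exactly the logic of Corollary~\ref{c:sing-crit}.) With this sentence inserted, the remainder of your argument---the common roof $W$, comparison of the pulled-back forms, and descent of regularity along $f:W\to\widetilde{X'}$---goes through.
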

\begin{proof}
  In \cite[Lemma 6.12]{bellamy-schedler}, the result is proved under the assumption that $X$ is in fact normal.  To conclude the lemma from this result, we
  may apply \cite[Corollary 1.4, Theorem 1.5]{Kalnpa}. By these results (and their proofs), given a Poisson variety $X$, the normalisation $X'$ has a unique Poisson structure such that the normalisation map $\nu: X' \to X$ is a Poisson morphism. The map $\pi$ factors through $\nu$, and the induced map $\pi': Y \to X'$ must be Poisson, since the Poisson bracket on $\OO_{X'}$ is the unique extension of the Poisson bracket on $\OO_X$ to a biderivation $\OO_{X'} \times \OO_{X'} \to \OO_Y$.  Then the fact that $X'$ has symplectic singularities follows from \cite[Lemma 6.12]{bellamy-schedler}.
\end{proof}
\begin{rem}\label{r:pss}
  For convenience, we will apply this result even in the case where $\pi$ is a symplectic resolution. However, in this case, the statement follows from definitions, without really requiring the results of \cite{bellamy-schedler,Kalnpa}, as follows.
  The map $\pi: Y \to X$ factors through $\pi': Y \to X'$, which induces on $X'$ a unique Poisson structure such that $\pi'$ is Poisson; as $Y$ is non-degenerate and its symplectic form is pulled back from $X'$, $X'$ must also be non-degenerate on the smooth locus. By definition, $X'$ is then a symplectic singularity.  Since $\pi$ is dominant,
  the Poisson structure on $X$ is uniquely determined from the one on $Y$,
  and must be the one obtained from $X'$ via the inclusion $\OO_X \to \OO_{X'}$. This proves the last statement.
\end{rem}  
  \begin{rem}
  Actually, in the above proposition, the biconditional holds:
  $X$ has symplectic singularities if and only if $Y$ does. Moreover, one can generalise to the case where $X$ is a non-reduced Poisson scheme: in this case, the map $\pi$ factors through the reduced subvariety $X^{\text{red}}$, which is canonically Poisson by
  \cite[Corollary 1.4]{Kalnpa}.
  \end{rem}

\subsection{The $q$-indivisible case}
We now prove the second statement of Theorem \ref{main-result}. Suppose that
$\beta \in \Sigma_{q,\theta}$ is $q$-indivisible.

First suppose that $\beta$ is real. In this case, 
% Given $\beta$  a dimension vector in $\Sigma_{q, \theta}$, we have the following possibilities:\\
% 1. $\beta$ is a \tit{real root}; in this case there is nothing to prove: indeed,
by \cite[Theorem 2.1]{cb-shaw}, $\Lambda^q(Q)$ admits a simple rigid representation $X$ and any other representation $Y$ of the same dimension must be isomorphic to $X$, which means that the variety $\mm_{q, \theta}(Q, \beta)$ is a point. So there is nothing to prove.

Next suppose that $\beta$ is imaginary. In this case one may proceed as follows: by choosing a generic stability parameter $\theta'\geq \theta$, there is a projective symplectic resolution
\[
\pi: \mm_{q, \theta'}(Q, \beta)\longrightarrow\mm_{q, \theta}(Q, \beta).
\]
Indeed, by \cite[Proposition 3.5]{yamakawa}, for $\theta'$ generic, the stable locus $\mm^s_{q, \theta'}(Q, \beta)$, which is smooth, coincides with the semistable locus. Hence we can find $\theta'\geq \theta$ such that $\mm_{q, \theta'}$ is smooth and symplectic. Moreover, the fact that the morphism $\pi$  exists and is projective and Poisson follows, in the $\theta=0$ case, from the very definitions of affine and GIT quotient and, for general $\theta$, from Lemma \ref{l:varytheta}.
%the arguments used in \cite[Lemma 2.3]{bellamy-schedler}. 
Finally, birationality of $\pi$ is ensured by 
Corollary \ref{c:birational} (and Remark \ref{r:rational stability}).
Thus,  we can conclude that $\mm_{q, \theta}(Q, \beta)$ admits a symplectic resolution, given by the morphism $\pi$.  By Proposition \ref{prop-symp-sing} (or Remark \ref{r:pss}), this implies that the normalisation of $\mm_{q,\theta}(Q,\beta)$ has symplectic singularities.

% Note that there are no representation-divisible isotropic imaginary
% roots in $\Sigma_{q,\theta}$, since this is already false for
% $q$-divisibility: if $\alpha=m\beta$ is an
% $q$-divisible isotropic imaginary root, then
% $p(\alpha) < m p(\beta)$ which implies that
% $\alpha \notin \Sigma_{q,\theta}$.

\subsection{The $q$-divisible case}
\label{ss:na}
In this subsection, we prove the first and third statements of Theorem \ref{main-result}.  We may assume that $\alpha$ is $q$-divisible: this is automatic in the third part, whereas in the first part, the result follows from the second part (proved in the preceding subsection) in the $q$-indivisible case. This means that $\alpha$ is anisotropic, by the following result:
\begin{lem}\label{l:aniso}
  Let $\alpha \in N_{q,\theta}$ be $q$-divisible.
  Then $\alpha \in \Sigma_{q,\theta}$ only if $\alpha$ is anisotropic.  Conversely, if $\alpha=m\beta$ and $\beta \in \Sigma_{q,\theta}$ is anisotropic, then
$\alpha \in \Sigma_{q,\theta}$.
\end{lem}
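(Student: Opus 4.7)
The plan is to prove both directions in turn.

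For the first direction, the plan is to proceed by contradiction using a specific decomposition. Since $\alpha$ is $q$-divisible, we can write $\alpha = m\beta$ with $m \ge 2$ and $\beta \in N_{q,\theta}$; in particular $\gcd(\alpha_i) \ge 2$, so $\alpha$ cannot be a real root (those being indivisible), hence $\alpha$ is imaginary. I would apply admissible reflection functors (Definition \ref{d:admiss-refl}), which act linearly and preserve the relation $s_v(m\beta) = m\, s_v(\beta)$ while keeping positive imaginary roots positive, to bring $\alpha$ into the fundamental region $\mc{F}(Q)$. Then $\beta$ also lies in $\mc{F}(Q)$ (same connected support as $\alpha$, and $(\beta, e_i) = \tfrac{1}{m}(\alpha, e_i) \le 0$), so $\beta$ is a positive imaginary root, and together with $q^\beta = 1$ and $\beta \cdot \theta = 0$ this gives $\beta \in R^+_{q,\theta}$. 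If $\alpha$ were isotropic, then $(\beta, \beta) = 0$ and $p(\beta) = 1$; the decomposition $\alpha = \beta + \cdots + \beta$ into $m$ copies of $\beta$ lies entirely in $R^+_{q,\theta}$ and witnesses $\sum_{i=1}^m p(\beta) = m \ge 2 > 1 = p(\alpha)$, contradicting $\alpha \in \Sigma_{q,\theta}$. Thus $\alpha$ must be anisotropic.

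For the converse direction, I would first verify $\alpha = m\beta \in R^+_{q,\theta}$: the equalities $q^\alpha = (q^\beta)^m = 1$ and $\theta \cdot \alpha = m(\theta \cdot \beta) = 0$ are immediate, and $m\beta$ is a positive imaginary root since writing $\beta = w(\beta_0)$ with $\beta_0 \in \mc{F}(Q)$ gives $m\beta = w(m\beta_0)$ with $m\beta_0 \in \mc{F}(Q)$. For the $\Sigma$-decomposition inequality, using the identity
\[
p(\alpha) - \sum_i p(\gamma^{(i)}) = 1 - r - \sum_{i<j}(\gamma^{(i)}, \gamma^{(j)}),
\]
I need to show $\sum_i p(\gamma^{(i)}) < p(m\beta)$ for every decomposition $\alpha = \sum \gamma^{(i)}$ with $r \ge 2$ and $\gamma^{(i)} \in R^+_{q,\theta}$. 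I would argue by contradiction: given any putatively violating decomposition, iteratively replace each $\gamma^{(i)} \notin \Sigma_{q,\theta}$ by a witnessing sub-decomposition, which preserves or increases $\sum p$ while strictly reducing the sizes of the pieces; this terminates in a decomposition with every $\gamma^{(i)} \in \Sigma_{q,\theta}$. After further reducing to $\beta \in \mc{F}(Q)$ via reflection functors, I split into the collinear case $\gamma^{(i)} = k_i\beta$ with $\sum k_i = m$, handled by direct computation yielding $p(m\beta) - \sum p(k_i\beta) = (1-r) + (p(\beta) - 1)(m^2 - \sum k_i^2) > 0$ for $p(\beta) \ge 2$ and $r \ge 2$; and the transverse case, where one uses $(\gamma^{(i)}, \beta) = \sum_v \gamma^{(i)}_v(e_v, \beta) \le 0$ (valid for $\gamma^{(i)} \ge 0$ and $\beta \in \mc{F}(Q)$) together with $\sum_i (\gamma^{(i)}, \beta) = m(\beta, \beta) \le -2m$ to extract arithmetic constraints.

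The main obstacle will be the transverse case of the converse direction: controlling decompositions in which some $\gamma^{(i)}$ is not proportional to $\beta$ requires using $\beta \in \Sigma_{q,\theta}$ nontrivially, beyond just $p(\beta) \ge 2$, to rule out subtle configurations where the $\gamma^{(i)}$ spread across distinct scaled copies of $\beta$ in a way that could saturate the inequality. This is the multiplicative analogue of the argument in the additive setting of \cite{bellamy-schedler}, which should transfer essentially unchanged once the additive moment map condition $\lambda \cdot \alpha = 0$ is replaced by the multiplicative one $q^\alpha = 1$.
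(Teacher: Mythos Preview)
Your first direction is essentially correct, though you invoke \emph{admissible} reflections to reach $\mc{F}(Q)$ without justifying that such a sequence exists for $\alpha$; this is in fact equivalent to $\alpha\in\Sigma_{q,\theta}$ (see Theorem~\ref{t:flat-sigma}), so it is circular to assume it here. You do not need admissibility: arbitrary reflections already send the imaginary root $\alpha$ to $\mc{F}(Q)$, and since they are linear and preserve integrality, the reflected $\beta=\alpha/m$ lands in $\mc{F}(Q)$ and is therefore a positive root. The conditions $q^\beta=1$ and $\theta\cdot\beta=0$ are given from the outset, so $\beta\in R^+_{q,\theta}$ and your decomposition $\alpha=\beta+\cdots+\beta$ does the job.

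The genuine gap is the converse. Your collinear computation is fine, but the transverse case (some $\gamma^{(i)}$ not proportional to $\beta$) is left open, and that is where the content lies. Your final sentence is in fact the fix, and it is exactly the paper's argument: the definition of $\Sigma_{q,\theta}$ depends only on the subset $N_{q,\theta}\subset\N^{Q_0}$ together with the Cartan form, not on $q$ or $\theta$ individually. Remarks~\ref{sigma-qt} and~\ref{mult-add} produce, for any $(q,\theta)$, an additive parameter $\lambda$ with $N^a_{\lambda,0}=N_{q,\theta}$ and hence $\Sigma^a_{\lambda,0}=\Sigma_{q,\theta}$; the lemma then becomes literally \cite[Proposition~1.2]{cb-deco}. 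The paper also records a second, independent route via Corollary~\ref{c:indiv}(ii): reflect $\beta\in\Sigma_{q,\theta}$ admissibly into $\mc{F}(Q)$ and invoke the classification Theorem~\ref{t:flat-sigma}, noting that $m\beta$ with $m\ge 2$ cannot be of type (b1) or (b2) (both force a coordinate equal to $1$) and is of type (a) precisely when $\beta$ is isotropic. Either route bypasses the transverse case you were unable to close.
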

\begin{proof} This is a generalisation of \cite[Proposition
  1.2]{cb-deco} (in view of Remarks \ref{sigma-qt} and
  \ref{mult-add}), with the same proof. For details, see Corollary
  \ref{c:indiv} below (whose proof is independent of any of the
  results of this section).
\end{proof}
%%We will prove this in Section \ref{ss:aniso}, since it requires some more tools.
% By the second part, proved in the preceding subsection, 
% if $\alpha$ is $q$-divisible,
% %$n\alpha\in \Sigma_{q,\theta}$ is an aniostropic imaginary root for $n \geq 2$ 
% and there exists a $\theta$-stable representation of dimension
% $\beta = \frac{1}{n}\alpha$ with $n \geq 2$, assuming
% % a representation-divisible anisotropic imaginary root and $n\geq 2$ such that 
% $(p(\beta), n)\neq (2,2)$, then there is a non-empty singular open subset $U\subset \mm_{q, \theta}(Q, \alpha)$ which is terminal and factorial. By standard arguments this has the consequence that $\mm_{q, \theta}(Q, \alpha)$ does not admit a symplectic resolution. 

 % The open subset $U$ mentioned above is the union of certain strata of $\mm_{q, \theta}(Q, n\alpha)$ indexed by \tit{weighted partitions} of $n$.
 Recall that a \tit{weighted partition} of $n$ is a sequence $\nu=(l_1, \nu_1; \dots; l_k, \nu_k)$ such that $\nu_1\geq \dots\geq \nu_k$ and $\sum_{i=1}^kl_i\nu_i=n$.  If $\nu$ is a partition of $n$, we shall denote by $\nu\beta$ the representation type $(l_1, \nu_1 \beta; \ldots, l_k,  \nu_k \beta)$. 
\begin{lem}\label{weigh}
\begin{enumerate}
\item The set $\Sigma_{q, \theta}$ contains $\{m\beta\ |\ m\geq 1\}$;
\item % the strata $C_{q, \theta}^{\nu\beta}(Q, n\beta)$ are parametrised by weighted partitions of $n$ and 
$\dim C_{q, \theta}^{\nu\beta}(Q, n\beta)=  2\left(k+(p(\beta)-1)\sum_{i=1}^k\nu_i^2\right)$;
\item for $(p(\beta), n)\neq (2,2)$, $\dim \mm_{q, \theta}(Q, n\beta)-\dim C_{q, \theta}^{\nu\beta}(Q, n\beta)\geq 4$ for all $\nu\neq (1, n)$.
\item for $(p(\beta), n)\neq (2,2)$ and $\nu\neq(1, n)$, one has $\dim \mm_{q, \theta}(Q, n\beta)-\dim C^{\nu\beta}_{q, \theta}(Q, n\beta)\geq 8$ unless one of the following holds: (i) $(p(\beta), n)=(2,3)$ and $\nu=(1, 2;1,1)$; (ii) $(p(\beta), n)=(3,2)$ and $\nu=(1,1;1,1)$.
\end{enumerate}
\end{lem}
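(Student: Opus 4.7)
The plan is to prove the four parts in order, with Part (1) being the main input and Parts (2)--(4) following from direct dimension computations built on it.

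For Part (1), the running hypothesis of the subsection gives $\alpha = n\beta \in \Sigma_{q,\theta}$ with $\alpha$ $q$-divisible, so $\alpha$ is anisotropic by Lemma \ref{l:aniso}. Expanding $p(n\beta) = 1 + n^2(p(\beta)-1)$, the inequality $p(n\beta) \geq 2$ forces $p(\beta) \geq 2$, so $\beta$ itself is anisotropic. Membership $m\beta \in N_{q,\theta}$ is automatic from $q^\beta = 1$ and $\beta \cdot \theta = 0$. Granted $\beta \in \Sigma_{q,\theta}$, the inclusion $\{m\beta : m \geq 1\} \subseteq \Sigma_{q,\theta}$ is the converse half of Lemma \ref{l:aniso} (proved later as Corollary \ref{c:indiv}.(ii)), following the standard argument of \cite[Proposition 1.2]{cb-deco}: for any decomposition $m\beta = \gamma^{(1)} + \cdots + \gamma^{(r)}$ into positive roots in $N_{q,\theta}$ with $r \geq 2$, one establishes $p(m\beta) > \sum_i p(\gamma^{(i)})$ using the quadratic superadditivity of $p$ along multiples of $\beta$ that follows from anisotropy $p(\beta) \geq 2$.

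Part (2) is immediate from Proposition \ref{strati} applied to the representation type $\tau = \nu\beta = (l_1, \nu_1\beta; \ldots; l_k, \nu_k\beta)$, which yields $\dim C^{\nu\beta}_{q,\theta}(Q,n\beta) = \sum_{i=1}^k 2p(\nu_i\beta)$. Substituting $p(\nu_i\beta) = 1 - \tfrac{\nu_i^2}{2}(\beta,\beta) = 1 + \nu_i^2(p(\beta)-1)$ and collecting terms gives the announced formula $2\bigl(k + (p(\beta)-1)\sum_i \nu_i^2\bigr)$.

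For Parts (3) and (4), combine Part (1) with Remark \ref{r:dim-stab} to get $\dim \mathcal{M}_{q,\theta}(Q,n\beta) = 2p(n\beta) = 2 + 2n^2(p(\beta)-1)$, so
\[
\Delta := \dim \mathcal{M}_{q,\theta}(Q,n\beta) - \dim C^{\nu\beta}_{q,\theta}(Q,n\beta) = -2(k-1) + 2(p(\beta)-1)\Bigl(n^2 - \textstyle\sum_i \nu_i^2\Bigr).
\]
Expanding $n = \sum_i l_i\nu_i$ yields $n^2 - \sum_i \nu_i^2 = \sum_i(l_i^2-1)\nu_i^2 + 2\sum_{i<j} l_i l_j \nu_i\nu_j$, with every summand non-negative. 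For $k=1$ and $\nu \neq (1,n)$ (so $l_1 \geq 2$) one gets $\Delta = 2(p(\beta)-1)(l_1^2-1)\nu_1^2 \geq 6$, with equality exactly at the excluded case $(p(\beta),n)=(2,2)$. For $k \geq 2$, the minimum of $n^2 - \sum_i \nu_i^2$ is attained at $l_i = \nu_i = 1$ for all $i$, giving $k(k-1)$, hence $\Delta \geq 2(k-1)(k(p(\beta)-1) - 1)$; this establishes $\Delta \geq 4$ for Part (3), the only failure being $k = 2, p(\beta) = 2$ which again yields $n=2$. For Part (4) the bound jumps to $\Delta \geq 8$ outside the two listed configurations, as a brief enumeration of partitions with $n^2 - \sum_i \nu_i^2$ near its minimum confirms that $\Delta = 6$ is realised only by $(p(\beta), n, \nu) = (2, 3, (1,2;1,1))$ and $(3,2,(1,1;1,1))$.

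The main obstacle is Part (1): the superadditivity of $p$ along a single sequence $\beta, 2\beta, 3\beta, \ldots$ is transparent from the quadratic form, but excluding all other decompositions requires the reflection-functor and fundamental-region machinery bundled into Lemma \ref{l:aniso}/Corollary \ref{c:indiv}.(ii). Once Part (1) is granted, Parts (2)--(4) reduce to the stratum dimension formula and a finite, explicit enumeration of weighted partitions near the extremal configurations; the only subtlety is the convention that distinct pairs $(l_i,\nu_i)$ with equal $\nu_i$ label genuinely distinct representation types, without which the exceptions in (3) and (4) would disappear.
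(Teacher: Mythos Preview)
Your proposal is correct and follows essentially the same route as the paper. The paper's own proof is a two-line deferral: Part (1) is declared a consequence of Lemma \ref{l:aniso}, and Parts (2)--(4) are said to be verbatim the arguments of \cite[Lemma 6.1]{bellamy-schedler} with Proposition \ref{strati} supplying the stratum dimensions. You have simply unpacked that computation---the formula $\Delta = -2(k-1) + 2(p(\beta)-1)(n^2 - \sum_i \nu_i^2)$ and the case split on $k$ are exactly what the cited argument does. One small clarification: your phrase ``Granted $\beta \in \Sigma_{q,\theta}$'' correctly flags that this is an input rather than something you derived; the paper leaves $\beta$ undefined before the lemma, but the intended hypothesis (inherited from \cite{bellamy-schedler}) is precisely that $\beta \in \Sigma_{q,\theta}$ is anisotropic, so your reading matches.
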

\begin{proof}
The arguments are completely analogous to those of \cite[Lemma 6.1]{bellamy-schedler}, except here that we use the dimension estimates given by Proposition \ref{strati}. The first statement is a consequence of Lemma \ref{l:aniso}.
\end{proof}
Note that the above result has the following interesting consequence. 
\begin{prop}\label{prop-normal}
Assume that all $\theta$-stable representations of dimension $\gamma<n\beta$ have $\gamma=m\beta$ for some $m$.  Moreover, assume that $(p(\beta),n)\neq (2,2)$. Then, $\mm_{q, \theta}(Q, n\beta)$ is normal.
\end{prop}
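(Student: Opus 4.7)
The strategy is to invoke Proposition \ref{quiv-norm} directly: we need $n\beta \in \Sigma_{q,\theta}$ together with the codimension estimate
\[
\min_{\tau \neq (1,n\beta)} \bigl(\dim \mm_{q,\theta}(Q,n\beta) - \dim C^{\tau}_{q,\theta}(Q,n\beta)\bigr) \geq 4.
\]
First I would observe that $n\beta \in \Sigma_{q,\theta}$ by Lemma \ref{weigh}(1), so that Proposition \ref{quiv-norm} is applicable and, moreover, Propositions \ref{rep-equidim2} and \ref{strati} give the dimensions of the relevant strata. Hence the proposition reduces entirely to the codimension estimate above.

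Next, I would use the standing hypothesis that every $\theta$-stable representation of dimension $\gamma < n\beta$ has $\gamma = m\beta$ for some $m$. Since a point of $\mm_{q,\theta}(Q,n\beta)$ corresponds to a $\theta$-polystable representation, and each $\theta$-stable summand has dimension a positive multiple of $\beta$ (strictly less than $n\beta$ unless the representation is itself $\theta$-stable of dimension $n\beta$), every non-open stratum is of the form $C^{\nu\beta}_{q,\theta}(Q,n\beta)$ for some weighted partition $\nu$ of $n$ with $\nu \neq (1,n)$. In particular, only strata of this shape occur, and one then applies Lemma \ref{weigh}(3), which under the hypothesis $(p(\beta),n) \neq (2,2)$ yields
\[
\dim \mm_{q,\theta}(Q,n\beta) - \dim C^{\nu\beta}_{q,\theta}(Q,n\beta) \geq 4
\]
for every $\nu \neq (1,n)$. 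Combining these facts, the minimum codimension of the non-open strata is at least $4$.

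For the claim that the hypothesis holds when $\beta$ is $q$-indivisible and $\theta$ is generic, I would argue as follows. Since $\beta$ is $q$-indivisible, the only dimension vectors $\gamma < n\beta$ lying in $N_{q,\theta}$ (hence admitting $\theta$-stable representations) are the vectors $m\beta$ for $1 \leq m < n$: the condition $q^\gamma = 1$ forces the scalar part of $\gamma$ along $\beta$, and genericity of $\theta$ on the hyperplane $\theta \cdot \beta = 0$ rules out $\theta$-stable representations of any other dimension meeting $\theta \cdot \gamma = 0$.

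Finally, with the codimension bound established, Proposition \ref{quiv-norm} applies and delivers normality of $\mm_{q,\theta}(Q,n\beta)$, completing the proof. The main content is really just assembling the pieces; the only mild subtlety is checking that, under the hypothesis, the representation types of non-open strata are exhausted by the $\nu\beta$ appearing in Lemma \ref{weigh}, which is immediate from the fact that polystable decompositions have $\theta$-stable summands of dimensions of the form $m\beta$.
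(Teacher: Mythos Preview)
Your proof is correct and follows the same approach as the paper: both reduce normality to Proposition \ref{quiv-norm} by using the hypothesis on $\theta$-stable representations to ensure every non-open stratum has type $\nu\beta$, and then invoke Lemma \ref{weigh}(3) for the codimension $\geq 4$ estimate. Your write-up is in fact more detailed than the paper's (which is a one-sentence proof), including an explicit justification of the ``for example'' claim that the paper leaves implicit.
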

For example, the first condition holds if $\beta$ is $q$-indivisible and $\theta$ is generic.
\begin{proof}
This is an immediate consequence of Proposition \ref{quiv-norm} and point (3) of Lemma \ref{weigh}, given that, by assumption on $\theta$, all strata except for the open one have codimension greater than 4.
\end{proof}
Now, let $\alpha \in \Sigma_{q,\theta}$ be $q$-divisible. Write
$\alpha = n \beta$ for $\beta$ $q$-indivisible and $n \geq 2$. For
generic $\theta' \geq \theta$, the only strata of
$\mm_{q,\theta'}(Q,\alpha)$ are those of the form $\nu \beta$, which appear in Lemma \ref{weigh}.
%type only contains multiples of $\beta$. %multipartitions of $n$, by multiplying them by $\beta$.
If $(p(\beta),n) \neq (2,2)$, then, taking into
account Remark \ref{r:dim-stab}, all non-open strata have
codimension at least four by Lemma \ref{weigh}.(3).  Therefore
$\mm_{q,\theta'}(Q, \alpha)$ is a symplectic singularity by Flenner’s
Theorem \cite{flenner}.  Now, the map
$\mm_{q,\theta'}(Q, \alpha) \to \mm_{q,\theta}(Q,\alpha)$ is
birational, projective, and Poisson by Corollary \ref{c:birational} (and
Remark \ref{r:rational stability}),
and Lemma \ref{l:varytheta}.  Therefore,
the normalisation of $\mm_{q,\theta}(Q,\alpha)$
is
itself a symplectic singularity by Proposition
\ref{prop-symp-sing}. This proves
the first statement of Theorem \ref{main-result}.

It remains to prove the final statement of Theorem \ref{main-result}.
% The next step is to prove that, in the case that $\alpha \in \Sigma_{q,\theta}$ and $\mm_{q,\theta}(Q,\alpha) \neq \emptyset$, then $\mm_{q,\theta}(Q, n\alpha)$ does not admit a symplectic resolution.
For this purpose, assume that $\alpha=n\beta$ for $n \geq 2$
and that $\beta \in N_{q, \theta}$ (not necessarily $q$-indivisible or in $\Sigma_{q,\theta}$), such that there exists a $\theta$-stable representation of dimension $\beta$.
Let $U$ be the union of all the strata indexed by $\nu \beta$ for
$\nu$ a weighted partitions of $n$, 
\[
U:=\bigcup_{\nu}C_{q, \theta}^{\nu\beta}(Q, \alpha).
\]
%%Note that, for $\theta$ generic, $U=\mm_{q, \theta}(Q, \alpha)$.
As well as for the previous lemma, to prove the following result one can repeat \tit{verbatim} the arguments in \cite[Lemma 6.2]{bellamy-schedler}.
\begin{lem}
  The subset $U$ is open in $\mm_{q, \theta}(Q, \alpha)$.
  If $\theta$ is generic and $\beta$ is $q$-indivisible, this subset is the entire variety.
\end{lem}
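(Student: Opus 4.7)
For openness of $U$, I will show its complement $V$ is closed, where $V$ consists of $S$-equivalence classes of $\theta$-polystable representations having at least one $\theta$-stable summand whose dimension does not lie in $\Z_{\geq 1}\beta$. The key input is a general specialisation principle (analogous to that used in the additive case by Crawley-Boevey): if $M_0$ lies in the closure of the stratum $C^\tau_{q,\theta}$ indexed by $\tau = (n_1,\gamma^{(1)}; \ldots; n_r,\gamma^{(r)})$, then each $\gamma^{(i)}$ can be written as a non-negative integer combination of the $\theta$-stable summand dimensions of $M_0$. Consequently, if all summand dimensions of $M_0$ lie in $\Z_{\geq 1}\beta$, so do all of the $\gamma^{(i)}$, and hence $M_t \notin V$ for every $M_t$ specialising to $M_0$. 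Contrapositively, $V$ is closed under specialisation, hence closed, and $U$ is open.

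For the second assertion, assume $\theta$ is generic in the hyperplane $\{\theta \cdot \alpha = 0\}$ and $\beta$ is $q$-indivisible. Genericity of $\theta$ ensures that every integer vector $\gamma$ with $0 < \gamma \leq \alpha$ and $\theta \cdot \gamma = 0$ is a rational multiple of $\alpha$, hence an integer multiple $k\beta_0$ of the primitive vector $\beta_0$ underlying $\beta$. The constraint $q^\gamma = 1$ then forces $k$ to be a multiple of the multiplicative order $m$ of $q^{\beta_0}$ in $\C^\times$. Writing $\beta = d\beta_0$, the $q$-indivisibility of $\beta$ (no $\beta/\ell \in N_{q,\theta}$ for $\ell \geq 2$) is equivalent to $d = m$, so every admissible $\gamma$ is an integer multiple of $\beta$. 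Thus every $\theta$-stable summand of a $\theta$-polystable representation of dimension $\alpha$ has dimension in $\Z_{\geq 1}\beta$, giving $U = \mm_{q,\theta}(Q,\alpha)$.

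The main obstacle is justifying the specialisation principle used in the first paragraph. This is standard in the additive setting but requires care to transfer to the multiplicative one. The cleanest route is via Theorem \ref{t:king}: points of $\mm_{q,\theta}(Q,\alpha)$ correspond to $\theta$-polystable modules up to isomorphism, and a degeneration $M_t \to M_0$ amounts to a filtration of each $\theta$-stable summand of $M_t$ whose associated graded agrees with the corresponding block of $M_0$. The dimension of each summand of $M_t$ then equals the sum with multiplicity of the corresponding subquotient dimensions, which are among the $\theta$-stable summand dimensions of $M_0$. Alternatively, one may invoke Luna's slice theorem for the $\GL(\alpha)$-action on $\Phi^{-1}(q)^{\theta\text{-}ss}$, which is available in the quasi-Hamiltonian setting.
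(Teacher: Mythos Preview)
Your proposal is correct and follows essentially the same route that the paper defers to: the paper simply says to repeat verbatim the arguments of \cite[Lemma~6.2]{bellamy-schedler}, and what you have written is a reasonable reconstruction of that argument (the complement of $U$ is a finite union of strata whose closures remain in the complement, using that the representation-type partial order refines dimensions of stable summands). Your treatment of the second assertion, reducing $q$-indivisibility of $\beta=d\beta_0$ to $d$ equalling the multiplicative order of $q^{\beta_0}$, is also correct and matches the intended reasoning.

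One small point of phrasing: your sentence ``a degeneration $M_t\to M_0$ amounts to a filtration of each $\theta$-stable summand of $M_t$'' is not quite accurate as stated. What one actually does is lift the specialisation in the quotient to a $\theta$-semistable family $N$ over a DVR with $[N_K]\in C^\tau$ and $[N_0]=[M_0]$; the Jordan--H\"older filtration of $N_K$ extends by saturation to a filtration of $N_0$ whose subquotients are $\theta$-semistable of dimensions $\gamma^{(i)}$, and the JH factors of those subquotients lie among the JH factors of $M_0$. This yields exactly your specialisation principle, but it is a filtration of the family, not of the individual stable summands of $M_t$. Also, to pass from ``closed under specialisation'' to ``closed'' you are implicitly using that $V$ is constructible (a finite union of locally closed strata), which is fine here but worth making explicit.
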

In order to prove that $U$ is factorial, we shall follow the approach of \cite{bellamy-schedler}, which was itself inspired by results of Drezet \cite{Drezet} on factoriality of points in moduli spaces of semistable sheaves on rational surfaces. Assuming the notation above, with $\pi:\mr{Rep}^{\theta\mr{-}ss}(\Lambda^q, \alpha)\rightarrow \mm_{q, \theta}(Q, \alpha)$ denoting the quotient map, define $V:=\pi^{-1}(U)$. We aim at proving that $V$ is a local complete intersection and that it is factorial and normal. We shall then descend the factoriality property to the subvariety $U$. 
\begin{prop}\label{compl-int} $V$ is a local complete intersection, factorial and normal.
\end{prop}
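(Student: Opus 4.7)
I would verify the three assertions of the proposition in turn, following closely the strategy of \cite[Proposition 6.4]{bellamy-schedler} in the ordinary quiver variety setting. For the local complete intersection property, observe that the ambient space $\mr{Rep}^{\theta-ss}(\overline{Q}, \alpha) \cap \mr{Rep}^{\circ}(\overline{Q}, \alpha)$ is smooth, being open in the affine space $\mr{Rep}(\overline{Q}, \alpha)$, and that $V$ is an open subset of the fiber of the group-valued moment map $\Phi$ over $q$. Since $\Phi$ takes values in $\GL(\alpha)$ and its image is subject to a single global determinantal relation (forcing $q^\alpha = 1$), the fiber $\Phi^{-1}(q)$ is locally cut out by $g_\alpha$ equations. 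A dimension count, combining the lower bound from Lemma \ref{lem-irred} with the upper bound coming from Proposition \ref{p:inverseimage} applied to the strata $C^{\nu\beta}$ (together with the explicit formula in Lemma \ref{weigh}.(2)), shows that every irreducible component of $V$ has dimension exactly $g_\alpha + 2p(\alpha)$, the expected value, so the defining equations form a regular sequence locally.

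For normality, I would apply Serre's criterion $R_1 + S_2$. The $S_2$ condition is automatic for local complete intersections. For $R_1$, the singular locus of $V$ is contained in $\pi^{-1}(Z)$, where $Z \subset U$ is the complement of the $\theta$-stable stratum; Lemma \ref{l:dimest} then yields $\mathrm{codim}_V \pi^{-1}(Z) \geq \tfrac{1}{2} \mathrm{codim}_U Z$. Combined with Lemma \ref{weigh}.(3) (and our standing exclusion of the case $(p(\beta), n) = (2, 2)$), this gives codimension at least $2$ in $V$, verifying $R_1$.

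For factoriality, I would invoke the parafactoriality criterion in the form: a normal local complete intersection whose singular locus has codimension $\geq 4$ is factorial (a standard consequence of the Grothendieck--Lefschetz theorem on Picard groups of formal punctured neighbourhoods, in the spirit of \cite{Drezet}). Using Lemma \ref{l:dimest} once more, it suffices to verify that each non-open stratum $C^{\nu\beta}$ has codimension $\geq 8$ in $U$, which is exactly the content of Lemma \ref{weigh}.(4), with the exception of the two cases (i) $(p(\beta), n, \nu) = (2, 3, (1, 2; 1, 1))$ and (ii) $(p(\beta), n, \nu) = (3, 2, (1, 1; 1, 1))$. The main obstacle will be handling these two exceptional cases: there, the half-codimension bound from Lemma \ref{l:dimest} yields only codimension $\geq 3$ in $V$, so one must sharpen the upper bound on $\dim \pi^{-1}(C^{\nu\beta})$ directly, exploiting that for these specific types the polystable representatives are direct sums of pairwise non-isomorphic $\theta$-stable summands (and in case (i), with one summand of dimension $2\beta$). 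This calls for an explicit $\mathrm{Ext}^1$-based computation via Proposition \ref{dim-ext} to gain the additional unit of codimension needed to conclude factoriality.
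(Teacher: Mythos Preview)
Your proposal is essentially correct and follows the same route as the paper, which simply cites \cite[Proposition 6.5]{bellamy-schedler} for the codimension-four estimate on the singular locus. Two small points of clarification. First, for the local complete intersection claim you do not need the elaborate dimension count: since $\alpha\in\Sigma_{q,\theta}$ here, Proposition \ref{rep-equidim2} already gives that $\mr{Rep}^{\theta-ss}(\Lambda^q,\alpha)$ is a complete intersection, and $V$ is open in it. Second, in the two exceptional cases your phrasing ``sharpen the upper bound on $\dim\pi^{-1}(C^{\nu\beta})$'' is not quite what is needed: the preimage itself may have codimension only $3$, but the \emph{singular locus} inside it is much smaller. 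Since all multiplicities in $\nu$ equal $1$, every non-split iterated extension of the stable summands has trivial endomorphism ring (by Proposition \ref{dim-ext} and Schur), hence is a smooth point of $V$; the singular points over $C^{\nu\beta}$ are only the polystable orbits themselves, whose codimension in $V$ is easily checked to exceed $4$. This is exactly the refinement carried out in \cite[Proposition 6.5]{bellamy-schedler}.
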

The proof of this proposition follows closely the arguments used in \cite[Proposition 6.5]{bellamy-schedler}. 

% As in such a result, a strategy to prove that $V$ is a local complete intersection is to prove the following proposition.

% \begin{prop} \label{p:mm-flat}
% Consider the multiplicative moment map
% \[\Phi:\mr{Rep}^{\theta\mr{-}ss}(\overline{Q}, \alpha)\longrightarrow (\C^\times)^{Q_0}\]
% defining the multiplicative preprojective relation. Then, $\Phi$ is flat over the open subset $B_{q, \theta}=\{q\in(\C^\times)^{Q_0}\ |\ \alpha\in \Sigma_{q, \theta}\}$, with all fibres complete intersections 
% of dimension $\alpha\cdot\alpha+p(\alpha)-1$.  The generic elements of these fibers are $\theta$-stable representations.
% \end{prop}
% \begin{proof}The fact that all fibres have the prescribed dimension is ensured by Proposition \ref{strati}. Moreover, given that $B_{q, \theta}$ is smooth and $\Phi^{-1}(B_{q, \theta})$ is open, the map $\Phi$ is a flat map and, therefore, each fibre $\Phi^{-1}(q)$ is a complete intersection (cut out by $\Phi^*$ of coordinates based at $q$).
% \end{proof}

\begin{proof}[Proof of Proposition \ref{compl-int}]
Since $V$ is open inside $\mr{Rep}^{\theta\mr{-}ss}(\Lambda^q,\alpha)$,
Proposition \ref{rep-equidim2} implies that it is a local complete intersection.
%Since $\mr{Rep}^{\theta\mr{-}ss}(\Lambda^q,\alpha)$ is a complete intersection 
% by the above proposition, it follows that $V$, which is open inside
% $\mr{Rep}^{\theta\mr{-}ss}(\Lambda^q,\alpha)$, is a local complete
% intersection.
To prove normality and factoriality, recall that a local complete intersection satisfies Serre's $S_2$ property, so Serre's criterion implies that it is normal if it is smooth outside 
a locus of codimension at least $2$.  Moreover, 
by a result of Grothendieck (\cite[Theorem 3.12]{kaledin-lehn-sorger}),
a local complete intersection which is smooth outside a locus of codimension at least $4$ is factorial. Put together, to show that $V$ is normal and factorial,
it suffices to show that it is
 smooth outside of a locus of codimension at least 4. For this, one can repeat verbatim the arguments used in \cite[Proposition 6.5]{bellamy-schedler}, replacing Corollary 6.4 and Lemma 6.1 with Lemma \ref{l:dimest} and Lemma \ref{weigh}, respectively.
\end{proof}
In order to descend factoriality from $V$ to $U$ we use Drezet's method. In particular, \cite[Theorem 6.7]{bellamy-schedler} holds true in this context as well with no change in the proof of the result, as we have already made sure that all of the tools used there are still applicable here, Proposition \ref{compl-int} being the most important one. Thus, the corresponding statement of \cite[Corollary 6.9]{bellamy-schedler} is the following:
\begin{thm}\label{t:factorial}
  $U$ is a factorial variety.
\end{thm}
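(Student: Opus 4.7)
The plan is to deduce factoriality of $U$ from the factoriality of $V$ (established in Proposition \ref{compl-int}) by descending through the GIT quotient $U = V/\!\!/\GL(\alpha)$. The central tool will be the Drezet-type descent criterion formulated in \cite[Theorem 6.7]{bellamy-schedler}: for a factorial affine variety $V$ acted upon by a reductive group $G$ (here $\GL(\alpha)$), local factoriality of the GIT quotient at the image of a polystable point $x$ is controlled by a condition on the characters of the stabiliser $G_x$, essentially that all relevant characters extend to characters of $G$ modulo those coming from the centre.

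Concretely, a closed orbit in $V$ corresponds to a $\theta$-polystable representation $M = \bigoplus_{i=1}^{k} M_i^{\oplus l_i}$ of type $\nu\beta$, where $\nu=(l_1,\nu_1;\dots;l_k,\nu_k)$ is a weighted partition of $n$ and the $M_i$ are pairwise non-isomorphic $\theta$-stable representations of dimension $\nu_i \beta$. By Schur's lemma (applied to $\theta$-stable representations, which have trivial endomorphism algebra), the stabiliser of $M$ in $\GL(\alpha)$ is $\prod_{i=1}^{k} \GL(l_i)$, embedded via the natural action on the isotypic components. Its character group is therefore freely generated by the determinants $\det_i$ of each factor, and restricting a character $\prod_v \det_v^{m_v}$ of $\GL(\alpha)$ yields $\prod_i \det_i^{\nu_i(m\cdot\beta)}$. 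The constraint $\theta\cdot\beta=0$ then ensures $\theta\cdot(\nu_i\beta)=0$, providing the compatibility with $\chi_\theta$ required by the Drezet criterion.

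The main obstacle is verifying that the hypotheses of the Drezet-type criterion hold in the multiplicative setting; but all the underlying ingredients have already been prepared: factoriality and the local complete intersection property of $V$ (Proposition \ref{compl-int}), the stratification of $\mm_{q,\theta}(Q,\alpha)$ by representation type (Proposition \ref{strati}), the codimension estimates along strata $C^{\nu\beta}_{q,\theta}(Q,\alpha)$ provided by Lemma \ref{weigh}, and the density of the $\theta$-stable locus inside $\mr{Rep}^{\theta-ss}(\Lambda^q,\alpha)$ given by Proposition \ref{rep-equidim2}. With these in place, the proofs of \cite[Theorem 6.7 and Corollary 6.9]{bellamy-schedler} transport verbatim to our context: the descent of factoriality from $V$ to $U$ follows, yielding the theorem.
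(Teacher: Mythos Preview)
Your proposal is correct and follows exactly the same approach as the paper: descend factoriality from $V$ to $U$ via Drezet's method, invoking \cite[Theorem 6.7 and Corollary 6.9]{bellamy-schedler} with Proposition \ref{compl-int} as the key input. The paper in fact omits the proof entirely, noting only that it is identical to \cite[Corollary 6.9]{bellamy-schedler}; your sketch of the stabiliser computation and the listing of the needed ingredients simply makes explicit what the paper leaves implicit.
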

We omit the proof, as it is exactly the same as in \cite[Corollary 6.9]{bellamy-schedler}.

Using the previous theorem and the estimates on the codimension of the singular locus, one can conclude that $\mm_{q, \theta}(Q, \alpha)$ does not admit a symplectic resolution.  We state this formally below, where we also recall our running hypotheses for the reader's convenience.
\begin{thm}\label{t:fact-term}
Let $\alpha=n\beta\in\Sigma_{q, \theta}$ be anisotropic imaginary, for $n \geq 2$,
such that there exists a $\theta$-stable representation of $\Lambda^q$ of dimension $\beta$, and 
%a $\Sigma$-indivisible anisotropic imaginary root and $n\geq 2$ such that 
$(p(\beta), n)\neq (2,2)$. Then $\mm_{q, \theta}(Q, \alpha)$ has an open subset which is factorial, terminal, and singular. Hence it
does not admit a symplectic resolution.  Moreover, if $\theta$ is generic and $\alpha$ is $q$-indivisible, then this open subset is the entire variety.
\end{thm}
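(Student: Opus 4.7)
The plan is to take $U = \bigcup_\nu C^{\nu\beta}_{q,\theta}(Q,\alpha)$ as in the lemma preceding Proposition \ref{compl-int}, and verify the four desired properties (open, singular, factorial, terminal) one by one. Openness is the content of that lemma, and factoriality of $U$ is exactly Theorem \ref{t:factorial}; in particular $U$ is normal. For singularity, the hypothesis that there exists a $\theta$-stable representation of dimension $\beta$ guarantees that the stratum $C^{\nu\beta}_{q,\theta}(Q,\alpha)$ with $\nu=(n,1)$ is non-empty. Since $n\geq 2$, this stratum consists of strictly $\theta$-semistable representations and lies in $U$; by Proposition \ref{sing-quiv} it is contained in the singular locus of $\mm_{q,\theta}(Q,\alpha)$, so $U$ is singular.

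For terminality, I would first observe that $U$ itself is a symplectic singularity: by the first statement of Theorem \ref{main-result} (proved above in this section) the normalisation of $\mm_{q,\theta}(Q,\alpha)$ is a symplectic singularity, and since $U$ is already normal, the restriction of the normalisation map over $U$ is an isomorphism and the symplectic singularity property transfers to $U$. Next, by Proposition \ref{sing-quiv} the singular locus of $U$ coincides with the union of the non-open strata $C^{\nu\beta}_{q,\theta}(Q,\alpha)$ with $\nu\neq(1,n)$ contained in $U$; by Lemma \ref{weigh}(3), under the hypothesis $(p(\beta),n)\neq(2,2)$, each such stratum has codimension at least $4$ in $\mm_{q,\theta}(Q,\alpha)$, hence in $U$. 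A symplectic singularity whose singular locus has codimension at least $4$ admits no crepant divisor and is therefore terminal (by a standard consequence of Namikawa's theory of symplectic singularities, as used in the analogous argument of \cite{bellamy-schedler}).

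Having shown that $U$ is open, singular, factorial, and terminal, the non-existence of a symplectic resolution of $\mm_{q,\theta}(Q,\alpha)$ follows from the Van der Waerden purity argument already recalled below Theorem \ref{t:char-g0}: if $\pi:Y\to\mm_{q,\theta}(Q,\alpha)$ were a symplectic resolution, its restriction $\pi^{-1}(U)\to U$ would be a crepant projective birational morphism from a smooth variety; factoriality of $U$ forces its exceptional locus to be a divisor (by purity), while terminality forces it to have codimension at least $2$, so $\pi^{-1}(U)\to U$ would be an isomorphism, contradicting singularity of $U$. The final sentence of the theorem is then immediate, as the lemma preceding Proposition \ref{compl-int} asserts that under the hypotheses that $\theta$ is generic and $\beta$ (equivalently $\alpha$) is $q$-indivisible, the only strata of $\mm_{q,\theta}(Q,\alpha)$ are those of the form $\nu\beta$ and so $U$ is the entire variety. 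The main step requiring care is transferring the symplectic singularity property from the normalisation to the open subset $U$, but this is routine given factoriality; essentially all other work has already been carried out in Theorem \ref{t:factorial} and Lemma \ref{weigh}.
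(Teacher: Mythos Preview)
Your proposal is correct and follows essentially the same approach as the paper's proof: take $U$ as defined, use the preceding lemma for openness, Theorem \ref{t:factorial} for factoriality, the non-open stratum $(n,\beta)$ together with Proposition \ref{sing-quiv} for singularity, and Lemma \ref{weigh}(3) plus Namikawa's criterion for terminality. The only place you add detail beyond the paper is in explicitly justifying that $U$ has symplectic singularities by transferring the property from the normalisation via the normality (factoriality) of $U$; the paper leaves this implicit, but your argument is valid (and one checks that $(p(\beta),n)\neq(2,2)$ indeed rules out the excluded case of Theorem \ref{main-result}, since $\alpha=2\gamma$ with $p(\gamma)=2$ forces $p(\alpha)=5$ and hence $n^2(p(\beta)-1)=4$, giving $(p(\beta),n)=(2,2)$).
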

\begin{proof}
The subset $U$ is singular, since it contains the non-open stratum $(n,\beta)$.  It is factorial by Theorem \ref{t:factorial}. Under the assumptions, the singular strata in $U$ all have codimension at least four, hence also the singular locus.  Thus, $U$ is terminal by \cite{naminote}, since it has symplectic singularities and the singular locus has codimension at least four.
\end{proof}
% \begin{proof} The corresponding statement in the ordinary quiver variety case is proved in \cite[Theorem 6.15]{bellamy-schedler} and we only have to note that such arguments can be repeated word by word in this context as well.
% \end{proof}
This completes the proof of the third and final statement of Theorem \ref{main-result}.
% Its proof is complete.

\subsection{Proof of Corollary \ref{c:main-result}}
Write $\alpha=m\beta$ for $\beta$ $q$-indivisible.
Note that, for $\theta$ generic, the only possible decompositions of $\alpha$ are into multiples of $\beta$. If $\alpha$ is $q$-indivisible,
it therefore follows trivially that $\alpha \in \Sigma_{q,\theta}$. Since
the only stratum in $\mm_{q,\theta}(Q,\alpha)$ is the open one of stable representations, it also follows
from Proposition \ref{p:s-smooth} 
that $\mm_{q,\theta}$ is smooth symplectic.
Suppose that $\alpha$ is $q$-divisible.
It then follows from 
Lemma \ref{l:aniso}
% Corollary \ref{c:indiv} below
that $\alpha$ is in $\Sigma_{q,\theta}$ if and only if it is anisotropic.
% , but we give a direct proof for the reader's convenience:
 This completes the proof of part (i).

Part (ii) follows from Proposition \ref{prop-normal} and
Theorem \ref{main-result}.  The first statement of part (iii)
% note that semistable representations have composition factors which
% are stable, and conversely every stable representation is
% semistable. Thus the existence of semistable representations and of
% stable representations of dimension a factor of $\alpha$ are
% equivalent.
follows from Theorem \ref{main-result}. Finally, the last statement follows from Proposition \ref{p:s-smooth} because, in this case, there is only one stratum in $\mm_{q,\theta}(Q,\alpha)$, consisting of $\theta$-stable representations.

% The only statement which does not follow immediately from the main
% theorem is that, under the given assumptions, if
% $\alpha \in \Sigma_{q,\theta}$ is $\Sigma$-indivisible, but may be
% $q$-divisible, then $\mm_{q,\theta}(Q,\alpha)$ admits a symplectic
% resolution. Since $\alpha \in \Sigma_{q,\theta}$, Lemma
% \ref{l:varytheta} and Corollary \ref{c:birational} imply as before
% that one has a projective birational Poisson morphism
% $\mm_{q,\theta'}(Q,\alpha) \to \mm_{q,\theta}(Q,\alpha)$ for
% $\theta' > \theta$. We only need to explain why
% $\mm_{q,\theta'}(Q,\alpha)$ is smooth for generic $\theta'$.  Choose
% $\theta'$ to have nonzero dot product with $\beta < \alpha$ which is
% not a multiple of $\alpha$. Then the only strata in
% $\mm_{q,\theta'}(Q,\alpha)$ can be sums of multiples of $\alpha$ (by
% rational numbers $\leq 1$).  But, ...[FIX]

% By the assumptions, the only such vector
% which can be the dimension of a $\theta$-stable representation is
% $\alpha$ itself. This proves the corollary.

\subsection{The anisotropic imaginary $(p(\alpha),n)=	(2,2)$ case} The only case left out in this analysis is that of $2\alpha \in \Sigma_{q,\theta}$ for $\alpha \in N_{q,\theta}$ satisfying $p(\alpha)=2$.
% is a $\Sigma$-indivisible non-isotropic imaginary root root with $p(\alpha)=2$,
%  and we consider
%  $\mm_{q, \theta}(Q, 2\alpha)$. 
The analogous question of existence of a symplectic resolution in the setting of Nakajima quiver varieties is settled in \cite[Theorem 1.6]{bellamy-schedler}, where it is shown that, for generic $\theta$, blowing up the ideal sheaf defining the singular locus gives a symplectic resolution of singularities. This is achieved by showing that, \'etale locally, the variety is isomorphic to the product of $\C^4$ with the closure of the six-dimensional nilpotent orbit closure in $\Sp(\C^4)$:
% for $V$ a 4-dimensional symplectic vector space,
see \cite[Theorem 5.1]{bellamy-schedler} and the references therein.	Given this, one might conjecture that an analogous result holds for multiplicative quiver varieties, and such a result should be proved by studying the \'etale local structure of the variety. In fact, by Artin's approximation theorem \cite{artin}, it would be sufficient to give a description of the formal neighbourhood of a point. This will be discussed in a future work.
% , where a detailed study of formal moduli spaces of polystable objects in 2-Calabi--Yau categories will be carried out. 
For more details, see Section \ref{sec-fut}. 

\section{Combinatorics of multiplicative quiver varieties}\label{sec-comb}
In this section we study some combinatorial problems which are related to the geometry of multiplicative quiver varieties. Indeed, an interesting problem is to classify all the possible ``$(2,2)$-cases'': these are the main $q$-divisible
cases for which we conjecture that there exists a symplectic resolution. In the next subsection we carry out these computations in the case of crab-shaped quivers (which we defined in Section \ref{section-char}).  We shall see how most of the $(2,2)$-cases occur in the case of star-shaped quivers, i.e., where there are no loops, so that the corresponding surface has genus zero.
% (see Definition \ref{}), i.e., when the surface has genus 0.
It is also important to point out that the classification below yields
an explicit classification of the crab-shaped quivers for which symplectic resolutions exist, or are conjectured to exist: see Corollary \ref{c:crab-sr} in the next section for details on this.

\subsection{(2,2) cases for crab-shaped quivers} The analysis is based on some standard numerical arguments and the constraints on the dimension vector $\alpha$ for it to satisfy the conditions of Section \ref{section-char}, i.e., it has to represent the multiplicities of the eigenvalues in the prescribed conjugacy class. 

\begin{thm} \label{comb1}There are exactly 13 pairs $(Q, \alpha)$, where $Q$ is a star-shaped quiver as in Section \ref{section-char} and $\alpha \in \mc{F}(Q)$ is in the fundamental region, such that $p(\alpha)=2$. Such pairs are depicted as follows, where a vertex is substituted by the corresponding entry of the dimension vector: 
\begin{equation}\label{quiver13}
\begin{tikzcd}
1& 1 & 1 \\
1&\arrow[l] 2\arrow[lu]\arrow[u]\arrow[ru]\arrow[r]&1\\
\end{tikzcd}
\end{equation}
\begin{equation}\label{quiver12}
\begin{tikzcd}
1& &2\arrow[r]&1\\
1&\arrow[l]3\arrow[lu]\arrow[ru]\arrow[r]&2\arrow[r]&1\\
\end{tikzcd}
\end{equation}
\begin{equation}\label{quiver11}
\begin{tikzcd}
2& &2\\
1&\arrow[l]4\arrow[lu]\arrow[ru]\arrow[r]&3\arrow[r]&2\arrow[r]&1\\
\end{tikzcd}
\end{equation}
\begin{equation}\label{quiver10}
\begin{tikzcd}
& & &3\arrow[r]&2\arrow[r]&1\\
1&\arrow[l]2&\arrow[l]4\arrow[ru]\arrow[r]&3\arrow[r]&3\arrow[r]&1\\
\end{tikzcd} 
\end{equation}
\begin{equation}\label{quiver9}%affineD4
\begin{tikzcd}
2& &2\\
2&\arrow[l]4\arrow[lu]\arrow[ru]\arrow[r]&2\arrow[r]&1\\
\end{tikzcd}
\end{equation}
\begin{equation}\label{quiver8}
\begin{tikzcd}
& &4\arrow[r]&3\arrow[r]&2\arrow[r]&1\\
2&\arrow[l]5\arrow[ru]\arrow[r]&4\arrow[r]&3\arrow[r]&2\arrow[r]&1\\
\end{tikzcd}
\end{equation}
\begin{equation}\label{quiver7}
\begin{tikzcd}
&&1&&&\\
&&3\arrow[u]&&&\\
1&\arrow[l]3&\arrow[l]5\arrow[u]\arrow[r]&4\arrow[r]&3\arrow[r]&2\arrow[r]&1\\ 
\end{tikzcd}
\end{equation}
\begin{equation}\label{quiver5}\begin{tikzcd}
&&&&&3&&&\\
1&\arrow[l]2&\arrow[l]3&\arrow[l]4&\arrow[l]5&\arrow[l]6\arrow[u]\arrow[r]&4\arrow[r]&2\arrow[r]&1\\ 
\end{tikzcd}\end{equation}
\begin{equation}\label{quiver6}\begin{tikzcd}%affineE6
&&2&&&\\
&&4\arrow[u]&&&\\
2&\arrow[l]4&\arrow[l]6\arrow[u]\arrow[r]&4\arrow[r]&2\arrow[r]&1\\ 
\end{tikzcd}\end{equation}
\begin{equation}\label{quiver4}\begin{tikzcd}
& &4&&&\\
2&\arrow[l]5&\arrow[l]8\arrow[u]\arrow[r]&7\arrow[r]&6\arrow[r]&5\arrow[r]&4\arrow[r]&3\arrow[r]&2\arrow[r]&1\\
\end{tikzcd}\end{equation}
\begin{equation}\label{quiver3}\begin{tikzcd}%affineE7
&&&&4&&&\\
1&\arrow[l]2&\arrow[l]4&\arrow[l]6&\arrow[l]8\arrow[u]\arrow[r]&6\arrow[r]&4\arrow[r]&2\\ 
\end{tikzcd}\end{equation}
\begin{equation}\label{quiver2}\begin{tikzcd}
&&& 5&\\
1&\arrow[l]4&\arrow[l]7&\arrow[l]10\arrow[u]\arrow[r]&8\arrow[r]&6\arrow[r]&4\arrow[r]&2\\
\end{tikzcd}\end{equation}
\begin{equation}\label{quiver1}%affineE8
\begin{tikzcd}
&&6&&&&&&\\
4&\arrow[l]8&\arrow[l]12\arrow[u]\arrow[r]&10\arrow[r]&8\arrow[r]&6\arrow[r]&4\arrow[r]&2\arrow[r]&1\\ 
\end{tikzcd}
\end{equation}\end{thm}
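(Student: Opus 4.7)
My plan is to parametrise pairs $(Q, \alpha)$ with $Q$ star-shaped and $\alpha \in \mc{F}(Q)$ by a tuple of partitions of $n$ (the dimension at the central vertex), compute $p(\alpha)$ in this language, and classify solutions via a single combinatorial invariant.

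For each leg $i$, setting $\alpha_{i,0} := n$ and $\alpha_{i, w_i} := 0$, the successive differences $m_j^{(i)} := \alpha_{i,j-1} - \alpha_{i,j}$ form a partition $\mu^{(i)}$ of $n$, since the fundamental-region condition $(\alpha, e_v) \leq 0$ at interior and leaf vertices of the leg is exactly the requirement that these differences be weakly decreasing. The only remaining fundamental-region condition, at the central node, reads $\sum_i m_1^{(i)} \leq (k-2) n$, where $k$ is the number of non-trivial legs. Using the telescoping identity $\sum_{j \geq 0} \alpha_{i,j}\, m_{j+1}^{(i)} = \tfrac{1}{2}(n^2 + |\mu^{(i)}|_2)$ with $|\mu|_2 := \sum_s m_s^2$, a direct computation of the Tits form on each leg gives
\[
p(\alpha) = 1 + \tfrac{1}{2}\Bigl( n^2 (k-2) - \sum_i |\mu^{(i)}|_2 \Bigr).
\]

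Setting $p(\alpha) = 2$ and multiplying the central-vertex inequality by $n$ yields $\sum_i \delta(\mu^{(i)}) = 2 - n\,t$, where
\[
\delta(\mu) := n \cdot m_1 - |\mu|_2 \ = \ \sum_s m_s\,(m_1 - m_s) \in \mathbb{Z}_{\geq 0}
\]
is the \emph{defect} of $\mu$ and $t := (k-2) n - \sum_i m_1^{(i)} \in \mathbb{Z}_{\geq 0}$ is the slack. For $n \geq 3$ this forces $t = 0$ and $\sum_i \delta(\mu^{(i)}) = 2$; the case $n = 2$ is a separate quick check, since the only non-trivial partition of $2$ is $(1,1)$ with $\delta = 0$, forcing $t = 1$ and $k = 5$ (quiver \eqref{quiver13}).

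A short analysis of the equation $\sum_s m_s(m_1 - m_s) = d$ shows: $\delta(\mu) = 0$ iff $\mu = (m^r)$ is rectangular; $\delta(\mu) = 1$ iff $\mu = (2^r, 1)$ with $n = 2r+1$; and $\delta(\mu) = 2$ iff $\mu$ is one of $(2^r, 1, 1)$, $(3^r, 1)$, or $(3^r, 2)$. For $n \geq 3$ the constraint splits into two patterns: either two legs have $\delta = 1$ (rest rectangular, only possible for $n$ odd), or exactly one leg has $\delta = 2$ (rest rectangular). In each sub-case, the equality $\sum_i m_1^{(i)} = (k-2) n$ combined with the bound $m_1^{(i)} \leq n/2$ for non-trivial rectangular legs (since then $m_1^{(i)} \mid n$ and $m_1^{(i)} < n$) gives $k \leq 3 + 2e/n$, where $e \in \{2, 3, 4\}$ records the contribution of the defect-carrying legs to $\sum_i m_1^{(i)}$. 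This immediately restricts $k \in \{3, 4, 5\}$ and bounds $n$ in terms of $k$. The main obstacle is the final enumeration: for each admissible $(n, k)$ and defect pattern, one must list all ways to write $(k-2)n - e$ as a sum of proper divisors of $n$ bounded by $n/2$, and then verify exhaustiveness and non-redundancy. A careful divisor analysis (e.g., for $k = 3$, any two proper divisors of $n$ that are $\leq n/2$ and sum to $n - e$ are either both $n/2$ or contain $n/2$ itself, forcing $(n/2 - e) \mid n$ and hence $n \leq 12$) yields precisely the 13 pairs listed: one for each of $n \in \{2, 3\}$, three for $n = 4$, two for each of $n \in \{5, 6, 8\}$, and one for each of $n \in \{10, 12\}$.
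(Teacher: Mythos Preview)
Your approach is correct and closely related to the paper's, though organised differently. The paper uses the identity $\langle\alpha,\alpha\rangle=\tfrac12\sum_v\alpha_v(\alpha,e_v)$ directly: since each summand is $\leq 0$ in the fundamental region and the total is $-1$, either one vertex contributes $-2$ (with $\alpha_v\in\{1,2\}$) or two vertices contribute $-1$ each (with $\alpha_v=1$). It then deduces that a ``regular'' branch (all pairings zero) is an arithmetic progression $n,n-c,\ldots,c$ with $c\mid n$, and lists the few possible tails for the special branch(es). Your defect $\delta(\mu^{(i)})$ is precisely $-\sum_{j\text{ on leg }i}\alpha_{i,j}(\alpha,e_{i,j})$ and $nt=-\alpha_0(\alpha,e_0)$, so your equation $nt+\sum_i\delta(\mu^{(i)})=2$ is the same identity repackaged leg-by-leg. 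Your $\delta=0$ partitions are the paper's regular branches, and your $\delta\in\{1,2\}$ types correspond exactly to the paper's special tails $3\to1$, $5\to2$, $4\to1$, and $4\to2\to1$. The partition language makes the structural classification cleaner and avoids the paper's somewhat ad hoc split into cases a.1--a.3 and b; the paper's vertex-by-vertex view is more elementary and closer to the quiver geometry.

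One small slip: your bound $k\leq 3+2e/n$ is derived assuming $k-1$ rectangular legs (pattern~(B)); for pattern~(A), with $k-2$ rectangular legs and $e=4$, the same reasoning gives the sharper $k\leq 2+8/n$. This does not affect the outcome. Your final divisor enumeration is only sketched; the paper carries it out more explicitly (though neither is fully exhaustive on the page), and one must also remember that the $e=3$ defect types require $n\not\equiv 0\pmod 3$, which rules out spurious candidates like $n=18$ that survive the pure divisor bound.
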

\begin{rem}\label{r:2delta-1-star}
It is important to highlight that quivers (\ref{quiver9}), (\ref{quiver6}), (\ref{quiver3}), (\ref{quiver1}) are the framed affine Dynkin quivers $\tilde{D}_4, \tilde{E}_6, \tilde{E}_7, \tilde{E}_8$, respectively, with dimension vector given by $(2\delta, 1)$, where $\delta$ is the minimal isotropic imaginary root of the corresponding quiver. See Remark \ref{r:comb} for the significance of this.
\end{rem}
\begin{proof}[Proof of Theorem \ref{comb1}]
  Note that $p(\alpha)=2$ if and only if $\langle \alpha, \alpha \rangle = -1$.
 Let us calculate the value of $\langle\alpha, \alpha\rangle$ explicitly, for $\alpha$ a general dimension vector. The general star-shaped quiver has $g$ loops and $k$ legs, each of which has $l_i$ arrows, $i=1, \dots, k$. We have
\[
\langle\alpha, \alpha \rangle=(1-g)n^2+\sum_{i, j}\alpha^2_{i, j} -n\sum_{i=1}^k\alpha_{i,1}-\sum_{i=1}^{k}\sum_{j=1}^{l_i-1}\alpha_{i, j}\alpha_{i, j+1}.
\]
Assume now that $\alpha\in\mc{F}(Q)$ and that $\langle\alpha, \alpha\rangle=-1$; then, given that $\langle\alpha, \alpha \rangle=\sum_{i\in Q_0}\alpha_i\langle \alpha, e_i\rangle=\sum_{i\in Q_0}\alpha_i\langle e_i, \alpha\rangle$, this implies that there can only be two possibilities: 
\begin{itemize}
	\item[a)] there exists a unique vertex $i\in Q_0$ such that either $\alpha_i=1$ and $(\alpha, e_i)=-2$, or $\alpha_i=2$ and $(\alpha, e_i)=-1$, with $(\alpha, e_j)=0$ for $j\neq i$; this implies that, denoted by $\mr{Adj}(i)$ the set of vertices which are adjacent to $i$, one has $\sum_{j\in\mr{Adj}(i)}\alpha_j=5$ for $\alpha_i=2$ and $\sum_{j\in\mr{Adj}(i)}\alpha_j=4$ for $\alpha_i=1$;
	\item[b)] there are two distinct vertices $i$ and $i'$ such that $(\alpha, e_{i})=(\alpha, e_{i'})=-1$ and $\alpha_i=\alpha_{i'}=1$, with $(\alpha, e_j)=0$ for $j\neq i, i'$. In this case one has $\sum_{j\in\mr{Adj}(k)}\alpha_j=3$ for $k=i, i'$.
\end{itemize}
In this case, if $i$ or $i'$ is the central vertex, then the only possibility is given by the quiver (1) in the statement of the theorem. Otherwise, if $v$ is the central vertex, then $(\alpha, e_v)=0$, which implies that $\sum_{j\in \mr{Adj}(v)}\alpha_j=2n$, where $\alpha_v=n$: indeed, 
\[
0=(\alpha, e_v)=\langle\alpha, e_v\rangle+\langle e_v,\alpha \rangle= n-\sum_{k\rightarrow v}\alpha_k+n-\sum_{v\rightarrow l}\alpha_l=2n-\sum_{j\in \mr{Adj}(v)}\alpha_j.
\]
Now, fix a branch along which none of the special vertices $i$ and $i'$ appear, let $l$ be its length and let $\beta_0=n, \beta_1, \dots, \beta_l$ be the components of the vector $\alpha$ along the branch.

Then, using that $(\alpha, e_j)=0$ for $j\neq i,i'$, we get the recursive formula
\[
2\beta_j=\beta_{j-1}+\beta_{j+1},
\]
for $j=1, \dots, l-1$, and also $\beta_{l-1}=2\beta_l$, which implies that 
\[
\beta_j=(l+1-j)\beta_l.
\]
Therefore, the branch has the form
\[
n\longrightarrow n-c\longrightarrow n-2c\longrightarrow \dots \longrightarrow c,\\ 
\]
where $c$ is a positive integer such that $c|n$. Moreover, in order for  condition a) to be satisfied there has to be one branch ending with one of the following 
\[
5\longrightarrow 2,\ \ \ 4\longrightarrow 2\longrightarrow 1, \ \ \ 4\longrightarrow 1,
\]
and, thus, having the form

\begin{multline*}
\tag{\S}
n-3\longrightarrow\dots, \longrightarrow 5\longrightarrow 2, \\ n-2\longrightarrow\dots, \longrightarrow 4\longrightarrow 2\longrightarrow 1, \\ n-3\longrightarrow\dots, \longrightarrow 4\longrightarrow 1
\end{multline*}

respectively; for condition b), there have to be two branches ending as
\[
3\longrightarrow 1, 
\] 
having the form 
\begin{equation*}
\tag{\S\S}
n-2\longrightarrow\dots\longrightarrow 3\longrightarrow 1.
\end{equation*}

Therefore, we are left to consider a star-shaped quiver where all but one or two branches are as follows:
\[
\begin{tikzcd}
a_1&&&a_i&&&a_{l}\\
&\dots\arrow[lu]&\dots&\dots\arrow[u]&\dots&\dots\arrow[ru]&\\
&&n-a_1\arrow[lu]&n-a_i\arrow[u]&n-a_{l}\arrow[ru]&&\\
&&&\arrow[lu]n\arrow[u]\arrow[ru]&&&\\
\end{tikzcd}
\]

Moreover, if the quiver satisfies condition a), then $l=k-1$ and there is an additional branch having one of the forms in (\S); on the other hand, if the quiver is as in case b), then $l=k-2$ and there are two additional legs of the form described by (\S\S). 

We shall now use some numerical arguments to prove that, among all such possibilities, only the ones listed in the statement of the theorem can actually occur. First, let us spell out how the equality $0=(\alpha, e_v)$ can be rephrased: one has that
\[
0=(\alpha, e_v) \iff 2n=\sum_{i=1}^k(n-a_i) \iff \sum_{i=1}^ka_i=(k-2)n.
\]
Therefore, one has the following possibilities: 
\begin{itemize}
	\item[a)] in the cases of a branch ending with $5\longrightarrow 2$ or $4\rightarrow 1$ the equality $0=(\alpha, e_v)$ reads as 
	\[
	\frac{3}{n}+\sum_{i=1}^{k-1}\frac{1}{n/a_i}=k-2, 
	\]
	where $n\equiv 2\Mod{3}$ and $n\equiv 1\Mod{3}$, respectively, and $n>a_i\geq 2$, $a_i|n$ for every $i$; these shall be mentioned in the following as cases a.1 and a.2. On the other hand for a branch ending with $4\longrightarrow 2\longrightarrow 1$ we have
	\begin{equation}\label{eq:inner}
	\frac{2}{n}+\sum_{i=1}^{k-1}\frac{1}{n/a_i}=k-2,
		\end{equation}
	where $n$ has to be even and $a_i|n$; this is renamed as case a.3.
	\item[b)] there are two branches $3\longrightarrow 1$ and $0=(\alpha, e_v)$ is equivalent to 
	\[
	\frac{4}{n}+\sum_{i=1}^{k-2}\frac{1}{n/a_i}=k-2,
	\]
	and $a_i|n$ for every $i$ and $n$ has to be odd, and $a_i<n$, for every $i$. 
\end{itemize}
In cases a.1 and a.2 one has that $n\geq 4$ which forces $k\leq 4$: indeed, one has that for $n\geq 4$ $n/a_i\geq 2$ and, therefore, 
\[
\frac{3}{n}+\sum_{i=1}^{k-1}\frac{1}{n/a_i} \leq \frac{3}{4}+\frac{k-1}{2}< \frac{k+1}{2},
\]
which implies that 
\[
k-2<\frac{k+1}{2},
\]
and thus $k\leq 4$.
For $k=4$ and $n=4$ it is easily checked that quiver (\ref{quiver11}) in the statement of the result is the only possibility. If $k=3$, then the following inequality holds: 
\[
\left(\frac{1}{2}+\frac{1}{4}\right)n\geq \left(1-\frac{3}{n}\right)n=n-3,
\]
which forces $n\leq 12$; one can check that case a) cannot be realised for $n=4, 7, 11$ and that the cases $n=5, 8, 10$ give quivers (\ref{quiver8}), (\ref{quiver4}) and (\ref{quiver2}) respectively. Next, for case a.3 one has: n even and $k\leq 4$: indeed, since $n\geq 4$, from equation (\ref{eq:inner}), one has that 
\[
\frac{2}{4}+\frac{k-1}{2} \geq k-2,
\]
which implies that 
\[
\frac{k}{2}\leq 2.
\]
 If $k=4$ and $n=4$ then one gets quiver (\ref{quiver9}). If $k=3$, then $n\leq 12$: indeed, from equation (\ref{eq:inner}), we have 
 \[
 \frac{2}{n}+\frac{1}{2}+\frac{1}{3}\geq 1
 \] 
 
This leads to: quiver (\ref{quiver10}) for $n=4$, quivers (\ref{quiver5}) and (\ref{quiver6}) for $n=6$, quiver (\ref{quiver3}) for $n=8$, and quiver (\ref{quiver1}) for $n=12$. 

We turn now to case b): $n$ is odd and  $\geq 3$; this implies that $k\leq 4$: indeed, in the same way as the previous case, equation (\ref{eq:inner}) gives
\[
\frac{4}{3}+\frac{k-2}{3}\geq k-2 \implies \frac{8}{3}\geq\frac{2k}{3}.
\]
 Therefore, setting $k=4$ forces $n=3$, which leads to quiver (\ref{quiver12}). When $k=3$ one has that $n\leq 6$, which implies that $n=3$ or $n=5$. One checks that $n=3$ is impossible, whereas $n=5$ gives quiver (\ref{quiver7}). Since we have dealt with all the possible cases, the proof is complete.
\end{proof}
\begin{thm}\label{comb2}
Assume that $g\geq 1$. Then, the only pairs $(Q, \alpha)$, where $Q$ is a crab-shaped quiver and $\alpha \in \mc{F}(Q)$ is such that $\langle\alpha, \alpha\rangle=-1$ are the following:  
\begin{equation}
\begin{tikzcd}
\arrow[out=150, in=210,loop,distance=1cm]{}{} 
\arrow[out=130, in=230,loop,distance=2cm]{}[swap]{}  1
\end{tikzcd}
\end{equation}
\begin{equation}
\begin{tikzcd}
\arrow[out=150, in=210,loop,distance=1cm]{}{} 2 \arrow[r]&1
\end{tikzcd}
\end{equation}
\end{thm}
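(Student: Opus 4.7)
The plan is to follow the numerical strategy of the proof of Theorem \ref{comb1}, adapted for the presence of $g \geq 1$ loops at the central vertex $c$. I will first rewrite $\langle \alpha, \alpha \rangle = -1$ as $(\alpha, \alpha) = -2$. For a crab-shaped quiver with $g$ loops at $c$, each loop contributes $-2$ to $(e_c, e_c)$, so
\[
(\alpha, e_c) = (2 - 2g)n - \sum_{i=1}^{k} \alpha_{i,1},
\]
where $n = \alpha_c$, $k$ is the number of legs, and $\alpha_{i,1}$ is the dimension at the vertex adjacent to $c$ on the $i$th leg. For each leg vertex, the formula $(\alpha, e_v) = 2\alpha_v - \sum_{w \in \mr{Adj}(v)} \alpha_w$ is unchanged from the $g = 0$ case. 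Expanding $(\alpha, \alpha) = \sum_v \alpha_v (\alpha, e_v) = -2$ with $\alpha_v \geq 1$ on the connected support and $(\alpha, e_v) \leq 0$ (by $\alpha \in \mc{F}(Q)$), the same three configurations (a), (b), (c) used in the proof of Theorem \ref{comb1} arise for the location of the negative contributions.

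The case $g \geq 2$ will dispatch rapidly: whenever $k \geq 1$, the estimate $(\alpha, e_c) \leq (2-2g)n - \ell \leq -2n - 1 \leq -3$, where $\ell = \sum_i \alpha_{i,1}$, is incompatible both with $c$ being a regular vertex (requiring $(\alpha, e_c) = 0$) and with $c$ being special (requiring $(\alpha, e_c) \in \{-1, -2\}$). Hence $k = 0$, the quiver consists of $c$ alone with $g$ loops, and $\langle \alpha, \alpha \rangle = (1-g)n^2 = -1$ forces $g = 2$ and $n = 1$, recovering the first quiver in the theorem.

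For $g = 1$ I will split according to the configuration. Here $(\alpha, e_c) = -\ell$; if $c$ were regular then $\ell = 0$ and $k = 0$, but then $\langle \alpha, \alpha \rangle = 0$, so $c$ must be one of the special vertices. In case (b), $\alpha_c = 1$ and $\ell = 2$, with every leg vertex satisfying $(\alpha, e_v) = 0$; the standard linear recursion $2\alpha_{i,j} = \alpha_{i,j-1} + \alpha_{i,j+1}$ together with $\alpha_{i,l_i+1} = 0$ gives $\alpha_{i,j} = (l_i+1-j)\alpha_{i,l_i}$, forcing $n = (l_i+1)\alpha_{i,l_i} = 1$, which has no solution with $l_i \geq 1$. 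In case (a) with $\alpha_c = 2$ and $\ell = 1$, there is a single leg of length $l_1$ with $\alpha_{1,1} = 1$; the same recursion yields $\alpha_{1,1} = l_1 \alpha_{1,l_1} = 1$ and $n = (l_1+1)\alpha_{1,l_1}$, so $l_1 = 1$, $\alpha_{1,l_1} = 1$, $n = 2$, matching the second quiver. In case (c), where $c$ is one of the two special vertices and the other special vertex $v'$ lies on the unique leg, I will track the one-unit jump in the second difference along the leg induced by $v'$: setting $d_j := \alpha_{1,j} - \alpha_{1,j-1}$, the conditions force $d_j$ to be piecewise constant with a jump of $+1$ at $v'$, and either the terminal condition $d_{l_1+1} = -\alpha_{1,l_1}$ or the root condition $d_1 = \alpha_{1,1} - n$ will fail to match. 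The alternative, that both special vertices lie off $c$, forces $\ell = 0$, so there are no leg vertices, a contradiction.

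The main obstacle, though conceptually mild, is the case (c) analysis for $g = 1$, which requires careful bookkeeping of the recursive sequence with one unit jump and two boundary conditions; every other step is essentially a specialisation of the argument of Theorem \ref{comb1}.
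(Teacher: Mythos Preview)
Your proof is correct and follows essentially the same strategy as the paper: expand $(\alpha,\alpha)=\sum_v \alpha_v(\alpha,e_v)=-2$ with all terms $\leq 0$, list the possible configurations of ``special'' vertices, and use the arithmetic-progression recursion along regular legs to pin down $\alpha$. Your case analysis is in fact slightly more thorough than the paper's, which for $k\geq 1$ simply asserts $n>1$ and hence lands directly in the $\alpha_c=2$, $(\alpha,e_c)=-1$ subcase; your cases (b) and (c) supply exactly the justification for that assertion.
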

\begin{rem}\label{r:2delta-1-a0}
  Parallel to Remark \ref{r:2delta-1-star}, in the second case above,
  the quiver and dimension vector are also of the form $(2\delta,1)$
  where $\delta=(1)$ is the primitive imaginary root of affine type
  $\widetilde A_0$ (the Jordan quiver with one vertex and one arrow).
\end{rem}
\begin{proof}[Proof of Theorem \ref{comb2}]
As in the arguments of the previous theorem, we see that, if $v$ is the central vertex and $\alpha\in\mc{F}$, then there are 3 possibilities for the value of $(\alpha, e_v)$, i.e.,  $(\alpha, e_v)$ can be either $0$, $-1$ or $-2$. In general one has that 
\[
(\alpha, e_v)=2(1-g)n-\sum_{i=1}^k\alpha_{i, 1}.
\]
If $k=0$, then, by the argument leading to cases (a) and (b) in the proof of Theorem \ref{comb1}, one must have $n=1$ and $g=2$, which gives the first quiver of the statement of the result. Thus, one is left to show that for $k\geq 1$, there are no crab-shaped quivers satisfying the mentioned conditions other than the second quiver in the statement of the theorem. If $k\geq 1$, $n> 1$, which implies that either $(\alpha, e_v)=0$ or $-1$. In the first case, we get that 
\[
2(1-g)n=\sum_{i=1}^k\alpha_{i, 1};
\] 
but $g\geq 1$, which gives $\sum_{i=1}^k\alpha_{i, 1}\leq 0$, a contradiction. If $(\alpha, e_v)=-1$, then one must have $n=2$ and $\alpha_{i, 1}=1$ for $i=1, \dots, k$ and $g=1$. This implies that
\[
k=5-4g,
\]
which forces $k=1$. Therefore, we get the quiver 
\[
\begin{tikzcd}
\arrow[out=150, in=210,loop,distance=1cm]{}{} 2 \arrow[r]&1. 
\end{tikzcd}
\]
%but such a dimension vector is not in $\Sigma$.
Since we dealt with all the possible cases the proof is complete.
\end{proof}

\begin{rem}\label{r:comb}Note that, to get a list of all the $(2,2)$-cases one has to take each of the pairs $(Q, \alpha)$ drawn above and consider the pair $(Q, 2\alpha)$. Moreover, given $q\in (\C^\times)^{Q_0}$ and $\theta \in \Z^{Q_0}$, it follows from Theorem \ref{t:flat-sigma} below that the given $\alpha$ are in
$\Sigma_{q, \theta}$ (since they are already in $\mc{F}(Q)$)
if and only if the following are satisfied: (a) they are in $N_{q,\theta}$, i.e.,  $q^\alpha=1$ and $\theta\cdot\alpha=0$, and (b) in the $(2\delta,1)$ cases (mentioned in Remarks \ref{r:2delta-1-star} and \ref{r:2delta-1-a0}), $\delta \notin N_{q,\theta}$, i.e.,  $q^\delta \neq 1$ or $\theta \cdot \delta \neq 0$.
\end{rem}
\section{General dimension vectors and decomposition}\label{s:decomp}
One fundamental tool in the classification theorem \cite[Theorem
1.4]{bellamy-schedler} is the canonical decomposition of a dimension
vector of a quiver variety into summands which lie in
$\Sigma_{\lambda, \theta}$, which is the additive version of the set
$\Sigma_{q, \theta}$ defined in this paper
% . The set
% $\Sigma_{\lambda, \theta}$ is defined analogously to
% $\Sigma_{q, \theta}$:
(one just needs to replace the condition
$q^\alpha=1$ with $\lambda\cdot \alpha=0$). This appears in
Crawley-Boevey's canonical decomposition in the additive case (extended to the case $\theta \neq 0$ in \cite{bellamy-schedler}). Combinatorially, it says:
\begin{lem}\cite[Theorem 1.1]{cb-deco}, \cite[Proposition 2.1]{bellamy-schedler}
\label{lem-prod}Let $\alpha\in \N R^+_{\lambda, \theta}$. Then $\alpha$ admits a unique decomposition $\alpha=n_1\sigma^{(1)}+\cdots+n_k\sigma^{(k)}$ as a sum of elements $\sigma^{(i)}\in\Sigma_{\lambda, \theta}$ such that any other decomposition of $\alpha$ as a sum of elements from $\Sigma_{\lambda, \theta}$ is a refinement of this decomposition. 
\end{lem}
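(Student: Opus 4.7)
The plan is to follow the approach of Crawley-Boevey (extended to $\theta \neq 0$ by Bellamy--Schedler), which establishes existence and uniqueness of the canonical decomposition by studying the generic behaviour of $\theta$-semistable representations. First I would show existence. Consider a generic point $X$ in $\mr{Rep}(\Pi^\lambda, \alpha)^{\theta-ss}$ and decompose its (unique, up to isomorphism) $\theta$-polystable representative into indecomposable summands. Let $\sigma^{(1)}, \ldots, \sigma^{(k)}$ (with multiplicities $n_i$) be the dimension vectors of those summands. The claim is that each $\sigma^{(i)} \in \Sigma_{\lambda,\theta}$. The key ingredients are Proposition \ref{dim-ext} (the formula $\dim \mr{Ext}^1(X,Y) = \dim \Hom(X,Y) + \dim \Hom(Y,X) - (\vdim X, \vdim Y)$), together with upper-semicontinuity of $\dim \Hom$: if some $\sigma^{(i)}$ admitted a non-trivial positive-root decomposition $\sigma^{(i)} = \gamma^{(1)} + \cdots + \gamma^{(r)}$ with $p(\sigma^{(i)}) \leq \sum_l p(\gamma^{(l)})$, then a count of the dimensions of Hom and Ext groups would force the generic representation of dimension $\sigma^{(i)}$ to split, contradicting indecomposability.

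Next, for uniqueness (in the precise sense that any other decomposition into elements of $\Sigma_{\lambda,\theta}$ is a refinement of the above), let $\alpha = \sum_j m_j \tau^{(j)}$ be any such alternative decomposition, with $\tau^{(j)} \in \Sigma_{\lambda,\theta}$. The plan is to show that each $\sigma^{(i)}$ is itself a non-negative integer combination of some subset of the $\tau^{(j)}$'s. This amounts to showing that one can reach the $\tau$-decomposition from the $\sigma$-decomposition by further subdivision. The defining inequality $p(\sigma^{(i)}) > \sum_l p(\delta^{(l)})$ for every non-trivial decomposition of $\sigma^{(i)}$ into positive roots in $R^+_{\lambda,\theta}$, combined with the additivity of $p$ plus $\frac{1}{2}(-,-)$, provides the obstruction that prevents the $\tau$-decomposition from cutting across the indecomposable summands of $X$. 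Induction on $k$ then gives the coarseness statement.

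The main obstacle I anticipate is controlling the interaction between the constraints defining $N_{\lambda,\theta}$ (namely $\lambda \cdot \beta = 0$ and $\theta \cdot \beta = 0$) and the decomposition into indecomposable summands of a generic representation. One needs that every indecomposable summand of the generic polystable $X$ itself satisfies both constraints. The constraint $\lambda \cdot \sigma^{(i)} = 0$ is automatic because each summand is itself a representation of $\Pi^\lambda$, via taking traces of the moment-map relation on each block. The constraint $\theta \cdot \sigma^{(i)} = 0$ is the genuinely new input required beyond Crawley-Boevey's $\theta = 0$ argument: it uses that in a $\theta$-polystable representation, every direct summand is itself $\theta$-stable, so $\theta \cdot \sigma^{(i)} = 0$ by definition of $\theta$-(semi)stability. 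With these two verifications in place, the rest of the argument proceeds as in \cite{cb-deco} and \cite[Proposition 2.1]{bellamy-schedler}.
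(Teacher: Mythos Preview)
The paper itself does not supply a proof of this lemma; it is quoted from \cite{cb-deco} and \cite{bellamy-schedler}. The paper does, however, prove the multiplicative analogue in Theorem~\ref{t:decompo}, explicitly following Crawley-Boevey's method, so that is the relevant comparison. That method is \emph{purely combinatorial}: one reduces $\alpha$ to the fundamental region by a sequence of admissible reflections and, whenever an inadmissible reflection would be needed at a vertex $v$, one instead splits off a copy of $e_v$; once in $\mc{F}(Q)$ one decomposes by connected support. Uniqueness is obtained by a case analysis (the types (a), (b1), (b2) of Theorem~\ref{t:flat-sigma}), showing in each case that every decomposition into $\Sigma$-elements refines a specific one.

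Your proposal takes a genuinely different route, via the decomposition type of a \emph{generic} $\theta$-polystable representation. There are two real gaps.

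First, your existence argument produces \emph{a} decomposition into elements of $\Sigma_{\lambda,\theta}$, not the \emph{coarsest} one. Granting that the $\theta$-stable summands of a generic polystable $X$ have dimension vectors in $\Sigma_{\lambda,\theta}$ (which in the additive case is indeed known, though your ``count of Hom and Ext'' does not by itself prove it---one needs the stratum dimension estimates as in Proposition~\ref{p:inverseimage}), nothing in your sketch explains why the generic type cannot be strictly finer than the canonical one. A priori, a generic representation could split more than is forced combinatorially; showing it does not is exactly the content you are trying to establish.

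Second, your uniqueness argument is not a proof. You say that ``the defining inequality \ldots\ combined with additivity \ldots\ provides the obstruction that prevents the $\tau$-decomposition from cutting across the indecomposable summands of $X$'', but you do not say which inequality, nor how it is deployed. In \cite{cb-deco} (mirrored in the proof of Theorem~\ref{t:decompo}) uniqueness is not obtained by a single obstruction but by the case analysis above, separately treating the affine Dynkin case $m\delta$ and the two ways the support can disconnect; your sketch gives no indication of how to carry this out. Without it, even the weaker claim---that every $\Sigma$-decomposition refines the generic type---remains unjustified.
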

Geometrically, the statement (together with the consequence for symplectic resolutions) is:
%%Crawley-Boevey proved the following (extended to the $\theta \neq 0$ case in \cite{bellamy-schedler}):
%Based upon this result, Bellamy and the first author prove the existence of a decomposition for the quiver variety corresponding to the quiver $Q$, with parameters $\lambda$ and $\theta$ and dimension vector $\alpha$: 
\begin{thm}\cite[Theorem 1.1]{cb-deco}, \cite[Theorem 1.4]{bellamy-schedler}\label{thm-prod} The symplectic variety $\mathcal{M}_{\lambda,\theta} (Q, \alpha)=\mu^{-1}(\lambda)^{\theta\mr{-}ss}\!/\!/\GL(\alpha)$ is isomorphic to the product 
\[
\mathcal{M}_{\lambda,\theta} (Q, \alpha)\cong \prod_{i=1}^kS^{n_i}\mathcal{M}_{\lambda,\theta}(Q, \sigma^{(i)}).
\]
Moreover, it admits a symplectic resolution if and only if each $\mathcal{M}_{\lambda,\theta}(Q, \sigma^{(i)})$ admits a symplectic resolution.
\end{thm}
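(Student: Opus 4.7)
The plan is to establish the isomorphism first by identifying closed points on both sides via the polystable decomposition, and then deduce the resolution criterion from standard facts about symplectic surfaces and their Hilbert schemes. Starting with the isomorphism: each closed point of $\mathfrak{M}_{\lambda,\theta}(Q,\alpha)$ has a unique (up to reordering) $\theta$-polystable representative, decomposing as $\bigoplus_j M_j^{\oplus m_j}$ with the $M_j$ pairwise non-isomorphic $\theta$-stable representations of dimension $\beta^{(j)} \in R^+_{\lambda,\theta}$ satisfying $\sum_j m_j \beta^{(j)} = \alpha$ (see \cite{king}). The key input is the additive form of expectation $(*)$---that the dimension vector of any $\theta$-stable representation of $\Pi^\lambda(Q)$ lies in $\Sigma_{\lambda,\theta}$---together with Lemma \ref{lem-prod}, which forces $\sum_j m_j \beta^{(j)}$ to match the canonical decomposition: each $\beta^{(j)}$ must equal some $\sigma^{(i)}$, and the multiplicities of summands sharing a given $\sigma^{(i)}$ must sum to $n_i$. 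This yields a canonical bijection on closed points with $\prod_i S^{n_i} \mathfrak{M}^s_{\lambda,\theta}(Q,\sigma^{(i)})$. To upgrade this to an isomorphism of varieties, I would construct the direct-sum morphism from the $\GL$-equivariant block-diagonal closed immersion $\prod_i \mathrm{Rep}(Q,\sigma^{(i)})^{n_i} \hookrightarrow \mathrm{Rep}(Q,\alpha)$, descend it through GIT, and invoke normality (from \cite{cb-norm}) together with Zariski's Main Theorem applied to the resulting proper bijective morphism.

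For the resolution criterion, observe first that the canonical decomposition satisfies $n_i = 1$ for every anisotropic $\sigma^{(i)}$: the additive analogue of Lemma \ref{l:aniso} shows that $n\sigma \in \Sigma_{\lambda,\theta}$ whenever $\sigma \in \Sigma_{\lambda,\theta}$ is anisotropic, so if $n_i \geq 2$ one could replace $n_i$ copies of $\sigma^{(i)}$ by one copy of $n_i \sigma^{(i)}$ and obtain a strictly coarser valid decomposition, contradicting the maximality of the canonical one. Hence nontrivial symmetric powers appear only for isotropic $\sigma^{(i)}$, in which case $\mathfrak{M}_{\lambda,\theta}(Q,\sigma^{(i)})$ is a symplectic surface of Kleinian type. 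For the ``if'' direction, a Cartesian product of symplectic resolutions is a symplectic resolution, and for each isotropic factor the Hilbert scheme of its minimal resolution provides a symplectic resolution of $S^{n_i}\mathfrak{M}_{\lambda,\theta}(Q,\sigma^{(i)})$. For the ``only if'' direction, I would use an \'etale slice at a general polystable point of the ``deepest'' stratum to show that the formal neighbourhood of $\mathfrak{M}_{\lambda,\theta}(Q,\alpha)$ splits as a product over the factors of the canonical decomposition; a symplectic resolution of the total space then induces one of each $S^{n_i}\mathfrak{M}_{\lambda,\theta}(Q,\sigma^{(i)})$, from which the resolution of the factor itself is extracted (trivially when $n_i = 1$, and via the Hilbert scheme description when $\sigma^{(i)}$ is isotropic).

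The main obstacle is the \'etale local structure required in the ``only if'' direction: one must show that the formal neighbourhood of a polystable point of $\mathfrak{M}_{\lambda,\theta}(Q,\alpha)$ splits as a product over the factors of the canonical decomposition, reducing the question to the representation theory of the stabiliser of the polystable point. In the additive setting this is supplied by Crawley-Boevey's \'etale slice theorem for representations of preprojective algebras. Without such a local structure theorem---as the authors emphasise, this is precisely what is currently unavailable in the multiplicative setting---propagating the classical non-existence of symplectic resolutions for $S^n V$ (with $V$ smooth symplectic of dimension $\geq 4$ and $n \geq 2$) from a single factor back to the total space is the technically delicate step.
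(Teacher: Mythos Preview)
This theorem is not proved in the present paper; it is quoted from \cite{cb-deco} (for the product decomposition) and \cite{bellamy-schedler} (for the resolution criterion). The paper later remarks explicitly that the proof in \cite{bellamy-schedler} relies on hyperk\"ahler twistings, available only for real $\lambda$. So there is no ``paper's own proof'' to compare against, but I can comment on your sketch versus what those references actually do.

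Your outline of the isomorphism is essentially Crawley-Boevey's argument: the additive (*) is a theorem, so every $\theta$-stable dimension vector lies in $\Sigma_{\lambda,\theta}$, and the maximality of the canonical decomposition forces the polystable summands to match it; the direct-sum map plus normality plus a bijectivity argument finishes. That part is fine.

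The ``only if'' direction of the resolution criterion, however, has a genuine gap. First, invoking an \'etale slice to exhibit a product structure is redundant: part one of the theorem already gives you a \emph{global} product decomposition. Second, and more seriously, you assert that ``a symplectic resolution of the total space then induces one of each $S^{n_i}\mathfrak{M}_{\lambda,\theta}(Q,\sigma^{(i)})$'' without saying how. Given a symplectic resolution $Z\to X\times Y$, the fibre $Z\times_{X\times Y}(X\times\{y\})$ over a smooth point $y\in Y$ has no reason to be smooth, so you cannot simply restrict. The actual arguments proceed differently: in \cite{bellamy-schedler} one classifies the non-resolvable factors and shows each contains an open singular factorial terminal locus, which then obstructs any symplectic resolution of the product (exactly the mechanism this paper uses in Theorem~\ref{t:main-result-nonsigma}); the hyperk\"ahler twisting is used to reduce to parameters where this classification is available. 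An alternative clean route is via $\Q$-factorial terminalisations: a product of such is again one, and by Namikawa's theory any two are isosingular, so smoothness of one forces smoothness of each factor's terminalisation. Either way, the step you flagged as ``technically delicate'' is not the \'etale local structure (which you already have globally) but precisely this descent of resolvability from product to factor, and your proposal does not address it. Finally, your closing paragraph conflates the additive and multiplicative settings: the unavailability of local structure is an issue for $\Lambda^q$, not for $\Pi^\lambda$, and is irrelevant to this purely additive statement.
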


% \begin{lem}\cite[Theorem 1.1]{cb-deco} (cf.~\cite[Proposition 2.1]{bellamy-schedler})
% \label{lem-prod}Let
%   $\alpha\in \N R^+_{\lambda, \theta}$. Then $\alpha$ admits a unique
%   decomposition $\alpha=n_1\sigma^{(1)}+\cdots+n_k\sigma^{(k)}$ as a
%   sum of elements $\sigma^{(i)}\in\Sigma_{\lambda, \theta}$ such that
%   any other decomposition of $\alpha$ as a sum of elements from
%   $\Sigma_{\lambda, \theta}$ is a refinement of this decomposition.
% \end{lem}
% Based upon this result, Bellamy and the first author prove the
% existence of a decomposition for the quiver variety corresponding to
% the quiver $Q$, with parameters $\lambda$ and $\theta$ and dimension
% vector $\alpha$:
% \begin{prop}\cite[Theorem 1.3]{bellamy-schedler}\label{prop-prod} The
%   symplectic variety
%   $\mathcal{M}_\lambda (Q,
%   \alpha)=\mu^{-1}(\lambda)^{ss}\!/\!/_{\theta}\GL(\alpha)$
%   is isomorphic to the product
% \[
% \mathcal{M}_\lambda (Q, \alpha)\cong \prod_{i=1}^kS^{n_i}\mathcal{M}_\lambda(Q, \sigma^{(i)}),
% \]
% and, therefore, it admits a projective symplectic resolution if and
% only if each $\mathcal{M}_\lambda(Q, \sigma^{(i)})$ admits a
% projective symplectic resolution.
% \end{prop}
For multiplicative quiver varieties, the combinatorial statement still
holds, but it is not clear that such a geometric decomposition
holds. We instead prove a weaker statement, which gives a
decomposition into factors which might not be minimal, but still has
all of the needed properties. Moreover, the resulting classification
of symplectic resolutions is the same statement as if the canonical
decomposition as above held.  As a result we are able to generalise
Theorem \ref{main-result} to the case of general dimension vectors
(Theorem \ref{t:main-result-nonsigma}), and give its specialisation to
the crab-shaped case (Corollary \ref{c:crab-sr}).  To complete the
proof, we need to establish that
$\mm_{q,\theta'}(Q,\alpha) \to \mm_{q,\theta}(Q,\alpha)$ is a
symplectic resolution for many $\alpha$ not in $\Sigma_{q,\theta}$
(Theorem \ref{t:res-flat}), in order to handle such factors appearing
in the decomposition. In the additive case, such resolutions include
the Hilbert schemes of points in $\C^2$ and in hyperk\"ahler ALE
spaces (i.e., minimal resolutions of du Val singularities).

\subsection{Flat roots} \label{ss:flat-roots}
In order to write a product decomposition in the multiplicative setting,
the dimension vectors for the factors need to be more general than those in $\Sigma_{q,\theta}$. The dimension vectors turn out to include ``flat roots'', which are those for which the moment map is flat (this is true for roots in $\Sigma_{q,\theta}$). This condition is also very important in order to have a geometric understanding of the varieties.
\begin{defn}\label{d:flat-roots}
A vector $\alpha \in N_{q,\theta}$ is called \emph{flat} if, for every decomposition $\alpha=\alpha^{(1)}+ \cdots + \alpha^{(m)}$ with $\alpha^{(i)} \in R^+_{q,\theta}$, we have $p(\alpha) \geq p(\alpha^{(1)})+ \cdots + p(\alpha^{(m)})$. Let $\widetilde{\Sigma}_{q,\theta}$ be the set of flat roots.
\end{defn}
\begin{rem} As in \cite[Theorem 1.1]{cb-geom}, we could alternatively have
made the definition only requiring $\alpha^{(i)} \in N_{q,\theta}$. Indeed, it follows
from the proof of the
decomposition theorem (Theorem \ref{t:decompo}) below that if $\alpha \in N_{q,\theta}$, then there is a decomposition $\alpha = \beta^{(1)} + \cdots + \beta^{(k)}$ with each $\beta^{(i)}$ either in $\widetilde{\Sigma}_{q,\theta}$
or of the form $\beta^{(i)} = m \gamma, \gamma \in \Sigma_{q,\theta}^{\text{iso}}$, satisfying
$p(\alpha) \leq p(\beta^{(1)}) + \cdots + p(\beta^{(k)})$. Hence, if we know the inequality when the $\alpha^{(i)} \in R^+_{q,\theta}$, we also know it when the $\alpha^{(i)} \in N_{q,\theta}$.
\end{rem}
The definition has the following interpretation. Let
$\SL(\alpha):=\{g \in \GL(\alpha) \mid \prod_{i \in Q_0} \det(g_i) = 1\} \subset \GL(\alpha)$. Note that $\Phi_\alpha$ factors through the inclusion $\SL(\alpha) \to \GL(\alpha)$; let $\overline{\Phi_\alpha}: \mr{Rep}^{\circ}(\overline{Q}, \alpha)^{\theta\mr{-}ss}\to \SL(\alpha)$ be the factored map.
\begin{prop}\label{p:flat-char}
If $\alpha$ is a flat root, then 
$\overline{\Phi}_\alpha$
% $: \mr{Rep}^{\circ}(\overline{Q}, \alpha)^{\theta\mr{-}ss}\longrightarrow \PGL(\alpha)$
is flat over a neighbourhood of $q$. In particular,
 $\Phi_\alpha^{-1}(q)^{\theta\mr{-}ss}$ is a complete intersection and is equidimensional of dimension $g_\alpha+2p(\alpha)$.
\end{prop}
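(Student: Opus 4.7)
The plan is to invoke Hironaka's miracle flatness criterion, which will reduce the proposition to an equidimensionality statement for the fiber of $\overline{\Phi}_\alpha$ over $q$. The source $\mr{Rep}^\circ(\overline{Q}, \alpha)^{\theta-ss}$ is open in the affine space $\mr{Rep}(\overline{Q}, \alpha)$, hence smooth (in particular Cohen--Macaulay) of dimension $2\sum_{a\in Q_1}\alpha_{h(a)}\alpha_{t(a)}$, while the target $\SL(\alpha)$ is smooth of dimension $g_\alpha = -1 + \sum_i \alpha_i^2$. A direct computation using $2p(\alpha) = 2 - (\alpha,\alpha)$ shows that the expected relative dimension $\dim\mr{Rep}^\circ - \dim\SL(\alpha)$ equals $g_\alpha + 2p(\alpha)$. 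Once I verify every point of $\overline{\Phi}_\alpha^{-1}(q)^{\theta-ss}$ lies on an irreducible component of this dimension, miracle flatness yields flatness at every such point, and this propagates to a neighborhood of $q$ in the target via upper semicontinuity of fiber dimension (shrinking to a neighborhood where every fiber has dimension at most $g_\alpha + 2p(\alpha)$, with the matching lower bound following from the dimension-of-fibers theorem). The complete intersection statement for $\Phi_\alpha^{-1}(q)^{\theta-ss}$ then follows automatically, since the fiber is cut out in a smooth Cohen--Macaulay ambient space by the $g_\alpha$ coordinate equations of $\overline{\Phi}_\alpha - q$, and equidimensionality of the correct codimension forces these to form a regular sequence.

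Everything thus reduces to showing that $\mr{Rep}^{\theta-ss}(\Lambda^q, \alpha) = \Phi_\alpha^{-1}(q)^{\theta-ss}$ is equidimensional of dimension $g_\alpha + 2p(\alpha)$. The lower bound on each irreducible component is already supplied by Lemma \ref{lem-irred}, so I only need the matching upper bound. For this I apply the stratification by representation type from Proposition \ref{strati}. For each occurring representation type $\tau = (k_1, \beta^{(1)}; \ldots; k_r, \beta^{(r)})$ (so $\alpha = \sum_l k_l \beta^{(l)}$), Proposition \ref{p:inverseimage} gives
\begin{equation*}
\dim \pi^{-1}(C^\tau_{q,\theta}(Q,\alpha)) \leq g_\alpha + p(\alpha) + \sum_{l=1}^r p(\beta^{(l)}).
\end{equation*}
Each $\beta^{(l)}$ lies in $N_{q,\theta}$ (as the dimension vector of a $\theta$-stable $\Lambda^q$-module) and is a positive root by the multiplicative analogue of Crawley--Boevey's theorem that dimension vectors of simple/$\theta$-stable modules over (multiplicative) preprojective algebras are positive roots. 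The flat root hypothesis applied to the decomposition formed by taking $k_l$ copies of $\beta^{(l)}$ for each $l$ then yields
\begin{equation*}
p(\alpha) \;\geq\; \sum_{l=1}^r k_l \,p(\beta^{(l)}) \;\geq\; \sum_{l=1}^r p(\beta^{(l)}),
\end{equation*}
the second inequality using $k_l \geq 1$ and $p(\beta) \geq 0$ for positive roots. Substituting back, $\dim \pi^{-1}(C^\tau) \leq g_\alpha + 2p(\alpha)$, as required. Taking the union over the finitely many strata completes the upper bound, and combined with Lemma \ref{lem-irred} gives equidimensionality.

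The main obstacle I anticipate is the verification that each $\beta^{(l)}$ occurring in a representation type genuinely lies in $R^+_{q,\theta}$, so that Definition \ref{d:flat-roots} applies directly to the decomposition above. In the additive setting this is Crawley--Boevey's theorem on dimensions of simples over deformed preprojective algebras; in the multiplicative setting one should adapt his argument to $\Lambda^q(Q)$, using reflection functors and the existence of $\theta$-stable representations of these dimensions. A possible alternative is to invoke the equivalent reformulation of flatness (sketched in the remark following Definition \ref{d:flat-roots}) which permits decompositions in $N_{q,\theta}$ rather than $R^+_{q,\theta}$; however, as that reformulation relies on Theorem \ref{t:decompo}, one must take care to avoid circular dependence, making the direct verification that $\beta^{(l)} \in R^+_{q,\theta}$ the cleaner route.
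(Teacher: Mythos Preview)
Your approach is essentially the paper's: reduce to equidimensionality of $\Phi_\alpha^{-1}(q)^{\theta-ss}$ via the stratification bound (Proposition~\ref{p:inverseimage}) plus the flat-root inequality, then invoke miracle flatness and upper semicontinuity of fibre dimension. The paper phrases this as ``follow the argument of Proposition~\ref{rep-equidim2} with the strict inequality replaced by the non-strict one,'' which unwinds to exactly your computation.

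Your one substantive worry---whether the $\beta^{(l)}$ in a representation type are actually in $R^+_{q,\theta}$---is the right thing to flag, but your preferred resolution is misdirected. There is no ready-made ``multiplicative analogue of Crawley--Boevey's theorem'' one can cite at this point: that dimensions of $\theta$-stable $\Lambda^q$-modules are roots is precisely Corollary~\ref{c:wk-sigma-converse}, which is proved later via Theorem~\ref{t:decompo}. So your direct route and your alternative route (the $N_{q,\theta}$ reformulation in the remark after Definition~\ref{d:flat-roots}) are really the same route, both forward-referencing Theorem~\ref{t:decompo}. The circularity you fear does not occur: the relevant part of the proof of Theorem~\ref{t:decompo} is purely combinatorial (admissible reflections, subtracting $e_v$ when $(\alpha,e_v)>0$ and $e_v\in R^+_{q,\theta}$, then decomposing into connected components in $\mc{F}(Q)$), and nowhere uses Proposition~\ref{p:flat-char}. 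So you may safely use the $N_{q,\theta}$ version of the flat-root inequality; the $\beta^{(l)}$ lie in $N_{q,\theta}$ automatically, and $p(\beta^{(l)})\ge 0$ by Proposition~\ref{p:s-smooth}.
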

\begin{proof}
  The second statement follows from the argument of Proposition
  \ref{rep-equidim2} (following \cite[Theorem 1.11]{cb-geom}): all
  arguments go through with the strict inequality replaced by the
  non-strict one, except that no statement can be deduced about the
  stable (or simple) representations forming a dense subset.  For the
  first statement, concerning flatness, note that
  $\dim \overline{\Phi}_\alpha^{-1}(q)^{\theta\mr{-}ss} = \dim
  \mr{Rep}^{\circ}(\overline{Q}, \alpha)^{\theta\mr{-}ss} - \dim
  \SL(\alpha)$. Then the statement follows from the following general
  considerations. Suppose we are given a morphism of varieties
  $f:X \to Y$, with $X$ equidimensional. By upper semicontinuity of
  the fibre dimension, the minimum fibre dimension is
  $\dim X - \dim Y$ and the locus in $Y$ where the fibres have this
  minimal dimension is open.  Next, it is a standard fact that a
  morphism from a Cohen-Macaulay variety $X$ to a smooth variety $Y$
  is flat if and only if for every $x \in X$, with $y = f(x) \in Y$,
  one has the equality $\dim_x X = \dim_{y} Y + \dim f^{-1}(y)$.  It
  follows that $f$ is flat over the open locus where the fibres have
  minimum dimension. Now, back to the situation at hand, by the second
  statement of the proposition, the minimum dimension is attained over
  $q \in \SL(\alpha)$. As the domain and codomain are equidimensional
  and smooth, the aforementioned open locus is a neighbourhood of $q$
  over which $\overline{\Phi}_\alpha$ is flat.
    % Therefore, given any $y \in Y$ such that
  % $f^{-1}(y)$ attains the minimum dimension, there exists an open
  % neighbourhood $U$ of $y$ over which $f$ is flat.
\end{proof}
Putting this together with Proposition \ref{p:inverseimage}, we conclude the following analogue of the last statement of Proposition \ref{rep-equidim2}:
\begin{coro} \label{c:dense-flat}
For $\alpha$ a flat root, a dense subset of $\Phi_\alpha^{-1}(q)^{\theta\mr{-}ss}$ is
  given by the union of preimages of strata of types
  $(1,\beta^{(1)}; \ldots ;1, \beta^{(r)})$ with
  $p(\alpha)=\sum_{i=1}^r p(\beta^{(i)})$.
\end{coro}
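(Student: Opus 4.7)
The plan is a dimension count combining the two preceding results. By Proposition \ref{p:flat-char}, the flatness of $\alpha$ guarantees that $\Phi_\alpha^{-1}(q)^{\theta-ss}$ is equidimensional of dimension $g_\alpha + 2p(\alpha)$. The stratification from Proposition \ref{strati} then decomposes this variety as a disjoint union of preimages $\pi^{-1}(C^\tau_{q,\theta}(Q,\alpha))$ over representation types $\tau = (k_1,\beta^{(1)};\ldots;k_r,\beta^{(r)})$, and the desired density statement reduces to showing that whenever such a preimage contains a full-dimensional irreducible component, the type must have $k_l = 1$ for all $l$ and $\sum_l p(\beta^{(l)}) = p(\alpha)$.

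For this I would combine Proposition \ref{p:inverseimage}, which gives the bound
\[
\dim \pi^{-1}(C^\tau_{q,\theta}(Q,\alpha)) \leq g_\alpha + p(\alpha) + \sum_{l=1}^r p(\beta^{(l)}),
\]
with the flat-root condition of Definition \ref{d:flat-roots} applied to the decomposition $\alpha = \sum_l k_l \beta^{(l)}$, interpreted as $k_l$ copies of each $\beta^{(l)} \in R_{q,\theta}^+$. This yields $p(\alpha) \geq \sum_l k_l p(\beta^{(l)})$. Requiring that the preimage attain the full dimension $g_\alpha + 2p(\alpha)$ forces $\sum_l p(\beta^{(l)}) \geq p(\alpha)$, and since $k_l \geq 1$ and $p(\beta^{(l)}) \geq 0$ for every positive root, these bounds chain together as
\[
p(\alpha) \;\leq\; \sum_{l=1}^r p(\beta^{(l)}) \;\leq\; \sum_{l=1}^r k_l\, p(\beta^{(l)}) \;\leq\; p(\alpha),
\]
collapsing all inequalities to equalities. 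The outer identity is exactly the condition $\sum_l p(\beta^{(l)}) = p(\alpha)$, while the middle one $\sum_l (k_l - 1) p(\beta^{(l)}) = 0$ forces $k_l = 1$ whenever $p(\beta^{(l)}) > 0$.

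The delicate remaining point is the case of a real root $\beta^{(l)}$ for which $p(\beta^{(l)}) = 0$, where the displayed chain is insensitive to the multiplicity $k_l$ and does not a priori exclude $k_l > 1$. I expect this to be the main technical obstacle of the plan. My approach would be to argue that any such ``bad'' stratum with $k_l > 1$ for a real $\beta^{(l)}$ is already contained in the closure of one of the strata in the claimed dense subset: since $M_{\beta^{(l)}}$ is rigid, the polystable representative $M_{\beta^{(l)}}^{k_l} \oplus \cdots$ admits a deformation inside $\Phi_\alpha^{-1}(q)^{\theta-ss}$ whose generic polystabilisation is of type with all multiplicities equal to one and the same $p$-total, so the bad stratum does not contribute any new irreducible component. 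Combined with the dimension count above, this closure argument should yield the full density statement.
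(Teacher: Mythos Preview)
Your dimension count is exactly the paper's argument: the paper's entire proof is the sentence ``Putting this together with Proposition~\ref{p:inverseimage}'', i.e.\ combine the equidimensionality from Proposition~\ref{p:flat-char} with the bound from Proposition~\ref{p:inverseimage} and the flatness inequality, precisely as you do.

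You in fact go beyond the paper in isolating the real-root edge case $k_l>1$, $p(\beta^{(l)})=0$, which your chain of inequalities genuinely fails to exclude and which the paper's one-line justification does not visibly address either. Your closure/deformation sketch points in a reasonable direction but would need care to make rigorous: one must produce a \emph{different} polystable type in the limit, and since the unique $\theta$-stable representation at a real root in $\Sigma_{q,\theta}$ is rigid, it is not automatic that some other summand can absorb the extra copy. That said, in the paper's only use of the corollary (the proof of Theorem~\ref{t:res-flat}) this issue is effectively sidestepped: there one first reduces $\alpha$ to the fundamental region via admissible and $(-1)$-reflections, and the paper notes that ``each $(-1)$-reflection will modify stratum types by removing a real root from the type; once we are in the fundamental region no real roots will appear''. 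So for the application the multiplicity-one conclusion is only needed after that reduction, where your dimension count alone is already conclusive.
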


% \begin{rem} As in \cite{cb-geom}, one can alternatively
% require only that
% %We don't have to restrict to
% %  $\alpha^{(i)} \in R^+_{q,\theta}$, and can take
%   $\alpha^{(i)} \in N_{q,\theta}$, and 
% the definition is equivalent.  The equivalence follows from 
% the decomposition we give below,
%   $p(\alpha^{(1)}) \leq p(\alpha^{(1,1)}) + \cdots + p(\alpha^{(1,m)}$
%   for $\alpha^{(1)} = \alpha^{(1,1)} + \cdots + \alpha^{(1,m)}$ and
%   $\alpha^{(1,i)} \in R^+_{q,\theta}$. 
% %%(cf.~\cite[Remark
% %%  7.4]{bellamy-schedler} for such a statement in the additive case).
% \end{rem}
Observe that
$\Sigma_{q,\theta} \subseteq \widetilde{\Sigma}_{q,\theta}$. The
opposite inclusion does not hold: for instance, with
$(q,\theta)=(1,0)$, one can take the quiver with two vertices and two
arrows, one a loop at the first vertex, the other an arrow to the
second vertex. Then the dimension vector $(m,1)$ is flat for all $m$,
but only in $\Sigma_{q,\theta}$ for $m=1$.
\begin{rem}\label{sigma-qt}
  Note that, for every pair $q,\theta$, there always exists $q'$ such
  that $N_{q',0}=N_{q,\theta}$, and hence
$\Sigma_{q,\theta}=\Sigma_{q',0}$ and $\widetilde{\Sigma}_{q,\theta} = \widetilde{\Sigma}_{q',0}$. Indeed, let $z \in \C^\times$ be
  a multiplicatively independent element from the $q_i$ (i.e.,
  $\langle z, q_i \rangle / \langle q_i \rangle$ is infinite cyclic,
  where $\langle-\rangle$ denotes the multiplicative group generated
  by the given elements).  Set $q'_i := q_i z^{\theta_i}$. Then $q'$
  has the desired properties.   
\end{rem}
\begin{rem}\label{mult-add}
  Similarly, given any parameters in the additive case (for ordinary quiver varieties),
  $\lambda \in \C^{Q_0}, \theta \in \Z^{Q_0}$, we also can construct
  $q' \in (\C^\times)^{Q_0}$ such that
  the sets $N$ and $\Sigma$ correspond. More precisely, letting $N^a_{\lambda,\theta}, \Sigma^a_{\lambda,\theta}, \widetilde{\Sigma^a}_{\lambda,\theta}$ denote the sets defined for the additive case, this means that
  $N^a_{\lambda,\theta} = N_{q',0}$,
  $\Sigma^a_{\lambda,\theta}=\Sigma_{q',0}$, and
  $\widetilde{\Sigma^a}_{\lambda,\theta} = \widetilde{\Sigma}_{q',0}$.
\end{rem}
% Recall that the fundamental
%   region is the locus
%   $\mc{F}(Q) := \{\alpha \in \N^{Q_0} \mid (\alpha,e_i) \leq 0, \forall i\}$. 
  We recall, following \cite{cb-geom} and \cite{Su-fmmrq}, how to classify flat roots in terms of the fundamental region.
\begin{defn}
We say that the transformation $\alpha \mapsto s_v(\alpha)$ is a \emph{$(-1)$-reflection} if $s_v(\alpha)=\alpha-e_v$.
\end{defn}
We point out a useful geometric consequence of this definition:
\begin{prop} \label{p:-1-iso}
Suppose that $\alpha \mapsto s_v(\alpha)$ is a $(-1)$-reflection
and that $q_v = 1$ and $\theta_v = 0$. Then
there is a reflection isomorphism $\mm_{q,\theta}(s_v(\alpha)) \iso \mm_{q,\theta}(\alpha)$.
\end{prop}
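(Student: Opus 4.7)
The plan is to construct the isomorphism explicitly via the map sending a representation to its direct sum with the simple $S_v$, and then to verify this map is an isomorphism via a bijection on closed points together with a local Ext analysis. First I would observe that the hypotheses $q_v = 1$ and $\theta_v = 0$ force $u_v(q) = q$ and $r_v(\theta) = \theta$, so that the desired isomorphism is between $\mm_{q,\theta}(Q, \alpha - e_v)$ and $\mm_{q,\theta}(Q, \alpha)$; meanwhile the $(-1)$-reflection hypothesis unpacks as $(\alpha, e_v) = 1$. Since $v$ is loopfree and $q_v = 1$, the simple $\Lambda^q(Q)$-module $S_v$ (with $\C$ at $v$, zero elsewhere, all arrows zero) is well-defined, and it is $\theta$-stable because $\theta \cdot e_v = 0$. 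The natural map is then $\iota \colon [M] \mapsto [M \oplus S_v]$, which is a morphism $\mm_{q,\theta}(Q, \alpha - e_v) \to \mm_{q,\theta}(Q, \alpha)$ induced by the direct-sum morphism at the level of representation varieties, descending to the GIT quotient. Note also that the dimensions match, since $(\alpha, e_v) = 1$ and $(e_v, e_v) = 2$ give $p(\alpha) - p(\alpha - e_v) = -(\alpha, e_v) + \tfrac{1}{2}(e_v, e_v) = 0$.

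The next step is bijectivity on closed points. Injectivity follows from the uniqueness of the polystable decomposition. For surjectivity, I would show that every $\theta$-polystable representation $M' = \bigoplus_i N_i^{m_i}$ of dimension $\alpha$ has $S_v$ among its stable summands. Indeed, for any $\theta$-stable $N$ not isomorphic to $S_v$, Schur's lemma for $\theta$-stable modules of the same slope (here, zero) gives $\operatorname{Hom}(N, S_v) = \operatorname{Hom}(S_v, N) = 0$, so by the $2$-Calabi--Yau Ext identity of Proposition \ref{dim-ext},
\[
(\dim N, e_v) = -\dim \operatorname{Ext}^1(N, S_v) \leq 0.
\]
Summing, $1 = (\alpha, e_v) = \sum_i m_i (\dim N_i, e_v)$ forces some $N_i \cong S_v$, so $M' \cong M \oplus S_v$ for some polystable $M$ of dimension $\alpha - e_v$, and $[M']$ lies in the image of $\iota$.

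To upgrade bijection to an isomorphism of schemes, I would analyse the étale local structure on both sides. At a closed point $[M]$ with $M$ $\theta$-stable of dimension $\alpha - e_v$, the moduli $\mm_{q,\theta}(Q, \alpha - e_v)$ is smooth of dimension $2p(\alpha - e_v)$ by Proposition \ref{p:s-smooth}. For the corresponding point $[M \oplus S_v]$ on $\mm_{q,\theta}(Q, \alpha)$, Luna-type slice theory in the quasi-Hamiltonian setting identifies the étale neighbourhood with the (multiplicative) symplectic quotient of $\operatorname{Ext}^1(M \oplus S_v, M \oplus S_v)$ by $\operatorname{Aut}(M \oplus S_v)/\C^*_{\mathrm{diag}} \cong \C^*$. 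Using Proposition \ref{dim-ext} together with $(\alpha - e_v, e_v) = -1$, one computes $\dim \operatorname{Ext}^1(M, S_v) = \dim \operatorname{Ext}^1(S_v, M) = 1$ and $\operatorname{Ext}^1(S_v, S_v) = 0$, so the extra Ext contributions sit in a $2$-dimensional symplectic summand on which the residual $\C^*$ acts with opposite weights; its multiplicative symplectic reduction at the point induced by the moment map is a reduced point. Hence the local model reduces to $\operatorname{Ext}^1(M, M) \cong \C^{2p(\alpha - e_v)}$, matching the local model at $[M]$, and $\iota$ is étale there. Extending this to strictly semistable points via the functoriality of slices and the bijection from the previous step (applied iteratively through the stratification by representation type) yields the global isomorphism.

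The main obstacle will be establishing the multiplicative analogue of the Luna slice theorem precisely enough to justify the étale local identification at non-stable polystable points, and verifying that the residual $\C^*$-reduction on the $(\operatorname{Ext}^1(M,S_v), \operatorname{Ext}^1(S_v,M))$ factor really is a reduced point in the multiplicative (rather than additive) quasi-Hamiltonian setting. If this slice description is not already available from the literature on quasi-Hamiltonian reduction, an alternative path is a perturbation argument: for generic $\theta^{(\epsilon)}$ close to $\theta$ with $\theta^{(\epsilon)} \cdot \alpha = 0$ but $\theta^{(\epsilon)}_v \neq 0$, the reflection becomes admissible and Theorem \ref{t:yama} gives $\mm_{q,\theta^{(\epsilon)}}(Q, \alpha) \cong \mm_{q, r_v\theta^{(\epsilon)}}(Q, \alpha - e_v)$; Lemma \ref{l:varytheta} provides projective maps to the $\epsilon = 0$ spaces, and a careful limit of these isomorphisms should recover $\iota^{-1}$ directly.
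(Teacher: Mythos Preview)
Your core argument coincides with the paper's: both construct the direct-sum map $[M] \mapsto [M \oplus S_v]$ and show that every $\theta$-polystable representation of dimension $\alpha$ must contain $S_v$ as a summand. Where you deduce $(\dim N, e_v) \leq 0$ for $\theta$-stable $N \not\cong S_v$ from Proposition~\ref{dim-ext}, the paper cites \cite[Lemma~5.1]{cb-shaw} (extended from simples to $\theta$-stables), which is exactly this statement.

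The divergence is in the final step. Once every polystable of dimension $\alpha$ is of the form $M \oplus S_v$, the paper concludes immediately that the direct-sum morphism is an isomorphism; there is no \'etale-local analysis and no perturbation of $\theta$. Your slice approach would run into precisely the obstacle you flag: a multiplicative Luna slice theorem is not established in the paper, and obtaining such a local model is in fact among the open problems of Section~\ref{sec-fut} (see Conjecture~\ref{conj:cy2} and the surrounding discussion). So while your instinct that bijectivity on closed points deserves justification is reasonable, the route through formal neighbourhoods is both heavier than needed and not supported by the paper's toolkit. The paper treats the passage from bijection on polystables to isomorphism of the (reduced) GIT quotients as direct, in the spirit of Crawley-Boevey's decomposition arguments for ordinary quiver varieties.
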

\begin{proof}
There is an obvious map $\mm_{q,\theta}(\alpha-e_v) \to \mm_{q,\theta}(\alpha)$, given by $\rho \mapsto \rho\oplus \C_v$, where $\C_v$ is the trivial representation (all arrows act as zero).  We claim that it is an isomorphism.  In the decomposition
of any $\theta$-polystable representation of dimension $\alpha$
into stable representations, at least
one factor must have dimension vector which has positive pairing with $e_v$. By
 \cite[Lemma 5.1]{cb-shaw} (in the case $\theta = 0$, which extends to 
%every $\theta$-stable representation 
the general case by replacing simple representations by $\theta$-stable ones), this summand must be $\C_v$ itself.
Therefore, the obvious map is an isomorphism.
  % every representation in $\mathcal{M}_{q,\theta}(Q,\alpha)$ has
  % either a subrepresentation or a quotient of dimension $e_v$.  Since
  % there is a unique such representation up to isomorphism, and it is
  % stable,
  % the direct sum map gives an isomorphism
  % $\mathcal{M}_{q,\theta}(Q,\alpha-e_v) \times
  % \mathcal{M}_{q,\theta}(Q,e_v) \iso
  % \mathcal{M}_{q,\theta}(Q,\alpha)$. Finally, note that, since there cannot be any loops at $v$,  $\mathcal{M}_{q,\theta}(Q,e_v)$ is just a point. 
\end{proof}
\begin{defn} Given $\alpha \in R^+_{q,\theta}$, call a sequence
$v_1, \ldots, v_m \in Q_0$ a reflecting sequence if, setting
 $$(q^{(i)},\theta^{(i)},\alpha^{(i)}) := (u_{v_i} \cdots u_{v_1}(q),
  r_{v_i} \cdots r_{v_1}(\theta), s_{v_i} \cdots s_{v_1}(q)),$$
  we have (a) $\alpha^{(m)} \in \mc{F}(Q) \cup \{e_v \mid v \in Q_0\}$, and
  (b) $\alpha^{(i)}_{v_i} < \alpha^{(i-1)}_{v_i}$ for all $i$.
\end{defn}
\begin{lem} A reflecting sequence always exists.
\end{lem}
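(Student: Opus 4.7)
\medskip

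The plan is to build the sequence greedily by induction on the norm $|\alpha|:=\sum_{v\in Q_0}\alpha_v$, at each step reflecting at a vertex where the Cartan pairing with $\alpha$ is strictly positive. If $\alpha\in\mc{F}(Q)\cup\{e_v\mid v\in Q_0\}$, the empty sequence satisfies conditions (a) and (b) vacuously. Otherwise, since $\alpha\in R^+$ automatically has connected support, the failure of $\alpha$ to lie in $\mc{F}(Q)$ forces the existence of a vertex $v$ with $(\alpha,e_v)>0$.

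The key observation I would establish is that any such $v$ is automatically loopfree, so that the reflection $s_v$ is well defined. Indeed, if $v$ has $g_v\geq 1$ loops then $(e_v,e_v)=2(1-g_v)\leq 0$, and for $w\neq v$ the off-diagonal values $(e_w,e_v)$ are minus the number of edges between $v$ and $w$, hence nonpositive; therefore $(\alpha,e_v)\leq 0$ whenever $v$ carries a loop. Choosing $v_1$ loopfree with $(\alpha,e_{v_1})>0$, I set $\alpha^{(1)}:=s_{v_1}(\alpha)$, $q^{(1)}:=u_{v_1}(q)$, $\theta^{(1)}:=r_{v_1}(\theta)$. The identity $(u_{v_1}q)^{\alpha'}=q^{s_{v_1}\alpha'}$ (and the analogous linear identity for $\theta$) gives $(q^{(1)})^{\alpha^{(1)}}=q^\alpha=1$ and $\theta^{(1)}\cdot\alpha^{(1)}=\theta\cdot\alpha=0$, while the standard fact about simple reflections on root systems (combined with $\alpha\neq e_{v_1}$) ensures $\alpha^{(1)}$ is again a positive root. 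Hence $\alpha^{(1)}\in R^+_{q^{(1)},\theta^{(1)}}$. Condition (b) at this step holds because $\alpha^{(1)}_{v_1}=\alpha_{v_1}-(\alpha,e_{v_1})<\alpha_{v_1}$.

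For the induction, the crucial point is termination: since $|\alpha^{(1)}|=|\alpha|-(\alpha,e_{v_1})<|\alpha|$ and the norm is a nonnegative integer, any greedy continuation must reach $\mc{F}(Q)\cup\{e_v\mid v\in Q_0\}$ after finitely many steps. By the inductive hypothesis applied to $\alpha^{(1)}\in R^+_{q^{(1)},\theta^{(1)}}$, there exists a reflecting sequence $v_2,\ldots,v_m$ for $\alpha^{(1)}$; prepending $v_1$ gives the desired sequence for $\alpha$, with (a) satisfied by construction and (b) verified at each step.

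I do not anticipate any real obstacle here, as the argument is the standard reduction-to-the-fundamental-region procedure for Kac--Moody root systems, transplanted to the multiplicative setting using Yamakawa's reflection data $(u_v,r_v,s_v)$. The only mild subtlety is the loopfree check above, but this follows immediately from the sign pattern of the Cartan form. Verifying that the resulting sequence is compatible with the multiplicative and stability parameters (i.e., that $\alpha^{(i)}\in R^+_{q^{(i)},\theta^{(i)}}$ at each stage) amounts to the two identities $(u_vq)^{s_v\beta}=q^\beta$ and $r_v(\theta)\cdot s_v(\beta)=\theta\cdot\beta$, which follow from the $s_v$-invariance of the Cartan form.
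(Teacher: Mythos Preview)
Your argument is correct. Both you and the paper run the same greedy descent---reflect at a loopfree vertex $v$ with $(\alpha,e_v)>0$, verify the result remains a positive root via $s_v(R^+\setminus\{e_v\})\subseteq R^+$, and repeat---but the termination invariants differ. You use the height $|\alpha|=\sum_v\alpha_v$, which drops by $(\alpha,e_v)\geq 1$ at each step; this is elementary and entirely self-contained. The paper instead tracks $N_\alpha:=|\{\beta\in R^+\text{ real}:(\alpha,\beta)>0\}|$, arguing that each (b)-type reflection decreases $N_\alpha$ by exactly one and that $N_\alpha<\infty$ for a root. The paper's invariant thus pins down the exact length of any reflecting sequence (namely $N_\alpha$ in the imaginary case, $N_\alpha-1$ in the real case), information your bound does not give; but the lemma only asks for existence, and your route avoids having to justify the finiteness of $N_\alpha$. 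One minor remark: your verification that $\alpha^{(1)}\in R^+_{q^{(1)},\theta^{(1)}}$ is correct but not strictly needed, since conditions (a) and (b) in the definition of a reflecting sequence involve only the $\alpha^{(i)}$, not $q^{(i)}$ or $\theta^{(i)}$; one could run the induction over all of $R^+$ without tracking the parameters.
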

\begin{proof}
By definition, $\alpha$ is a root if and only if
 there exists a sequence of reflections at loopfree vertices
taking $\alpha$ to either the fundamental region or to an elementary root $e_v$ (it is imaginary in the former case and real in the latter case). Now, given $\alpha \in \N^{Q_0}$, let $N_\alpha := |\{\beta \in R^+ \text{ real} \mid (\alpha,\beta) > 0\}|$. Then each reflection satisfying (b) decreases $N_{\alpha}$ by one, and a non-trivial reflection
not satisfying (b) increases $N_{\alpha}$ by one. 
Now assume that $\alpha \in R^+_{q,\theta}$. Then $N_{\alpha} < \infty$.
Since $s_v(R^+ \setminus \{e_v\}) \subseteq R^+$, an arbitrary sequence
of reflections satisfying (b) will remain in $\N^{Q_0}$.
Thus, if $\alpha$ is real,
an arbitrary $N_{\alpha}-1$ reflections satisfying (b) will send $\alpha$ to $e_v$ for some $v \in Q_0$, and if $\alpha$ is imaginary, then an arbitrary
$N_\alpha$ reflections satisfying (b) will take $\alpha$ to $\mc{F}(Q)$.
\end{proof}
As in \cite[Theorem 1.2]{Su-fmmrq}, we have the following.
\begin{thm} \label{t:flat}
Let $\alpha \in R^+_{q,\theta}$. Pick any sequence of
  vertices $v_1, \ldots, v_m \in Q_0$ such that, for
  $(q^{(i)},\theta^{(i)},\alpha^{(i)}) := (u_{v_i} \cdots u_{v_1}(q),
  r_{v_i} \cdots r_{v_1}(\theta), s_{v_i} \cdots s_{v_1}(q))$,
  we have $\alpha^{(m)} \in \mc{F}(Q)$ and
  $\alpha^{(i)}_{v_i} < \alpha^{(i-1)}_{v_i}$.  Then $\alpha$ is flat
  if and only if (a) $\alpha^{(m)} \in \mc{F}(Q)$ is flat, and
 (b) for every $i$, either (b1)
  $(q^{(i)},\theta^{(i)},\alpha^{(i)})$ is an admissible reflection
  (Definition \ref{d:admiss-refl}) of
  $(q^{(i-1)},\theta^{(i-1)},\alpha^{(i-1)})$ (i.e.,
  $q^{(i-1)}_{v_i}\neq 1$ or $\theta^{(i-1)}_{v_i} \neq 0$), or (b2)
  $\alpha^{(i)}$ is a $(-1)$-reflection of $\alpha^{(i-1)}$.
\end{thm}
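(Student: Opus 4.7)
The plan is to induct on the length $m$ of the reflecting sequence, reducing to the single-step claim: for a reflection $s_v$ at a loopfree vertex with $(\alpha, e_v) > 0$, the vector $\alpha$ is flat for $(q, \theta)$ if and only if (i) the reflection is admissible or a $(-1)$-reflection, and (ii) $s_v\alpha$ is flat for $(u_v q, r_v \theta)$. Given this single-step claim, the theorem follows by induction, with the base case $m = 0$ matching condition (a) exactly and condition (b) vacuous.

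In the admissible case (b1), where $q_v \neq 1$ or $\theta_v \neq 0$, one has $e_v \notin R^+_{q,\theta}$ and $e_v \notin R^+_{u_v q, r_v \theta}$ (admissibility is preserved in the $v$-th coordinate by $u_v, r_v$). From the identity $(u_v q)^{s_v\beta} = q^\beta$ (preceding Theorem~\ref{t:yama}) and a short check that $r_v(\theta) \cdot s_v(\beta) = \theta \cdot \beta$ (using the adjointness of $r_v$ and $s_v$ with respect to the dot product), $s_v$ restricts to a $p$-preserving bijection $R^+_{q,\theta} \to R^+_{u_v q, r_v \theta}$. Decompositions into positive roots then correspond bijectively with equal $p$-sums, and since $p(\alpha) = p(s_v\alpha)$, flatness transfers in both directions.

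In the non-admissible $(-1)$-reflection case (b2), we have $q_v = 1$, $\theta_v = 0$, and $(\alpha, e_v) = 1$, so $s_v\alpha = \alpha - e_v$, the parameters are unchanged, and $p(\alpha) = p(\alpha - e_v)$. A decomposition of $\alpha$ into $R^+_{q,\theta}$-elements either contains $e_v$ as a summand (in which case removing one copy yields a decomposition of $\alpha - e_v$ with identical $p$-sum, since $p(e_v) = 0$) or does not (in which case applying $s_v$ termwise stays in $R^+_{q,\theta}\setminus\{e_v\}$ and preserves each $p(\beta^{(i)})$). Conversely, prepending $e_v$ to any decomposition of $\alpha - e_v$ recovers one of $\alpha$. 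These maps show the maximal $p$-sums over decompositions agree, so flatness transfers.

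In the remaining ``bad'' case, $q_v = 1$, $\theta_v = 0$, and $(\alpha, e_v) \geq 2$, we have $e_v \in R^+_{q,\theta}$, and by the standard Kac root-string property $\alpha - e_v$ is a positive root; it lies in $N_{q,\theta}$ as one checks directly. The decomposition $\alpha = (\alpha - e_v) + e_v$ then has $p$-sum $p(\alpha) + (\alpha, e_v) - 1 \geq p(\alpha) + 1 > p(\alpha)$, so $\alpha$ is not flat. The main obstacle is the bookkeeping in case (b2): the decomposition correspondence is not a strict bijection but preserves only the maximal $p$-sum, and one must carefully allow an arbitrary number of copies of $e_v$ among the summands. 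Beyond this, the argument closely parallels Su's treatment \cite{Su-fmmrq} of the additive case, to which the multiplicative combinatorics reduce via Remark~\ref{mult-add}.
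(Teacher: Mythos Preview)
Your proof is correct and follows essentially the same strategy as the paper's: induct along the reflecting sequence, splitting at each step into the admissible case (where $s_v$ bijects $R^+_{q,\theta}$ with $R^+_{u_vq,r_v\theta}$), the inadmissible $(-1)$-reflection case, and the bad inadmissible case (where $(\alpha-e_v)+e_v$ witnesses non-flatness). Your handling of the $(-1)$-reflection case differs slightly---you match decompositions by either removing a copy of $e_v$ or reflecting termwise by $s_v$, whereas the paper argues by contrapositive, subtracting $e_v$ from a single summand with positive pairing---but this is a minor variation that in fact avoids invoking the root-string property in that case.
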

Analogously to \cite{Su-fmmrq}, we will show below that $\alpha^{(m)} \in \mc{F}(Q)$ is flat
if and only if it is not of the form $m \ell \delta$ for $\delta$ the minimal imaginary root of an affine Dynkin subquiver, $m \geq 2$, and $\ell \geq 1$ is such that $q^\delta$ is a primitive $\ell$-th root of unity.

The theorem actually gives an algorithm to determine if a root is
flat, by playing a variant of the numbers game \cite{Moz-rfgWg} (with
a cutoff in the inadmissible case as in \cite{GS-dmWge}).
\begin{proof}[Proof of Theorem \ref{t:flat}]
  Under an admissible reflection the condition of being flat does not
  change since
  $s_{v_i}: R^+_{q^{(i-1)},\theta^{(i-1)}} \to
R^+_{q^{(i)},\theta^{(i)}}$
is a bijection (as $e_{v_i} \notin R^+_{q^{(i-1)},\theta^{(i-1)}}$),
cf.~\cite[Lemma 5.2]{cb-geom}.  Here we let $q^{(0)} := q$ and $\theta^{(0)} := \theta$.  If we apply a $(-1)$-reflection, we
claim that the condition of being flat does not change. We only have
to show that, if
$\alpha^{(i)} \in \widetilde{\Sigma}_{q^{(i)},\theta^{(i)}}$, then
also
$\alpha^{(i-1)} \in \widetilde{\Sigma}_{q^{(i-1)},\theta^{(i-1)}}$,
since the converse follows immediately from the definition of flat.
Suppose on the contrary that
$\alpha^{(i-1)} = \beta^{(1)} + \cdots + \beta^{(k)}$ is a decomposition
with $\beta^{(j)} \in R^+_{q^{(i-1)},\theta^{(i)}}$ and
$p(\alpha^{(i-1)}) < p(\beta^{(1)}) + \cdots + p(\beta^{(k)})$. Since
$(\alpha^{(i-1)}, e_{v_i}) = 1$, there must exist $j$ with
$(\beta^{(j)}, e_{v_i}) \geq 1$, and hence also
$\beta^{(j)}_{v_i} \geq 1$. Set
$\gamma^{(\ell)} := \beta^{(\ell)} - \delta_{\ell j} e_{v_i}$ for all
$\ell$. Note that $p(\gamma^{(j)}) \geq p(\beta^{(j)})$, with equality
if and only if $(\beta^{(j)},e_{v_i}) = 1$. Then
\[
p(\alpha^{(i)}) = p(\alpha^{(i-1)}) < p(\beta^{(1)}) + \cdots + p(\beta^{(k)})
 \leq  p(\gamma^{(1)}) + \cdots + p(\gamma^{(k)}),
\]
so that $\alpha^{(i)} \notin \widetilde{\Sigma}_{q^{(i)},\theta^{(i)}}$. We have proved the contrapositive.

It remains to show that, if $(\alpha^{(i-1)}, e_{v_i}) > 1$ and
$(q_{v_i},\theta_{v_i})=(1,0)$, then
$\alpha^{(i-1)} \notin \widetilde{\Sigma}_{q^{(i-1)},\theta^{(i-1)}}$
In this case there can be no loops at $v_i$, so that $e_{v_i}$ is a
real root.  Moreover, $\alpha^{(i-1)}_{v_i} \geq 1$. Then,
$p(\alpha^{(i-1)}-e_{v_i}) = p(\alpha^{(i-1)}-e_{v_i}) + p(e_{v_i}) >
p(\alpha^{(i-1)})$.  So $\alpha^{(i-1)}$ is not flat.  
\end{proof}
\begin{rem}
  The same theorem as above applies in the additive case, to
  characterise the analogous set $\widetilde{\Sigma}_{\lambda,\theta}$
  of flat roots. Also, note that when $\theta=0$, the above proof
  simplifies the proof of \cite[Theorem 1.2]{Su-fmmrq}, since it does not require
  the classification \cite[Theorem 8.1]{cb-geom} of roots in
  $\mc{F}(Q) \setminus \Sigma_{\lambda,0}$.
\end{rem}
\begin{rem}Thanks to \cite[Theorem 3.1]{GS-dmWge}, the condition that any (or every) reflecting sequence consists only of admissible and $(-1)$-reflections is equivalent to the condition that, for every real root $\beta \in R^+_{q, \theta}$, we have $(\alpha, \beta)\leq 1$.
\end{rem}

\subsection{Fundamental and flat roots not in $\Sigma_{q,\theta}$}
To complete the characterisation, we need to determine the set
$\mc{F}(Q)\setminus \widetilde{\Sigma}_{q,\theta}$. This follows from
\cite[Theorem 8.1]{cb-geom}, which computes
$\mc{F}(Q) \setminus \Sigma_{\lambda}$ (in the additive case, but which
extends to the present setting). We state a sharper and more general
version:
\begin{thm}\label{t:flat-sigma}
  A root in $R^+_{q,\theta}$ is not in $\Sigma_{q,\theta}$ if and only if,  applying a
  reflecting sequence as in Theorem \ref{t:flat}, either one of the
  reflections is inadmissible, or the resulting element of $\mc{F}(Q)$ is
  one of the following:
\begin{enumerate}
\item[(a)] $m\ell\delta$ with $\delta$ the indivisible imaginary root for an affine Dynkin subquiver, $m \geq 2$, and $\ell$ is such that  $q^\delta$ is a primitive $\ell$-th root
  of unity; or %, and and $\ell \delta$ is $q$-indivisible \in R^+_{q,\theta}$;
\item[(b)] The support of $\alpha$ is $J \sqcup K$
for $J,K \subseteq Q_0$ disjoint subsets 
with exactly one arrow  in $\overline{Q}_1$ from a vertex $j \in J$ to a vertex $k \in K$, 
$\alpha_k = 1$,  and either:
%%$\alpha|_J, \alpha|_K \in R^+_{q,\theta}$, and either:
\begin{enumerate}
\item[(b1)] $\alpha_j=1$, and $\alpha|_J \in R^+_{q,\theta}$, or
\item[(b2)] $Q|_J$ is affine Dynkin and $\alpha|_J = m \delta$ for some $m \geq 2$, with $\delta \in R^+_{q,\theta}$ indivisible 
  and $j$ an extending vertex of $Q|_J$.
\end{enumerate}
\end{enumerate}
Moreover, the root is not in $\widetilde{\Sigma}_{q,\theta}$ if and
only if one of the reflections is neither admissible nor a
$(-1)$-reflection, or the resulting element of $\mc{F}(Q)$ is in case (a).
\end{thm}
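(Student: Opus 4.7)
The plan is to reduce to the fundamental region via a reflecting sequence, track how $\Sigma_{q,\theta}$ and $\widetilde{\Sigma}_{q,\theta}$ behave under each elementary step, and then invoke the multiplicative analogue of the Crawley--Boevey classification of $\mc{F}(Q)\setminus\Sigma_{q,\theta}$. First I would analyse the effect of each step in a reflecting sequence $v_1,\ldots,v_m$, in parallel with the proof of Theorem \ref{t:flat}. An admissible reflection at $v_i$ induces a bijection $s_{v_i}\colon R^+_{q^{(i-1)},\theta^{(i-1)}}\iso R^+_{q^{(i)},\theta^{(i)}}$ preserving $p$, so both $\Sigma$- and $\widetilde{\Sigma}$-membership is preserved. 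For a non-admissible $(-1)$-reflection (where $q_{v_i}=1$, $\theta_{v_i}=0$, and $(\alpha^{(i-1)},e_{v_i})=1$), the argument in the proof of Theorem \ref{t:flat} shows that $\widetilde{\Sigma}$-membership is preserved; however, the decomposition $\alpha^{(i-1)}=(\alpha^{(i-1)}-e_{v_i})+e_{v_i}$ (with both summands in $R^+_{q^{(i-1)},\theta^{(i-1)}}$) yields $p(\alpha^{(i-1)})=p(\alpha^{(i-1)}-e_{v_i})+p(e_{v_i})$, violating strict inequality, so $\alpha^{(i-1)}\notin\Sigma_{q^{(i-1)},\theta^{(i-1)}}$. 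Finally, a step that is neither admissible nor $(-1)$ has $(\alpha^{(i-1)},e_{v_i})\geq 2$ with $q_{v_i}=1$, $\theta_{v_i}=0$, and the same computation as in the proof of Theorem \ref{t:flat} yields $p(\alpha^{(i-1)})<p(\alpha^{(i-1)}-e_{v_i})+p(e_{v_i})$, hence $\alpha^{(i-1)}\notin\widetilde{\Sigma}_{q^{(i-1)},\theta^{(i-1)}}$. Since flatness/admissibility of each step is intrinsic to $\alpha$ (independent of the particular sequence chosen), this reduces both claims to the fundamental-region case.

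Next I would treat the fundamental-region case. By Remarks \ref{sigma-qt} and \ref{mult-add}, the sets $\Sigma_{q,\theta}$ and $\widetilde{\Sigma}_{q,\theta}$ depend only on $N_{q,\theta}$ and agree with their additive analogues, so this reduces to the multiplicative reformulation of \cite[Theorem 8.1]{cb-geom}. The ``if'' direction I would check by exhibiting explicit obstructing decompositions: in case (a), write $m\ell\delta=\ell\delta+\cdots+\ell\delta$ ($m$ copies); each summand lies in $R^+_{q,\theta}$ since $q^{\ell\delta}=(q^\delta)^\ell=1$ and $\theta\cdot\delta=0$, and $\sum p=m>1=p(m\ell\delta)$ as $\delta$ is isotropic, placing $\alpha$ outside $\widetilde{\Sigma}_{q,\theta}$. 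In case (b1), the decomposition $\alpha=\alpha|_J+e_k$ has both summands in $R^+_{q,\theta}$, and the single arrow $j\to k$ with $\alpha_j=1$ gives $(\alpha|_J,e_k)=-1$, yielding $p(\alpha)=p(\alpha|_J)=p(\alpha|_J)+p(e_k)$; equality shows $\alpha\in\widetilde{\Sigma}_{q,\theta}\setminus\Sigma_{q,\theta}$. In case (b2), one decomposes $\alpha=\delta+\cdots+\delta+e_k$ ($m$ copies of $\delta$); using $(\delta,\delta)=0$ and $(\delta,e_k)=-\delta_j=-1$ (since $j$ is an extending vertex, $\delta_j=1$), a direct computation gives $(\alpha,\alpha)=2-2m$ and hence $p(\alpha)=m=\sum p(\beta^{(i)})$, again equality, so $\alpha\in\widetilde{\Sigma}_{q,\theta}\setminus\Sigma_{q,\theta}$. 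This establishes all the claimed exclusions.

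The main obstacle is the converse in the fundamental region: showing that every $\alpha\in\mc{F}(Q)\cap R^+_{q,\theta}$ that is not of type (a) or type (b) actually lies in $\Sigma_{q,\theta}$. This is essentially the content of \cite[Theorem 8.1]{cb-geom}, whose proof analyses a would-be violating decomposition $\alpha=\beta^{(1)}+\cdots+\beta^{(r)}$ with $p(\alpha)\leq\sum p(\beta^{(i)})$, exploiting connectedness of $\operatorname{supp}\alpha$ together with the classification of imaginary roots with $p=1$ as integer multiples of indivisible isotropic roots supported on affine Dynkin subdiagrams. The additional bookkeeping in the multiplicative/$\theta$-weighted setting amounts to tracking the constraints $q^{\beta^{(i)}}=1$ and $\theta\cdot\beta^{(i)}=0$ for each summand; this is precisely why the parameter $\ell$ appears in case (a) and why the conditions $\alpha|_J\in R^+_{q,\theta}$ in (b1) and $\delta\in R^+_{q,\theta}$ in (b2) appear. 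By the translations of Remarks \ref{sigma-qt} and \ref{mult-add}, no genuinely new combinatorics arises, and the additive argument of \cite[Theorem 8.1]{cb-geom} transfers directly, completing the proof.
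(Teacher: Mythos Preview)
Your overall strategy matches the paper's: reduce to the fundamental region by tracking how each reflection type interacts with $\Sigma_{q,\theta}$ and $\widetilde{\Sigma}_{q,\theta}$, then invoke the multiplicative analogue of \cite[Theorem 8.1]{cb-geom} via Remarks \ref{sigma-qt} and \ref{mult-add}. The reduction and the verification that cases (a), (b1), (b2) fail the strict $\Sigma_{q,\theta}$ inequality are essentially as in the paper.

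There is, however, a genuine gap in your treatment of the $\widetilde{\Sigma}_{q,\theta}$ statement. In cases (b1) and (b2) you exhibit one decomposition achieving equality $p(\alpha)=\sum_i p(\beta^{(i)})$ and then assert ``equality shows $\alpha\in\widetilde{\Sigma}_{q,\theta}\setminus\Sigma_{q,\theta}$.'' But equality in a \emph{single} decomposition only yields $\alpha\notin\Sigma_{q,\theta}$; membership in $\widetilde{\Sigma}_{q,\theta}$ requires $p(\alpha)\geq\sum_i p(\beta^{(i)})$ for \emph{every} decomposition into elements of $R^+_{q,\theta}$, and you have not excluded the possibility that some other decomposition strictly beats $p(\alpha)$. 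The paper does not attempt a direct verification either: it cites \cite[Theorem 1.1]{Su-fmmrq}, which establishes flatness of these dimension vectors already for the maximal root set $R^+_{1,0}\supseteq R^+_{q,\theta}$, hence a fortiori flatness here. Without this ingredient (or an equivalent direct argument), the ``only if'' direction of the last sentence of the theorem is not established.

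A smaller point: your obstructing decompositions in (b1) and (b2) use the summand $e_k$, but the statement does not force $K=\{k\}$, nor is $e_k\in R^+_{q,\theta}$ part of the hypotheses. The paper instead uses $\alpha=\alpha|_J+\alpha|_K$ in case (b1) and $\alpha=\delta+(\alpha-\delta)$ in case (b2), both of which are seen to lie in $R^+_{q,\theta}$ directly from the stated hypotheses $\alpha|_J\in R^+_{q,\theta}$, respectively $\delta\in R^+_{q,\theta}$.
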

\begin{proof}
  First, it is clear that if an inadmissible reflection is applied in
  the sequence, $p(\alpha) \leq p(s_i \alpha) + p(\alpha-s_i\alpha)$ shows
  that $\alpha$ is not in $\Sigma_{q,\theta}$. So we can assume all
  reflections are admissible.  By \cite[Theorem 8.1]{cb-geom} in the
  additive case (with $\theta=0$), there is a sequence of admissible
  reflections resulting in one of the given cases (which is not assumed to be in
   $\mc{F}(Q)$). The proof in \emph{loc.~cit.}~extends verbatim to our case,
  replacing $N_{\lambda}$ by $N_{q,\theta}$. Although the additive
  statement has $\ell=1$ (or in characteristic $p$ has $\ell=p$), for
  us we note that if $q^\delta$ is a primitive $\ell$-th root of unity, i.e., $\ell \delta$ is $q$-indivisible, then 
  % $\ell\delta \in R^+_{q,\theta}$ is $q$-indivisible,
  % has the property
  % that $k\delta \notin R^+_{q,\theta}$ for $k \mid m$, then actually
  $\ell\delta \in \Sigma_{q,\theta}$, since every element $\beta \in
  N_{q,\theta}$ with $\beta < \ell
  \delta$ is real.  Note that, in \cite[Theorem 8.1]{cb-geom} the
  condition in (b2) that $\delta \in
  R^+_\lambda$ is not stated (since its goal is to produce
  non-exhaustive necessary conditions for $\alpha \in
  \Sigma$), but it follows from the proof that it is also a necessary
  condition for $\alpha \notin \Sigma$.

  We claim that in fact we can take the result to be in $\mc{F}(Q)$.
  Note that, in \cite{cb-geom}, it is not required in conditions
  (a),(b) that $\alpha$ be in $\mc{F}(Q)$. However, we already
  know that, in order to have $\alpha \in \Sigma_{q,\theta}$, there
  must be an admissible reflection sequence taking $\alpha$ to
  $\mc{F}(Q)$. Thus we may assume that $\alpha \in \mc{F}(Q)$ and
  moreover that it is sincere.  Then, applying \cite[Theorem
  8.1]{cb-geom}, there is a further sequence of admissible reflections
  taking $\alpha$ to one of the forms above.  After this we can apply
  an admissible reflection sequence to get back to an element of
  $\mc{F}(Q)$, necessarily $\alpha$ again. It is clear that doing so
  will not change the form as above, since $\alpha$ was assumed to be
  sincere, so the reflections cannot shrink the support of $\alpha$;
  note that, in cases (a) and (b2), this means that no reflections will
  be applied to the coefficients of the multiple of $\delta$.

  For the converse, it remains to verify that cases (a) and
  (b1),(b2) are not in $\Sigma_{q,\theta}$. In case (a)
  $p(m\delta) = 1 < mp(\delta)=m$. In case (b1),
  $p(\alpha) = p(\alpha|_J) + p(\alpha|_K)$. Finally, in case
  (b1), $p (\alpha) = p(\delta)+p(\alpha-\delta)$.

  The statement about flat roots follows from Theorem \ref{t:flat}
  together with the observation that case (a) is not flat (as
  $p(m\ell \delta) < mp(\ell \delta)$), whereas cases (b1) and
  (b2) are flat: this follows from \cite[Theorem 1.1]{Su-fmmrq},
  where it is shown that (b1) and (b2) are already flat as
  elements of $\Sigma_{1,0}$ (which is stronger).
%%%%%Check and add Riedtmann!
  % By Theorem \ref{t:flat}, the statement reduces to the case $\alpha \in \mc{F}(Q)$.  There,  the statement was proved in the case  $(q,\theta)=(1,0)$
% in \cite[Theorem 8.1]{cb-geom} (actually for the additive case with $\theta=0$). The proof extends to this case.  However, as that proof is complicated, let us give a direct way to deduce the result from the $(1,0)$ case. First, observe that:
% \[
% (\mc{F}(Q) \cap \widetilde{\Sigma}_{q,\theta}) \setminus \Sigma_{q,\theta}
% = (\mc{F}(Q) \cap R^+_{q,\theta}) \setminus \Sigma_{q,\theta}
% \subseteq (\mc{F}(Q) \cap R^+_{q,\theta}) \setminus (\Sigma_{1,0} \cap R^+_{q,\theta})
% = (\mc{F}(Q) \setminus \Sigma_{1,0}) \cap R^+_{q,\theta}.
% \]
% Therefore, the only possibilities are the listed ones without the condition
% in (b) that $\alpha|_J,\alpha|_K \in R^+_{q,\theta}$.  Assuming these
% conditions, let us check that the given vectors are not in $\Sigma_{q,\theta}$.  This follows as in the additive case:   Now assume that $\alpha|_J, \alpha|_K \notin R^+_{q,\theta}$. We can induct on the number of vertices.  
\end{proof}

\subsection{Canonical decompositions}\label{ss:cd}
Let $\Sigma_{q,\theta}^{\text{iso}} \subseteq \Sigma_{q,\theta}$ be the subset
of isotropic imaginary roots. We use the notation $\N_{\geq 2}\cdot \Sigma_{q,\theta}^{\text{iso}} := \{m \alpha \mid m \geq 2, \alpha \in \Sigma_{q,\theta}^{\text{iso}}\}$.
\begin{thm}\label{t:decompo}
(i)  Given $\alpha \in N_{q,\theta}$, 
%if  $\mathcal{M}_{q,\theta}(Q,\alpha) \neq \emptyset$, then 
there exists
a unique decomposition $\alpha = \alpha^{(1)} + \cdots + \alpha^{(m)}$ with
$\alpha^{(i)} \in \Sigma_{q,\theta}$ such that any other such decomposition is
a refinement of this one.

(ii) There is also a unique decomposition $\alpha = \beta^{(1)} + \cdots + \beta^{(k)}$ with $\beta^{(i)} \in \widetilde{\Sigma}_{q,\theta} \cup \N_{\geq 2}\cdot \Sigma_{q,\theta}^{\text{iso}}$, satisfying the properties:
\begin{enumerate}
% \item[(a)] When $p(\beta^{(i)})=0$ then
% %Every real root $\beta^{(i)}$ is in 
% $\beta^{(i)} \in \Sigma_{q,\theta}$;
% \item[(b)] Every other element $\beta^{(i)}$ has the property $(\beta^{(i)}, \gamma) \leq 0$ for every real root $\gamma \in N_{q,\theta}$;
% %$\in R^+_{q,\theta}$;
\item[(a)] Every element $\beta^{(i)}$ is of one of the following three types:
  \begin{itemize}
  \item[(1)] $\beta^{(i)} \in \Sigma_{q,\theta}$;
  \item[(2)] $\beta^{(i)} \in \N_{\geq 2} \cdot \Sigma_{q,\theta}^{\text{iso}}$;
  \item[(3)]
    $\beta^{(i)} \in \widetilde{\Sigma}_{q,\theta} \setminus
    \Sigma_{q,\theta}$; moreover, there is an admissible reflection
    sequence taking $\beta^{(i)}$ to an element of the fundamental
    region having only decompositions of the form (b2) in Theorem
    \ref{t:flat-sigma}.
  \end{itemize}
\item[(b)] Any other decomposition into
  $\widetilde{\Sigma}_{q,\theta}\cup \N_{\geq 2}\cdot \Sigma^{\text{iso}}_{q, \theta}$ satisfying (a) and (b) is a refinement
  of this one.
\end{enumerate}

% \begin{enumerate}
% \item[(a)] Every real root $\beta^{(i)}$ is in $\Sigma_{q,\theta}$;
% \item[(b)] Every other element $\beta^{(i)}$ has the property $(\beta^{(i)}, \gamma) \leq 0$ for every real $\gamma \in R^+_{q,\theta}$;
% \item[(c)] Any other decomposition into
%   $\widetilde{\Sigma}_{q,\theta}\cup \N_{\geq 2}\cdot \Sigma^{\text{iso}}_{q, \theta}$ satisfying (a) and (b) is a refinement
%   of this one.
% \end{enumerate}
% Also, $(\beta^{(i)}, \beta^{(j)})=0$ if $\beta^{(i)}, \beta^{(j)}$ are
% imaginary and $i \neq j$.

(iii) The decomposition in (i) is a refinement of the one in (ii), obtained uniquely, after applying admissible reflections,
by performing decompositions $\alpha=\alpha|_J + \alpha|_K$ of type (b2) in Theorem \ref{t:flat-sigma}.

% as follows: In type (2), we replace $\beta^{(i)}$ by $m$ copies of $\frac{1}{m} \beta^{(i)} \in \Sigma_{\text{iso}}$ (for unique $m \geq 2$). In type (3) we replace $\beta^{(i)}$ uniquely by a real root $\gamma$ plus $m$ copies of $\frac{1}{m} (\beta^{(i)} - \gamma) \in \Sigma^{\text{iso}}$.

%The real roots coincide. Writing the refinement as
% $\beta^{(i)} = \sum_j \alpha^{(i,j)}$, each pairing
% $(\alpha^{(i,j)},\alpha^{(i',j')}) \in \{0,-1\}$, and the intersection
% graph for the $\alpha^{(i,j)}$ (putting in edges for pairings $-1$) is
% a forest, with connected components precisely the $\beta^{(i)}$.
% In particular, (ii) is recovered from (i) 
%  by iteratively combining
% together pairs of imaginary roots with pairing $-1$.
% %  nonzero pairing (necessarily
% % $-1$).  In particular, in (ii), for every pair of imaginary roots
% % $\beta^{(i)}, \beta^{(j)}$, we have $(\beta^{(i), \beta^{(j)}) = 0$
% %   for all $i \neq j$ (whereas in (i), for imaginary roots
% %   $\alpha^{(i)}, \alpha^{(j)}$, we have
% %   $(\alpha^{(i)}, \alpha^{(j)}) \in \{-1,0\}$).

(iv) The direct sum map produces a Poisson isomorphism (of reduced varieties), using the decomposition in (ii),
\[
\prod_{i=1}^k \mathcal{M}_{q,\theta}(Q,\beta^{(i)}) \iso
\mathcal{M}_{q,\theta}(Q,\alpha).
\]

% (v) Suppose that, for some irreducible 
% component $C_\alpha \subseteq \mm_{q,\theta}(Q,\alpha)$,
%  the generic decomposition (of $\theta$-polystable representations)
% is $\alpha = \sum \gamma^{(i)}$.  Then
% this decomposition is a refinement of (ii), and further refines to (i).  
% Moreover, we have an isomorphism of reduced varieties $C_\alpha \cong \prod_i C_{\gamma^{(i)}}$ for the corresponding irreducible components $C_{\gamma^{(i)}} \subseteq \mm_{q,\theta}(Q, \gamma^{(i)})$.  
\end{thm}
Notice the following immediate consequence (of the decomposition in (ii)), giving a weakened version of \eqref{sigma-converse}:
\begin{coro}\label{c:wk-sigma-converse}
  If $\alpha$ is the dimension of a $\theta$-stable representation of
  $\Lambda^q$, then  one of the following three cases must hold: (1) $\alpha \in \Sigma_{q,\theta}$; (2) $\alpha \in \N_{\geq 2} \Sigma_{q,\theta}^{\text{iso}}$; (3) $\alpha$ is obtained by admissible reflections from an element in the fundamental region having only type (b2) decompositions in Theorem \ref{t:flat-sigma}.  % In particular, $\alpha$ is related by admissible
% reflections either to $e_v$ (i.e., $\alpha$ is a real root in $\Sigma_{q,\theta}$), or to an element of the fundamental region.
\end{coro}
\begin{rem}
  In the additive situation, the fact that cases (2) and (3) in the preceding corollary cannot occur was very recently given a simpler, unified proof in
  \cite{Crawley-Boevey2019}.
  %It is hoped that this will extend to prove \eqref{sigma-converse} fully.
  \end{rem}
Note that, in the real case $\alpha \in \Sigma_{q,\theta}$, it is
obvious from the properties of admissible reflections that $\alpha$ is
the dimension of a $\theta$-stable representation (see \cite[Theorem
1.9]{cb-shaw} for a stronger statement).  In the imaginary case,
following \eqref{sigma-converse}, we only expect a stable
representation if $\alpha \in \Sigma_{q,\theta}$, but it is not at all
clear how to prove its existence.

In the proof of the theorem, we will produce also an algorithm for
constructing the $\beta^{(i)}$, by a sequence of reflections and
subtracting roots $e_i$, with the end result an element in $\mc{F}(Q)$
whose connected components give the imaginary $\beta^{(i)}$. For the
real roots, we obtain the unique decomposition into real roots in
$\Sigma_{q,\theta}$ (as a real root which is the sum of multiple real
roots cannot be in $\Sigma_{q,\theta}$).
% Moreover, there is a sequence of reflections as in Theorem
% \ref{t:fact} taking $\alpha$ to $\mc{F}(Q)$, such that by the following
% operations, $\{\alpha^{(i)}\}$ is taken to the set of connected
% components of $\alpha$: (a) for each admissible reflection, apply the
% same reflection to each $\alpha^{(i)}$; (b) for each inadmissible
% $(-1)$-reflection at a vertex $v \in Q_0$, there will exist
% $\alpha^{(i)}$ such that $\alpha^{(i)} = e_v$, and simply delete such
% an $\alpha^{(i)}$.
\begin{rem}
  Observe that essentially the same proof as that provided below of
  Theorem \ref{t:decompo} was given in \cite{cb-deco} in the context
  of Nakajima quiver varieties, and indeed the result above holds in
  that setting. However, due to the simplifying properties of that
  case (such as expectation (*) holding, and $q$-divisibility
  coinciding with ordinary divisibility), Crawley-Boevey is able to
  show that the product decomposition in (iv) always refines to one
  using the decomposition of (i). Hence the statement given in
  \cite{cb-deco} is substantially simpler, eliminating parts (ii) and
  (iii). %( of the statement). %( and hence parts of the proof below).
\end{rem}
%%result and proof
%%holds in the additive case: actuall
\begin{proof}[Proof of Theorem \ref{t:decompo}]
  We first obtain the existence of the desired decompositions in (i) and (ii) satisfying (iii) and (iv).
  We prove this by induction on the sum of the entries of
  $\alpha$. If there is a vertex $v \in Q_0$ at which
  $(\alpha, e_v) > 0$ and either $q_v \neq 1$ or $\theta_v \neq 0$,
  then we can apply an admissible reflection. Since admissible
  reflections preserve the set of flat roots, the statements follows
  from Theorem \ref{t:yama} and the induction hypothesis.  So suppose
  that there is no such vertex.  Instead, suppose that $v \in Q_0$ is
  such that $(\alpha, e_v) > 0$ but $q_v=1, \theta_v=0$.  Then every
  decomposition of $\alpha$ into elements of
  $\widetilde{\Sigma}_{q,\theta}$ must have an element having positive
  Cartan pairing with $e_v$. This cannot happen by definition for the
  imaginary roots. Since $e_v$ is the only real root in
  $\Sigma_{q,\theta}$ with positive pairing with $e_v$, this implies
  that $e_v$ must appear as a summand of every decomposition of types
  (i) and (ii).  Similarly, the argument of the proof of Proposition \ref{p:-1-iso} shows that, in this case, the direct sum map yields an isomorphism
  $\mathcal{M}_{q,\theta}(Q,\alpha-e_v) \times
  \mathcal{M}_{q,\theta}(Q,e_v) \iso
  \mathcal{M}_{q,\theta}(Q,\alpha)$.
  We can apply the induction hypothesis to $\alpha-e_v$.

This reduces the theorem to the case that $( \alpha, e_i ) \leq 0$ for all $i$.
 %$\in \mc{F}(Q)$.  
In this case we can decompose $\alpha$ into its connected components.
By Theorem \ref{t:flat-sigma}, all of these are flat roots, except for
elements of the form $m\ell\delta$ with
$\ell\delta \in \Sigma_{q,\theta}^{\text{iso}}$ and $m \geq 2$.

It remains to show that, given the situation (b1)
of Theorem \ref{t:flat-sigma}, we get a decomposition of our moduli space.
% By the theorem, we can reduce immediately to the case $\alpha \in \widetilde{\Sigma}_{q,\theta} \cup \N_{\geq 2} \Sigma_{q,\theta}^{\text{iso}}$. In the former case, if $\alpha \notin \Sigma_{q,\theta}$, we can reduce by Theorem \ref{t:flat-sigma} using admissible reflections to cases (b1) and (b2).  We just have to show that (b1) cannot occur.
This follows from the 
arguments of \cite[\S 10, II]{cb-geom}. We briefly repeat them for the reader's convenience, adapting them to our situation (see \emph{loc.~cit.~} for details). Restrict the quiver to the vertices $\{j\} \cup K$ and the arrows incident only to these vertices. Let $a, a^*$ be the pair of reverse arrows between $j$ and $k$; without loss of generality suppose $a: j \to k$. Adding the relations at all the vertices of $K$, and using that $\alpha|_K \in N_{q,\theta}$ (since $\alpha$ and $\alpha|_J$ are), we obtain that $a^* a$ is equal to a sum of commutators. It thus has trace one, and since $\alpha_j=1$, it is zero.  Therefore either $a^*$ or $a$ acts by zero. In the former case we obtain a quotient representation with dimension vector $\alpha|_K$; in the latter we obtain a subrepresentation with this dimension.  These representations are $\theta$-semistable since $\theta \cdot \alpha|_K = 0$, and the original representation was $\theta$-semistable. So the polystabilisation of our representation decomposes into a direct sum of representations with dimension vectors $\alpha|_J$ and $\alpha|_K$.  Therefore the direct sum map $\mathcal{M}_{q,\theta}(Q,\alpha|_J) \times \mathcal{M}_{q,\theta}(Q, \alpha|_K) \to \mathcal{M}_{q,\theta}(Q,\alpha)$ is an isomorphism.

This
yields the desired decomposition in (ii), as well as the isomorphism
in (iv), and the decomposition in (iii).
%To further refine into the decomposition of (i), we
%iteratively apply Theorem \ref{t:flat-sigma}. Eventually this process
%must terminate since $\alpha$ is finite.

Let us prove that the decompositions are unique. For (i), this is done in \cite[Theorem 1.1]{cb-deco}; the proof carries over verbatim, replacing $\Sigma_\lambda$ by $\Sigma_{q,\theta}$ and adapting all notions.  Let us consider (ii), whose proof is similar.  
We can obviously assume that $\alpha$ is sincere. We first claim 
that the uniqueness statement is unaffected by applying the aforementioned reduction to the fundamental region.
% This is essentially contained in the proof of \cite[Theorem 1.1]{cb-deco}, but we sketch the argument for the reader's convenience.
It is obvious that applying admissible reflections does not change the statement.  So we only have to show that, if  $(\alpha, e_v) > 0$, then every decomposition of type (ii) includes $e_v$.
In this case, any decomposition of the form (ii) must have $(\beta^{(i)}, e_v) > 0$ for some $v$.
If $\beta^{(i)} \neq e_v$, then we must have $\beta^{(i)} \notin \N \cdot \Sigma_{q,\theta}$. Thus $\beta^{(i)} \in \widetilde{\Sigma}_{q,\theta}$, and by assumption (3), it must
be related by admissible reflections to an element
of the fundamental region. Therefore, it
 does not have a positive pairing with any real roots.  This is a contradiction. We  therefore obtain that $\beta^{(i)} = e_v$ for some $i$.  Thus the uniqueness statement for $\alpha$ is equivalent to that for $\alpha-e_v$, as desired. 

This reduces us to the case that $\alpha$ is in the fundamental region. We clearly can get a decomposition in a unique way by iteratively replacing $\alpha$ by the sum $\alpha|_J  + \alpha|_K$ as in (b1) of Theorem \ref{t:flat-sigma}. We only have to show that every decomposition of the form (ii) refines such a decomposition of type (b1).
For a contradiction, suppose $\alpha$ is in the fundamental region and we have a decomposition of type (b1) with sets $J$ and $K$, but also a decomposition of type
(ii) with some $\beta^{(i)}$ not supported entirely on $J$ or $K$. After applying admissible reflections to $\beta^{(i)}$, this property continues to hold, since we cannot perform admissible reflections at the vertices $j$ and $k$.  So we can assume $\beta^{(i)}$ is itself in the fundamental region.  This contradicts our assumptions on $\beta^{(i)}$.   \qedhere

\end{proof}
As a consequence, we obtain the following description of
divisibility criteria for elements of $\widetilde{\Sigma}_{q,\theta}$ and $\Sigma_{q,\theta}$, analogous to \cite[Theorem
2.2]{bellamy-schedler}:
\begin{coro}\label{c:indiv} 
 Let $\alpha = m\beta$ for $\beta \in R^+_{q,\theta}$ $q$-indivisible and imaginary, and $m \geq 2$.
% \begin{enumerate}
%\item[(i)]
 Then,  $\alpha \in  \widetilde{\Sigma}_{q,\theta}$ if and only if $\beta \in \widetilde{\Sigma}_{q,\theta}$, $\beta$ is anisotropic, and  a reflecting sequence taking $\beta$ to the fundamental region involves only admissible reflections.
 In this case, also $\alpha \in \Sigma_{q,\theta}$.

   In particular, for $\gamma \in \widetilde{\Sigma}_{q,\theta}$, every rational multiple $r \gamma \in N_{q,\theta}$ for $r \in Q_{\leq 1}$ is also in $\widetilde{\Sigma}_{q,\theta}$. 
   %So $\widetilde{\Sigma}_{q,\theta}$-indivisibility is equivalent to $q$-indivisibility.
   % then $N_{q,\theta} \cap \Q_{\leq 1} \cdot \gamma \subseteq \widetilde{\Sigma}$ if and only if $\frac{1}{m} \gamma \notin \widetilde{\Sigma}_{q,\theta}$ for all $m \geq 2$.  %(``$\widetilde{\Sigma}$-indivisibility is equivalent to indivisibility'').

% \item[(ii)] $\alpha \in \Sigma_{q,\theta}$ if and only if $\beta$ is
%   anisotropic and either $\beta \in \Sigma_{q,\theta}$, or $\beta$ is
%   equivalent via admissible reflections to one of the forms (b1) or
%   (b2) in Theorem \ref{t:flat-sigma}.

%   % In particular, a root $\gamma \in \Sigma_{q,\theta}$ is $\Sigma$-indivisible if and only if either it is $q$-indivisible, or $\gamma=p\beta$ for $p \geq 2$ prime and $\beta$ equivalent via admissible reflections to one of the forms (b1) or (b2) in Theorem \ref{t:flat-sigma}.
%   In particular, a rational multiple $r \gamma \in N_{q,\theta}$ for $r \in \Q_{\leq 1}$ is in $\Sigma_{q,\theta}$ unless it is equivalent to one of the forms (b1) or (b2). This can only happen if $r = 1/\text{gcd}(\gamma)$ and $q^{r\gamma}=1$. 
%\end{enumerate}
\end{coro}
\begin{proof}
  This follows from the classification of flat roots in Theorems
  \ref{t:flat}, \ref{t:flat-sigma}, and \ref{t:decompo}. Observe simply
  that if a reflection sequence for $\beta$ involves an inadmissible
  $(-1)$-reflection, then the same sequence for $\alpha$ involves an
  inadmissible $(-2)$-reflection (which is not allowed).
  % There are no inadmissible reflections if and only if the decomposition of Theorem \ref{t:decompo}.(ii) of $\beta$ has no real roots. Since applying admissible reflections leaves invariant the set of real roots in $R^+_{q,\theta}$ whose pairing with $\alpha$ is negative, we see that the latter condition is equivalent to saying that $\alpha$ pairs non-negatively with all real roots in $R^+_{q,\theta}$.
  On the other hand, if only admissible reflections are allowed, then $m\beta$ will also be flat unless $\beta$ is isotropic. In the anisotropic case, $m\beta \in \Sigma_{q,\theta}$, since the decompositions of type (b1) and (b2) cannot occur for a divisible vector. The last statement then follows by considering $\gamma$ and $r\gamma$ as multiples of a common vector. \qedhere

  % Finally observe that if $\beta$ has the trivial decomposition, then $m\beta$ also gives a flat root unless $\beta$ is isotropic. Since, after applying admissible

  % it will in fact be in $\Sigma_{q,\theta}$. \qedhere

% (ii) Assume that $\beta \in \Sigma_{q,\theta}$. After applying admissible reflections, we can assume $\beta \in \mc{F}(Q)$. Then $\alpha=m\beta$ will be in $\Sigma$ unless $\beta$ is isotropic, by Theorem \ref{t:flat-sigma}.  Conversely, if $\alpha \in \Sigma_{q,\theta}$, again by applying admissible reflections, we can assume $\alpha \in \mc{F}(Q)$. Then by Theorem \ref{t:flat-sigma}, we see that $\beta$ is either itself in $\Sigma_{q,\theta}$ or of one of the forms (b1) and (b2).
\end{proof}
\begin{rem} 
  Although we are working with reduced varieties throughout the paper,
  we emphasised this in Theorem \ref{t:decompo}.(iv) because it is not
  completely clear that the reflection isomorphisms in \cite[Theorem
  5.1]{yamakawa} are defined scheme-theoretically. On the other hand,
  this is the only obstacle here. That is, if these isomorphisms are
  defined scheme-theoretically, then the proof would appear to extend
  to this case, i.e., to not-necessarily-reduced multiplicative quiver
  schemes.
%In part (v) the reducedness condition is more essential.
\end{rem}
% (which gives a proof of \cite[Theorem 1.1]{cb-deco} not requiring the classification in \cite[Theorem 8.1]{cb-geom}).

%\end{proof}

\subsection{Symplectic resolutions for $q$-indivisible flat roots}
%Using  \cite[Section 8]{cb-geom}, we can construct symplectic
%resolutions of $\mm_{q,\theta}(Q,\alpha)$ via GIT, 
%like for $\Sigma_{q,\theta}$:
\begin{thm}\label{t:res-flat} Suppose that $\alpha \in \widetilde{\Sigma}_{q,\theta}$ is $q$-indivisible and $\mm_{q,\theta}(Q,\alpha)$ is non-empty.
  Then for suitable
$\theta' \geq \theta$, $\mm_{q,\theta'}(Q,\alpha) \to \mm_{q,\theta}(Q,\alpha)$ is a symplectic resolution.
\end{thm}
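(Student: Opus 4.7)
The plan is to produce a symplectic resolution by varying the stability parameter. I would first pick $\theta' \in \Z^{Q_0}$ with $\theta' \cdot \alpha = 0$ and $\theta' \geq \theta$ (Definition \ref{d:tss}), chosen generic so that $\theta' \cdot \beta \neq 0$ for every integer vector $\beta$ with $0 < \beta < \alpha$ and $q^\beta = 1$. Such a $\theta'$ exists because $q$-indivisibility of $\alpha$ prevents any such $\beta$ from being a rational multiple of $\alpha$: if $\beta = (p/r)\alpha$ in lowest terms with $r \geq 2$, then $\alpha/r \in \Z^{Q_0}$ and $\zeta := q^{\alpha/r}$ is an $r$-th root of unity of order $m$ strictly dividing $r$, so $q^\beta = \zeta^p = 1$ would force $m \mid p$, contradicting $\gcd(p,r)=1$. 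Hence the finitely many relevant hyperplanes intersect $\{\theta' \cdot \alpha = 0\}$ in proper subspaces, and $\theta'$ can be chosen off them in a chamber adjacent to $\theta$'s face. Genericity then forces every $\theta'$-semistable representation of dimension $\alpha$ to be $\theta'$-stable, since any proper non-trivial sub-representation would exhibit a $\beta \in N_{q,\theta'}$ with $0 < \beta < \alpha$ contradicting our choice. By Proposition \ref{p:s-smooth}, $\mm_{q,\theta'}(Q,\alpha) = \mm^s_{q,\theta'}(Q,\alpha)$ is then smooth symplectic of dimension $2p(\alpha)$ whenever non-empty, and by Lemma \ref{l:varytheta} the induced morphism $\pi:\mm_{q,\theta'}(Q,\alpha) \to \mm_{q,\theta}(Q,\alpha)$ is projective and Poisson. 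It remains to establish non-emptiness of the source and birationality of $\pi$.

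For birationality I would apply Corollary \ref{c:dense-flat}: a dense open subset of $\Phi^{-1}(q)^{\theta-ss}$ is the union of preimages of strata of types $(1,\beta^{(1)};\dots;1,\beta^{(r)})$ with $\sum_i p(\beta^{(i)}) = p(\alpha)$. Genericity of $\theta'$ rules out all strata with $r \geq 2$ from the $\theta'$-semistable locus, since each $\beta^{(i)}$ would satisfy $\theta' \cdot \beta^{(i)} = 0$ and $q^{\beta^{(i)}}=1$. Hence only the stratum $(1,\alpha)$ of $\theta'$-stable representations meets the $\theta'$-semistable locus, and under $\pi$ it maps isomorphically onto a dense open subset of $\mm_{q,\theta}(Q,\alpha)$, yielding birationality and surjectivity onto the top-dimensional components. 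To handle non-emptiness of the source, I would apply a reflecting sequence of admissible and $(-1)$-reflections (Theorem \ref{t:flat}), transporting the entire statement across reflection isomorphisms via Theorem \ref{t:yama} (admissible) and Proposition \ref{p:-1-iso} ($(-1)$-reflections), reducing to $\alpha \in \mc{F}(Q) \cup \{e_w \mid w \in Q_0\}$. The $e_w$ case is trivial. For $\alpha \in \mc{F}(Q)$, Theorem \ref{t:flat-sigma} combined with $q$-indivisibility leaves three sub-cases: (i) $\alpha \in \Sigma_{q,\theta}$, handled by the $q$-indivisible case of Theorem \ref{main-result} already proved; (ii) case (b1), with $\alpha = \alpha|_J + \alpha|_K$, $\alpha_j = \alpha_k = 1$, and one arrow between $J$ and $K$; (iii) case (b2), $\alpha = (m\delta,1)$ for $Q|_J$ affine Dynkin, the multiplicative analogue of framed affine Dynkin quiver varieties (Hilbert schemes of points on minimal resolutions of du Val singularities, in the additive setting).

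The main obstacle is non-emptiness of $\mm_{q,\theta'}(Q,\alpha)$ in sub-cases (b1) and (b2): because expectation $(*)$ is not available, the existence of a $\theta'$-stable representation of dimension $\alpha$ cannot be deduced on purely combinatorial grounds and must be constructed explicitly. In case (b1), the expected approach is to glue stable representations of the sub-quivers $Q|_J$ and $Q|_K$ (of dimensions $\alpha|_J$ and $\alpha|_K$) along the single arrow between $J$ and $K$ via a generic nonzero scalar, producing an indecomposable representation whose $\theta'$-stability follows from the decomposition-ruling genericity of $\theta'$. In case (b2), one should adapt Nakajima's framing construction to the multiplicative setting: starting from a representation of $Q|_J$ of dimension $m\delta$ satisfying the multiplicative moment map on $Q|_J$, adjoin a generic linear datum at the framing vertex $k$ (the arrow into/out of $k$) and verify that this yields a representation of the full multiplicative preprojective algebra which is $\theta'$-stable. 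The remaining technicality is the verification that these constructions are actually compatible with the multiplicative relation (rather than merely the additive moment map), which will require careful bookkeeping with the ordered products defining $\Phi$.
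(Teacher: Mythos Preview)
Your birationality argument has a genuine gap. You argue that genericity of $\theta'$ rules out $\theta$-strata with $r \geq 2$ from the $\theta'$-semistable locus, and hence only the stratum $(1,\alpha)$ meets it. But this conflates two different things: a $\theta'$-stable representation $\rho'$ is certainly $\theta$-semistable (since $\theta' \geq \theta$), yet its $\theta$-polystabilisation can perfectly well have type $(1,\beta^{(1)};\dots;1,\beta^{(r)})$ with $r \geq 2$. The fact that the direct sum $\bigoplus_i \rho_i$ is not itself $\theta'$-semistable does not prevent a nontrivial iterated extension of the $\rho_i$ from being $\theta'$-stable; indeed, this is exactly what happens when $\alpha \notin \Sigma_{q,\theta}$, where the $\theta$-stable locus may fail to be dense (or even be empty). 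Consequently your conclusion that $\pi$ restricts to an isomorphism onto a dense open via the $(1,\alpha)$ stratum does not go through, and both birationality and non-emptiness remain unproved.

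The paper treats birationality and non-emptiness \emph{together}: for each $\theta$-polystable $\rho$ in a dense stratum (of type $(1,\beta^{(1)};\dots;1,\beta^{(r)})$ with $\sum p(\beta^{(i)}) = p(\alpha)$), it shows there is a \emph{unique} $\theta'$-stable $\rho'$ whose $\theta$-polystabilisation is $\rho$. This is done by induction on $\alpha$: after reflecting into $\mc{F}(Q)$, the decompositions of Theorem~\ref{t:flat-sigma} (types (b1), (b2)) give $\alpha = \alpha^{(1)} + \alpha^{(2)}$ with $(\alpha^{(1)},\alpha^{(2)}) = -1$, so by Proposition~\ref{dim-ext} the relevant $\mr{Ext}^1$ is one-dimensional and the extension $\rho'$ is determined up to isomorphism by $\psi$ and $\phi$. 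One then shows $\psi$ and $\phi$ are themselves stable for suitably modified parameters $\theta''$, and invokes the inductive hypothesis. Existence is the same argument run in reverse: take stable $\psi,\phi$ by induction, form the unique nontrivial extension, and check it is $\theta'$-stable. Your sketch for cases (b1) and (b2) (gluing along the single arrow, adapting framing) is in the right spirit for existence, but the key missing ingredient is the uniqueness side, which is where birationality actually comes from.
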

\begin{rem} Observe that the theorem also holds in the additive
  setting, where the result is also interesting.  Indeed, it explains
  and generalises the technique of framing used to construct
  resolutions such as Hilbert schemes of $\C^2$ or of hyperk\"ahler
  ALE spaces.  In the former case, the quiver is again the framed Jordan quiver
  (with two vertices and two arrows, a loop at the first vertex, and an arrow from the second to the first vertex).
  %one in
  %Remark \ref{r:sdiv}, and
  The dimension vector is $\alpha=(m,1)$.
  The theorem recovers the well-known statement that taking
  $\theta \neq 0$ gives a symplectic resolution of the singularity
  $\operatorname{Sym}^m \C^2$ in the additive case; this identifies
  with $\operatorname{Hilb}^m \C^2$. In the multiplicative case for
  the same quiver, by Theorem \ref{t:iso-mult-char} and Remark \ref{r:imc-open}, after
  localisation, we obtain a resolution of the character variety of the
  once-punctured torus in the multiplicative case for rank $m$ local
  systems with unipotent monodromy $A$ satisfying
  $\operatorname{rk}(A-I) \leq 1$.
\end{rem}
\begin{proof}[Proof of Theorem \ref{t:res-flat}]
  In view of Lemma \ref{l:varytheta},
  %Following the proof in the simpler case of $\Sigma_{q,\theta}$, in
  %Section \ref{section-sing},
   we only have to show that we can find
  $\theta' \geq \theta$ such that $\mm_{q,\theta'}(Q,\alpha)$ is smooth
  and $\mm_{q,\theta'}(Q,\alpha) \to \mm_{q,\theta}(Q,\alpha)$ is
  birational. We will make use of the combinatorial analysis of
  \cite[Section 8]{cb-geom}. Note that, if $\alpha \in \Sigma_{q,\theta}$, then the result follows from the discussion in Section \ref{ss:sing-alpha}, so we can
  assume that this is not the case.

  Let us take $\theta'$ generic such that the conditions of Remark \ref{r:rational stability} are satisfied (i.e., $\theta'$ is an integral multiple of a generic rational stability condition in a small neighbourhood of $\theta$). In particular this means that $\theta' \geq \theta$, every $\theta$-stable representation is $\theta'$-stable, and $\theta' \cdot \beta \neq 0$ for any $\beta < \alpha$ with $\beta \in N_{q,\theta}$.

% For the first condition, we merely
%  pick $\theta'$ such that, if $\beta < \alpha$, then $\beta \cdot \theta' \neq 0$. The second condition is trickier and we will use the combinatorial analysis of \cite[Section 8]{cb-geom}. 

%   It will be useful to generalise from integral to rational stability
%   conditions: note that if $\theta' \in \Q^{Q_0}$, then the sign of
%   $\theta \cdot \beta$ equals the sign of $m\theta \cdot \beta$ for
%   all $m \geq 1$, and hence this added generality does no harm, as one
%   can always multiply by the greatest common denominator of all the
%   $\theta_i$.

%   Allowing rational stability conditions, we claim that for
%   $\theta' \in \Q^{Q_0}$ in a small enough neighbourhood of $\theta$
%   (intersected with $\theta' \cdot \alpha = 0$), then
%   $\theta' \geq \theta$ under the ordering of Definition
%   \ref{d:tss}. Indeed, $\theta' \geq \theta$ follows if
%   $\theta \cdot \beta > 0 \,\Rightarrow\, \theta' \cdot \beta > 0$ for
%   all $\beta < \alpha$, which is an open condition.  Moreover, we can
%   guarantee that
%   $\theta \cdot \beta < 0 \, \Rightarrow \theta' \cdot \beta < 0$,
%   i.e., that every $\theta$-stable representation is also
%   $\theta'$-stable.  If, moreover, we choose $\theta'$ so that
%   $\theta' \cdot \beta \neq 0$ with $\beta < \alpha$, then
%   $\mm_{q,\theta'}(Q,\alpha)$ will be smooth, so that we only have to
%   show that
% %, for appropriate $\theta'$ as above, 
% $\mm_{q,\theta'}(Q,\alpha) \to \mm_{Q,\theta}(Q,\alpha)$ is birational.
% % In this paragraph we could have restricted to $\beta$ such that
% % $q^\beta = 1$.

By Corollary \ref{c:dense-flat}, for each connected component of
$\mm_{q,\theta}(Q,\alpha)$, there is a dense stratum of the form
$(1,\beta^{(1)};\ldots;1,\beta^{(r)})$ with
$p(\alpha)=\sum_{i=1}^r p(\beta^{(i)})$.  We need to show that each
representation $\rho$ in such a stratum is in the boundary of a unique
$\GL(\alpha)$-orbit in
$\mr{Rep}^{\theta'-s}(\Lambda^q(Q),\alpha)$. Equivalently, we
must show that there is a unique $\theta'$-stable representation
$\rho'$ up to isomorphism such that $\rho$ is the
$\theta$-polystabilisation of $\rho'$.

We will prove the statement by induction on $\alpha$, with respect to
the partial ordering $\leq$.  First, applying admissible and
$(-1)$-reflections, we reduce to the case that $\alpha$ is in the
fundamental region. Indeed, it is clear from Theorem \ref{t:yama} and
Proposition \ref{p:-1-iso} that applying these reflections causes no
harm. Note that each $(-1)$-reflection will modify stratum types by
removing a real root from the type; once we are in the fundamental region no real roots will appear.

We first show uniqueness. If $\rho'$ is as
above, then suppose that there is an exact sequence of $\theta$-semistable
representations of the form
$0 \to \psi \to \rho' \to \phi \to 0$.
%, with $\psi$ $\theta$-stable.
By our assumptions, the dimension vectors of $\psi$ and $\phi$ are sums
of complementary subsets of the $\beta^{(i)}$.
It follows from the proof of
Theorem \ref{t:flat-sigma} (using \cite[Section 8]{cb-deco}) that
there is a corresponding decomposition of $\alpha$ as in Theorem
\ref{t:flat-sigma}, of type (b1) or (b2), with the following property:
in type (b1), $\alpha^{(1)} := \alpha|_J$ and
$\alpha^{(2)} := \alpha|_K$ are the dimension vectors of $\psi$ and
$\phi$, in either order; or, in type (b2), $\alpha^{(1)} := \delta$
and $\alpha^{(2)} := \alpha-\delta$ are these dimension vectors,
again in some order.
Note that the ordering of the $\alpha^{(i)}$ is fixed
by the conditions that $\dim \psi \cdot \theta < 0$ and
$\dim \phi \cdot \theta > 0$.  Next, since
$(\alpha^{(1)}, \alpha^{(2)}) = -1$, it follows from Proposition
\ref{dim-ext} that
$\dim \mr{Ext}^1(\psi, \phi)=\dim \mr{Ext}^1(\phi,\psi) = 1$. So the
extension $\rho'$ is uniquely determined, up to isomorphism, from
$\psi$ and $\phi$.

We claim that $\psi$ and $\phi$ are uniquely determined from their
dimension vectors up to isomorphism.  We give the argument for $\psi$;
the one for $\phi$ is symmetric. Let $C \subseteq \{1,\ldots,r\}$ be a
subset of indices such that $\dim \psi = \sum_{i \in C} \beta^{(i)}$.
(This set is unique except in case (b2) with
$\dim \psi= \alpha^{(2)}=\alpha-\delta$.)  There exists a unique
$i \in C$ such that $(\beta^{(i)}, \dim \phi) = -1$ (since
$(\alpha^{(1)}, \alpha^{(2)}) = -1$).  Now, define $\theta'':= \theta'|_{\text{supp} \dim \psi} - (\theta' \cdot \dim \psi) e_v$,
where $v$ is 
%the one of the vertices $\{j,k\}$ appearing in the aforementioned decomposition of Theorem \ref{t:decompo}.(b) (i.e., 
the unique vertex in $\text{supp} \dim \psi$ which has non-zero Cartan pairing with $\text{supp} \dim \phi$ (so $(\dim \psi)_v = 1$). By construction, $\theta'' \cdot \dim \psi = 0 = \theta'' \cdot \dim \phi$. Moreover,
$\theta'' \cdot \beta^{(j)} = \theta' \cdot \beta^{(j)}$ for $j \in C \setminus\{i\}$.  We claim that $\psi$ is $\theta''$-stable.
%$j \in C \setminus\{i\}$.
 By definition of $\theta'$-stability, every nonzero submodule $\eta$ of $\psi$ 
satisfies
$\theta' \cdot \dim \eta < 0$.
%; equivalently, every proper quotient module
%$\psi/\eta$ satisfies $\theta'' \cdot \dim (\psi/\eta) > 0$.
Now, if $\beta^{(i)} \not \leq \dim \eta$, then
$\theta'' \cdot \dim \eta = \theta' \cdot \dim \eta < 0$.
On the other hand, if $\eta$ is a proper submodule of $\psi$
with $\beta^{(i)} \leq \dim \eta$, then $\psi/\eta$ is a nonzero quotient
module with $\beta^{(i)} \not \leq \dim (\psi/\eta)$.  Then
$(\dim (\psi/\eta), \dim \phi) = 0$. By Proposition \ref{dim-ext}, $\mr{Ext}^1(\phi,\psi/\eta)=0$. Therefore, we have an exact sequence $0 \to \eta \to \rho' \to (\psi/\eta) \oplus \phi \to 0$. As a consequence, $\psi/\eta$ itself is a quotient module of $\rho'$.  It follows that $\theta' \cdot \dim (\psi/\eta) > 0$. Therefore, $\theta'' \cdot \dim (\psi/\eta) = \theta' \cdot (\psi/\eta) > 0$.
Therefore, $\theta'' \cdot \eta < 0$. We conclude that $\psi$ is $\theta''$-stable, as desired. By induction on $\alpha$, $\psi$ is then uniquely determined up to isomorphism.
% Now take a quotient
% module $\eta$ of $\psi$ with $\beta^{(i)} \not \leq \dim \eta$. Then
% $(\eta, \dim \phi)=0$, so by Proposition \ref{dim-ext},
% $\mr{Ext}^1(\phi, \eta)=0$. Thus, in this case
% $\theta' \cdot \eta = \theta'' \cdot \eta > 0$.  Put together this
% shows that $\psi$ is indeed $\theta'$-stable.  By induction, it is
% uniquely determined up to isomorphism.

This completes the proof of uniqueness. We move on to existence, which is similar.  Begin with a decomposition given by Theorem \ref{t:flat-sigma} of type (b1) or (b2). Let us keep the notation $\alpha^{(1)}, \alpha^{(2)}$ defined above. The same construction as above yields modifications $\theta^{(1)}, \theta^{(2)}$ of $\theta'$ such that $\theta^{(i)} \cdot \alpha^{(i)} = 0$.  By induction we can take $\theta^{(i)}$-stable representations $\phi, \psi$ of dimension vectors $\alpha^{(i)}$.  Then since $(\alpha^{(1)}, \alpha^{(2)}) = -1$, $\dim \mr{Ext}^1(\phi, \psi) = \dim \mr{Ext}^1(\psi, \phi)=1$. Assume that $\theta' \cdot \phi < 0$, otherwise swap $\phi$ and $\psi$. Then form a non-trivial extension $0 \to \phi \to \rho' \to \psi \to 0$.  The same computation as above guarantees that $\rho'$ is $\theta'$-stable.
\end{proof}
\begin{coro}\label{c:res-flat}
In the situation of the proposition, the normalisation of $\mm_{q,\theta}(Q,\alpha)$ is a symplectic singularity.
\end{coro}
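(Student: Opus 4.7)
The plan is straightforward: to deduce Corollary \ref{c:res-flat} as an essentially immediate consequence of Theorem \ref{t:res-flat} combined with Proposition \ref{prop-symp-sing} (or, more efficiently, Remark \ref{r:pss}). Concretely, let $\theta' \geq \theta$ be the stability parameter supplied by Theorem \ref{t:res-flat}, so that
\[
\pi\colon \mm_{q,\theta'}(Q,\alpha) \longrightarrow \mm_{q,\theta}(Q,\alpha)
\]
is a symplectic resolution. I would then verify in order that $\pi$ satisfies the hypotheses of Proposition \ref{prop-symp-sing}: the source $\mm_{q,\theta'}(Q,\alpha)$ is smooth symplectic (hence trivially has symplectic singularities), and $\pi$ is projective (and in particular proper), birational, and Poisson. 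Projectivity and the Poisson property come directly from Lemma \ref{l:varytheta}, while birationality is built into the statement of Theorem \ref{t:res-flat} (as a symplectic resolution is by definition birational onto its image, and here the image is all of $\mm_{q,\theta}(Q,\alpha)$ since surjectivity follows from the properness of $\pi$ together with density of the image, itself guaranteed by the stability loci considerations in the proof of Theorem \ref{t:res-flat}).

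With these hypotheses verified, Proposition \ref{prop-symp-sing} applies and yields the conclusion that the normalisation of $\mm_{q,\theta}(Q,\alpha)$ is a symplectic singularity. Alternatively, one can bypass the full strength of Proposition \ref{prop-symp-sing} and instead invoke the softer argument sketched in Remark \ref{r:pss}: since $\pi$ factors through the normalisation $\nu\colon \mm_{q,\theta}(Q,\alpha)' \to \mm_{q,\theta}(Q,\alpha)$ as $\pi = \nu \circ \pi'$, and $\pi'\colon \mm_{q,\theta'}(Q,\alpha) \to \mm_{q,\theta}(Q,\alpha)'$ is again a resolution of singularities, the symplectic form on the source pushes down to a symplectic form on the smooth locus of the normalisation, which then extends back to a regular (in fact the given) $2$-form on $\mm_{q,\theta'}(Q,\alpha)$. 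This is exactly the definition of a symplectic singularity for $\mm_{q,\theta}(Q,\alpha)'$.

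There is no real obstacle here: all of the substantive work has already been carried out in Theorem \ref{t:res-flat}, where both the existence of a suitable $\theta'$ and the symplectic resolution property are established. The only mildly subtle point to double-check is that $\pi'$ (rather than $\pi$) is still birational onto the normalisation, but this is automatic since birational morphisms factor uniquely through normalisations and the induced map remains birational. Thus the corollary is a one-line application of the preceding machinery, and I would present it as such.
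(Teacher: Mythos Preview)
Your proposal is correct and follows exactly the paper's approach: the paper's proof is a single sentence noting that the result follows from the symplectic resolution constructed in Theorem \ref{t:res-flat} via Proposition \ref{prop-symp-sing} or Remark \ref{r:pss}. Your more detailed verification of the hypotheses (projectivity, Poisson, birationality) is a helpful expansion but not a different argument.
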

\begin{proof} This follows
  %immediate from the definition of symplectic singularity,
  since we have constructed a symplectic resolution (see Proposition \ref{prop-symp-sing} or Remark \ref{r:pss}).  
\end{proof}
\begin{rem} Note that the main step of the proof is to show that
  $\mm_{q,\theta'}(Q,\alpha) \to \mm_{q,\theta}(Q,\alpha)$ is
  birational for suitable $\theta' \geq \theta$. For this, we did not
  need the hypothesis that $\alpha$ is $q$-indivisible.  On
  the other hand, by Theorems \ref{t:flat} and \ref{t:flat-sigma},
  when
  $\alpha \in \widetilde{\Sigma_{q,\theta}} \setminus
  \Sigma_{q,\theta}$,
  $\alpha$ is actually indivisible (not merely
  $q$-indivisible).  For $\alpha \in \Sigma_{q,\theta}$,
  the birationality statement is Corollary \ref{c:birational}, which
  is easy. (Moreover, the full statement of Theorem \ref{t:res-flat}
  was established for $\alpha \in \Sigma_{q,\theta}$ in Section
  \ref{ss:sing-alpha}.) So it does not really add anything to state the birationality property without the $q$-indivisibility hypothesis.
\end{rem}

\subsection{Symplectic resolutions for general $\alpha$} 
% \begin{proposition} 
% Suppose that $\beta$ is an indivisible isotropic imaginary
% root in $\Sigma_{q,\theta}$ and that $\mathcal{M}_{q,\theta}(Q,\beta)$ is non-empty.  Then for all $k\geq 2$ there is a closed embedding $\Sym^k \mathcal{M}_{q,\theta}(Q,\beta) \to \mathcal{M}_{q,\theta}(Q,k\beta)$ whose image is a union of components of $\mathcal{M}_{q,\theta}(Q,k\beta)$.
% % Let $\alpha \in \N_{\geq 2}\Sigma^{\text{iso}}_{q,\theta}$. Then
% % each reduced component of $\mathcal{M}_{q,\theta}(Q,\alpha)$ 
% % admits a symplectic resolution.
% \end{proposition}
% \begin{proof}

% %  Write $\alpha = m \beta$ for $\beta \in \Sigma^{\text{iso}}_{q,\theta}$.
% % Take a reduced component $X$. Suppose that generic decomposition of
% % representations in this component has $r_i$ factors from reduced components $C_i \subseteq \mathcal{M}_{q,\theta}(Q,m_i \beta)$ for $i=1,\ldots,k$, and the remaining factors have real dimension vectors.  This implies that we have an isomorphism
% % $\prod_{i=1}^k \Sym^{r_i} C_i \to X$: the map is a closed embedding since it is the image under the quotient map $\phi$
% % of a closed embedding $\prod_{i=1}^k \phi^{-1}(C_i)^{r_i} \to \Phi^{-1}(q)^{\theta\mr{-}ss}$, with the 

% % some $r_i, m_i \geq 1, 
% % $(r_1, m_1 \beta, \ldots, r$\alpha = \sum_i m_i \beta + \sum_j \gamma_j$ for $\gamma_j$ real roots and $\sum_i r_i m_i \leq m$.  Then we have a birational map
% \end{proof}
% \begin{defn} A flat root $\alpha$ is said to be $\widetilde{\Sigma}$-indivisible if $\frac{1}{m} \alpha \notin \widetilde{\Sigma}_{q,\theta}$ for any $m \geq 2$.
% \end{defn}
\begin{thm}\label{t:main-result-nonsigma}
Assume that $\mm_{q,\theta}(Q,\alpha)$ is non-empty and that
the decomposition of Theorem \ref{t:decompo}.(ii) has no
elements $\beta^{(i)}$ of the forms 
%%$ \in \N_{\geq 2}\Sigma_{q,\theta}^{\text{iso}}(Q,\alpha)$ or
(a) $\beta^{(i)}=2\gamma$ for $\gamma \in N_{q,\theta}$
% $\Sigma_{q,\theta}$
and $p(\gamma)=2$, or (b)
 $\beta^{(i)}=m\gamma$
for $m\geq 2$ and $\gamma \in \Sigma_{q,\theta}^{\text{iso}}$.  Then:
\begin{itemize}
\item The normalisation of $\mathcal{M}_{q,\theta}(Q,\alpha)$ is a symplectic singularity;
\item   Each factor $\mm_{q,\theta}(Q,\beta^{(i)})$ with $\beta^{(i)} \notin \Sigma_{q,\theta}$ admits a symplectic resolution;
\item If for any factor $\beta^{(i)}$ there exists a $\theta$-stable
  representation of dimension
  $\gamma^{(i)} = \frac{1}{m} \beta^{(i)}$ with $m \geq 2$, then $\mm_{q,\theta}(Q,\alpha)$ does not admit a symplectic resolution. In fact, it has an open, singular, terminal, factorial subset.
% \item  If, furthermore, for each factor $\beta^{(i)} \in \Sigma_{q,\theta}$, the assumptions of Corollary \ref{c:main-result} are satisfied, then the following are equivalent:
% \begin{enumerate}
% \item[(i)] $\mathcal{M}_{q,\theta}(Q,\alpha)$ admits a symplectic resolution;
% \item[(ii)] Each of the $\mathcal{M}_{q,\theta}(Q,\beta^{(i)})$ admits a symplectic resolution;
% %\item[(iii)] Each of the $\beta^{(i)}$ are indivisible;
% \item[(iii)] Each $\alpha^{(i)}$ in Theorem \ref{t:decompo}.(i) is $\Sigma$-indivisible (equivalently, each $\beta^{(i)}$ which is in $\Sigma_{q,\theta}$).
% % For each $\alpha^{(i)}$ in Theorem \ref{t:decompo}.(i) which 
% % is not $q$-divisible, $\mm_{q,\theta}(Q,\alpha^{(i)})$ admits a symplectic resolution.
% \end{enumerate}
\end{itemize}
\end{thm}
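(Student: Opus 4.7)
The plan is to invoke the Poisson isomorphism of Theorem \ref{t:decompo}.(iv), which decomposes $\mm_{q,\theta}(Q,\alpha)$ into a product $\prod_i \mm_{q,\theta}(Q,\beta^{(i)})$ with factors in $\widetilde{\Sigma}_{q,\theta}$ (the hypothesis excludes isotropic multiples), and to treat each factor by the appropriate earlier result: Theorem \ref{main-result} when $\beta^{(i)}\in \Sigma_{q,\theta}$, and Theorem \ref{t:res-flat} together with Corollary \ref{c:res-flat} when $\beta^{(i)}\in \widetilde{\Sigma}_{q,\theta} \setminus \Sigma_{q,\theta}$.

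Classifying the factors case by case: if $\beta^{(i)}\in \Sigma_{q,\theta}$, the exclusion of the $(2,2)$-case lets Theorem \ref{main-result} apply directly, producing a projective symplectic resolution when $\beta^{(i)}$ is $q$-indivisible and identifying the normalisation as a symplectic singularity in general. If instead $\beta^{(i)}\in \widetilde{\Sigma}_{q,\theta}\setminus \Sigma_{q,\theta}$, then by Theorem \ref{t:flat-sigma} the vector $\beta^{(i)}$ has one of the special ``framed'' forms (b1) or (b2), for which the coordinate at the framing vertex equals $1$; this forces $\beta^{(i)}$ to be indivisible (in particular, $q$-indivisible), so Theorem \ref{t:res-flat} produces a projective symplectic resolution and Corollary \ref{c:res-flat} yields that the normalisation is a symplectic singularity. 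This establishes the second bullet, and using that normalisation commutes with finite products over $\C$ and that a finite product of symplectic singularities is again a symplectic singularity (the symplectic forms on the smooth loci combine factorwise), also the first.

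For the third bullet, note that the existence of a $\theta$-stable representation of dimension $\tfrac{1}{m}\beta^{(i)}$ with $m\geq 2$ requires $\beta^{(i)}$ to be divisible as an integer vector. Since the framed forms (b1), (b2) of Theorem \ref{t:flat-sigma} are indivisible by the preceding observation, the offending factor $\beta^{(i)}$ must lie in $\Sigma_{q,\theta}$, and hence the final statement of Theorem \ref{main-result} furnishes an open subset $U_i\subseteq \mm_{q,\theta}(Q,\beta^{(i)})$ that is singular, factorial, and terminal. Setting
\[
V := U_i \times \prod_{j\neq i}\mm^s_{q,\theta}(Q,\beta^{(j)}),
\]
where the other factors are taken to be their $\theta$-stable loci (smooth symplectic by Proposition \ref{p:s-smooth}), we see that $V$ is open in $\mm_{q,\theta}(Q,\alpha)$, singular (since $U_i$ is), has symplectic singularities with singular locus of codimension at least four (inherited from $U_i$), hence is terminal by \cite{naminote}; factoriality of $V$ follows from the factoriality of $U_i$ and the smoothness of the remaining factors. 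Van der Waerden purity then excludes a symplectic resolution.

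The main subtlety will lie in the final step, namely verifying that the product of a factorial singular variety with smooth varieties is again factorial. This reduces at the level of local rings to the classical fact that polynomial extensions of factorial (UFD) local rings remain factorial, applied via the standard description of local rings on a product over an algebraically closed field.
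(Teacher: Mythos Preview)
Your proposal is correct and follows essentially the same route as the paper: decompose via Theorem \ref{t:decompo}.(iv), handle $\Sigma_{q,\theta}$-factors by Theorem \ref{main-result}, handle flat non-$\Sigma$ factors by observing indivisibility (so Theorem \ref{t:res-flat} and Corollary \ref{c:res-flat} apply), and for the third bullet pull back the open singular factorial terminal subset from the offending factor. Two minor points worth tightening: (i) the $\beta^{(i)}\in\widetilde\Sigma_{q,\theta}\setminus\Sigma_{q,\theta}$ need not literally be of form (b1)/(b2)---rather, after admissible reflections they are (this is how the paper phrases it), but since reflections lie in $\GL(\Z^{Q_0})$ the indivisibility conclusion is unaffected; (ii) in your construction of $V$, the $\theta$-stable loci $\mm^s_{q,\theta}(Q,\beta^{(j)})$ are known to be non-empty when $\beta^{(j)}\in\Sigma_{q,\theta}$ (Proposition \ref{rep-equidim2}) but not obviously so when $\beta^{(j)}\in\widetilde\Sigma_{q,\theta}\setminus\Sigma_{q,\theta}$---replacing these by the smooth loci (non-empty since each factor is a non-empty reduced variety) fixes this and preserves the factoriality/terminality argument verbatim. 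The paper's own proof is terser on this last step and simply asserts the conclusion.
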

% Notice that part (iii) reduces the question, under the assumptions, to the one addressed in Theorem \ref{main-result}.
%Note that condition (iii) is  simpler  than (iv) due to Corollary \ref{cindiv}.
\begin{proof} The first statement follows if we show that the
normalisation of each factor $\mathcal{M}_{q,\theta}(Q,\beta^{(i)})$ is a symplectic singularity. For the factors such that $\beta^{(i)}$ is in $\Sigma_{q,\theta}$, this is a consequence of Theorem \ref{main-result}.  For  $\beta^{(i)} \notin \Sigma_{q,\theta}$, after applying admissible reflections it
  follows from Theorem \ref{t:flat-sigma} that it is indivisible. Hence $\beta^{(i)}$ is itself indivisible. By our assumptions, $\beta^{(i)}$ is flat. The result then follows from Theorem \ref{t:res-flat}. This also proves the second statement.
% Thus, a symplectic resolution can be obtained by varying $\theta$ (i.e., we can find $\theta'$ such that $\beta^{(i)} \in \Sigma_{q,\theta'}$). 

We proceed to the third statement.
% Clearly (ii) implies (i). Assume (iii) holds. For each
%   $\beta^{(i)}$ which is itself in $\Sigma_{q,\theta}$, it equals one of the $\alpha^{(i)}$. Then Corollary \ref{c:main-result} implies (ii).
% % We only need to observe in this case that, if $\beta^{(i)}$ is $q$-indivisible, 
% % a resolution
% %   of the factor $\mathcal{M}_{q,\theta}(Q,\beta^{(i)})$ exists by
% %   Theorem \ref{main-result}.  Next,
% %  if $\beta^{(i)}$ is not in
% %   $\Sigma_{q,\theta}$, then as explained in the preceding paragraph,
% % it is indivisible (hence $q$-indivisible). Thus, 
% % it admits a symplectic resolution again by Theorem \ref{t:res-flat}.
% %   % Hence, a resolution of $\mathcal{M}_{q,\theta}(Q,\beta^{(i)})$ can
% %   % be obtained by varying $\theta$.
%   Finally, assume (i). 
% %  If one of
% %   the $\alpha^{(i)}$ is such that $X:=\mm_{q,\theta}(Q,\alpha^{(i)})$ does not admit a symplectic resolution, then since its normalisation $\widetilde{X}$
% % is a symplectic singularity, we can form a $\mathbb{Q}$-factorial terminalisation of $\widetilde{X}$ by \cite{BCHM}, which is singular. Applying the same reasoning to the other factors in $\Sigma_{q,\theta}$ (and for those that are not, performing symplectic resolutions), we obtain a $\mathbb{Q}$-factorial terminalisation of $\mm_{q,\theta}(Q,\alpha)$, which is singular.  Since we have seen that $\mm_{q,\theta}(Q,\alpha)$ is a symplectic singularity, 
% If some $\alpha^{(i)}$ is $\Sigma$-divisible, then
%   since it is divisible, by Theorem \ref{t:decompo}.(iii), it
%   equals one of the $\beta^{(j)}$.
 Under the hypotheses, since we have excluded the isotropic and 
$(2,2)$-cases, an open subset of 
  $\mathcal{M}_{q,\theta}(Q,\beta^{(j)})$ is factorial terminal
  singular by Theorem \ref{main-result} (see Theorem \ref{t:fact-term}).
  Hence so is an open subset of
  $\mathcal{M}_{q,\theta}(Q,\alpha)$, which therefore does not admit a symplectic resolution.
\end{proof}
\subsection{Classifications of symplectic resolutions of punctured character varieties}
Here, we combine the results of this section and Theorem \ref{comb1} to get a classification of all the character varieties of punctured surfaces which admit a symplectic resolution, modulo the conjectural results of the $(2,2)$-cases. As explained in Section \ref{section-char}, in order to get such a result, it suffices to consider multiplicative quiver varieties of crab-shaped quivers, where the parameter $q$ and the dimension vector $\alpha$ are chosen in an appropriate way; see Theorem \ref{t:iso-mult-char}. 

Let $Q$ be a crab-shaped quiver, $q\in(\C^\times)^{Q_0}$ and $\alpha \in N_{q, \theta}$ and consider the corresponding quiver variety $\mm_{q, \theta}(Q, \alpha)$. Then, if $\alpha \notin \mc{F}(Q)$, we can apply the algorithm of Theorem \ref{t:decompo} and obtain a decomposition where the dimension vectors of the factors are in the fundamental region (such dimension vectors are the connected components of the reflection of $\alpha$). Moreover, note that, in the crab-shaped case, all the vector components not containing the central vertex are Dynkin quivers of type $A$: therefore, the associated multiplicative quiver variety is just a point. This implies that we can assume, without loss of generality, that $\alpha$ be sincere ($\alpha_i > 0$ for all $i$) and in the fundamental region. After having performed this reduction, we can prove the following. 
\begin{coro}\label{c:crab-sr}
  Let $Q$ be a crab-shaped quiver and $\alpha \in N_{q,\theta}$ a sincere vector in the fundamental region.
%and assume that $\mm_{q,\theta}(Q,\alpha)$ is non-empty.
% (in particular, $\alpha \in N_{q,\theta}$).
%, and assume that $q_i$ is a root of unity, for $i\in Q_0$. 
Further assume that $(Q,\alpha)$ is \textbf{not} one of the following cases: 
\begin{itemize}
\item[(a)] $\beta=\frac{1}{2}\alpha \in N_{q,\theta}$ and
  $(Q,\beta)$ is one of the quivers in Theorem \ref{comb1} and Theorem \ref{comb2};
  % , which are the so-called (2,2)-cases;
% \item[(b)] the pair $(Q, \alpha)$ does not satisfy the condition: $Q$ is a quiver with one vertex, one loop and no other arrows, and the dimension vector  $\alpha$ is $ml$, where $m > 1$ and $l$ is the order of $q$; 
\item[(b)] $Q$ is affine Dynkin (of type
  %the pair $(Q, \alpha)$ is not given as follows: $Q$ is an affine Dynkin quiver of type
  $\tilde{A}_0$ (i.e., the Jordan quiver with one vertex and one arrow), $\tilde{D}_4$ or $\tilde{E}_6, \tilde{E}_7, \tilde{E}_8$)
  % $n=6, 7, 8$
  and $\alpha$ is a $q$-divisible multiple of the indivisible imaginary root $\delta$ of $Q$.
\end{itemize}
Then:
\begin{itemize}
\item The normalisation of $\mm_{q,\theta}(Q,\alpha)$ is a symplectic singularity;
\item If $\alpha$ is $q$-indivisible, %or $Q$ is the Jordan quiver, 
$\mm_{q,\theta}(Q,\alpha)$ admits a symplectic resolution;
\item %If $\alpha$ is not $q$-indivisible, 
  If $\alpha=m\beta$ for $m \geq 2$ and there exists a $\theta$-stable
  representation of $\Lambda^q(Q)$ of dimension $\beta$,
%   % $\beta \in \Sigma_{q,\theta}$,
%   $\mm_{q,\theta}(Q,\beta)^s \neq \emptyset$, 
% % %and $Q$ is not the Jordan quiver, 
then $\mm_{q,\theta}(Q,\alpha)$ does not admit a symplectic resolution (it contains an open singular factorial terminal subset);
%%\item If $Q$ is star-shaped, then we can drop the non-emptiness condition.
\item In the case that $\alpha$ is $q$-divisible, the condition of the preceding part is always satisfied, except possibly in the case: (c) $Q = Q^e \cup \{*\}$
is an affine Dynkin quiver $Q^e$ of type $\tilde A_0, \tilde D_4, \tilde E_6, \tilde E_7$, or $\tilde E_8$ %with (the affine Dynkin quiver $Q^e$)
together with an additional vertex $\{*\}$
and an additional arrow from this vertex to one with dimension vector $1$ in $\delta$, and $\alpha$ has
  the form
  $(p,p\ell\delta)$ for $p \geq 2$ a prime, $\ell\delta$ $q$-indivisible. Here
  $\delta$ denotes the indivisible imaginary root of $Q^e$.
\end{itemize}
\end{coro}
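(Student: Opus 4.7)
The plan is to write $\alpha = n\beta$ with $\beta$ $q$-indivisible and $n \geq 2$, and to produce an integer $m \geq 2$ dividing $\alpha$ componentwise together with a $\theta$-stable representation of dimension $\alpha/m$. First I would observe that, since $\alpha$ is $q$-divisible, every nonzero entry is at least $\gcd(\alpha) \geq 2$. Consequently $\alpha$ itself cannot be of the forms (b1) or (b2) of Theorem~\ref{t:flat-sigma}, both of which require some entry to equal $1$. Combined with the exclusion of Corollary case (b), which rules out Theorem~\ref{t:flat-sigma} case (a), this forces $\alpha \in \Sigma_{q,\theta}$. Furthermore, if $\beta$ were isotropic, then its being sincere (inherited from $\alpha$) would force $Q$ to be affine Dynkin and $\alpha$ to be a $q$-divisible multiple of $\delta$, again Corollary case (b). Hence $\beta$ is anisotropic.

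Next I would apply Corollary~\ref{c:indiv}.(ii) to split into two subcases: (A) $\beta \in \Sigma_{q,\theta}$, and (B) $\beta$ is equivalent via admissible reflections to a vector of form (b1) or (b2) of Theorem~\ref{t:flat-sigma}. In subcase (B), the classification together with the crab-shaped and sincereness hypotheses forces $Q$ to have the shape described in Corollary case (c): namely $Q = Q^e \cup \{*\}$ with $Q^e$ affine Dynkin of type $\tilde A_0$, $\tilde D_4$, $\tilde E_6$, $\tilde E_7$, or $\tilde E_8$, plus one extra vertex and arrow. If $n$ is prime we are exactly in case (c) and the corollary leaves this open. If $n$ is composite, factor $n = p n'$ with $p$ prime and $n' \geq 2$, and set $\gamma := n'\beta$; then $\gamma \in \Sigma_{q,\theta}$ by Corollary~\ref{c:indiv}.(ii). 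In subcase (A) set instead $\gamma := \beta$. In either case, $\alpha = m \gamma$ with $m \geq 2$ and $\gamma$ an anisotropic imaginary element of $\Sigma_{q,\theta}$.

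It then remains to produce a $\theta$-stable representation of dimension $\gamma$. By Proposition~\ref{rep-equidim2}, the $\theta$-stable locus is dense in $\mr{Rep}^{\theta-ss}(\Lambda^q, \gamma)$, so it suffices to verify that $\mm_{q,\theta}(Q, \gamma) \neq \emptyset$. This is the principal obstacle. One approach is to invoke the multiplicative analogue of Crawley-Boevey's existence theorem for simple representations of deformed preprojective algebras (\cite[Theorem 1.2]{cb-geom}), which is the forthcoming work of Crawley-Boevey and Hubery referred to in the introduction. An alternative is to deduce nonemptiness from the assumed nonemptiness of $\mm_{q,\theta}(Q, \alpha)$ by analysing the $\theta$-polystable representative at a point, using the stratification by representation type (Proposition~\ref{strati}) and the canonical decomposition (Theorem~\ref{t:decompo}) to isolate a stable summand of the required dimension. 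Once $\mm_{q,\theta}(Q,\gamma)$ is shown nonempty, the third bullet of the corollary (via Theorem~\ref{main-result} and Theorem~\ref{t:fact-term}) then gives an open factorial terminal singular subset of $\mm_{q,\theta}(Q,\alpha)$, precluding the existence of a symplectic resolution.
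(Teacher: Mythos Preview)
Your reduction is sound and matches the paper's route: once cases (a) and (b) are excluded, $\alpha \in \Sigma_{q,\theta}$, and the dichotomy from Corollary~\ref{c:indiv}.(ii) leaves you needing a $\theta$-stable representation of some $\gamma \in \Sigma_{q,\theta}$ with $\gamma \mid \alpha$. The genuine gap is exactly where you place it, and neither of your proposed fixes closes it.

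First, the Crawley-Boevey--Hubery work alluded to in the introduction is aimed at expectation~(*), which is the \emph{opposite} implication: existence of a $\theta$-stable representation of dimension $\gamma$ forces $\gamma \in \Sigma_{q,\theta}$. It says nothing about producing such a representation when $\gamma \in \Sigma_{q,\theta}$, which is what you need. Second, your stratification idea does not work: a $\theta$-polystable representation of dimension $\alpha$ decomposes into stable summands whose dimension vectors are constrained only by summing to $\alpha$; there is no mechanism forcing one of them to equal $\gamma$. The canonical decomposition of Theorem~\ref{t:decompo} is a statement about dimension vectors, not about actual representations, and in any case here $\alpha$ itself lies in $\Sigma_{q,\theta}$, so that decomposition is trivial.

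The paper resolves the non-emptiness question with results that are specific to crab-shaped quivers and not available for general $Q$. In the star-shaped case (no loops), $\mm_{q,\theta}(Q,\gamma) \neq \emptyset$ for $\gamma \in \Sigma_{q,\theta}$ is exactly \cite[Theorem~1.1]{cb-shaw}, the solution to one direction of the multiplicative Deligne--Simpson problem. In the case with $g \geq 1$ loops, one passes via Theorem~\ref{t:iso-mult-char} and Remark~\ref{r:imc-open} to character varieties: given conjugacy classes $\mathcal{C}_1,\ldots,\mathcal{C}_m \subset \GL(n,\C)$ with $\prod_i \det \mathcal{C}_i = 1$, pick $C_i \in \mathcal{C}_i$ and invoke Thompson's theorem \cite{Tho-csglg} that every element of $\SL(n,\C)$ is a single commutator to solve $[A_1,B_1]\cdots[A_g,B_g] = C_1\cdots C_m$. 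This exhibits a point of the character variety, hence of the multiplicative quiver variety; density of the stable locus (Proposition~\ref{rep-equidim2}) then yields the required $\theta$-stable representation.
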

\begin{rem} In the final part of the corollary, expectation (*) from
  the introduction predicts that the exception indeed fails to satisfy
  the conditions of the preceding part. Nonetheless, we
  believe that, also in this case, there should not exist a projective
  symplectic resolution; this would be implied by Conjecture
  \ref{conj:cy2} in the appendix together with the consequence that
  follows it, by \cite[Theorem 1.5]{bellamy-schedler}.
  \end{rem}
\begin{proof}[Proof of Corollary \ref{c:crab-sr}]
  By Theorem \ref{t:decompo}, we know that $\alpha$ is flat unless it
  is a positive integral multiple of an isotropic root, excluded in
  case (b).  Then the first and third statements follow from Theorems
  \ref{main-result} and the second from \ref{t:res-flat}.  For the
  fourth statement, we apply Theorem \ref{t:flat-sigma}.
  %we first apply Corollary \ref{c:indiv}.(ii),
  to see that, in the crab-shaped case,
  the
  dimension vector can only be in the fundamental region but not in $\Sigma_{q,\theta}$ if the quiver is a
  framed affine Dynkin quivers of types $\tilde A_0, \tilde D_4, \tilde E_6, \tilde E_7$, and $\tilde E_8$, and the dimension vector is $(1,\ell\delta)$.
  %are in the fundamental region but not in $\Sigma_{q,\theta}$.
  Thus only prime multiples of this vector can be $q$-divisible but have no factor in $\Sigma_{q,\theta}$. 

  It then remains only to show 
  % noting which quivers appear in the crab-shaped case, to see that in
  % the given cases there is $\beta \in \Sigma_{q,\theta}$ with
  % $\alpha=m\beta, m \geq 2$.  It remains only to show
  that
  $\mm_{q,\theta}(Q,\beta) \neq \emptyset$ for $\beta \in \Sigma_{q,\theta}$,
  not of the form $(1,\ell\delta)$ for a framed affine Dynkin quiver. Let us first assume that $\theta = 0$. In the star-shaped
  case, the result follows from
  %$\mm_{q,\theta}(Q,\beta) \neq \emptyset$ by
  \cite[Theorem
  1.1]{cb-shaw}.  In the crab-shaped case with $g > 0$ loops at the
  central vertex, with $\theta = 0$, it suffices by Theorem \ref{t:iso-mult-char} and Remark
  \ref{r:imc-open} to show
  that, for all conjugacy classes $\mathcal{C}_1, \ldots, \mathcal{C}_m \subset \GL(n, \C)$
  with product of determinants equal to one, there exists a solution
  to the equations $[A_1,B_1]\cdots[A_g,B_g] = C_1 \cdots C_m$ for
  $C_i \in \mathcal{C}_i$. This follows because there is a solution to the
  equation $[X_1,Y_1]=C$ for arbitrary $C \in \SL(n, \C)$ (by
  \cite[Theorems 1, 2]{Tho-csglg}).

  Now assume $\theta \neq 0$. In most cases (excepting the case of one loop and one branch), one can extend \cite[Theorem 1.1]{cb-shaw} to this case using \cite[\S 4.3, 4.4]{yamakawa}; however, we may give a more direct argument. Since
  we are not in the situation
 of a framed affine Dynkin quiver with dimension vector $(1,\ell\delta)$, note that $\alpha \in \Sigma_{q,0}$ as well, so from the $\theta \neq 0$ case and \cite[Theorem 1.11]{cb-shaw}, we know that there exists a simple representation of $\Lambda^q$ of dimension $\alpha$. This is automatically $\theta$-stable, so we obtain the desired non-emptiness statement. 
  % For the general case, a consequence of
  % \cite[\S 4.3]{yamakawa} (actually, we can get around this, as in the following paragraph). For the general case $\theta \neq 0$, first note that, 
  %  For the general case $\theta \neq 0$, non-emptiness follows from \cite[\S 4.4]{yamakawa}.
  % to Theorem \ref{main-result}. We
  % only have to show that $\mm_{q,\theta}(Q,\beta) \neq \emptyset$ if 
  % Finally, the last statement is a reminder of
  % the fact that the non-emptiness question is resolved in the
  % star-shaped case, thanks to \cite[Theorem 1.3]{cb-monod} and \cite[Section 4]{yamakawa} (see also Section \ref{ss:ne-mqv}).
\end{proof}
% \begin{coro} Under the same assumptions as in the theorem, if we also assume
% the assumptions of Corollary \ref{c:main-result}, then $\mm_{q,\theta}$ admits a symplectic resolution if and only if $\alpha$ is $\Sigma$-indivisible.
% \end{coro}
% The $\Sigma$-indivisible cases, in this context, thanks to Theorem \ref{t:flat-sigma}, are precisely the $q$-indivisible cases, together with the following: for some prime $p \geq 2$, we have $\alpha=p\beta$, where
% %$\beta$ the quiver with two vertices and two arrows (a loop at $1$ and an arrow from $1$ to $2$),
% %i.e., 
% $\beta$ is of type (b2) in Theorem \ref{t:flat-sigma} for types $\tilde{A}_0$,
% $\tilde{D}_4, \tilde{E}_6, \tilde{E}_7, \tilde{E}_8$.

% and if there is no $\theta$-stable representation of dimension $\beta$, then
% $\mm_{q,\theta}(Q,p \beta)$ admits a symplectic resolution (provided it is non-empty).  
% In the star-shaped case, there is a partial converse, since by \cite[Theorem 1.1]{cb-shaw} for $\theta = 0$ and \cite[Theorem 4.13]{yamakawa} in general (?), 
%  there exists a simple representation of dimension $\alpha$ if  $\alpha \in \Sigma_{q,\theta}$. Thus the examples given above are exhaustive.
%  does not admit a $\theta$-stable representation, then a prime multiple of this pair 
% , or
% \item $(Q, \alpha)$ is a prime multiple of a pair of type (b2) from Theorem \ref{t:flat-sigma} (for types $\tilde{A}_0, \tilde{D}_4, \tilde{E}_6, \tilde{E}_7, \tilde{E}_8$).
\begin{rem}The assumptions made in the above theorem relate to the
  fact that in cases $(a)$ and $(b)$, it is still unknown whether a
  symplectic resolution exists, as this problem seems to be solvable
  only through a deep understanding of the local structure of the
  variety. Nonetheless in case (a), we expect such symplectic
  resolutions to exist and to be constructible by using analogous
  techniques to the ones used by Bellamy and the first author in
  \cite[Theorem 1.6]{bellamy-schedler} (see Remark \ref{r:(2,2)}).
\end{rem}

\subsection{Proof of Theorems \ref{t:char-g0} and \ref{t:char-gg0}} \label{ss:proof-char}
Theorems \ref{t:char-g0} and \ref{t:char-gg0} follow from Corollary
\ref{c:crab-sr}, together with Theorem \ref{t:iso-mult-char}, as
follows.

First, we claim that $q$-divisibility for the collection of conjugacy
classes $\mathcal{C}$ coincides with the same-named property for the
dimension vector $\alpha$ of the corresponding crab-shaped quiver. To
see this, first note that $m \cdot \mathcal{C}$ indeed corresponds to
$m \cdot \alpha$. So we only have to show that the condition that
$\prod_i \det \mathcal{C}_i = 1$ is equivalent to $q^\alpha = 1$.
This is true by construction.

Next, we claim that, for $g=0$, the condition $\ell \geq 2n$ of
Theorem \ref{t:char-g0} is equivalent to the condition that
$\alpha \in \mathcal{F}(Q)$, whereas for $g > 0$, we have
$\alpha \in \mathcal{F}(Q)$ unconditionally. By the chosen ordering of
the $\xi_{i,j}$, we have $(\alpha, e_i) \leq 0$ for all $i \in Q_0$
except possibly the node. There, the condition $\ell \geq 2n$ is
equivalent to $(\alpha, e_i) \leq 0$.  On the other hand, when
$g \geq 1$, then as there is a loop at the node, it is automatic that
$(\alpha, e_i) \leq 0$ for $i \in Q_0$ the node, and hence in this
case $\alpha \in \mathcal{F}(Q)$ automatically.

We now claim that the dimension of $\mathcal{X}(g, k, \mathcal{C})$
equals $2p(\alpha)$ when the quiver is not Dynkin or affine Dynkin and
moreover $\ell \geq 2n$ or $g \geq 1$. This follows
from Theorem \ref{t:flat-sigma} and Theorem \ref{t:iso-mult-char} (see also
Remark \ref{r:imc-open}),
provided that $\alpha$ is not both $q$-divisible and isotropic. However,
the latter conditions,
for $\alpha \in \mathcal{F}(Q)$, are equivalent to saying
that the graph $\Gamma_{\mathcal{C}}$ is affine Dynkin and $\alpha$ is
$q$-divisible.

With the preceding claims established, we proceed to the proof of the theorems.
Note that applying reflections as earlier in this section preserves the property that a dimension vector is one corresponding to a character variety (by Theorem \ref{t:iso-mult-char}).  So we can always reduce to case that $\alpha \in \mathcal{F}(Q)$, unless we end up with something with Dynkin support (hence $\mathcal{X}(g,k,\mathcal{C})$ is a point) or something where $\alpha$ becomes negative (hence $\mathcal{X}(g,k,\mathcal{C})$ is empty). This proves the first part of Theorem \ref{t:char-g0}.  The remaining assertions of the theorems follow from the above claims, which allow us to translate Corollary
\ref{c:crab-sr} into the given results via Theorem \ref{t:iso-mult-char}.

\section{Open questions and future directions}\label{sec-fut}
In this section we pose some questions concerning the cases which are left out from the analysis carried out in the previous sections. This includes the question of to what extent the decomposition in Theorem \ref{t:decompo} can be refined, as in the additive case in \cite{cb-deco}.
% , namely when $\alpha$
%  satisfies $\alpha=2\beta$ for $\beta \in \Sigma_{q,\theta}$
% satisfying $(p(\beta), n)=(2,2)$, and when $\alpha \notin \Sigma_{q, \theta}$. The corresponding cases in the additive setting are studied in \cite{bellamy-schedler}, where a complete answer to the problem is given using methods relying on \cite[\S 5]{cb-geom}. 

One interesting direction of research which naturally arises from the results proved in this paper and the work \cite{tirelli} of the second author is the study of analogous problems in the context of the Higgs bundle moduli spaces, which appear in the picture via the non-abelian Hodge correspondence.

We also say a few words on how one might hope to study the local structure of formal moduli spaces of polystable objects in a 2-Calabi--Yau category and prove that, under suitable conditions, formal neighbourhoods of such moduli spaces are quiver varieties associated to a quiver which arises from the deformation theory of the objects parametrised by the moduli spaces. This is relevant to the present context as it would make it possible to give an alternative and more insightful proof of Proposition \ref{sing-quiv}, as explained
 at the beginning of Section \ref{section-sing}.
 
 Before getting into these issues, we begin by discussing some cases where the multiplicative quiver varieties are known to be non-empty.
 % recall what is known about
 % of varieties, which is unfortunately not much: although
 % we do not have a lot to say about this issue, it is an obvious and
 % relevant open question which we have already mentioned throughout the
 % paper.

 \subsection{Non-emptiness of multiplicative quiver varieties}\label{ss:ne-mqv}
 As explained in the previous sections, one of the subtleties in
 the study of multiplicative quiver varieties is the
 fact that it is not known in general when they are non-empty (nor how
 many connected components they have). On the other hand, there are
 special cases in which non-emptiness can be shown. For example, when
 $q=1$, then, for any quiver $Q$ and any vector $\alpha\in \N^{Q_0}$,
 the zero representation is a suitable element of
 $\mr{Rep}(\Lambda^q, \alpha)$, since the invertibility condition is
 automatically satisfied as well as the multiplicative preprojective
 relation. Thus $\mm_{1,0}(Q,\alpha) \neq \emptyset$. More generally,
 for every real root $\beta \in R^+_{q,\theta}$, then by applying
 reflection sequences as in Section \ref{ss:flat-roots} (see also the
 discussion after Corollary \ref{c:wk-sigma-converse}), we conclude
 that $\mm_{q,\theta}(Q,\beta) \neq \emptyset$. As a result, if
 $\alpha$ can be expressed as a sum of real roots in $R^+_{q,\theta}$
 (not necessarily coordinate vectors) then
 $\mm_{q,\theta}(Q,\alpha) \neq \emptyset$.

 Another important and less trivial case in which we are guaranteed
 that $\mm_{q, \theta}(Q, \alpha)$ is non-empty is when $Q$ is
 crab-shaped (for arbitrary $\alpha \in N_{q,\theta}$): this follows
 from the arguments of the proof of Corollary \ref{c:crab-sr} (relying on
 \cite{cb-monod} and \cite{Tho-csglg}). We remark that, by the
 arguments of
 % then, in order to construct a
 % representation of dimension $\alpha$, one can use the correspondence
 % of between multiplicative quiver varieties and character varieties of
 % punctured surfaces and, further, use parabolic bundles and monodromy,
 % see \cite{cb-monod}, and 
 \cite[\S 4.3, 4.4]{yamakawa}, relying on the
 correspondence between
  character varieties of
  punctured surfaces and moduli of parabolic bundles and 
  \cite[\S 5]{inaba}, it follows that these varieties are in fact irreducible except possibly
  in certain cases of a crab-shaped quiver with a single loop (when, after
  reducing to the fundamental region, the support includes exactly one branch). %%This remaining case is merely asking 
 % In
 % the crab-shaped case with at least one loop, if $\theta = 0$, we know that
 % $\mm_{q,0}(Q,\alpha) \neq \emptyset$ for
 % $\alpha \in N_{q,0}$, using Thompson's thesis \cite{Tho-csglg},
 % as explained in the proof of Corollary \ref{c:crab-sr}. For general $\theta$, provided that there are at least two branches, or only one branch and at least two loops, we can use \cite[\S 4.4]{yamakawa} to conclude non-emptiness, relying on \cite{inaba} (where the aforementioned assumptions are needed); in these cases in fact the variety is shown to be irreducible.  In the case of no branches, i.e., just a single vertex, it is automatic that $\theta=0$ and moreover the variety is irreducible as explained in \cite[\S 8]{bellamy-schedler}.

 % To summarise, for all crab-shaped quivers except those with one loop and one branch, it is known that $\mm_{q,\theta}(Q,\alpha)$ is nonempty and irreducible. 

 We can also ask when the stable locus
 $\mm_{q,\theta}^s(Q,\alpha) \neq \emptyset$.  Note that an answer to
 this question for all $\alpha$ also answers the question of
 non-emptiness of the entire locus, since every point in
 $\mm_{q,\theta}(Q,\alpha)$ is represented by a polystable
 representation. More explicitly,
 $\mm_{q,\theta}(Q,\alpha) \neq \emptyset$ if and only if $\alpha$ can
 be represented as a sum of roots $\alpha^{(i)}$ for which
 $\mm_{q,\theta}^s(Q,\alpha^{(i)}) \neq \emptyset$.  Note that, when
 $\alpha \in \Sigma_{q,\theta}$, then non-emptiness of
 $\mm_{q,\theta}(Q,\alpha)$ is equivalent to that of
 $\mm_{q,\theta}^s(Q,\alpha)$, by Proposition \ref{rep-equidim2}.  Our
 expectation (*) says that $\mm_{q,\theta}^s(Q,\alpha) \neq \emptyset$
 implies $\alpha \in \Sigma_{q,\theta}$.
 
 \subsection{Refined decompositions for multiplicative quiver varieties}
 \label{ss:ref-dec}
% One fundamental tool in the classification theorem  \cite[Theorem 1.4]{bellamy-schedler} is the canonical decomposition of a dimension vector of a quiver variety into summands which lie in $\Sigma_{\lambda, \theta}$, which is the additive version of the set $\Sigma_{q, \theta}$ defined in this paper. The set $\Sigma_{\lambda, \theta}$ is defined analogously to $\Sigma_{q, \theta}$: one just needs to replace the condition $q^\alpha=1$ with $\lambda\cdot \alpha=0$. The most interesting property of $\Sigma_{\lambda, \theta}$ is the following. 

% Assume $q_i\in\R^+$; then, taking $\lambda_i=\log q_i$, we get that
% \[\Sigma_{q, \theta}=\Sigma_{\lambda, \theta},
% \] 
% where the first set is intended in the multiplicative setting whereas the second in the additive one. From this, it follows that an analogue of Lemma \ref{lem-prod} holds true for $\Sigma_{q, \theta}$ as well. 
% On the other hand, it is not obvious if the aforementioned canonical decomposition holds for $\Sigma_{q, \theta}$ in the case of general $q$ and, more importantly, if, given such a decomposition, an analogous of Proposition \ref{prop-prod} holds in the multiplicative setting. Nevertheless, this would make it possible to extend our analysis to multiplicative quiver varieties such that $\alpha\notin \Sigma_{q, \theta}$.
Recall Crawley-Boevey's canonical decomposition in the additive case (Theorem \ref{thm-prod}, Lemma \ref{lem-prod}).  It is useful to ask to what extent such a decomposition holds in the multiplicative setting, refining the one of Theorem \ref{t:decompo}. Let $\beta^{(i)}, \alpha^{(i,j)}$ be as in Theorem \ref{t:decompo},
and group together the $\alpha^{(i,j)}$ that are equal, yielding distinct
$\gamma^{(i,j)}$ each occurring $r_{i,j} \geq 1$ times.  Note that, when $\gamma^{(i,j)}$ is anisotropic, then $r_{i,j}=1$, since $r_{i,j} \gamma^{(i,j)} \in \Sigma_{q,\theta}$, by the uniqueness of the decomposition in Theorem \ref{t:decompo}.(i).
\begin{conj}\label{conj:cb-decomp}
We have a decomposition as follows:
\begin{equation}
\mm_{q,\theta}(Q,\alpha) \cong \prod_{i,j} S^{r_{i,j}}\mm_{q,\theta}(Q,\gamma^{(i,j)}).
\end{equation}
\end{conj}
 The following proposition partly resolves the conjecture modulo expectation (*). 
\begin{prop}\label{p:decomp-refine}
If (*) holds, then the decomposition of Theorem \ref{t:decompo}.(iv) refines to one of the form
\begin{equation}
\mm_{q,\theta}(Q,\alpha) \cong \prod_{i,j} \mm_{q,\theta}(Q,r_{i,j}\gamma^{(i,j)}).
\end{equation}
Moreover, in this case, the direct sum map
\begin{equation} \label{e:ds-iso}
S^{r_{i,j}} \mm_{q,\theta}(Q,\gamma^{(i,j)}) \to \mm_{q,\theta}(Q,r_{i,j} \gamma^{(i,j)})
\end{equation}
is surjective.
% one as in Lemma \ref{lem-prod} and Proposition \ref{prop-prod},
% i.e., replacing each $\mathcal{M}_{q,\theta}(Q,\beta^{(i)})$ with 
% the corresponding product of varieties $\prod_{i,j} S^{r_i} \mathcal{M}_{q,\theta} \mathcal{M}_{q,\theta}(Q,\gamma^{(i,j)})$, where $\beta = \sum_{j} r_i \gamma^{(i,j)}$ (each $\gamma^{(i,j)}$ occurs as an $\alpha^{(i,j')}$ exactly $r_i$ times.
% %anisotropic $\beta^{(i)}$ by the product
% %of varieties for the summands $\alpha^{(i,j)}$, and replacing $\mathcal{M}_{q,\theta}(Q,m\gamma)$ by $S^m \mathcal{M}_{q,\theta}(Q,\gamma)$ for $\gamma \in \Sigma_{q,\theta}^{\text{iso}}$?  
\end{prop}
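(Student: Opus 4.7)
The plan is to combine Theorem~\ref{t:decompo}(iv) with a further refinement of each factor $\mm_{q,\theta}(Q,\beta^{(i)})$. When $\beta^{(i)} \in \Sigma_{q,\theta}$ or $\beta^{(i)} \in \N_{\geq 2}\cdot \Sigma_{q,\theta}^{\text{iso}}$, only one distinct $\gamma^{(i,j)}$ appears in the grouping of the $\alpha^{(i,j)}$ (with $r_{i,j}=1$ in the former case), so the corresponding refinement is vacuous. By Theorem~\ref{t:flat-sigma}, the remaining case is $\beta^{(i)} \in \widetilde{\Sigma}_{q,\theta} \setminus \Sigma_{q,\theta}$, which after admissible reflections takes the form (b1) or (b2); this is where actual work is required.

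The first step is to use (*) to describe the closed points of $\mm_{q,\theta}(Q,\beta^{(i)})$: every such point corresponds (up to isomorphism) to a unique $\theta$-polystable $\rho$, which by (*) decomposes into $\theta$-stable pieces whose dimension vectors lie in $\Sigma_{q,\theta}$. By the coarsest-refinement property of Theorem~\ref{t:decompo}(i) applied to $\beta^{(i)}$, the multiset of these dimensions must refine $\beta^{(i)} = \sum_j r_{i,j}\gamma^{(i,j)}$, so each stable piece has dimension $\gamma^{(i,j)}$ for some $j$. Grouping the stables by their dimension then yields a canonical decomposition $\rho \cong \bigoplus_j \rho_j$, with $\rho_j$ a $\theta$-polystable of dimension $r_{i,j}\gamma^{(i,j)}$; this provides a set-theoretic inverse to the direct sum morphism
\[
\Psi: \prod_j \mm_{q,\theta}(Q, r_{i,j}\gamma^{(i,j)}) \longrightarrow \mm_{q,\theta}(Q,\beta^{(i)}),
\]
so $\Psi$ is bijective on closed points. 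To upgrade $\Psi$ to an isomorphism of reduced varieties, I would adapt Crawley-Boevey's argument for the additive canonical decomposition \cite[Theorem~1.1]{cb-deco}: $\Psi$ is a proper morphism (both sides being projective over their affinisations), and the two sides have the same dimension, computed from the stratification by polystable representation type under (*) (one cannot simply invoke the formula $2p(\beta^{(i)})$ here, since $\beta^{(i)} \notin \Sigma_{q,\theta}$). Provided normality of both sides can be established (via Proposition~\ref{quiv-norm} where applicable, with more delicate local analysis at polystable points otherwise), Zariski's Main Theorem then promotes the bijective, proper morphism $\Psi$ to an isomorphism. Assembling these isomorphisms for each $i$ and combining with Theorem~\ref{t:decompo}(iv) yields the refined decomposition.

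For the surjectivity of the direct sum map $S^{r_{i,j}} \mm_{q,\theta}(Q,\gamma^{(i,j)}) \to \mm_{q,\theta}(Q, r_{i,j}\gamma^{(i,j)})$, the anisotropic case is trivial since $r_{i,j}=1$. When $\gamma^{(i,j)} \in \Sigma_{q,\theta}^{\text{iso}}$ and $r_{i,j} \geq 2$, the canonical decomposition of $r_{i,j}\gamma^{(i,j)}$ into elements of $\Sigma_{q,\theta}$ is $r_{i,j}$ copies of $\gamma^{(i,j)}$; this is the coarsest such decomposition and, since $\gamma^{(i,j)}$ is a single root, the only one. Hence by (*), every $\theta$-polystable of dimension $r_{i,j}\gamma^{(i,j)}$ is isomorphic to a direct sum of $r_{i,j}$ $\theta$-stable pieces each of dimension $\gamma^{(i,j)}$, and therefore lies in the image of the symmetric product.

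The main obstacle is the scheme-theoretic upgrade in the first step. Expectation (*) fixes the set-theoretic picture, but the local structure of $\mm_{q,\theta}(Q,\beta^{(i)})$ at a polystable point $\rho = \bigoplus_j \rho_j$ is controlled, \'etale locally, by a quasi-Hamiltonian slice involving invariants of $\mr{Ext}^1$ groups between distinct $\rho_j$ (corresponding to the $-1$ edges of the forest in Theorem~\ref{t:decompo}(iii)); one must verify that the moment map equations together with the GIT quotient cut these contributions down so that the local ring of $\mm_{q,\theta}(Q,\beta^{(i)})$ at $\rho$ matches the product of local rings of the $\mm_{q,\theta}(Q, r_{i,j}\gamma^{(i,j)})$ at the $\rho_j$. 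In the additive setting this point is absorbed into Crawley-Boevey's proof; a careful adaptation to the multiplicative case, presumably via a version of Luna's slice theorem for quasi-Hamiltonian actions, is where the bulk of the technical effort would lie.
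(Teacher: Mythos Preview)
Your argument for the surjectivity of \eqref{e:ds-iso} is correct and matches the paper's proof essentially verbatim: when $r_{i,j}>1$ the root $\gamma^{(i,j)}$ is isotropic, the canonical decomposition of $r_{i,j}\gamma^{(i,j)}$ in Theorem~\ref{t:decompo}(i) is just $r_{i,j}$ copies of $\gamma^{(i,j)}$, and (*) forces every $\theta$-polystable of that dimension to be a sum of $\theta$-stables of dimension $\gamma^{(i,j)}$.

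For the first statement, your set-theoretic analysis is also right and agrees with the paper's logic: (*) forces the $\theta$-stable summands of any polystable of dimension $\beta^{(i)}$ to have dimensions among the $\gamma^{(i,j)}$, so the direct sum map $\Psi$ is a bijection on closed points. The divergence is in how you promote $\Psi$ to an isomorphism of varieties. You propose properness plus normality plus Zariski's Main Theorem, and you correctly identify normality as a genuine obstacle---Proposition~\ref{quiv-norm} does not apply to $\beta^{(i)}\notin\Sigma_{q,\theta}$ without the codimension-4 hypothesis, and the local slice analysis you sketch at the end is precisely the unavailable tool discussed in Section~\ref{sec-fut}. So as written, your argument has a gap you have already flagged.

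The paper avoids this by invoking \cite[Section~5]{cb-deco} verbatim, replacing ``simple'' by ``$\theta$-stable''. Crawley-Boevey's Lemmas~5.3 and~5.4 treat exactly the configurations (b1) and (b2) of Theorem~\ref{t:flat-sigma}, and the point is that those arguments exploit the specific combinatorial structure: in (b1) the connecting vertices $j,k$ both carry dimension $1$, and in (b2) the extending vertex carries dimension $1$. This lets one write down the inverse of $\Psi$ directly at the level of representation varieties (the arrow between $J$ and $K$ is forced to vanish, or the $\delta$-summand splits off canonically), so the isomorphism of GIT quotients follows without appealing to normality or ZMT. Your abstract route would work if normality were known, but the paper's route sidesteps the issue by reusing the hands-on argument already available in the additive case.
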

\begin{proof}
It suffices to decompose each of the $\mm_{q,\theta}(Q,\beta^{(i)})$.  By
Proposition \ref{t:flat-sigma}, the first statement follows by the arguments of \cite[Section 5]{cb-deco} verbatim, replacing simple representations by $\theta$-stable ones. For the second statement, if $r_{i,j} > 1$,  then $\gamma^{(i,j)}$ is isotropic.  Then,  the canonical decomposition of $r_{i,j} \gamma^{(i,j)}$ appearing in Theorem \ref{t:decompo}.(i) is just as a sum of $r_{i,j}$ copies of $\gamma^{(i,j)}$. Thus the statement follows from Theorem \ref{t:decompo}.(i) and expectation (*), since every representation in $\mm_{q,\theta}(Q,r_{i,j} \gamma^{(i,j)})$ is represented by a polystable one.
\end{proof}
Therefore, modulo (*), Conjecture \ref{conj:cb-decomp} reduces to the following statement:
% \begin{}\label{q:ds-iso}
\emph{The natural map \eqref{e:ds-iso} an isomorphism.}
%\end{ques}
This should have a positive answer if Conjecture \ref{conj:cy2} holds, since as explained in Section \ref{ss:cy}, the multiplicative quiver varieties would formally locally be additive quiver varieties, compatibly with the direct sum map; then the statement reduces to Theorem \ref{thm-prod}.
\begin{eg}
Suppose that $\beta^{(i)}$ is the following dimension vector supported on a framed type $\widetilde E_6$ quiver:
\begin{equation}\begin{tikzcd}%affineE6
&&n&&&\\
&&2n\arrow[u]&&&\\
n&\arrow[l]2n&\arrow[l]3n\arrow[u]\arrow[r]&2n\arrow[r]&n\arrow[r]&1\\ 
\end{tikzcd}\end{equation}
Then, by the star-shaped case of Theorem \ref{t:iso-mult-char} 
(proved in \cite[Section 8]{cb-shaw}), the variety 
$\mm_{1,0}(Q,\beta^{(i)})$ is isomorphic to
the character variety of rank $3n$ local systems on
the three-punctured sphere $\Sigma_{0,3} = \mathbb{P}^1\setminus \{0,1,\infty\}$ with unipotent monodromies: about the first two punctures, there should be $n$ Jordan blocks of size three (or some refinement), and about the third puncture, there should be $n-2$ Jordan blocks of size three, one Jordan block of size four, and one of size two (or some refinement).  On the other hand, $\mm_{1,0}(Q,\delta)$ is the character variety of rank $3$ local systems
on $\Sigma_{0,3}$ with arbitrary unipotent monodromies.  Conjecture \ref{conj:cb-decomp} then asks whether the first variety is isomorphic to the $n$-th symmetric power of the second; it does not seem so obvious that this should be the case.
\end{eg}
%In the additive case, Question has a positive answer.
One of the difficulties in trying to adapt the proof of the analogous statement
to Conjecture \ref{conj:cb-decomp} in the additive case 
(\cite[Section 3]{cb-deco}) is that, in the multiplicative case, it is
no longer guaranteed that one of the components of $\gamma^{(i,j)}$
equals one (since $\gamma^{(i,j)}$ need only be $q$-indivisible, not
indivisible). It seems it may be a better approach to prove Conjecture \ref{conj:cy2}, as stated above.
% However, if (*) holds and Conjecture \ref{conj:cy2}
% below holds, then it would follow from the \'etale-local
% identification of the multiplicative quiver variety with an additive
% one that \eqref{e:ds-iso} is indeed an isomorphism.
%Thus we expect a positive answer to Question \ref{q:ds-iso} as well.

Note finally that the proof of Theorem \ref{thm-prod} (\cite[Theorem
1.4]{bellamy-schedler}), in the additive case, relied on hyperk\"ahler
twistings, for which one needs to assume that the parameter $\lambda$
is real. In fact, some of the issues we face (such as expectation (*))
are not yet resolved, to our knowledge, in the general additive case where both
$\lambda \notin \R$ and $\theta \neq 0$.

%  As mentioned above for the analogue of Lemma \ref{lem-prod},
% in order to apply such a technique in the multiplicative setting, one
% might need to assume $q\in (S^1)^{Q_0}$, accordingly. On the other
% hand, such an assumption seems to be quite restrictive, also in the
% light of the correspondence established in Section
% \ref{section-char}. Therefore, this seems to suggest that a different
% technique to prove a decomposition theorem for
% $\mm_{q, \theta}(Q, \alpha)$ should be used.

\subsection{Symplectic resolutions and singularities}
In view of our results and the flexibility of symplectic
singularities, as well as the relationships between multiplicative and
additive quiver varieties, we propose the following:
\begin{conj} Every multiplicative quiver variety is a symplectic
  singularity.
\end{conj}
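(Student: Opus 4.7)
The plan is to reduce to an étale-local statement and then appeal to the corresponding known result for ordinary (additive) Nakajima quiver varieties. The starting point is the canonical decomposition of Theorem \ref{t:decompo}.(iv), which already reduces the problem to the case of a single flat root (or a multiple of an isotropic root). For $\alpha \in \Sigma_{q,\theta}$ satisfying the $(p,n) \neq (2,2)$ condition, Theorem \ref{main-result} already gives that the normalisation is a symplectic singularity, and Theorem \ref{t:res-flat} provides the analogous statement for flat roots outside $\Sigma_{q,\theta}$. Therefore the conjecture reduces to two tasks: (i) handling the remaining local/special cases (the $(2,2)$-case and the isotropic case $m\gamma$ with $\gamma \in \Sigma_{q,\theta}^{\text{iso}}$ and $m \geq 2$), and (ii) removing the passage to normalisation, i.e., proving that $\mm_{q,\theta}(Q,\alpha)$ is already normal.

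For (i), my plan is to establish an étale-local structure theorem: for any polystable point $x \in \mm_{q,\theta}(Q,\alpha)$ with representation type $\tau = (k_1,\beta^{(1)};\ldots;k_r,\beta^{(r)})$, the completion of $\mm_{q,\theta}(Q,\alpha)$ at $x$ is isomorphic to the completion at $0$ of an additive quiver variety $\mathfrak{M}_{0,0}(Q^\tau,\alpha^\tau)$ associated to the "Ext-quiver" $Q^\tau$ built from the self- and cross-extensions of the $\beta^{(i)}$. The natural route here is the conjectural $2$-Calabi--Yau property of $\Lambda^q(Q)$ (discussed in Section \ref{sec-fut}), combined with the work of Bocklandt--Galluzzi--Vaccarino \cite{bocklandt-et-al}, which proves exactly this kind of local statement for moduli of representations of $2$-Calabi--Yau algebras. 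Granting this local model, the $(2,2)$-case and the isotropic cases become, locally, the analogous cases for additive quiver varieties, for which \cite[Theorem 1.6]{bellamy-schedler} (and the general results of \cite{bellamy-schedler}) tell us that the normalisations are symplectic singularities (in fact the $(2,2)$-case there admits an O'Grady-type resolution). Since the symplectic singularity property is étale-local in nature, this would complete the proof for the normalisation.

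For (ii), normality, I would again use the local model to transfer the known normality of additive quiver varieties (Proposition \ref{prop-normal} extended via the local-to-global principle, together with \cite[Corollary 7.2]{cb-norm}) to the multiplicative setting. Once the étale-local model identifies $\widehat{\mm_{q,\theta}(Q,\alpha)_x}$ with $\widehat{\mathfrak{M}_{0,0}(Q^\tau,\alpha^\tau)_0}$, and the latter is normal (which can be deduced from \cite{cb-norm} and the codimension estimates of Lemma \ref{weigh}, available in the additive case without the $(2,2)$-exclusion), Serre's criterion applied stalkwise gives normality of $\mm_{q,\theta}(Q,\alpha)$. Combined with (i), this upgrades "the normalisation is a symplectic singularity" to "the variety itself is a symplectic singularity."

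The main obstacle is unquestionably the étale-local structure theorem, which in turn rests on the $2$-Calabi--Yau property for $\Lambda^q(Q)$ when $Q$ is non-Dynkin. Proving this CY property, or a sufficient substitute giving a Darboux-type formal neighbourhood description, is the core technical content; once it is in hand the reduction to the additive case is essentially bookkeeping. A secondary subtlety is Dynkin $Q$, where $\Lambda^q(Q)$ is not expected to be $2$-CY: here however the varieties tend to be points or low-dimensional, and a direct case-by-case analysis (using the reflection isomorphisms of Theorem \ref{t:yama} to reduce to elementary dimension vectors) should suffice. A final subsidiary point is the isotropic case, where the direct-sum map $S^m \mm_{q,\theta}(Q,\gamma) \to \mm_{q,\theta}(Q,m\gamma)$ is, modulo (*), expected to be an isomorphism (Proposition \ref{p:decomp-refine}, Question \ref{q:ds-iso}); if so, the result reduces to the symmetric-power case, and symmetric powers of symplectic singularities are symplectic singularities by \cite{beauville}.
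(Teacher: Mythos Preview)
This statement is a \emph{conjecture} in the paper, not a theorem; the paper gives no proof, only the brief discussion immediately following it. Your proposal is therefore not a proof but a strategy, and you correctly flag the unproven inputs (the 2-Calabi--Yau property of $\Lambda^q(Q)$, the \'etale-local model, and expectation (*)).

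Your strategy is essentially the same route the paper sketches. The paper reduces via the product decomposition of Theorem \ref{t:decompo}.(iv) to the individual factors, then (assuming (*)) to $\alpha \in \Sigma_{q,\theta} \cup \N_{\geq 2}\cdot\Sigma_{q,\theta}^{\text{iso}}$, and (assuming a positive answer to Question \ref{q:ds-iso}, itself expected under Conjecture \ref{conj:cy2}) further to $\alpha \in \Sigma_{q,\theta}$. Your plan to use the conjectural 2-CY property and the Bocklandt--Galluzzi--Vaccarino local model to handle the $(2,2)$ and isotropic cases, and to upgrade from ``normalisation is a symplectic singularity'' to normality itself, is exactly what the paper points to in Section \ref{ss:cy} (see Conjecture \ref{conj:cy2} and the paragraph after it). So there is no genuine divergence: both the paper and you identify the same missing ingredient (the \'etale-local structure theorem, resting on the 2-CY conjecture) as the core obstruction.

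One small correction: in your discussion of (ii), you say the additive codimension estimates of Lemma \ref{weigh} are ``available in the additive case without the $(2,2)$-exclusion''; this is not quite right---the codimension-$4$ bound of Lemma \ref{weigh}.(3) genuinely fails in the $(2,2)$-case (the relevant stratum has codimension $2$), so normality there requires the full local model, not just the dimension count. This does not affect your overall plan, since you are already invoking the local model for that case anyway.
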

Note that a product of Poisson varieties is a symplectic singularity
if and only if each of the factors is (because normality and being
symplectic on the smooth locus have this property, and in the
definition of symplectic singularity it is equivalent to check the
extension property for one or all resolutions of singularities).
Therefore the conjecture reduces to the case of factors appearing in
Theorem \ref{t:decompo}, and if (*) holds, to the case
$\alpha \in \Sigma_{q,\theta} \cup \N_{\geq 2}
\Sigma^{\text{iso}}_{q,\theta}$ by Proposition
\ref{p:decomp-refine}. If Conjecture \ref{conj:cb-decomp} 
% Question \ref{q:ds-iso}
holds,
%additionally has an affirmative answer, (e.g., in the presence of
%Conjecture \ref{conj:cy2}),
then we can furthermore reduce to the case
$\alpha \in \Sigma_{q,\theta}$. In that case, by Theorem \ref{t:char-gg0}, the only issue is normality, which would be implied if the variety is formally locally an additive one, as predicted by Conjecture \ref{conj:cy2} and the discussion thereafter.

Next, we ask to what extent the property of having a symplectic
resolution is equivalent to the same property for the factors.
\begin{ques} (i) Is it true that $\mm_{q,\theta}(Q,\alpha)$ admits a
  symplectic resolution if and only if each of the factors
  $\mm_{q,\theta}(Q,\beta^{(i)})$ does?

  (ii) Suppose that (*) holds. Is it true that
  $\mm_{q,\theta}(Q,\alpha)$ admits a symplectic resolution if and
  only if each of the factors
  $\mm_{q,\theta}(Q,r_{i,j} \gamma^{(i,j)})$, appearing in Proposition \ref{p:decomp-refine}, does?
\end{ques}
Note that, when $r_{i,j} > 1$, then
$\gamma^{(i,j)} \in \Sigma_{q,\theta}^{\text{iso}}$, and hence is
$q$-indivisible.  Therefore, in this case,
$\mm_{q,\theta}(Q,\gamma^{(i,j)})$ has a symplectic resolution by
varying $\theta$, by Lemma \ref{l:varytheta} and Corollary
\ref{c:birational}. Since it is a surface, so does its $r_{i,j}$-th
symmetric power, by the corresponding Hilbert scheme.  Therefore, if
Conjecture \ref{conj:cb-decomp} holds,
% Question \ref{q:ds-iso} has a positive answer (e.g., in the presence
% of Conjecture \ref{conj:cy2}),
we can ignore these factors in (ii)
above, and only consider the ones with $r_{i,j}=1$. Also notice that all $q$-indivisible factors, including those with $\gamma^{(i,j)} \notin \Sigma_{q,\theta}$, admit symplectic resolutions. Also, if any factor $\gamma^{(i,j)}$ appears which is a $\geq 2$ multiple of the dimension vector of a $\theta$-stable representation, there can be no symplectic resolution of $\mm_{q,\theta}(Q,\alpha)$, nor of $\mm_{q,\theta}(Q,\gamma^{(i,j)})$. So again, for the question (ii), it is enough to consider only the factors $\gamma^{(i,j)}$ which are anisotropic, $q$-divisible, and not a $\geq 2$ multiple of the dimension vector of a $\theta$-stable representation.

%Note that, in the decomposition, $r_i = 1$ if $\gamma^{(i,j)}$ is anisotropic (since multiples of anisotropic elements of $\Sigma_{q,\theta}$ are still in $\Sigma_{q,\theta}$).
% in the anisotropic case, and $S^m \mathcal{M}_{q,\theta}(Q,\gamma) \twoheadrightarrow \mathcal{M}_{q,\theta}(Q,m\gamma)$ in the isotropic case. 

% A positive answer would be consistent with Theorem \ref{t:main-result-nonsigma}.(iii). Note, though, that for the answer to be positive, then whenever $\mm_{q,\theta}(Q,\alpha) \neq \emptyset$, so are all of the varieties 
% $\mm_{q,\theta}(Q,\alpha^{(i,j)})$, recalling that $\alpha^{(i,j)} \in \Sigma_{q,\theta}$.  (Although this is not at all obvious,
%  the corresponding statement does clearly follow
%   for the vectors $\beta^{(i)}$, which are either flat or positive integral multiples
% of isotropic elements of $\Sigma_{q,\theta}$.)

% \begin{rem}
% An argument in favour to the above conjecture is given by the existence of a map
% \[
% \prod_{i=1}^kS^{n_i}\mm_{q, \theta}(Q, \sigma^{(i)})\longrightarrow \mm_{q, \theta}(Q, \alpha),
% \]
% constructed taking direct sum of (S-equivalence classes of) representations. This is the morphism may be used to prove the isomorphism between $\prod_{i=1}^kS^{n_i}\mm_{q, \theta}(Q, \sigma^{(i)})$ and $\mm_{q, \theta}(Q, \alpha)$.
% \end{rem}

\subsection{Moduli of parabolic Higgs bundles and the Isosingularity Theorem}
We restrict the attention to the case of crab-shaped quivers, the
corresponding multiplicative quiver varieties of which, as explained
in Section \ref{section-char}, lead to the study of character
varieties of (possibly non-compact) Riemann surfaces.

Character varieties of compact Riemann surfaces are important, among many reasons, as they appear as the \tit{Betti side} of the non-abelian Hodge correspondence, which is a series of results that establish isomorphisms between apparently unrelated moduli spaces, see \cite{simpson1994-1}. Such a correspondence holds also in the case of non-compact curves, thanks to the work of Simpson, see \cite{simpson-harm}.
 
For the compact case, in \cite{tirelli}, the second author exploited a fundamental result of Simpson, called the Isosingularity Theorem, \cite{simpson1994-2}, to show how the statements proved in \cite[\S 8]{bellamy-schedler} could be translated to the \tit{Dolbeault side} of the non-abelian Hodge correspondence, i.e.,  to the moduli spaces of semistable Higgs bundles of degree $0$. 

In the light of the results of this paper and the non-abelian Hodge correspondence in the non-compact setting, it is a natural question to ask whether an analogue of the main theorems of \cite{tirelli} holds for the Dolbeault moduli spaces defined on complex curves with punctures, which turn out to be the  moduli spaces of \tit{parabolic} Higgs bundles. Before stating some conjectural results, we recall the relevant definitions. 
Motivated by the work of Simpson, \cite{simpson-harm}, we recall filtered local systems, following \cite[\S 4]{yamakawa}, which gives a slightly different but nonetheless equivalent definition from the one given in Simpson, \cite{simpson-harm}. 
\begin{defn}\cite[Definition 4.5]{yamakawa}\label{4.2.3}
	Let $X$ be a compact Riemann surface and $D \subset X$ be a finite subset. 
	Let $L$ be a local system on $X \setminus D$. 
	For a collection of non-negative integers $l=(l_p)_{p \in D}$, 
	a {\em filtered structure} on $L$ of {\em filtration type} $l$ is a collection 
	$(U_p, F_p)_{p\in D}$, where for each $p\in D$:  
	\begin{enumerate}
		\item[(i)] $U_p$ is a neighbourhood of $p$ in $X$ (we set $U_p^* :=U_p \setminus \{ p \}$); and   
		\item[(ii)] $F_p$ is a filtration  
		\[
		L |_{U_p^*} = F_p^0(L) \supset F_p^1(L) \supset \cdots
		\supset F_p^{l_p}(L) \supset F_p^{l_p+1}(L)=0
		\]
		by local subsystems of $L |_{U_p^*}$.
	\end{enumerate}
	
	Two filtered structures $(U_p, F_p)_{p\in D}, (U'_p, F'_p)_{p\in D}$ of the same filtration type 
	are {\em equivalent} if for each $p\in D$, there exists a neighbourhood $V_p \subset U_p \cap U'_p$ 
	of $p$ such that $F_p$ and $F'_p$ coincide on $V^*_p$.
	A local system $L$ together with an equivalence class of filtered structures 
	$F =[ (U_p, F_p)_{p\in D} ]$
	is called a {\em filtered local system} on $(X,D)$ of filtration type $l$. 
\end{defn}

From \cite{yamakawa} one has also the following definition of (semi)stability. 
\begin{defn} \cite[Definition 4.6]{yamakawa}\label{4.2.4} 
	Let $(L,F)$ be a filtered local system on $(X,D)$ of filtration type $l$. 
	Let $\beta = (\beta_p^j \mid p\in D,~j=0,\dots ,l_p)$ be a collection of rational numbers 
	satisfying $\beta_p^i < \beta_p^j$ for any $p$ and $i <j$ -- such a collection is called a {\em weight}. The pair $(L,F)$  is said to be {\em $\beta$-semistable} if 
	for any non-zero proper local subsystem $M \subset L$ the following inequality holds:
	\[
	\sum_{p\in D} \sum_j \beta_p^j \frac{ \mr{rank} \left(  M \cap F_p^j(L) \right)  /\left( 
		M \cap F_p^{j+1}(L) \right)  }{\mr{rank}\ M} 
	\leq
	\sum_{p\in D} \sum_j \beta_p^j \frac{\mr{rank}  \left(  F_p^j(L)/
		F_p^{j+1}(L) \right)  }{\mr{rank}\ L}.
	\]
	$(L,F)$ is {\em $\beta$-stable} if the strict inequality always holds.
\end{defn}
Yamakawa established a correspondence between semistable filtered local systems and multiplicative quiver varieties of star-shaped quivers. This is, as mentioned, a particular case of the correspondence outlined in Section \ref{section-char}. To shed more light on this correspondence, we spell out the correspondence between the parameters $q, \alpha, \theta$ defining a multiplicative quiver variety $\mm_{q, \theta}(Q, \alpha)$ of a star shaped quiver and the weights $\beta$ of a filtered local system $(L, F)$: start with a crab-shaped quiver $Q$ with vertex set $Q_0=\{0, (i, j)_{i\in \{1, \dots n\}, j\in\{1, \dots, l_i\}} \}$ -- i.e. $Q$ has $n$ legs, each of which has length $l_i$, for $i=1, \dots, n$. Moreover, let $\alpha$ be a dimension vector, $\alpha\in \N^{Q_0}$, $\xi\in (\C^\times)^{Q_0}$ a collection of non-zero complex numbers and $\beta\in \Q^{Q_0}$ a collection of rational numbers. Then, on one side, one can consider filtered local systems $(L, F)$ on $(\mb{P}^1, \{p_1, \dots, p_n\})$, where $p_i, i=1, \dots, n$ are pairwise distinct points in $\mb{P}^1$, with stability parameter $\beta$ and such that: 
\begin{enumerate}
	\item $\mr{rank}(L)=\alpha_0$,
	\item $\dim F^j_{p_i}(L)=\alpha_{i, j}$,
	\item the local monodromy of $F^j_{p_i}(L)/F^{j+1}_{p_i}(L)$ around $p_i$ is given by the scalar multiplication by $\xi^j_{p_i}$ for all $i, j$.
\end{enumerate}
On the other side, one can consider the multiplicative quiver variety $\mm_{q, \theta}(Q, \alpha)$, where $Q$ and $\alpha$ are as above and $q$ and $\theta$ are given as 
\[
q_0:=\prod_i(\xi^0_{p_i})^{-1},\ \ \ q_{i, j}=\xi^{j-1}_{p_i}/\xi^j_{p_i}
\]
\[
\theta_0:=\frac{\sum_{i, j}\theta_{i, j}\alpha_{i, j}}{\alpha_0},\ \ \ \theta_{i, j}=\beta^{j}_{p_i}-\beta^{j-1}_{p_i}.
\]

The other main concept in the non-abelian Hodge correspondence on non-compact curves is that of parabolic Higgs bundle, which we recall below (note that, in \cite{simpson-harm} the term \tit{filtered Higgs bundle} is used instead).
\begin{defn} Let $X$ and $D$ be a compact Riemann surface and a reduced divisor on $X$ respectively. Let $E\rightarrow X$ be a holomorphic vector bundle on $X$. A \tit{parabolic structure} on $E$ is the datum of weighted flags $(E_{i,p}, \alpha_{i, p})_{p\in D}$
\[
E_p=E_{1, p}\supseteq E_{2, p}\supseteq\dots\supseteq E_{l+1, p}=0,\]
\[
0\leq \alpha_{1, p}<\dots<\alpha_{l, p}<1,
\] 
for each $p\in D$. A \tit{morphism of parabolic vector bundles} is a morphism of holomorphic vector bundles which preserves the parabolic structure at every point $p\in D$. 
\end{defn}
\begin{defn}
Given a parabolic bundle $(E, (E_{i, p}, \alpha_{i, p})_{p\in D})$, its \tit{parabolic degree} is defined to be 
\[
\mr{pardeg}(E)=\deg(E)+\sum_{p\in D}\sum_i m_i(p)\alpha_{i,p},
\]
where $m_i(p)=\dim E_{i, p}-\dim E_{i, p+1}$ is called the \tit{multiplicity} of $\alpha_{i, p}$.
\begin{rem}Given the notion of parabolic degree, stability and semistability of a parabolic bundle are defined as in the case of vector bundles, using the parabolic degree in place of the ordinary degree, see \cite[\S 2]{logares-martens} for more details and an outline on some geometric properties of the corresponding moduli spaces.
\end{rem} 
\end{defn}

\begin{defn}
A \tit{parabolic Higgs bundle} on $(X, D)$ is a parabolic bundle $(E, (E_{i, p}, \alpha_{i, p})_{p\in D})$ together with a meromorphic map $\Phi:E\rightarrow E\otimes K_X$ with poles of order at most $1$ at the points $p\in D$. The residue of $\Phi$ at marked points is assumed to preserve the corresponding filtration. 
\end{defn}
From the Riemann-Hilbert correspondence, we know that representations of the fundamental group of a punctured surface with fixed monodromies correspond bijectively to filtered local systems. Moreover, in \cite{simpson-harm}, the following theorem is proved. 
\begin{thm}\cite[Theorem, p. 718]{simpson-harm} There is a one-to-one correspondence between (stable) filtered local systems and (stable) parabolic Higgs bundles of degree zero.
\end{thm}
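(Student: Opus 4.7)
The plan is to follow Simpson's strategy from \cite{simpson-harm}, using harmonic metrics as the bridge between the Betti side (filtered local systems) and the Dolbeault side (parabolic Higgs bundles). Let $X$ be a compact Riemann surface, $D\subset X$ a finite set of punctures, and write $X^\circ = X\setminus D$. The key analytic object in the middle is a tame harmonic metric on a flat bundle over $X^\circ$ with prescribed growth at the punctures determined by the filtration weights. The central steps are: (i) existence of such harmonic metrics on the Betti side, (ii) existence of solutions to Hitchin's equations with prescribed asymptotics on the Dolbeault side, and (iii) showing that the parabolic/filtration data and stability conditions match under these constructions.

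First, starting from a stable filtered local system $(L,F)$ with weights $\beta$ of parabolic degree zero, I would use Simpson's tame harmonic metric theorem to produce a hermitian metric $h$ on the associated flat bundle over $X^\circ$ such that the difference $D_h - \nabla$ (where $\nabla$ is the flat connection and $D_h$ is the Chern connection of $h$ with respect to the holomorphic structure defined below) is self-adjoint with controlled growth near $D$. Decomposing $\nabla = D_h + \psi$ and writing $\psi = \Phi + \Phi^*_h$, the $(0,1)$-part of $D_h$ defines a holomorphic structure on a bundle $E$ over $X^\circ$, while $\Phi$ is a holomorphic Higgs field. The parabolic extension across $D$ is obtained by defining $E_p$ as the subsheaf generated by sections of $h$-norm $O(|z|^{-\alpha + \varepsilon})$ near each puncture $p$, giving the flag $E_{i,p}$ and weights $\alpha_{i,p}$ from the growth rates. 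That the result is a polystable parabolic Higgs bundle of parabolic degree zero follows from the Chern-Weil computation of parabolic degree in terms of the asymptotic behavior of $h$, together with the fact that destabilising parabolic subobjects would pull back to destabilising sub-local-systems.

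For the converse, given a stable parabolic Higgs bundle $(E,\Phi,\{E_{i,p},\alpha_{i,p}\})$ of parabolic degree zero, I would solve Hitchin's equations
\[
F(D_h) + [\Phi, \Phi^*_h] = 0
\]
on $X^\circ$ with the boundary condition that $h$ has the prescribed asymptotic behavior at each puncture determined by the parabolic weights. The connection $\nabla := D_h + \Phi + \Phi^*_h$ is then flat, and its monodromy representation equipped with the filtrations coming from the growth rates of $h$-sections yields a filtered local system of filtration type matching the flag structure, with weights coming from the parabolic weights. Stability of the filtered local system follows from polystability on the Dolbeault side via the usual argument: a destabilising filtered sub-local-system would give a $\nabla$-invariant subbundle that, endowed with the restricted metric, contradicts the Hitchin equations.

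The hard part will be step (ii), namely existence and uniqueness of solutions to Hitchin's equations with prescribed singular boundary behaviour at the punctures; this requires delicate weighted Sobolev analysis, either via the continuity method with a priori estimates as in Simpson's original approach, or through the heat flow on the punctured surface, and careful control of the behavior of $h$ near $D$ to ensure that the constructed parabolic structure reproduces the given one. A secondary technical point is checking that the two constructions are mutually inverse up to natural equivalence, which amounts to a uniqueness statement for harmonic metrics with the prescribed asymptotics; this is handled by a maximum principle argument on the relative endomorphism $h_1^{-1}h_2$, using that it satisfies a nonlinear elliptic equation on $X^\circ$ and is bounded at the punctures by the matching asymptotic data.
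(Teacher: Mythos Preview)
The paper does not give a proof of this statement: it is quoted as a result of Simpson \cite{simpson-harm} and used as background in Section~\ref{sec-fut}, with no argument supplied. So there is no ``paper's own proof'' to compare against.

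That said, your outline is a faithful high-level sketch of Simpson's strategy: build the bridge via tame harmonic metrics, passing from a stable filtered local system to a Hermitian--Yang--Mills solution with controlled growth, read off the parabolic Higgs data from the metric asymptotics, and conversely solve Hitchin's equations with prescribed singular boundary behaviour. You correctly flag the hard analytic input (existence and uniqueness of the metric with the required asymptotics via weighted Sobolev estimates) and the uniqueness argument for mutual inverse via the maximum principle on $h_1^{-1}h_2$. One point to be careful with: the correspondence is between \emph{polystable} objects in general, with stability preserved in both directions; your sketch conflates stable and polystable in a couple of places. Also, the precise dictionary between the weights $\alpha$ on the Dolbeault side and the weights $\beta$ on the Betti side involves the real part of the residue eigenvalues (the paper records this as $\beta = \alpha - \Re\lambda$, $\xi = \exp(-2\pi\sqrt{-1}\lambda)$), which you should state when claiming the parabolic and filtration data ``match''.
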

Even though we will not go into the details of this correspondence, we shall at least explain how it works at the level of parameters $\alpha$ and $\beta$. To this purpose, let $X$ be a compact Riemann surface and $D\subset X$ a finite subset of distinct points of $X$. Fixing $p\in D$, consider the sets
\[
\{(\lambda,\alpha_p^j) \in \C \times [0,1)\ |\ 
\text{the action of $\mr{Res}_p \Phi$ on $E_{j, p}/E_{j-1, p}$ has an eigenvalue $\lambda$}\},
\] 
\[
\left\{(\xi,\beta_p^k) \in \C^\times \times \R\ |\ 
\begin{array}{l}
\text{the monodromy of $F_p^k(L)/F_p^{k+1}(L)$} \\ 
\text{along a simple loop around $p$ has an eigenvalue $\xi$}
\end{array}
\right\}.
\]
Then the correspondence between these two sets is explicitly given by $(\lambda, \alpha) \mapsto (\xi, \beta)$, where   
\[
\beta := \alpha - \Re \lambda, \qquad   
\xi := \exp (-2\pi \sqrt{-1} \lambda). 
\]

Another fundamental result of Simpson that is crucially used in \cite{tirelli} is the Isosingularity theorem, which, roughly speaking, states that the moduli spaces of the non-abelian Hodge Theorem in the compact case, i.e.,  with no punctures, are \'etale isomorphic at corresponding points. It is still not known whether the same result holds in the non-compact case. 
\begin{conj} The Isosingularity theorem holds between the moduli space of semistable filtered local systems for fixed parameters and the moduli space of semistable parabolic Higgs bundles of degree zero with corresponding parameters.
\end{conj}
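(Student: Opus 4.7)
The plan is to adapt Simpson's proof of the Isosingularity Theorem from the compact case to the parabolic setting, using tame harmonic bundles as the bridge between Dolbeault and Betti deformation theory. The argument should proceed in three steps: (i) realise both moduli spaces as GIT quotients of smooth parameter spaces under reductive group actions, so that Luna's étale slice theorem applies; (ii) at each pair of corresponding polystable points, reduce the formal local structure to a formal slice modulo the stabiliser; (iii) use the tame harmonic metric attached to each polystable point to exhibit a quasi-isomorphism between the parabolic Dolbeault and Betti DGLAs, equivariant with respect to the stabiliser.

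For step (i), on the Dolbeault side one would use Yokogawa's GIT construction of the moduli of semistable parabolic Higgs bundles with fixed discrete invariants, while on the Betti side, moduli of filtered local systems of fixed filtration type can be constructed either intrinsically (Inaba--Iwasaki--Saito, Yamakawa) or, for star/crab-shaped topological type, via the multiplicative quiver variety presentation of Section \ref{section-char}. In both cases, Luna's slice theorem identifies the formal neighbourhood at a polystable point $[E]$ with reductive stabiliser $G_{E}$ with the formal quotient $\widehat{T}_{[E]}/\!\!/G_{E}$, where $\widehat{T}_{[E]}$ is a formal slice to the orbit.

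For step (iii), to each polystable parabolic Higgs bundle $(E,\Phi)$, respectively filtered local system $(L,F)$, one associates a DGLA governing its formal deformations with fixed parabolic/filtration data: on the Dolbeault side $L_{\mathrm{Dol}}$ is the parabolic endomorphism complex with differential $[\Phi,-]$, while on the Betti side $L_{B}$ is the group cohomology DGLA of $\pi_{1}(X\setminus D)$ with coefficients in endomorphisms preserving the filtrations at the punctures. Given a tame harmonic metric on the associated harmonic bundle (existence by Simpson and Mochizuki), one then constructs an intermediate parabolic de Rham DGLA $L_{\mathrm{dR}}$ fitting in a zig-zag
\[
L_{\mathrm{Dol}} \;\longleftarrow\; L_{\mathrm{dR}} \;\longrightarrow\; L_{B}
\]
of $G_{E}$-equivariant quasi-isomorphisms. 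A parabolic version of the Goldman--Millson theorem then identifies the corresponding formal deformation functors, giving a $G_{E}$-equivariant isomorphism of formal slices which, combined with Luna, yields the desired isomorphism of formal neighbourhoods at matched polystable points.

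The hard part is step (iii) in the parabolic setting: one must track the filtration and residue data through the harmonic-bundle construction and verify that the three DGLAs above genuinely govern deformations with fixed parabolic weights and filtration type. This requires, on the analytic side, the work of Simpson, Biquard and Mochizuki on growth estimates for harmonic metrics near the punctures, and, on the algebraic side, a parabolic Goldman--Millson theorem in which the non-compactness prevents the usual properness of the gauge action and must be replaced by controlled decay conditions. Once established, this would transport the main results of the present paper from character varieties to the Dolbeault side, extending the main theorems of \cite{tirelli} to punctured surfaces.
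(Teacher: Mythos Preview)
The statement you are attempting to prove is presented in the paper as a \emph{conjecture}, not a theorem: the paper gives no proof whatsoever, and explicitly lists it among open questions and future directions in Section~\ref{sec-fut}. There is therefore nothing in the paper to compare your proposal against.

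That said, your proposal is itself not a proof but a strategy outline, and you correctly identify the genuine obstacle: step~(iii), the parabolic analogue of the Goldman--Millson formality argument, requires controlling the DGLAs near the punctures with fixed filtration and weight data, and the non-properness of the gauge action in the open case is precisely what has prevented a straightforward extension of Simpson's Isosingularity Theorem. Your sketch of using tame harmonic bundles (Simpson, Biquard, Mochizuki) as an intermediary is the natural approach and is consistent with what the paper anticipates; indeed, the paper suggests that a route via the 2-Calabi--Yau framework of Section~\ref{ss:cy} might be an alternative way to establish the local structure on both sides. But until the parabolic Goldman--Millson step and the equivariance of the zig-zag of quasi-isomorphisms are actually carried out with full control of the growth conditions, what you have written remains a plausible plan rather than a proof, which is exactly the status the paper assigns to the statement.
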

From the above conjectural result, in combination with the results proved in Section \ref{section-sing} and Conjecture \ref{conj:cy2} below, one should be able to deduce the following:
\begin{conj} The moduli space of semistable parabolic Higgs bundles of
degree zero with fixed parameters is a symplectic singularity, admitting a symplectic resolution if and only if the corresponding character variety admits a symplectic resolution. 
\end{conj}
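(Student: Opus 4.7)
The plan is to transport the statement across the non-abelian Hodge correspondence on the punctured surface, passing first to semistable filtered local systems and then via Yamakawa's dictionary (Theorem \ref{t:iso-mult-char}, Section \ref{section-char}) to multiplicative quiver varieties, where the main theorems of the paper (in particular Corollary \ref{c:crab-sr}) give the desired characterisation. Concretely, given parameters specifying the moduli space of semistable parabolic Higgs bundles of degree zero on $(X,D)$—the residues of the Higgs field together with parabolic weights at each $p \in D$—I would use the explicit parameter dictionary recalled above, $\beta = \alpha - \Re \lambda$ and $\xi = \exp(-2\pi\sqrt{-1}\lambda)$, to produce the matching parameters $(q,\theta,\alpha)$ on the filtered local system side. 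By Simpson's correspondence \cite{simpson-harm}, the resulting moduli spaces are homeomorphic (on the polystable representatives of points); the conjectural non-compact Isosingularity Theorem would upgrade this to an analytic isomorphism of formal (or henselian) neighbourhoods at corresponding closed points.

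With this étale-local identification in hand, the symplectic singularity statement follows as in the compact case of \cite{tirelli}. Normality, the existence of a non-degenerate Poisson structure on the smooth locus, and Beauville's extension condition to a resolution of singularities, are all étale-local properties; hence the symplectic singularity property (which we establish for the character variety / multiplicative quiver variety side via Corollary \ref{c:crab-sr} and the bridge of Theorem \ref{t:iso-mult-char}, together with Conjecture \ref{conj:cy2} in the remaining cases covered by the wider framework of Section \ref{sec-fut}) transfers to the moduli of parabolic Higgs bundles. Note that the open subset of the multiplicative quiver variety identified with the character variety in Remark \ref{r:imc-open} retains the symplectic singularity property, so the transfer is carried out at the level of the corresponding open charts.

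For the equivalence of existence of symplectic resolutions, I would argue in both directions using the same étale-local isomorphism. If the character variety admits a symplectic resolution, then (by Theorem \ref{main-result}, together with Theorem \ref{t:res-flat} for flat roots outside $\Sigma_{q,\theta}$) it is obtained via variation of $\theta$ inside the multiplicative quiver variety framework, producing a projective, smooth, symplectic variety over it; pulling this back through the étale-local isomorphism and gluing over the moduli space gives a symplectic resolution of the parabolic Higgs moduli. Conversely, non-existence of a symplectic resolution is detected, in all cases handled by the paper, by the presence of an open singular, factorial, terminal subset (Theorem \ref{t:fact-term}); each of factoriality (via Drezet's criterion and Flenner's theorem, as used in Section \ref{ss:na}) and terminality is étale-local, so this obstruction passes to the parabolic Higgs side.

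The main obstacle is the conjectural nature of the Isosingularity Theorem in the non-compact setting: extending Simpson's argument in \cite{simpson1994-2} to punctured curves requires a careful local analysis of the analytic correspondence at the punctures (parabolic filtration data and residues of the Higgs field versus filtered monodromy), identifying local models on both sides and matching them up—this is where the novel analytic input is really needed. A secondary technical issue is that existence of a symplectic resolution is not, in general, an étale-local property; however, as in \cite{BSnon}, for varieties with symplectic singularities admitting a $\C^\times$-action contracting to a fixed point (or, more generally, for the cases relevant here where the obstruction is provided by an étale-local factorial terminal singular subset), the property does descend and ascend, which is the tool we would use to close the argument.
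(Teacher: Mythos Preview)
The statement you are attempting to prove is labelled a \emph{Conjecture} in the paper, and the paper does not supply a proof. What it does supply is precisely the outline you have written: the sentence immediately preceding the conjecture says that it ``should'' follow from the conjectural non-compact Isosingularity Theorem together with the results of Section~\ref{section-sing} and Conjecture~\ref{conj:cy2}. So your proposal is not an alternative argument; it is a faithful expansion of the paper's own heuristic, with the same conditional dependencies made explicit.

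That said, your write-up is honest about where the real gaps lie, and they are worth naming sharply. First, the non-compact Isosingularity Theorem is genuinely open; without it, the transfer of \'etale-local properties between the Dolbeault and Betti sides has no foundation, and nothing in your argument can proceed. Second, you invoke Conjecture~\ref{conj:cy2} to cover the cases (the $(2,2)$-cases, the isotropic cases, normality in general) that the paper's unconditional results do not reach---this is consistent with the paper's own intent, but it means your ``proof'' is doubly conditional. Third, your remark that existence of a symplectic resolution is not \'etale-local is correct and important: the descent argument you sketch via factorial terminal singular open subsets handles the \emph{non}-existence direction, but for the existence direction you are relying on an explicit global construction (variation of $\theta$, then blow-up) on the quiver side and then transporting it; this step would need the Isosingularity isomorphism to be global enough to glue, or a separate argument that the resolution constructed on the Betti side admits a parallel construction on the Dolbeault side. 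The paper does not claim to have resolved this, and neither should you.
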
 
A possible strategy to prove the results listed above would be to study the local structure of moduli spaces of (semistable) objects in Calabi--Yau categories. More details are provided in the next subsection. 

\subsection{Moduli spaces in 2-Calabi--Yau categories} \label{ss:cy}
As mentioned above, the problem of studying the singularities of a variety can be carried out by analysing the local structure of the variety itself around a point.

This method is powerful in certain cases, e.g., when one is able to prove some locally \'etale isomorphism between the variety of interest and another variety whose singularities are well-known: for example, this is carried out by Kaledin and Lehn in \cite{kaledin-lehn} and later by Arbarello and Sacc\`a in \cite{arb-sacca}, where they prove that, given a strictly semistable bundle in the moduli space of semistable sheaves on a K3 surface with a fixed non-generic polarisation, there exists an \'etale neighbourhood around that point that is isomorphic to an affine quiver variety, which depends on the point itself. Similar computations, which find their inspiration from \cite{kaledin-lehn}, were also performed by the Bellamy and the first author in \cite{bellamy-schedler} in the context of quiver and character varieties.

The fact that such a technique can be used and gives the same results in so many apparently different situations suggests that these are indeed particular cases of a series of theorems which should apply in much greater generality, namely in the context of 2-Calabi--Yau categories. In the work \cite{bocklandt-et-al} the authors carry out a detailed study of the deformation theory of representation spaces of 2-Calabi--Yau algebras and they show that among all semisimple representations, the ones that correspond to smooth points are precisely the simple ones. It is known that, when $Q$ is a Dynkin quiver and $q=1$, there is an isomorphism 
\[
\Lambda^1(Q)\cong \Pi^0(Q),
\] 
between the multiplicative preprojective algebra and the additive preprojective algebra, as shown in \cite[Corollary 1]{cb-monod}. Moreover, given that the additive preprojective algebras of Dynkin quivers with at least one arrow have infinite homological dimension, the above isomorphism suggests that, in general, multiplicative preprojective algebras are not 2-Calabi--Yau. On the other hand, a conjectural statement can be made for the case of non-Dynkin quivers. 
\begin{conj}\label{conj:cy2}
Let $Q$ be a connected non-Dynkin quiver and $q\in (\C^\times)^{Q_0}$. Then $\Lambda^q(Q)$ is a 2-Calabi--Yau algebra. 
\end{conj}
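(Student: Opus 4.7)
The plan is to prove the conjecture by passing through a derived enhancement. Consider the derived multiplicative preprojective algebra $\mathbf{L}\Lambda^q(Q)$, which can be constructed as a derived universal localization of $\C\overline{Q}$ inverting $1+xx^*$ followed by a derived quotient by the multiplicative moment map relation, as in the work of Etgu--Lekili \cite{etgu-lekili}. By the quasi-Hamiltonian / group-valued moment map formalism (for which the 2-shifted symplectic structure on the derived stack of representations can be deduced from the non-commutative Poincar\'e duality inherited from the Fukaya-categorical description, and lifts to a 2-Calabi--Yau structure on the algebra), $\mathbf{L}\Lambda^q(Q)$ carries a 2-Calabi--Yau structure by construction. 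Since the 2-Calabi--Yau property is invariant under quasi-isomorphism, it would suffice to establish

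\textbf{Key step:} The canonical map $\mathbf{L}\Lambda^q(Q) \to \Lambda^q(Q) = H^0(\mathbf{L}\Lambda^q(Q))$ is a quasi-isomorphism whenever $Q$ is connected and non-Dynkin.

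This would be the main obstacle, and the natural route is through a Hilbert series / flatness calculation. In the additive case, the analogous concentration statement for $\Pi^0(Q)$ is obtained by comparing Hilbert series of an explicit Koszul-type complex with those of $\Pi^0(Q)$ itself, using the non-Dynkin hypothesis precisely to rule out finite-dimensionality. In the multiplicative setting, one can try to reduce to the additive case via a filtration. Concretely, equip a suitable completion $\widehat{\Lambda^q(Q)}$ with the filtration induced by word length on $\C\overline{Q}$ (extended to $A(Q)$ using the expansion $(1+xx^*)^{-1}=\sum_{k\geq 0}(-xx^*)^k$): the associated graded is naturally a quotient of the completed additive preprojective algebra $\widehat{\Pi^0(Q)}$. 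A dimensional comparison, combined with the 2-Calabi--Yau property of $\Pi^0(Q)$ established by Bocklandt--Le Bruyn and Crawley-Boevey for non-Dynkin $Q$, would yield that this associated graded map is in fact an isomorphism, and hence that the negative cohomologies of $\mathbf{L}\Lambda^q(Q)$ vanish.

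Once this concentration is obtained, the deformation-theoretic approach proceeds in two further steps. First, one transports the bimodule resolution of $\Pi^0(Q)$,
\[
0 \to \Pi \otimes_S \Pi \to \bigoplus_{a \in \overline{Q}_1} \Pi e_{h(a)} \otimes_S e_{t(a)} \Pi \to \Pi \otimes_S \Pi \to \Pi \to 0,
\]
with $S = \C^{Q_0}$, to a candidate resolution of $\Lambda^q(Q)$ with the same shape but with differentials deformed by the multiplicative moment map relation (written as a power series in the arrows). Second, one checks self-duality of this resolution under $\mr{RHom}_{\Lambda^q \otimes (\Lambda^q)^{\mathrm{op}}}(-, \Lambda^q \otimes \Lambda^q)$, which follows formally from the symmetric form of the multiplicative relation after noting that $\varepsilon(x) = -\varepsilon(x^*)$ and that $1 + xx^*$ conjugates $1 + x^*x$, producing the required involution on the middle term.

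The hard part is really the concentration statement: it is essentially equivalent to the flatness of the multiplicative moment map (Proposition \ref{p:flat-char}) holding in maximal generality, and to the expected Hilbert series for $\Lambda^q(Q)$ being attained. In the Dynkin case the conjectural 2-Calabi--Yau property genuinely fails, as pointed out in the paper, because $\Lambda^1(Q) \cong \Pi^1(Q)$ is finite-dimensional; hence any proof of concentration must use the non-Dynkin hypothesis at precisely the point where the analogue for $\Pi^0(Q)$ uses it, namely in ruling out cohomology classes supported on the Dynkin root lattice. If instead one wished to bypass the derived construction, a direct proof could try to establish the bimodule resolution above by hand, but the verification of exactness of the middle differential requires exactly the same input (flatness / Hilbert series), so no real shortcut is available. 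A positive answer would, by the results of Bocklandt--Galluzzi--Vaccarino \cite{bocklandt-et-al}, immediately give the \'etale-local structure theorem for $\mm_{q,\theta}(Q,\alpha)$ used implicitly in Remarks \ref{r:(2,2)} and the discussion of the $(2,2)$-cases, and in particular would supply the local model needed to attack Question \ref{q:ds-iso} affirmatively.
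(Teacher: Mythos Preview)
The statement you are attempting to prove is labeled \emph{Conjecture} in the paper, and the paper supplies no proof; it is placed in the section on open questions precisely because the authors do not know how to establish it. So there is no ``paper's own proof'' against which to compare your argument.

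That said, your proposal is not a proof either---it is a reasonable strategy outline, and you yourself flag its status with phrases like ``it would suffice to establish'' and ``one can try to reduce.'' The genuine gap is exactly where you locate it: the concentration statement (your \textbf{Key step}) that $\mathbf{L}\Lambda^q(Q)\to\Lambda^q(Q)$ is a quasi-isomorphism for non-Dynkin $Q$. Everything downstream of that is standard, but nothing upstream of it is actually carried out. In particular:
\begin{itemize}
\item The assertion that $\mathbf{L}\Lambda^q(Q)$ is 2-Calabi--Yau ``by construction'' is not justified. The reference \cite{etgu-lekili} concerns wrapped Fukaya categories of certain plumbings and identifies their endomorphism algebras with derived multiplicative preprojective algebras in specific cases; extracting a 2-CY structure on $\mathbf{L}\Lambda^q(Q)$ for an arbitrary quiver and arbitrary $q$ from that source, or from a quasi-Hamiltonian formalism, requires an argument you have not given.
\item The filtration argument is only a heuristic. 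Passing to a completion and asserting that the associated graded is $\widehat{\Pi^0(Q)}$ needs a proof that the multiplicative relation, expanded as a power series, has leading term exactly the additive relation \emph{and} that no lower-order terms interfere with the dimension count. The ``dimensional comparison'' you invoke is precisely the content of the conjecture in disguise: you are assuming that $\Lambda^q(Q)$ has the expected size, which is what you need to prove.
\item Proposition \ref{p:flat-char} concerns flatness of the group-valued moment map on representation varieties for \emph{flat roots}, not flatness at the level of the algebra itself, and it is a statement about fibre dimensions over a single point, not about the global Hilbert series of $\Lambda^q(Q)$. Invoking it as ``essentially equivalent'' to concentration overstates what is available.
\end{itemize}
In short: the paper records this as an open conjecture, and your proposal correctly identifies the shape a proof should take and where the difficulty lies, but does not resolve that difficulty.
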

Assuming the above conjecture, then \cite[Theorem 6.3, 6.6]{bocklandt-et-al} implies the following for $\theta=0$: given a dimension vector $\alpha\in \Sigma_{q, \theta}$ and a point $x\in\mm_{q, \theta}(Q, \alpha)$, then formally locally around $x$, we have an isomorphism 
\[
\widehat{\mm_{q, \theta}(Q, \alpha)}_x\cong\widehat{\mathcal{M}_{0,0}(Q', \alpha')}_0
\] 
for some appropriate quiver $Q'$ and dimension vector $\alpha'$. If we can generalise this to arbitrary $\theta$ and prove the conjecture, there
would be many interesting consequences. First, it would make it possible to handle the $(2,2)$-case, where one may construct a symplectic resolution by first performing GIT (replacing $\theta$ by suitably generic $\theta'$) and then performing a blow-up of the singular locus. Moreover, this result would imply normality for $\mm_{q, \theta}(Q, \alpha)$, without any assumption on $\alpha$ and $\theta$.

Secondly, it would be interesting to extend such arguments and results to coarse moduli spaces of objects in 2-Calabi--Yau categories. This would indeed make it possible to reduce the study of singularities of a numerous class of moduli spaces to answering the following question:\tit{ does the moduli space parametrise objects of a 2-Calabi--Yau category?}

Knowing more about the singularities of coarse moduli spaces of objects in Calabi--Yau categories might allow one to prove a generalisation of the Isosingularity theorem in the non-compact case, provided that a suitable version of the 2-Calabi--Yau condition holds for the category of parabolic Higgs bundles.

\subsection{Character varieties and Higgs bundles for arbitrary groups}Another interesting problem would be to analyse whether the results of the present paper can extend to character varieties of (punctured) Riemann surfaces with representations in arbitrary groups and, via the non-abelian Hodge correspondence, to the moduli spaces of (parabolic) Higgs principal bundles. Since the results on punctured character varieties contained in the present paper are deduced based on the correspondence described in Section \ref{section-char}, which heavily relies on the fact that one considers representations and conjugacy classes inside $\GL(n, \C)$, it seems unlikely that the techniques used here could be applied in the more general setting of $G$-representations. On the other hand, a possible approach could be the one outlined in the previous subsection. To this end, the question to be answered would be the following: \tit{given an algebraic group $G$, are there cases, other than $G=GL_n$, where the category of morphisms $\rho:\pi_1(X) \rightarrow G$ is 2-Calabi--Yau?}

% \printbibliography
\bibliographystyle{amsalpha}
\bibliography{biblio}
\end{document}